\documentclass[12 pt]{amsart}

\usepackage{hyperref}
\usepackage{etex}
\usepackage[shortlabels]{enumitem}
\usepackage{amsmath}
\usepackage{amsxtra}
\usepackage{amscd}
\usepackage{amsthm}
\usepackage{adjustbox}
\usepackage{amsfonts}
\usepackage{amssymb}
\usepackage{eucal}
\usepackage[all]{xy}
\usepackage{graphicx}
\usepackage{tikz-cd}
\usepackage{mathrsfs}
\usepackage{subfiles}
\usepackage{mathpazo}
\usepackage[colorinlistoftodos, textsize=tiny]{todonotes}
\setlength{\marginparwidth}{2cm}
\usepackage{morefloats}
\usepackage{pdfpages}
\usepackage{thm-restate}
\usepackage[percent]{overpic}
\usepackage[utf8]{inputenc}
\usepackage{epigraph}
\usepackage{csquotes}
\usepackage[margin=1in]{geometry}
\usepackage{adjustbox}
\usepackage{microtype}
\usepackage{verbatim}
\usepackage{stmaryrd}
\usepackage{scalerel}
\usepackage{stackengine}
\stackMath
\newcommand\reallywidehat[1]{%
\savestack{\tmpbox}{\stretchto{%
  \scaleto{%
    \scalerel*[\widthof{\ensuremath{#1}}]{\kern-.6pt\bigwedge\kern-.6pt}%
    {\rule[-\textheight/2]{1ex}{\textheight}}
  }{\textheight}%
}{0.5ex}}%
\stackon[1pt]{#1}{\tmpbox}%
}
\parskip 1ex

\graphicspath{ {images/} }

\RequirePackage{color}
\definecolor{myred}{rgb}{0.75,0,0}
\definecolor{mygreen}{rgb}{0,0.5,0}
\definecolor{myblue}{rgb}{0,0,0.65}

\usepackage{color}

\usepackage{hyperref}
\hypersetup{citecolor=blue}
\usepackage{tikz}
\usetikzlibrary{matrix,arrows,decorations.pathmorphing}


\theoremstyle{plain}
\newtheorem{theorem}[subsubsection]{Theorem}
\newtheorem{proposition}[subsubsection]{Proposition}
\newtheorem{lemma}[subsubsection]{Lemma}
\newtheorem{corollary}[subsubsection]{Corollary}

\theoremstyle{definition}
\newtheorem{definition}[subsubsection]{Definition}
\newtheorem{remark}[subsubsection]{Remark}
\newtheorem{example}[subsubsection]{Example}

\newtheorem{question}[subsubsection]{Question}
\newtheorem{conjecture}[subsubsection]{Conjecture}

\theoremstyle{remark}
\newtheorem{notation}[subsubsection]{Notation}

\numberwithin{equation}{section}
\newcommand\nc{\newcommand}
\nc\on{\operatorname}
\nc\renc{\renewcommand}
\DeclareMathOperator\rk{rk}

\DeclareMathOperator\Mod{Mod}

\DeclareMathOperator\PW{PW}

\DeclareMathOperator\id{id}
\DeclareMathOperator\Hom{Hom}

\title{Canonical representations of surface groups}
\author{Aaron Landesman, Daniel Litt}
\date{\today}

\begin{document}

\begin{abstract}
Let $\Sigma_{g,n}$ be an orientable surface of genus $g$ with $n$ punctures. We study actions of the mapping class group $\on{Mod}_{g,n}$ of 
$\Sigma_{g,n}$ via Hodge-theoretic and arithmetic techniques.  We show that if
$$\rho: \pi_1(\Sigma_{g,n})\to \on{GL}_r(\mathbb{C})$$ is a representation whose
conjugacy class has finite orbit under $\on{Mod}_{g,n}$, and $r<\sqrt{g+1}$,
then $\rho$ has finite image. 
This answers questions of Junho Peter Whang and Mark Kisin.  We give applications of our methods to 
the Putman-Wieland conjecture,
the Fontaine-Mazur conjecture, 
and a question of Esnault-Kerz. 

The proofs rely on non-abelian Hodge theory, our earlier work on semistability
of isomonodromic deformations, and recent work of Esnault-Groechenig and Klevdal-Patrikis
on Simpson's integrality conjecture for cohomologically rigid local systems.
\end{abstract}

\maketitle
\setcounter{tocdepth}{1}
\tableofcontents

\section{Introduction}
\label{section:introduction}
\subsection{Overview}
\label{subsection:overview}
Let $\Sigma_{g,n}$ be an orientable topological surface of genus $g$ with $n$ punctures, and let $x\in\Sigma_{g,n}$ be a basepoint.
We denote the mapping class group of $\Sigma_{g,n}$ by $\on{Mod}_{g,n}$. 
There is a natural outer action of $\on{Mod}_{g,n}$ on $\pi_1(\Sigma_{g,n},x)$, induced by the action of $\on{Homeo}^+(\Sigma_{g,n})$ on $\Sigma_{g,n}$. 
Hence, $\on{Mod}_{g,n}$ acts on the set of conjugacy classes of
representations of $\pi_1(\Sigma_{g,n},x)$ into $\on{GL}_r(\mathbb{C})$. Our goal is to study the finite orbits of this action
via Hodge-theoretic and arithmetic techniques. 

\begin{definition}
	\label{definition:mcg-finite}
	For $g,n,r \geq 0$,
	a representation $\rho: \pi_1(\Sigma_{g,n},x) \to \on{GL}_r(\mathbb C)$
	is {\em MCG-finite} if the conjugacy class of $\rho$ has finite orbit under the action of
	$\on{Mod}_{g,n}$.
\end{definition}
Note here that we are studying \emph{conjugacy classes} of representations. For
example, semisimple MCG-finite representations correspond to finite
$\on{Mod}_{g,n}$-orbits in the character variety of $\pi_1(\Sigma_{g,n})$, not
its representation variety. Indeed, $\on{Mod}_{g,n}$ does not naturally act on
the representation variety of $\pi_1(\Sigma_{g,n})$.

Any representation of $\pi_1(\Sigma_{g,n})$ constructed without making choices
(e.g.~without naming a specific curve in $\Sigma_{g,n}$),
or by making choices with only a finite amount of indeterminacy, is MCG-finite.
So we view MCG-finite representations 
as the \emph{canonical} representations of $\pi_1(\Sigma_{g,n})$. 

The study of MCG-finite
representations has a long history.
It has a close
connection to isomonodromy and the Painlev\'e VI equation,
originally introduced in 1902 by Painlev\'e 
\cite{painleve:on-the-differential-equations-of-sectond-order}
and Gambier
\cite{gambier:on-the-differential-equations-of-second-order}.
For each $g,n,r$, conjugacy classes of semisimple MCG-finite representations 
$$\pi_1(\Sigma_{g,n})\to \on{GL}_r(\mathbb{C})$$ correspond to algebraic solutions
to a certain \emph{isomonodromy} differential equation, see \cite[Theorem A]{CH:isomonodromic}. 
Algebraic solutions to the Painlev\'e VI equation in an appropriate choice of coordinates 
correspond to MCG-finite representations with trivial determinant in
the particular case $g=0, n=4, r=2$ (see
\cite[\S1-\S3]{mahoux:itroduction-to-the-theory-of-isomonodromic-deformations}
and \cite[\S2]{doran:isomonodromic}). 

There is substantial literature on the
search for algebraic solutions 
to the Painlev\'e VI equation,
\cite{andreev2002transformations, boalch2005klein, boalch2006fifty, boalchsome, boalch2007higher, cantat2009holomorphic, dubrovin1996geometry, dubrovin2000monodromy, hitchin1996poncelet, hitchin2003lecture, kitaev2006grothendieck, kitaev2005remarks},
culminating in the classification given in \cite{lisovyy2014algebraic}. Boalch \cite{boalch2007towards} pitches the problem of classifying algebraic solutions to Painlev\'e VI as a natural generalization of Schwarz's list of hypergeometric equations with finite monodromy \cite{schwarz1873ueber}.

We will later see that the classification of MCG-finite representations is
closely connected to several major open questions in low-dimensional topology.
In particular, it has connections to the Putman-Wieland conjecture
\cite[Conjecture 1.2]{putmanW:abelian-quotients} and Ivanov's conjecture that mapping class groups do not virtually surject onto $\mathbb{Z}$ \cite[\S7]{Ivanov:problems}.
These conjectures in turn have applications to algebraic geometry, as they would
determine the first rational homology of finite covers of $\mathscr M_{g,n}$.

Many interesting classes of
representations are MCG-finite.
Examples include 
the rigid local systems studied by Katz \cite{katz2016rigid}, 
representations constructed via TQFT techniques ~(e.g.~\cite{kuperberg2011denseness, koberda2016quotients, koberda2018irreducibility, koberda2018representation}),
and representations of algebro-geometric interest, such as those constructed via the Kodaira-Parshin trick (e.g.~\cite[Example 3.3.1]{lawrence2019representations}).
We will see later that MCG-finite representations of $\pi_1(\Sigma_{g,n})$ are closely related to representations of finite-index subgroups of $\on{Mod}_{g,n+1}$. 

\subsection{Results for surface groups}
\label{subsection:surface-groups}
Given the difficulty of classifying MCG-finite representations in the $g = 0, n = 4, r = 2$ case (encompassing Painlev\'e VI), it may be surprising that
 we are able to obtain a complete and simple characterization of MCG-finite representations when 
$g \gg r$.

\begin{theorem}\label{theorem:finite-image}
	For $g, n, r\geq 0$, let
	$$\rho: \pi_1(\Sigma_{g,n})\to \on{GL}_r(\mathbb{C})$$ be a MCG-finite
representation. If $r<\sqrt{g+1}$, then $\rho$ has finite image. 
\end{theorem}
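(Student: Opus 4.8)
The plan is to reduce the statement to two inputs: (1) a Hodge-theoretic rigidity statement, showing that an MCG-finite representation underlies a variation of Hodge structure (VHS) after an isomonodromic deformation, and (2) an arithmetic boundedness statement, controlling how the Hodge numbers of such a VHS force the rank to grow with $g$ unless the local system is unitary (hence, being MCG-finite and defined over a number ring, finite). First I would set up the universal family: an MCG-finite $\rho$ of $\pi_1(\Sigma_{g,n})$ extends, after passing to a finite-index subgroup $\Gamma \leq \on{Mod}_{g,n+1}$ (using the Birman exact sequence to relate $\pi_1(\Sigma_{g,n})$ with the point-pushing subgroup), to a local system $\mathbb{V}$ on a finite étale cover $\mathcal{U} \to \mathscr{M}_{g,n+1}$; restricting to a fiber recovers $\rho$. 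The isomonodromy equation has an algebraic solution precisely because the orbit is finite, so $\mathbb{V}$ is an algebraic (indeed motivic-looking) local system on a quasi-projective variety.

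Next I would invoke non-abelian Hodge theory together with the semistability result on isomonodromic deformations cited in the abstract: the isomonodromic deformation of $\rho$ over the base $\mathcal{U}$ is semistable, and a semistable, MCG-finite (hence "rigid along the fibers" in a suitable sense) local system is, after a deformation, a complex variation of Hodge structure on the surface fiber. Here I would use that the mapping-class-group action corresponds to the monodromy of the isomonodromy connection; finiteness of the orbit means the associated point on the character variety is fixed by a finite-index subgroup, so the local system on $\mathcal{U}$ is genuinely rigid, and by Esnault–Groechenig/Klevdal–Patrikis it is integral — defined over $\mathcal{O}_K[1/N]$ for a number field $K$. Combining integrality at all places with the VHS structure at the archimedean place(s): if $\rho$ is not unitary at some place, the VHS is nontrivial.

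The crux — and the step I expect to be the main obstacle — is the numerical estimate: a nontrivial polarized complex VHS of rank $r$ on $\Sigma_{g,n}$ must have rank at least $\sqrt{g+1}$. I would extract this from the behavior of the Higgs field $\theta: E \to E \otimes \Omega^1_{\Sigma_{g,n}}(\log D)$ of the VHS, graded by the Hodge filtration $E = \bigoplus E^{p,q}$. The key inequality is an Arakelov/Milnor–Wood-type bound: degrees of the graded pieces are controlled by $2g-2+n$ and by $r$, and injectivity/genericity of the Higgs field (coming from rigidity — the Higgs bundle is a fixed point of the $\mathbb{C}^\times$-action, so it is a "system of Hodge bundles" and the kernel of $\theta$ is a sub-VHS, forcing $\theta$ to be maximally nondegenerate) forces a chain of subsheaves whose degrees jump by at least $2g-2+n$ (roughly $\deg \Omega^1$) at each step. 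Iterating across the $\le r$ Hodge pieces and comparing with the bound $|\deg E^{p,q}| \leq$ (something like $r(g-1)$) coming from semistability yields $r^2 \gtrsim g$, i.e. $r \geq \sqrt{g+1}$. Making the constants sharp — tracking exactly the logarithmic poles at the $n$ punctures and getting $\sqrt{g+1}$ rather than $\sqrt{g}$ or a worse constant — is where the real work lies; I would expect to need a careful induction on the length of the Hodge filtration together with the parabolic degree bounds from semistability, and to treat the unitary case (where $\theta = 0$, the VHS is trivial, and integrality of a bounded-image representation forces finite image) separately.
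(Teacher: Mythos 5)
Your outline captures the shape of the unitary case---cohomological rigidity, Klevdal--Patrikis integrality, non-abelian Hodge theory, and a boundedness argument at archimedean places---but it has three genuine gaps, two of them fatal to the plan as stated.

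First, you assert that ``finiteness of the orbit means the associated point on the character variety is fixed by a finite-index subgroup, so the local system on $\mathcal{U}$ is genuinely rigid.'' This does not follow. MCG-finiteness of $\rho$ says nothing about whether the extended local system $\mathbb{V}$ on $\mathcal{U}$ is cohomologically rigid: there are MCG-finite representations that are not rigid (the paper exhibits several). Rigidity has to be proven, and this is precisely where the bound $r < \sqrt{g+1}$ enters. The paper establishes that $H^1(\mathcal{U}, \on{ad}\mathbb{V}) = 0$ by a cohomological vanishing theorem (\autoref{theorem:unitary-rigid}) applied to $\on{ad}\mathbb{V}$, which requires $\rk(\on{ad}\mathbb{V}) = r^2 - 1 < g$. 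That vanishing theorem is the real technical core, and is proven by analyzing the derivative of the period map on $R^1\pi^\circ_*\mathbb{V}$ and a Clifford-type inequality for parabolic bundles, \emph{not} by a Higgs-field/Milnor--Wood computation on a single fiber. Your proposed ``Arakelov-type'' argument would likely yield some bound of order $\sqrt{g}$, but not the correct one, and would not supply rigidity at all---so you could not invoke Klevdal--Patrikis.

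Second, after deforming $\mathbb{V}$ to a PVHS via Mochizuki, you need to show the deformed local system still restricts to (a representation conjugate to) $\rho$ on the fiber. This is not automatic: the deformation is of the local system on the total space $\mathcal{U}$, not a homotopy fixing the fibral monodromy. The paper proves the agreement via a separate rigidity argument (\autoref{lemma:global-rigidity}, using \autoref{theorem:unitary-rigid} again to show that unitary low-rank fibral monodromy admits no nonconstant deformations over $\mathcal{U}$). Without this step one only learns that \emph{some} MCG-finite representation near $\rho$ has finite image.

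Third, and most importantly, your argument only treats semisimple $\rho$ (a PVHS is in particular semisimple). The non-semisimple case requires a genuinely different tool: the paper reduces it to the semisimple case using the asymptotic Putman--Wieland theorem (\autoref{theorem:asymptotic-putman-wieland}) to show that low-rank extensions of finite-image representations must split, because the relevant extension class lives in $H^1$ of a finite cover and would produce a forbidden low-dimensional $\Gamma$-invariant subspace of $H^1(\Sigma_{g'},\mathbb{C})$. Nothing in your outline touches this, and it is not a routine extension of the semisimple argument.
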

	This answers a question of Junho Peter Whang \cite[Question
	1.5.3]{lawrence2019representations}. We will prove
	\autoref{theorem:finite-image} in
\autoref{subsection:main-proof}.
	
	While the statement appears to be purely topological or even group-theoretic in nature, the proof relies on (non-abelian and mixed) Hodge theory, and also takes input from the Langlands program, through the work of Esnault-Groechenig \cite{esnault2018cohomologically} 
and Klevdal-Patrikis \cite{klevdalP:g-rigid-local-systems-are-integral}
on integrality of cohomologically rigid local systems.
	
\begin{remark}
	\label{remark:}
	Some bound on the rank as in \autoref{theorem:finite-image} is
	necessary. Indeed, there are a number of interesting MCG-finite representations of $\pi_1(\Sigma_{g,n})$ of rank $r\gg g$ with infinite image, as explained in \autoref{question:bounds}. 
	We have no reason to think that the precise bound of
	$\sqrt{g+1}$ is optimal in general. 
	That said the bound cannot be improved too much. It is sharp when $g=1$,
	by \autoref{remark:sharp-g-1}, but not for $g=2,3$, by \autoref{remark:not-sharp-g-2-3}.
	For general $g$ there are MCG-finite representations of $\pi_1(\Sigma_g)$ of rank  
	$2g+1$ with infinite image (\autoref{non-rigid-examples}\eqref{h1-example}), so the bound cannot be improved to be better than linear in $g$.
	We summarize the situation in \autoref{figure:MCG-geography}.
\end{remark}

\begin{figure}
	\centering
	\includegraphics[scale=.7, trim={2cm 18.0cm 0 1cm}]{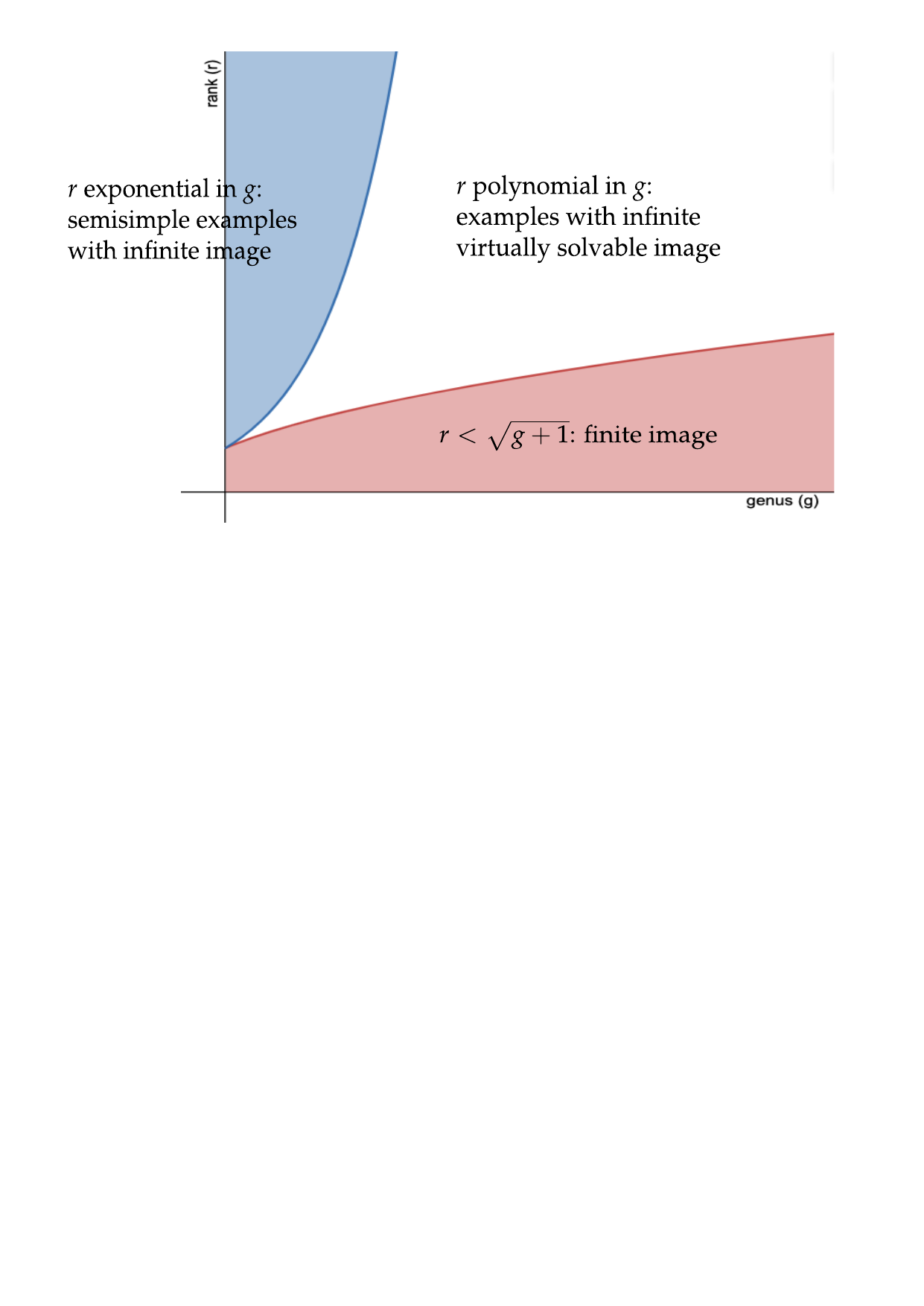}
\caption{The geography of known MCG-finite representations.
\autoref{theorem:finite-image} shows that the region $r<\sqrt{g+1}$ contains
only MCG-finite representations with finite image. As explained in
\autoref{question:bounds}, there are examples of MCG-finite rank $r$ representations with infinite image for $r\gg g$. For $r$ polynomial in $g$, all known examples have virtually solvable image. For $r$ exponential in $g$, there are interesting MCG-finite irreps with infinite image.}\label{figure:MCG-geography}
\end{figure}

As explained in ~\cite[Theorem A]{CH:isomonodromic}, 
\autoref{theorem:finite-image} yields a classification result for algebraic solutions to isomonodromy differential equations:
\begin{corollary}
	Let $C$ be a smooth projective curve of genus $g$, and let $D\subset C$
	be a reduced effective divisor. Let $({E},\nabla)$ be a semisimple flat
	vector bundle on $C$ with regular singularities along $D$, with $\rk
	{E}<\sqrt{g+1}$. Suppose that the eigenvalues of the residue matrices of
	$({E},\nabla)$ have real parts in $[0,1)$. Then $({E},\nabla)$ has an algebraic universal isomonodromic deformation if and only if $({E},\nabla)$ has finite monodromy.
\end{corollary}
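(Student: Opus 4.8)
The plan is to deduce the corollary from \autoref{theorem:finite-image} together with the Riemann--Hilbert correspondence and the dictionary between algebraic isomonodromic deformations and MCG-finite representations recorded in \cite[Theorem A]{CH:isomonodromic}. First I would note that, topologically, the pair $(C,D)$ is the surface $\Sigma_{g,n}$ with $n=\deg D$, so that restricting $(E,\nabla)$ to $C\setminus D$ and taking its monodromy produces a representation $\rho\colon\pi_1(\Sigma_{g,n})\to\on{GL}_r(\mathbb{C})$, well defined up to conjugacy. The hypothesis on the residues---that their eigenvalues have real part in $[0,1)$---is exactly the normalization making $(E,\nabla)$ the Deligne canonical extension of $(E,\nabla)|_{C\setminus D}$, so that $(E,\nabla)\mapsto\rho$ is a bijection between isomorphism classes of flat bundles of the type under consideration and conjugacy classes of $r$-dimensional representations of $\pi_1(\Sigma_{g,n})$; under this bijection $(E,\nabla)$ is semisimple if and only if $\rho$ is semisimple, and $(E,\nabla)$ has finite monodromy if and only if $\rho$ has finite image.

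Next I would invoke \cite[Theorem A]{CH:isomonodromic}: under the correspondence just described, the semisimple flat bundle $(E,\nabla)$ admits an algebraic universal isomonodromic deformation if and only if the conjugacy class of $\rho$ has finite orbit under $\on{Mod}_{g,n}$, i.e.\ if and only if $\rho$ is MCG-finite in the sense of \autoref{definition:mcg-finite}. (Here the universal isomonodromic deformation is the family of Deligne canonical extensions over a cover of $\mathscr M_{g,n}$ obtained by deforming $\rho$ locally constantly, and \emph{algebraic} means this family, a priori defined only over Teichm\"uller space, descends to a finite---hence algebraic---cover of $\mathscr M_{g,n}$.)

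It then remains to run the two implications through this dictionary. For the ``if'' direction, suppose $(E,\nabla)$ has finite monodromy, so $\rho$ has finite image $G\subset\on{GL}_r(\mathbb{C})$; since $\on{Mod}_{g,n}$ acts through automorphisms of $\pi_1(\Sigma_{g,n})$, every representation in the orbit of $\rho$ has image exactly $G$, and there are only finitely many homomorphisms from the finitely generated group $\pi_1(\Sigma_{g,n})$ to the finite group $G$, so the orbit of $\rho$ is finite and $\rho$ is MCG-finite; by the dictionary $(E,\nabla)$ then has an algebraic universal isomonodromic deformation. For the ``only if'' direction, suppose $(E,\nabla)$ has an algebraic universal isomonodromic deformation; by the dictionary $\rho$ is MCG-finite, and since $\rk E=r<\sqrt{g+1}$, \autoref{theorem:finite-image} forces $\rho$ to have finite image, which by the first step means $(E,\nabla)$ has finite monodromy.

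The main point requiring care is the bookkeeping in the first two steps: verifying that the residue normalization is precisely what upgrades Riemann--Hilbert to a bijection (so that ``the universal isomonodromic deformation of $(E,\nabla)$'' is literally the family of Deligne extensions of the locally constant deformation of $\rho$), and that the resulting setup matches the hypotheses of \cite[Theorem A]{CH:isomonodromic}. Beyond this, there is no genuine analytic or arithmetic obstacle remaining, since all of the substantive difficulty has been absorbed into \autoref{theorem:finite-image}.
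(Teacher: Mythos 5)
Your proposal is correct and follows the same route the paper intends: translate $(E,\nabla)$ to a monodromy representation $\rho$ via the Deligne canonical extension normalization, invoke \cite[Theorem A]{CH:isomonodromic} to equate algebraicity of the universal isomonodromic deformation with MCG-finiteness of $\rho$, then apply \autoref{theorem:finite-image} for the nontrivial direction and the elementary observation that a finitely generated group has finitely many homomorphisms into a finite group for the converse. The paper leaves all of this implicit (the corollary follows its citation of \cite[Theorem A]{CH:isomonodromic} without a separate proof), so the only difference is that you have spelled out the bookkeeping.
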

Note that the condition on residue matrices can always be achieved after a birational gauge transformation, by replacing $({E}, \nabla)$ with the Deligne canonical extension of $({E}, \nabla)|_{C\setminus D}$.

This algebro-geometric reformulation of our main result is not just window
dressing. It is the perspective we will take in the proof of \autoref{theorem:finite-image}. 

\subsection{An application to low rank local systems on $\mathscr M_{g,n}$}

We next record a consequence of our main result for local systems of low rank on families of curves.
As in \autoref{notation:versal-family}, we say a family $\pi:\mathscr C \to \mathscr M$ of smooth proper genus
$g$ curves with geometrically connected fibers, equipped with $n$ disjoint sections $s_i: \mathscr M \to
\mathscr C$, is {\em versal} if the corresponding map $\mathscr M \to \mathscr M_{g,n}$
is dominant. We call 
$\pi^\circ: \mathscr C^\circ := \mathscr C \setminus \bigcup_i s_i(\mathscr M)\to \mathscr
M$ a {\em punctured versal family} of genus $g$ curves.

The following is immediate from \autoref{theorem:finite-image}, by \autoref{proposition:MCG-finite-family-of-curves} below.
\begin{corollary}
	\label{corollary:}
	Suppose $\pi^\circ: \mathscr C^\circ \to \mathscr M$
	is a punctured versal family of genus $g$ curves,
	and $\mathbb V$ is a local system on $\mathscr C^\circ$ of rank $<
	\sqrt{g+1}$.
	For any fiber $C^\circ$ of $\pi^\circ$, the local system $\mathbb
	V|_{C^\circ}$ has finite monodromy.
\end{corollary}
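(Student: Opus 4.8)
The plan is to deduce this from \autoref{theorem:finite-image} by checking that the restriction of $\mathbb V$ to a fiber is MCG-finite, which is precisely the content of \autoref{proposition:MCG-finite-family-of-curves}. Fix a point $m\in\mathscr M$ and let $C^\circ$ be the fiber of $\pi^\circ$ over $m$. Choosing a topological identification $\pi_1(C^\circ)\cong\pi_1(\Sigma_{g,n})$, let $\rho\colon\pi_1(\Sigma_{g,n})\to\on{GL}_r(\mathbb C)$ be the monodromy representation of $\mathbb V|_{C^\circ}$. Since $\mathbb V$ has rank $r<\sqrt{g+1}$, \autoref{theorem:finite-image} shows that $\rho$ has finite image as soon as $\rho$ is MCG-finite, so this is all that remains to prove.

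To see $\rho$ is MCG-finite, recall that versality of $\pi^\circ$ means the classifying map $\mathscr M\to\mathscr M_{g,n}$ is dominant. Restricting to the open locus over which this map is smooth, applying Stein factorization to absorb any disconnectedness of its fibers, and using that an open dense subvariety of the (irreducible, normal) $\mathscr M_{g,n}$ has fundamental group surjecting onto $\pi_1(\mathscr M_{g,n})$, one finds that the image of $\pi_1(\mathscr M,m)$ in $\pi_1(\mathscr M_{g,n})\cong\on{Mod}_{g,n}$ is of finite index, at least for $m$ outside a proper closed subset. Because $\mathbb V$ is defined on the whole total space $\mathscr C^\circ$, parallel transport along (a lift of) any loop $\gamma$ in $\mathscr M$ based at $m$ carries $\mathbb V|_{C^\circ}$ isomorphically back to itself, compatibly with the mapping class of $\Sigma_{g,n}$ determined by $\gamma$; hence the conjugacy class $[\rho]$ in the character variety of $\pi_1(\Sigma_{g,n})$ is fixed by the finite-index image of $\pi_1(\mathscr M,m)$ in $\on{Mod}_{g,n}$. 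A point fixed by a finite-index subgroup has finite orbit under the whole group, so $[\rho]$ has finite $\on{Mod}_{g,n}$-orbit, i.e.\ $\rho$ is MCG-finite.

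To handle an arbitrary (rather than generic) fiber, note that $\mathscr M$ is connected, so any fiber $C^\circ_{m'}$ is joined to $C^\circ$ by a path in $\mathscr M$; such a path induces a diffeomorphism $C^\circ\simeq C^\circ_{m'}$ identifying $\mathbb V|_{C^\circ}$ with $\mathbb V|_{C^\circ_{m'}}$, so the two restrictions have isomorphic (in particular, simultaneously finite) monodromy. I expect the main technical point to be the finite-index statement for $\pi_1(\mathscr M)\to\on{Mod}_{g,n}$, together with the standard but delicate identification of $\pi_1(\mathscr M_{g,n})$ with the mapping class group in the presence of orbifold structure for small $g$; both are dealt with in \autoref{proposition:MCG-finite-family-of-curves}, after which the corollary is immediate.
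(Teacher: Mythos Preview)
Your proposal is correct and follows the same route as the paper: apply \autoref{proposition:MCG-finite-family-of-curves} to show $\mathbb V|_{C^\circ}$ is MCG-finite, then invoke \autoref{theorem:finite-image}. Your middle paragraphs simply unpack the content of that proposition (with the minor caveat that $\pi_1(\mathscr M_{g,n})\cong\on{PMod}_{g,n}$ rather than $\on{Mod}_{g,n}$, and that versality already gives an \'etale map so no passage to a smooth locus is needed), and your path-connectedness argument for arbitrary fibers is fine.
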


\subsection{The Putman-Wieland conjecture}
\label{subsection:intro-putman-wieland}

Among the main ingredients in the proof of \autoref{theorem:finite-image} are some new results toward the Putman-Wieland conjecture, sketched below and
described in detail in \autoref{section:Putman-Wieland}.

We now set some notation to state the Putman-Wieland conjecture.
Let $\Sigma_{g,n}$ be a surface and let $\phi: \pi_1(\Sigma_{g,n},
x)\twoheadrightarrow H$ be a surjection onto a finite group, corresponding to
some finite $H$-cover $\Sigma_{g'}\to \Sigma_{g}$ branched over $n$ points. Let $\Gamma$ be the  stabilizer of $\phi$ up to conjugacy in the pure mapping class group of $\Sigma_{g,n+1}$. Then $\Gamma$ acts naturally acts on $H_1(\Sigma_{g'})$. 

We say that {\em the Putman-Wieland conjecture for $(g,n,H)$ holds} if
for every such $H$-cover $\Sigma_{g'} \to \Sigma_g$, 
all nonzero vectors in $H_1(\Sigma_{g'})$ have infinite orbit under $\Gamma$.
The original Putman-Wieland conjecture 
\cite[Conjecture 1.2]{putmanW:abelian-quotients}
asserts that for any fixed $g \geq 2, n \geq 0$,
the Putman-Wieland conjecture for $(g,n,H)$
holds for every $H$.
We prove 
Putman-Wieland conjecture for $(g,n,H)$ for any fixed $H$ once $g$
is sufficiently large. 
In this way, our results are an ``asymptotic" version of the Putman-Wieland conjecture. 
\begin{theorem}\label{theorem:intro-PW}
	For $g \geq 2, n \geq 0$, and any finite group $H$ with
	$\#H<g^2$,
	the Putman-Wieland conjecture for $(g,n,H)$ holds.
\end{theorem}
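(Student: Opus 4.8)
The plan is to argue by contraposition. A nonzero vector $v\in H_1(\Sigma_{g'};\mathbb C)$ with finite $\Gamma$-orbit will, after passing to a finite-index subgroup and decomposing under $H$, become a nonzero flat section of an isotypic summand of the Hodge local system of the family of covers over a finite cover of a Hurwitz space dominating $\mathscr M_{g,n}$; the theorem of the fixed part upgrades this to a nonzero \emph{constant} sub-variation of Hodge structure, and versality of the family then produces a contradiction of a numerical nature governed by $\#H$ against $g$.

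Concretely, suppose the Putman--Wieland conjecture for $(g,n,H)$ fails, witnessed by an $H$-cover $\Sigma_{g'}\to\Sigma_g$ branched over $n$ points and a nonzero $v\in H_1(\Sigma_{g'};\mathbb C)$ with finite $\Gamma$-orbit. After replacing $\Gamma$ by a finite-index subgroup I may assume $v$ is $\Gamma$-fixed and that $\Gamma$ acts $\mathbb C[H]$-linearly on $H_1(\Sigma_{g'};\mathbb C)$; projecting to $H$-isotypic components I may take $v$ to lie in the $W$-isotypic part for a single irreducible $\mathbb C[H]$-module $W$. The trivial isotypic component is $H_1(\Sigma_g;\mathbb C)$, on which $\Gamma$ acts through a finite-index subgroup of $\on{Sp}_{2g}(\mathbb Z)$ and hence has no nonzero finite orbit for $g\ge 1$, so $W$ is nontrivial. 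Passing to the geometry underlying \autoref{proposition:MCG-finite-family-of-curves}, there is a smooth connected variety $\mathscr S$, finite \'etale over an open subset of a Hurwitz space dominating $\mathscr M_{g,n}$, carrying the universal branched $H$-cover $\mathfrak f\colon \mathscr Y\to\mathscr S$ (a fiber being a smooth projective genus-$g'$ curve $Y$ with $H$-action, mapping to the corresponding $n$-pointed curve $(C,D)$); the $W$-isotypic part $\mathbb V_W:=(R^1\mathfrak f_*\mathbb C)_W$ is a polarizable complex variation of Hodge structure of weight $1$ realizing the $\Gamma$-action on $H^1(\Sigma_{g'};\mathbb C)_W$, and after a further finite \'etale base change $v$ yields a nonzero class in $H^0(\mathscr S,\mathbb V_W)$.

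By the theorem of the fixed part, $H^0(\mathscr S,\mathbb V_W)$ underlies a sub-Hodge structure and the associated sub-variation $\mathbb M\subseteq\mathbb V_W$ is constant; since $\overline{\mathbb V_W}=\mathbb V_{W^\vee}$, and replacing $(W,\mathbb M)$ by $(W^\vee,\overline{\mathbb M})$ if needed, I may assume the Hodge piece $\mathbb M^{1,0}$ is nonzero. As $\mathbb M$ is constant its Higgs field vanishes. But the Higgs field of $\mathbb V_W$ at a point $[C,D]$ is cup product with the image of the Kodaira--Spencer class, and because $\mathscr S$ dominates $\mathscr M_{g,n}$ this image fills out $H^1(Y,T_Y)^{H}\cong H^1(C,T_C(-D))$; dualizing via Serre duality I obtain a nonzero $H$-stable subspace $M\subseteq H^0(Y,\omega_Y)_W$ such that the composite
\[
M\otimes H^0(Y,\omega_Y)\longrightarrow H^0(Y,\omega_Y^{\otimes 2})\longrightarrow H^0(Y,\omega_Y^{\otimes 2})^{H}\cong H^0(C,\omega_C^{\otimes 2}(D))
\]
is identically zero, where the second arrow is projection onto $H$-invariants.

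The hard part, and where $\#H<g^2$ (hence $\dim W\le\sqrt{\#H}$) enters, is to show that for $g\ge 2$ no such $M$ exists. Writing $M=W\otimes M'$ with $M'\subseteq E_W:=\Hom_H(W,H^0(Y,\omega_Y))$ and identifying $H^0(Y,\omega_Y)_\chi=H^0(C,\mathcal E_\chi)$ for a rank-$\dim\chi$ bundle $\mathcal E_\chi$ on $C$, the vanishing above says the natural multiplication $H^0(C,\mathcal E_W)\otimes H^0(C,\mathcal E_{W^\vee})\to H^0(C,\omega_C^{\otimes 2}(D))$ (arising from $\mathcal E_{W^\vee}\cong\mathcal E_W^\vee\otimes\omega_C^{\otimes 2}(D)$) annihilates $M'\otimes H^0(C,\mathcal E_{W^\vee})$. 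I would rule this out by combining: the Chevalley--Weil formula and Riemann--Hurwitz to pin down $\deg\mathcal E_W$, $\dim E_W$ and $\dim E_{W^\vee}$ (each of size $\approx(g-1)\dim W$) against $\dim H^0(C,\omega_C^{\otimes 2}(D))=3g-3+n$; positivity/semistability of $\mathcal E_W$ — here invoking our earlier results on semistability of the parabolic Hodge bundles attached to isomonodromic deformations — to force $\mathcal E_W$ to be globally generated off a controlled locus; Max Noether's theorem that the canonical ring of a non-hyperelliptic curve is generated in degree one, so these multiplication maps on $C$ are as nondegenerate as possible; and a direct treatment of the hyperelliptic and low-genus exceptional cases, which force $g'$, and hence $\#H$ and $g$, to be small. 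The bound $\#H<g^2$ is precisely the margin that closes these estimates, giving $M'=0$ and contradicting $v\neq 0$. I expect the semistability bookkeeping, together with correctly accounting for the ramification/parabolic contributions of the $n$ branch points, to be the main technical obstacle; everything through the Serre-duality reformulation is comparatively formal.
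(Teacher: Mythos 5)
Your overall architecture coincides with the paper's: reduce to a fixed vector of a finite-index subgroup, decompose into $H$-isotypic pieces (using $\#H<g^2\Rightarrow\dim W<g$ exactly as in \autoref{corollary:rep-theory-PW} and \autoref{corollary:asymptotic-putman-wieland}), realize the situation over a versal family of $\phi$-covers (Wewers), apply the theorem of the fixed part to get a constant sub-VHS, identify the vanishing of its Higgs field with the vanishing of a multiplication map
$M\otimes H^0(C,\mathcal E_{W^\vee}\otimes\omega_C)\to H^0(C,\omega_C^{\otimes2}(D))$
via Kodaira--Spencer, versality and Serre duality. Everything up to and including that Serre-duality reformulation tracks \autoref{theorem:asymptotic-putman-wieland}, \autoref{theorem:rank-bound-subsystem}, \autoref{lemma:low-rank-vector} and \autoref{theorem:period-map-computation} (including your correct handling of the conjugate to arrange $\mathbb M^{1,0}\neq0$).

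The genuine gap is the step you yourself flag as ``the hard part'': showing that no nonzero $M$ with this vanishing property exists when $\rk\mathcal E_W=\dim W<g$. This is precisely \autoref{proposition:pairing-result}, and the paper proves it not by the toolkit you propose but by Clifford's theorem for semistable \emph{parabolic} vector bundles: one applies \autoref{proposition:generic-parabolic-global-generation} (i.e.~\cite[Proposition 6.3.6]{LL:geometric-local-systems}) to the parabolic bundle $(E_\star)^\vee\otimes\omega_C(D)$, which has parabolic slope exactly $2g-2+n$ because the Mehta--Seshadri bundle $E_\star$ of the unitary isotypic local system has parabolic degree zero; a corank-one kernel subbundle with large $h^0$ then forces $\rk E\geq g$. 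Your substitute --- Chevalley--Weil plus Riemann--Hurwitz dimension counts, Max Noether's theorem, and a case analysis of hyperelliptic and low-genus curves --- does not close this. Max Noether governs the multiplication $H^0(\omega_C)\otimes H^0(\omega_C)\to H^0(\omega_C^{\otimes2})$ on the \emph{base} curve and says nothing about sections of the higher-rank bundles $\mathcal E_W$; surjectivity or nondegeneracy of $H^0(C,\mathcal E_W\otimes\omega_C(D))\otimes H^0(C,\mathcal E_W^\vee\otimes\omega_C)\to H^0(C,\omega_C^{\otimes2}(D))$ genuinely fails in general (the source typically has much larger dimension than the target $3g-3+n$), so the correct statement is only the rank inequality $\rk E\geq g-r$ of \autoref{proposition:pairing-result}, and extracting it requires the semistability/Clifford input. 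Your remark that ``the bound $\#H<g^2$ is precisely the margin that closes these estimates'' also mislocates where the hypothesis enters: it is used only to guarantee $\dim W<g$ before the vector-bundle argument begins, not inside the multiplication-map estimates. Without a proof of the key lemma the argument is incomplete.
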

\autoref{theorem:intro-PW} is proven below as a case of \autoref{corollary:asymptotic-putman-wieland}.
More generally, for arbitrary $H$ we prove that the Putman-Wieland conjecture holds for the subspace of $H_1(\Sigma_{g'})$ spanned by irreducible representations of $H$ of small rank.
\begin{theorem}\label{theorem:rho-isotypic-PW-intro}
	Let $\rho$ be an irreducible representation of $H$ with $\dim\rho<g$, and let $H_1(\Sigma_{g'})^{\rho}$ be the $\rho$-isotypic component of $H_1(\Sigma_{g'})$. Then no nonzero vector in $H_1(\Sigma_{g'})^{\rho}$ has finite orbit under $\Gamma$.
\end{theorem}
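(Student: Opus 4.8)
The plan is to translate the statement about finite $\Gamma$-orbits on $H_1(\Sigma_{g'})^\rho$ into a statement about monodromy of local systems on a punctured versal family, and then feed that into the machinery behind \autoref{theorem:finite-image}. First I would set up the dictionary: the $H$-cover $\Sigma_{g'}\to\Sigma_g$ branched over $n$ points, together with the basepoint, spreads out over a finite cover $\mathscr M \to \mathscr M_{g,n+1}$ (the cover parametrizing the extra choice of the surjection $\phi$ up to conjugacy), yielding a family of covers $\mathscr C'\to \mathscr C^\circ \to \mathscr M$, where $\mathscr C^\circ \to \mathscr M$ is a punctured versal family of genus-$g$ curves. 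The group $H$ acts on $\mathscr C'$ over $\mathscr C^\circ$, and $R^1\pi'_*\mathbb{C}$ is a local system on $\mathscr M$ carrying an $H$-action; its $\rho$-isotypic part $\mathbb{W}^\rho$ is a local system whose monodromy representation of $\pi_1(\mathscr M)$ (a finite-index subgroup of $\on{Mod}_{g,n+1}$, hence containing $\Gamma$ up to the usual bookkeeping) is exactly the $\Gamma$-action on $H_1(\Sigma_{g'})^\rho$. So a nonzero vector with finite $\Gamma$-orbit corresponds to a nonzero global section of a finite-index sub-local-system, i.e. (after passing to a further finite cover) an invariant vector, equivalently a trivial summand of $\mathbb{W}^\rho$.

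The next step is to bound the rank of $\mathbb{W}^\rho$ restricted to a fiber curve. By the standard computation (Chevalley–Weil / equivariant Euler characteristic), the multiplicity of $\rho$ in $H_1$ of the cover is controlled by $\dim\rho$ times roughly $(2g-2+n)$ plus boundary corrections from the ramification; the key point I want is that the restriction $\mathbb{W}^\rho|_{C^\circ}$ to a fiber of the \emph{punctured versal family} $\pi^\circ:\mathscr C^\circ\to\mathscr M$ — i.e. the local system on the punctured curve $C^\circ$ whose cohomology is being assembled — has rank $\dim\rho < \sqrt{g+1}$ once we are in the hypothesis $\dim\rho < g$... but here one must be careful: the relevant rank bound for invoking \autoref{theorem:finite-image}'s circle of ideas is $\dim\rho<g$, not $\sqrt{g+1}$, so the argument cannot be a black-box appeal to \autoref{theorem:finite-image} itself. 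Instead I expect the proof to run through \autoref{proposition:MCG-finite-family-of-curves} and the Putman–Wieland reformulation directly: the local system $\mathbb{L}_\rho$ on $C^\circ$ induced by $\rho\circ\phi|_{\pi_1(C^\circ)}$ has rank $\dim\rho < g$, and we want to show $H^1(C^\circ,\mathbb{L}_\rho)$ (or its parabolic/compactly-supported variant) carries no nonzero $\Gamma$-fixed vector, equivalently that the associated variation of Hodge structure (after the non-abelian Hodge correspondence, using that a finite $\Gamma$-orbit forces the pulled-back isomonodromic deformation to be algebraic, hence underlies a VHS by our earlier semistability work) cannot have a flat global section unless $\mathbb{L}_\rho$ itself is trivial — and $\mathbb{L}_\rho$ trivial would contradict $\phi$ being a surjection onto $H$ with $\rho$ nontrivial.

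Concretely the key steps, in order, are: (1) reduce ``finite $\Gamma$-orbit of a nonzero $v\in H_1(\Sigma_{g'})^\rho$'' to ``$\mathbb{W}^\rho$ on $\mathscr M$ has a nonzero sub-local-system with finite monodromy,'' after passing to a finite-index subgroup of $\Gamma$; (2) interpret $\mathbb{W}^\rho$ Hodge-theoretically as (a summand of) $R^1\pi'_*$ of an algebraic family, so it is a polarized VHS over the quasiprojective base $\mathscr M$; (3) use the geometry of the versal family — in particular that $\mathscr M\to\mathscr M_{g,n+1}$ is dominant and that the Torelli-type / big monodromy input forces the fiberwise local system $\mathbb{L}_\rho$ on $C^\circ$ to have large enough monodromy unless it is finite — to conclude that a flat sub-VHS of rank $<g$ would violate either rigidity or the arithmetic integrality constraints (Esnault–Groechenig, Klevdal–Patrikis) in the same way as in the proof of \autoref{theorem:finite-image}; (4) rule out the finite-monodromy case by hand, since $\mathbb{L}_\rho$ finite with $\rho$ nontrivial contradicts no nonzero invariant existing, completing the contradiction. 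The main obstacle I anticipate is step (3): making the dimension bookkeeping work at the threshold $\dim\rho<g$ (rather than $\sqrt{g+1}$) requires using the \emph{specific} structure of the isotypic piece — that it sits inside $H_1$ of a single cover with $H$-action, so one controls not just one local system but the whole induced-from-$\phi$ package — rather than an arbitrary rank-$(<g)$ local system on a versal family, for which the conclusion would be false.
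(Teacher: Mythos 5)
You have the reduction right: spread the cover out over a versal family of $\phi$-covers, decompose the pushforward into isotypic pieces $\mathbb U_\rho\otimes(\pi^\circ)^*\mathbb W_\rho$ (this is \autoref{lemma:unitary-direct-sum} applied to the family produced by Wewers' theorem), and convert ``nonzero vector of $H_1(\Sigma_{g'})^\rho$ with finite $\Gamma$-orbit'' into ``nonzero invariant sub-local system of $R^1\pi^\circ_*\mathbb U_\rho$ after passing to a finite cover of the base.'' This matches the paper's setup, and the theorem is indeed deduced in the paper as the $2g-2\dim\rho>1$ case of \autoref{theorem:asymptotic-putman-wieland}.

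However, the heart of the argument --- why $R^1\pi^\circ_*\mathbb U_\rho$ has no nonzero sub-local system of rank less than $2g-2\dim\rho$ --- is missing, and what you put in its place does not work. Your step (3) invokes rigidity and the integrality results of Esnault--Groechenig and Klevdal--Patrikis ``in the same way as in the proof of \autoref{theorem:finite-image},'' but those inputs play no role in the Putman--Wieland result; the logical dependence runs the other way (the Putman--Wieland theorem is an \emph{ingredient} of \autoref{theorem:finite-image}, used to handle non-semisimple representations). Your step (4) does not parse: the fiberwise local system $\mathbb U_\rho|_{C^\circ}$ \emph{always} has finite monodromy, since it factors through the finite group $H$, so ``ruling out the finite-monodromy case'' is not a case one can be in or out of --- the entire problem concerns the monodromy of the \emph{cohomology} of a finite (hence unitary) local system. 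Finally, your anticipated obstacle is a red herring: the conclusion is not false for an arbitrary unitary local system of rank $<g$ on a punctured versal family; that is precisely \autoref{theorem:rank-bound-subsystem} (equivalently \autoref{theorem:unitary-rigid}), and unitarity of $\mathbb U_\rho$ is the only structural input used, not the full induced-from-$\phi$ package. The missing chain of ideas is: $R^1\pi^\circ_*\mathbb V$ for unitary $\mathbb V$ underlies an admissible graded-polarizable VMHS; by the theorem of the fixed part, a low-rank sub-local system $\mathbb L$ produces a nonzero $v\in F^1\mathscr H_m\simeq H^0(C,E\otimes\omega_C(D))$ whose image under the derivative of the period map has rank at most $\tfrac{1}{2}\rk\mathbb L$; that derivative is identified (\autoref{theorem:period-map-computation}) with the multiplication pairing $H^0(E\otimes\omega_C(D))\otimes H^0(E^\vee\otimes\omega_C)\to H^0(\omega_C^{\otimes 2}(D))$ for $E$ the Mehta--Seshadri parabolic bundle of $\mathbb V|_{C^\circ}$; and a Clifford-type bound for semistable parabolic bundles (\autoref{proposition:pairing-result}) then forces $\rk E\geq g-\tfrac{1}{2}\rk\mathbb L$. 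That is where the threshold $\dim\rho<g$ comes from, and nothing in your proposal substitutes for it.
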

\autoref{theorem:rho-isotypic-PW-intro} is proven below as a special case of
\autoref{theorem:asymptotic-putman-wieland}.

As we shall see in the course of the proof of \autoref{theorem:finite-image},
especially \autoref{lemma:semisimplicity},
the Putman-Wieland conjecture is connected to the classification of MCG-finite representations with virtually solvable image.

\subsection{Arithmetic applications}
\label{subsection:arithmetic}
In \autoref{section:arithmetic-applications}, we give a number of applications
to questions in arithmetic geometry. For example, we show in
\autoref{theorem:arithmetic-consequence} that low rank representations of the
arithmetic \'etale fundamental group of a generic curve of genus $g$ have finite
image when restricted to the geometric fundamental group. This verifies a
prediction of the Fontaine-Mazur conjecture, see
\autoref{remark:fontaine-mazur}. 
These results also answer a question of Esnault-Kerz, see \autoref{remark:esnault-kerz}.

As a consequence, we construct many residual
representations of the geometric fundamental group of a generic curve of genus
$g$ which have no lifts to representations of geometric origin. These examples are related to a conjecture of de Jong \cite{de2001conjecture} (proven by Gaitsgory \cite{gaitsgory2007jong}) and a question of Flach, see \autoref{remark:complete-intersection} and \autoref{remark:flach}.

\subsection{A consequence for free groups}
\label{subsection:free-groups}

From \autoref{theorem:finite-image}, we deduce an analogous result for free groups.
\begin{corollary}\label{corollary:free-groups}
	Let $F_N$ be a free group on $N$ generators, with $N=2g$ or $N=2g+1$.
	Let $$\rho: F_N\to \on{GL}_r(\mathbb{C})$$ be a representation whose conjugacy class has finite orbit under $\on{Out}(F_N)$. If $r<\sqrt{g+1}$, then $\rho$ has finite image.
\end{corollary}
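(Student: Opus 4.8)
The plan is to realize the free group $F_N$ as the fundamental group of a punctured surface in such a way that finiteness of the $\on{Out}(F_N)$-orbit forces MCG-finiteness, and then to quote \autoref{theorem:finite-image}. Recall that for $n\geq 1$ the surface $\Sigma_{g,n}$ is homotopy equivalent to a wedge of $2g+n-1$ circles, so $\pi_1(\Sigma_{g,n})$ is free of rank $2g+n-1$; in particular $\pi_1(\Sigma_{g,1})\cong F_{2g}$ and $\pi_1(\Sigma_{g,2})\cong F_{2g+1}$. These are precisely the two cases $N=2g$ and $N=2g+1$ in the statement, so I would fix $n\in\{1,2\}$ accordingly together with an identification $\pi_1(\Sigma_{g,n})\cong F_N$.

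Next I would recall that a homeomorphism of $\Sigma_{g,n}$ induces, after choosing a path from the basepoint to its image, an automorphism of $\pi_1(\Sigma_{g,n})$ well-defined up to inner automorphisms — this is the Dehn--Nielsen--Baer construction for punctured surfaces — and hence a homomorphism $\on{Mod}_{g,n}\to\on{Out}(\pi_1(\Sigma_{g,n}))=\on{Out}(F_N)$. By construction, the outer action of $\on{Mod}_{g,n}$ on $\pi_1(\Sigma_{g,n})$, and therefore the action of $\on{Mod}_{g,n}$ on conjugacy classes of representations $\pi_1(\Sigma_{g,n})\to\on{GL}_r(\mathbb{C})$ used in \autoref{definition:mcg-finite}, factors through this homomorphism and the tautological action of $\on{Out}(F_N)$ on conjugacy classes of representations of $F_N$.

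Now suppose $\rho\colon F_N\to\on{GL}_r(\mathbb{C})$ has finite $\on{Out}(F_N)$-orbit on its conjugacy class. Transporting $\rho$ along the identification $\pi_1(\Sigma_{g,n})\cong F_N$, the previous paragraph shows that the $\on{Mod}_{g,n}$-orbit of the conjugacy class of $\rho$ is contained in its (finite) $\on{Out}(F_N)$-orbit, hence is finite; that is, $\rho$ is MCG-finite in the sense of \autoref{definition:mcg-finite}. Since $r<\sqrt{g+1}$, \autoref{theorem:finite-image} applies and shows that $\rho$ has finite image, which is exactly the assertion of the corollary.

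There is essentially no serious obstacle here: the real content is \autoref{theorem:finite-image}, and the only thing to be careful about is the bookkeeping in the middle step — namely that the natural map $\on{Mod}_{g,n}\to\on{Out}(F_N)$ exists for punctured surfaces (as opposed to the variant for surfaces with boundary, which lands in $\on{Aut}(F_N)$), that it is the map intertwining the two actions on character varieties, and that one does \emph{not} need it to be injective (only that an orbit under a subgroup lies inside the orbit under the ambient group). One minor point worth flagging is that $\on{Mod}_{g,2}$ is allowed to permute the two punctures of $\Sigma_{g,2}$; this causes no difficulty, as such mapping classes still act through $\on{Out}(F_{2g+1})$.
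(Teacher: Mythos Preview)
Your proof is correct and follows exactly the same approach as the paper: identify $F_N$ with $\pi_1(\Sigma_{g,1})$ or $\pi_1(\Sigma_{g,2})$, observe that the $\on{Mod}_{g,n}$-action on conjugacy classes factors through $\on{Out}(F_N)$ so that finite $\on{Out}(F_N)$-orbit implies MCG-finiteness, and apply \autoref{theorem:finite-image}. You have simply spelled out in more detail what the paper compresses into three sentences.
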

\begin{proof}
	If $N$ is even, choose an isomorphism $F_N\simeq \pi_1(\Sigma_{g,1})$
	and if $N$ is odd choose $F_N\simeq \pi_1(\Sigma_{g,2})$. Any
	representation of $F_N$ with finite orbit under $\on{Out}(F_N)$ yields a
	representation of a genus $g$ surface group with finite orbit under the
	mapping class group. Such a representation has finite image by \autoref{theorem:finite-image}.
\end{proof}
\begin{remark}
It is natural to ask if the bound on $r$ in \autoref{corollary:free-groups} is sharp. 
There exist (non-semisimple) representations of $F_N$ of fairly low rank ($r =N + 1$) with infinite image and finite orbit under $\on{Out}(F_N)$, see 
\autoref{example:free-group-bounds}, so the bound cannot be improved too much. 
However, in contrast with the case of MCG-finite representations, we do not know any examples of \emph{semisimple} representations of $F_N$ with infinite image and finite orbit under $\on{Out}(F_N)$ as soon as $N\geq 3$.
Indeed, a conjecture of Grunewald and Lubotzky \cite[Conjecture in
\S9.2]{grunewald2009linear}, combined with the main result of
\cite{farb2017moving}, implies that no such examples exist when $N \geq 3$. In the interests of provocation, we conjecture 
(\autoref{conjecture:infinite-image-semisimple})
that such examples \emph{do} exist. 
As evidence, we offer several examples of semisimple MCG-finite representations of surface groups with infinite image, see \autoref{question:bounds}. Note that there are interesting semisimple representations of $F_2$ with finite orbit under $\on{Out}(F_2)$, see \autoref{example:free-group-bounds}.
\end{remark}

\begin{remark}
	\label{remark:explicit-free}
One appeal of \autoref{corollary:free-groups} lies in the fact that it admits a completely elementary reformulation, using Nielsen's description of $\on{Aut}(F_N)$,
see \cite{nielsen:die-isomorphismengruppe} 
and also \cite[Theorem 3.2, p. 131]{magnusKS:combinatorial-group-theory}.
Let $(A_1, \cdots, A_N)$ be an $N$-tuple of invertible $r\times r$ complex matrices, with $N=2g$ or $N=2g+1$. Consider the following operations on $N$-tuples of invertible matrices:
\begin{enumerate}
	\item The cyclic permutation $$c: (A_1, A_2, \cdots, A_N)\mapsto (A_2, A_3, \cdots, A_N, A_1)$$
	\item The transposition $$\tau: (A_1, A_2, A_3, \cdots, A_N)\mapsto (A_2, A_1, A_3, \cdots, A_N)$$
	\item The inversion map  $$\epsilon: (A_1, A_2, \cdots, A_N)\mapsto (A_1^{-1},A_2, \cdots, A_N)$$
	\item The Dehn twist $$d: (A_1, A_2, \cdots, A_N)\mapsto (A_1A_2, A_2, \cdots, A_N)$$
\end{enumerate}
	We say $(A_1, \cdots, A_N)$ and $(A_1', \cdots, A_N')$ are \emph{conjugate} if there exists some $B$ such that $A_i=BA_i'B^{-1}$ for all $i$. 
	
	Now suppose that the set of $N$-tuples obtained from $(A_1, \cdots, A_N)$ by repeatedly applying $c, \tau, \epsilon$, and $d$ only intersect finitely many conjugacy classes of $N$-tuples
	$(A_1, \cdots, A_N)$. 
	If $r<\sqrt{g+1}$,
	\autoref{corollary:free-groups} implies that $A_1, \ldots, A_N$ generate a finite subgroup of $\on{GL}_r(\mathbb{C})$. 
\end{remark}
\begin{remark}
	\label{remark:}
	Similarly to \autoref{remark:explicit-free}, one could make
	\autoref{theorem:finite-image} explicit, using any one of the known
	generating sets of $\on{Mod}_{g,n}$, but the formulas are a bit more
	involved, as in \cite[\S6]{CH:isomonodromic}. It would in our view be of great interest to find a \emph{proof} of \autoref{theorem:finite-image} or \autoref{corollary:free-groups} of a similarly explicit nature.
\end{remark}
	
\begin{remark}
A result analogous to \autoref{corollary:free-groups} follows immediately for
characteristic quotients of free or surface groups. For example, let $G$ be a
characteristic quotient of a free group $F_N$ on $N=2g$ or $N=2g+1$ generators,
and $\rho: G\to \on{GL}_r(\mathbb{C})$ a representation whose conjugacy class has finite orbit under $\on{Out}(G)$. If $r<\sqrt{g+1}$, then $\rho$ has finite image by \autoref{corollary:free-groups}.
\end{remark}

\subsection{Cohomological results}
\label{subsection:cohomological}
One of the new technical inputs we use to achieve \autoref{theorem:finite-image} is
an analysis of the cohomology of unitary local systems on families of curves.
Our main result, which follows from an analysis of the derivative of the period
map associated to the mixed Hodge structure on the cohomology of unitary local
systems, is the following.
We adapt notation described later in \autoref{notation:versal-family}.
\begin{theorem}\label{theorem:rank-bound-subsystem}
Let $\pi:\mathscr{C}\to \mathscr{M}$ be a smooth proper family of $n$-pointed
curves of genus $g$ with geometrically connected fibers, so that the associated
map $\mathscr{M}\to \mathscr{M}_{g,n}$ is dominant \'etale. Let $\pi^\circ:
\mathscr{C}^\circ\to \mathscr{M}$ be the associated family of punctured curves.
Let $\mathbb{V}$ be a unitary local system on $\mathscr{C}^\circ$. Then any
nonzero sub-local system of $R^1\pi_*^\circ \mathbb{V}$ has rank at least $2g-2\rk\mathbb{V}$. 	
\end{theorem}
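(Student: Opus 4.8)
The plan is to compute the derivative of the period map for the variation of mixed Hodge structure carried by $R^1\pi^\circ_*\mathbb{V}$, and to argue that a proper sub-local system would be forced to be flat for the Gauss--Manin connection, hence locally constant on $\mathscr{M}_{g,n}$, which is impossible once its rank is too small. First I would reduce to the case where $\mathbb{V}$ is irreducible: a unitary local system is semisimple, $R^1\pi^\circ_*$ is additive, and a sub-local system of a direct sum projects onto sub-local systems of the summands, so it suffices to bound the rank of sub-local systems in each $R^1\pi^\circ_*\mathbb{V}_i$ with $\mathbb{V}_i$ irreducible unitary of rank $\le \rk\mathbb{V}$. Next I would recall the Hodge-theoretic structure: because $\mathbb{V}$ is unitary, $R^1\pi^\circ_*\mathbb{V}$ underlies a polarizable variation of mixed Hodge structure on $\mathscr{M}$, whose weight-$1$ graded piece is a variation of pure Hodge structure of weight $1$ with Hodge bundles $F^1 = \pi_*(\Omega^1_{\mathscr{C}/\mathscr{M}}(\log) \otimes \mathcal{V})$ and its conjugate, where $\mathcal{V}$ is the Deligne extension; the Kodaira--Spencer/cup-product description identifies the derivative of the period map at a fiber $C^\circ$ with a map built from the cup product
\[
H^0(C, \Omega^1_C(\log)\otimes \mathcal{V}) \otimes H^1(C, T_C(-D)) \longrightarrow H^1(C, \mathcal{V}),
\]
i.e.\ from multiplication by Kodaira--Spencer classes deforming the pointed curve.

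The key step is an infinitesimal Torelli–type / Noether–Lefschetz argument. Suppose $\mathbb{W} \subset R^1\pi^\circ_*\mathbb{V}$ is a nonzero sub-local system; being a sub-VHS (sub-VMHS) it has its own Hodge flag $F^\bullet\mathbb{W} \subseteq F^\bullet \cap \mathbb{W}$, and Griffiths transversality says the period derivative of the ambient system must preserve $\mathbb{W}$. Concretely, at a general fiber this means the cup-product maps above carry $F^1\mathbb{W} \otimes H^1(C,T_C(-D))$ into $\mathbb{W}$ rather than spreading across all of $H^1(C,\mathcal{V})$. The heart of the matter is to show that this constraint, for a versal (dominant étale over $\mathscr{M}_{g,n}$) family — so that $H^1(C,T_C(-D))$ is the full tangent space to $\mathscr{M}_{g,n}$ and the Kodaira--Spencer classes are as generic as possible — forces $\dim F^1\mathbb{W}$ (equivalently $\operatorname{rk}\mathbb{W}$) to be large: roughly, a low-dimensional Hodge-flag piece cannot be stable under multiplication by all Kodaira--Spencer classes of a generic pointed curve unless it is everything compatible with its rank. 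I expect the precise numerical mechanism to come from a dimension count: if $W^{1,0}$ is the $(1,0)$-part of $\mathbb{W}$ then the image of $W^{1,0} \otimes H^1(C, T_C(-D))$ inside $H^1(C,\mathcal{V}) = H^{0,1}$ has dimension controlled from below by a Clifford/base-point-free type estimate (using that $\Omega^1_C(\log D)$ is globally generated in genus $\ge 1$ away from the finitely many issues, and $\mathcal{V}$ is a flat bundle of small rank $r = \rk\mathbb{V}$), and this image must lie in $W^{0,1}$; chasing the inequality $\dim W^{0,1} \ge \dim(\text{image}) \gtrsim 2g - 2r$ together with the complementary inequality on $W^{1,0}$ gives $\operatorname{rk}\mathbb{W} = \dim W^{1,0} + \dim W^{0,1} \ge 2g - 2\rk\mathbb{V}$, after accounting for the weight-$0$ and weight-$2$ graded pieces contributed by the punctures.

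The main obstacle, I expect, is making the genericity input rigorous: one needs that for a generic point of $\mathscr{M}_{g,n}$ the Kodaira--Spencer deformation classes act on $H^1(C,\mathcal{V})$ with enough "mixing" that no small subspace is preserved, and this is exactly where the derivative-of-period-map computation must be done carefully rather than invoked formally — in particular handling the mixed (non-pure) part coming from the $n$ punctures, where $R^1\pi^\circ_*\mathbb{V}$ acquires weight-$2$ pieces supported at the punctures whose dimension depends on the local monodromy of $\mathbb{V}$, and ensuring these do not let $\mathbb{W}$ "hide" in a low-rank flat piece. A secondary technical point is the small-genus edge cases (where $\Omega^1_C(\log D)$ may fail to be globally generated, e.g.\ $g=0$ with few punctures), which either make the claimed bound vacuous (negative right-hand side) or must be excluded by the hypothesis $g$ implicit in how the theorem will be applied. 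Once the infinitesimal statement is in place, semicontinuity upgrades "general fiber" to "every fiber" since the rank of a sub-local system is constant.
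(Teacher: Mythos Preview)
Your outline---constrain a sub-local system via the VMHS on $R^1\pi^\circ_*\mathbb{V}$, compute the derivative of the period map as a cup product, and close with a Clifford-type bound---is exactly the paper's strategy, but two of your steps are genuine gaps rather than routine.

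First, the assertion ``being a sub-VHS (sub-VMHS) it has its own Hodge flag'' is not automatic. A sub-local system of an admissible graded-polarizable VMHS need not be a sub-VMHS for the \emph{given} inclusion: the element of $\Hom(\mathbb{L},R^1\pi^\circ_*\mathbb{V})$ realizing the inclusion need not be a Hodge class. The paper supplies this via the theorem of the fixed part applied to $\widetilde{\mathbb{L}}^\vee\otimes R^1\pi^\circ_*\widetilde{\mathbb{V}}$: one obtains a constant real MHS $Q$ and a nonzero morphism of VMHS $\iota\colon Q\otimes\widetilde{\mathbb{L}}\to R^1\pi^\circ_*\widetilde{\mathbb{V}}$, and then works with $v=\iota(q\otimes\ell)$ for a \emph{chosen} $q\in Q^{p,q}$ of pure type. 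This device is also what resolves the obstacle you flag but do not handle---when $\mathbb{L}$ has trivial Hodge structure (e.g.\ sits in the weight-$2$ piece), one takes $q$ of type $(1,0)$ or $(1,1)$ so that $v$ still lands in $F^1\mathscr{H}$, and then $\overline{\nabla}(v)=0$ because the period map of $\mathbb{L}$ is trivial.

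Second, your numerical mechanism is off. No Clifford estimate says the image of $W^{1,0}\otimes H^1(C,T_C(-D))$ has dimension $\gtrsim 2g-2r$. The paper's bilinear-pairing result (Proposition~\ref{proposition:pairing-result}, using parabolic semistability of the Mehta--Seshadri bundle $E_\star$) gives only: for a \emph{single} nonzero $v\in H^0(C,E\otimes\omega_C(D))$, if $B_E(v,-)$ has rank $s$ then $\rk E\ge g-s$. The factor of $2$ enters differently: since $\mathbb{L}$ carries a complex PVHS, after possibly replacing $\mathbb{V}$ by $\overline{\mathbb{V}}$ one arranges $\dim\mathbb{L}^{0,1}\le\rk\mathbb{L}/2$; for $v$ in the image of $\mathbb{L}^{1,0}$ the map $\overline{\nabla}(v)$ then factors through a space of dimension $\le\rk\mathbb{L}/2$, whence $\rk\mathbb{V}\ge g-\rk\mathbb{L}/2$. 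Your ``complementary inequality on $W^{1,0}$'' would require running the same argument for $\overline{\mathbb{V}}$ and would still need both Hodge pieces of $\mathbb{L}$ nonzero, which you have not arranged.

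Two smaller points: the reduction to $\mathbb{V}$ irreducible is unnecessary and does not simplify anything (sub-local systems of a direct sum do not in general decompose along the summands); and no genericity beyond dominance is used---the Kodaira--Spencer map is already an isomorphism, and $E_\star$ is parabolically semistable of degree zero at \emph{every} fiber by Mehta--Seshadri, so the Clifford bound holds fiberwise without semicontinuity.
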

We prove this in \autoref{subsection:rank-bound-proof}, and give a related cohomological vanishing theorem with milder unitarity hypotheses in \autoref{theorem:unitary-rigid}.
Note that \autoref{theorem:rank-bound-subsystem} shows that $H^0(\mathscr{M}, R^1\pi_*^\circ \mathbb{V})=0$ if
$g>\on{rk}\mathbb{V}$. 
This result is used several times in the proof of
\autoref{theorem:finite-image}, as described in
\autoref{subsection:outline-of-proof}.
It is especially interesting when $\mathbb{V}$ has finite monodromy, where it is closely related to the Putman-Wieland conjecture, as described
above in \autoref{subsection:intro-putman-wieland} and in more detail in \autoref{section:Putman-Wieland}.

\subsection{Prior work}
\label{subsection:prior-work}
\subsubsection{MCG-finite representations and Painlev\'e VI}
\label{subsubsection:prior-mcg-finite}
As mentioned in \autoref{subsection:overview}, there has been a huge amount of effort put forth towards classifying algebraic solutions to the Painlev\'e VI equation, arguably beginning with work of Riemann \cite{riemann1857beitrage} and Schwarz \cite{schwarz1873ueber}
and culminating in the classification \cite{lisovyy2014algebraic}. 
Calligaris-Mazzocco \cite{calligaris2018finite} classified algebraic
solutions satisfying several conditions in the case $g=0, n=5, r=2$, but the complete classification in
this case remains open. See also \cite{michel-hurwitz} for a related result in the genus zero case, when the local monodromy matrices are given by reflections; this appears to be the first result showing that MCG-finite representations (satisfying certain conditions) have finite image.

For all the work invested in the algebraic solutions to Painlev\'e VI, this only
addresses the case of classifying  MCG-finite representations with $g = 0, n = 4,
r = 2$, and trivial determinant, as explained in \cite{lisovyy2014algebraic}.
In higher genus, all work that we know of has been on the case of
$2$-dimensional representations.
The beautiful paper \cite{biswas2017surface} handles the $2$-dimensional case ($r = 2, g \geq 1, n \geq 0$) with trivial
determinant.
The non-semisimple case $r = 2, g \geq 1, n \geq 0$ with arbitrary determinant
is explained in \cite[Theorem B]{CH:isomonodromic}.
These results appear to depend crucially on the assumption $r=2$.

\begin{remark}
	\label{remark:}
	The above-mentioned results in rank $2$ led Junho Peter Whang to ask whether, for $g\gg r$, all MCG-finite rank $r$ representations of $\Sigma_{g,n}$
have finite image \cite[Question 1.5.3]{lawrence2019representations}.
More generally, motivated by the $p$-curvature conjecture, Mark Kisin asked whether MCG-finite representations
necessarily have finite image,
see \cite[p. 3]{biswas2017surface} and \cite[p. 1]{sinz:thesis}. (Note that
counterexamples are known in general, see ~\cite[Theorem
5.1]{biswas2018representations} for counterexamples coming from TQFT techniques or 
\cite[Example 3.3.1]{lawrence2019representations} for counterexamples coming
from the Kodaira-Parshin trick, as well as
\autoref{question:bounds} of this paper.)
In this way, \autoref{theorem:finite-image} answers Whang's question
affirmatively, and provides a positive answer to Kisin's question in the regime
$g > r^2-1$.
\end{remark}

There has also been much work put towards finding interesting \emph{examples} of MCG-finite representations, notably \cite{doran:isomonodromic, diarra2013construction, girand:a-new-two-parameter-family, diarra2015ramified, girand2016algebraic}. 

In a more arithmetic direction, \cite{bourgainGS:announcement} 
announced striking results on strong approximation for Markoff triples, obtained by analyzing the
arithmetic properties of subvarieties of the character variety parametrizing $2$-dimensional
representations of $\pi_1(\Sigma_{1,1})$.
The first step in their analysis is to determine the finite orbits of the
mapping class group action on this character variety \cite[last paragraph of p.
132]{bourgainGS:announcement}.
In this way, our \autoref{theorem:finite-image} can be viewed as a necessary
first step 
toward attempting to generalize their approach to 
higher rank, higher genus character
varieties.

\subsubsection{MCG-finite representations and geometric topology}

In the world of low-dimensional topology a number of authors have studied the
dynamics of mapping class group actions on character varieties; finite orbits
are the same as semisimple MCG-finite representations. Kasahara
\cite{kasahara2015visualization} relates fixed points of this action
corresponding to faithful representations to the well-known question of
linearity of the mapping class group. Goldman and many other authors have
studied ergodicity of these actions, see \cite{goldmanmapping} and the
references therein. Previte-Xia \cite{previte2000topological,
previte2002topological} study the relationship between density of the image of
an $SU(2)$-representation and density of its mapping class group orbit. In some
sense our main result is a partial answer to \cite[Question
2.7]{goldmanmapping}, which asks for necessary and sufficient conditions for a
representation to have dense orbit under the mapping class group. We
characterize the most extreme possible failure of density, namely the case of
finite orbit.

There are also a number of related results on representations of the mapping class group.
Farb, Lubotzky, and Minsky \cite[Theorem 1.6]{farbLM:rank-1-phenomena} show there are no faithful linear
representations of finite index subgroups of $\on{Mod}_{g,0}$ of dimension $< 2 \sqrt{g-1}$. See \autoref{remark:FLM-rank-1-result} for a comparison of our results to theirs.
See also
\cite{franksH:triviality,korkmaz:low-dimensional-linear-representations,funar:two-questions, kielak-pierro}
for bounds on the dimension of representations of 
$\on{Mod}_{g,n}$, although these results only address representations of the full mapping
class group, as opposed to representations of finite index subgroups.

\subsubsection{The Putman-Wieland conjecture} The other main contribution of
\label{subsubsection:prior-putman-wieland}
this paper, towards the Putman-Wieland conjecture, has a number of precursors,
notably \cite{looijenga:prym-representations,grunewald2015arithmetic, looijenga2021arithmetic}.
Our Hodge-theoretic approach is related to an approach suggested by Looijenga \cite{looiejenga:AG}. 
The strongest previous result towards the Putman-Wieland conjecture is perhaps 
the main result of \cite{grunewald2015arithmetic}, which says that for certain quotients of $\pi_1(\Sigma_g)$, 
called \emph{$\phi$-redundant} quotients,
much more than the Putman-Wieland conjecture is true --- the monodromy representations considered by the conjecture have very large image, commensurable with an arithmetic group. There has also been interesting recent work related to the Putman-Wieland conjecture by Markovi\'c and Markovi\'c-To\u{s}i\'c \cite{markovic, markovic2}.

The analogue of the Putman-Wieland conjecture for graphs, as opposed to surfaces, has been proven by Farb and Hensel \cite{farb2017moving}.

\subsection{Outline of the proof}\label{subsection:outline-of-proof}
We now sketch the proof of \autoref{theorem:finite-image}, which is loosely inspired by Katz's proof of the $p$-curvature conjecture for the Gauss-Manin connection \cite{katz:pcurvature-and-hodge}.
Following this, we sketch the proof of the vanishing results we use (\autoref{theorem:rank-bound-subsystem} and its consequence \autoref{theorem:unitary-rigid}) in
\autoref{subsubsection:rigidity-idea}, which are used in different ways in each of the three steps listed below of the proof of \autoref{theorem:finite-image}.
A schematic diagram outlining the main elements of the proof is depicted in
\autoref{figure:proof-schematic}.

\begin{figure}
	\centering
\adjustbox{scale=.75,center}{
   \begin{tikzcd}[column sep = 2.3em]
	\qquad &&
\text{Lem.}~\ref{lemma:gl-defined-over-number-field} \ar{d}&
  \text{Lem.}~\ref{lemma:defined-over-number-field} \ar{l} \\
  \qquad &&
  \underbrace{\text{Prop.}~\ref{proposition:unitary-implies-finite}}_{\text{Unitary
  case}}
  \ar{dl}{\text{\cite[Non-abelian Hodge theory]{mochizuki2006kobayashi}}} 
  \ar[swap]{dl}{\text{\cite[Isomonodromy]{LL:geometric-local-systems}}}  &
  \text{Prop.}~\ref{proposition:cohomologically-rigid} 
  \ar{l}{\text{\cite{mochizuki2006kobayashi}}} 
  \ar[swap]{l}{\text{\cite{LL:geometric-local-systems}}}
  \ar[swap]{u}{\text{\cite[Langlands]{esnault2018cohomologically, klevdalP:g-rigid-local-systems-are-integral}}} &
  \text{Prop.}~\ref{proposition:generic-parabolic-global-generation} \ar{d} \\
  \text{Thm.}~\ref{theorem:finite-image} &
  \underbrace{\text{Thm.}~\ref{theorem:finite-image-semisimple}}_{\text{Semisimple
  case}}\ar{l}&
  \text{Lem.}~\ref{lemma:global-rigidity} \ar{l} &
  \underbrace{\text{Thm.}~\ref{theorem:unitary-rigid}}_{\text{Cohomological
  vanishing}} \ar{l} \ar{u}&
  \text{Prop.}~\ref{proposition:pairing-result} \ar{dl}{\text{Mixed Hodge
  theory and \cite{mehta1980moduli}}} \\
  \qquad & \text{Lem.}~\ref{lemma:semisimplicity} \ar{ul}&
  \underbrace{\text{Thm.}~\ref{theorem:asymptotic-putman-wieland}}_{\text{Putman-Wieland}} \ar{l}&
	  \underbrace{\text{Thm.}~\ref{theorem:rank-bound-subsystem}}_{\text{Cohomological
	  rank bound}}\ar{l} \ar{u} 
	  &
	  \underbrace{\text{Thm.}~\ref{theorem:period-map-computation}}_{\text{Period map}}
	  \ar{l}
	  \end{tikzcd}
}
\caption{
A diagram depicting the structure of the proof of
the main result,
\autoref{theorem:finite-image}, in the shape of a boat.}
\label{figure:proof-schematic}
\end{figure}
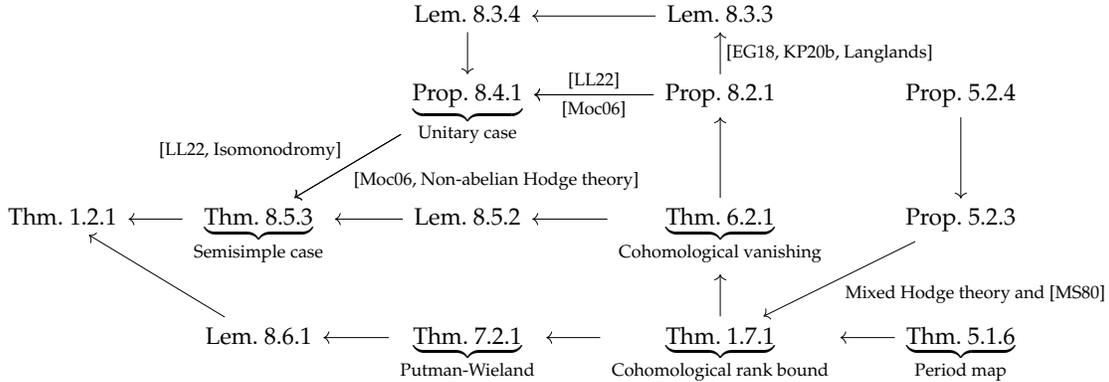

\subsubsection{Step 1. The unitary case.} 
\label{subsubsection:step-1}
Every unitary representation is a direct sum of irreducible unitary representations.
Therefore, we suppose $\rho$ is unitary, irreducible, and MCG-finite, 
with $\rk\rho<\sqrt{g+1}$. We construct from $\rho$ a finite
\'etale cover $\mathscr{M}$ of $\mathscr{M}_{g,n}$, with associated family of
punctured curves $\pi^\circ:\mathscr{C}^\circ\to \mathscr{M}$, and a projective unitary local
system $\mathbb{V}$ on $\mathscr{C}^\circ$ whose restriction to a fiber
$C^\circ$ of
$\pi^\circ$ has monodromy given by $\rho$. 
Applying \autoref{proposition:cohomologically-rigid}, (which is a fairly 
straightforward consequence of our cohomological vanishing result,
\autoref{theorem:unitary-rigid}, applied to $\on{ad}\mathbb{V}$), 
shows that $\mathbb{V}$ is cohomologically rigid. 
The main result of \cite{klevdalP:g-rigid-local-systems-are-integral} then gives that $\rho$ is defined over the ring of integers $\mathscr{O}_K$ of some number field $K$.
Moreover, using \autoref{proposition:gl-lift},
by replacing $\mathscr M$ by a dominant \'etale scheme over it, along which
certain cohomological lifting obstructions vanish, we can assume
$\mathbb V$ lifts from a projective local system to a bona fide local system.

By compactness of the unitary group and discreteness of $\mathscr{O}_K$, it
suffices to show that for each embedding $\iota:
\mathscr{O}_K\hookrightarrow\mathbb{C}$, $\rho\otimes_{\mathscr{O}_K, \iota}
\mathbb{C}$ is unitary. (We know this unitarity for one such $\iota$ by assumption, but not
the others.) The rigidity of $\mathbb{V}$ implies by non-abelian Hodge theory
that these $\rho\otimes_{\mathscr{O}_K, \iota} \mathbb{C}$ underlie complex polarizable variations of Hodge structure for any complex structure on $\Sigma_{g,n}$. 
Hence, given their low rank, these local systems are unitary by \cite[Theorem
1.2.12]{LL:geometric-local-systems}.

\subsubsection{Step 2. The semisimple case.} 
\label{subsubsection:step-2}
Now suppose $\rho$ is an arbitrary
semisimple, MCG-finite representation with $\rk\rho<\sqrt{g+1}$. Again, we
associate to $\rho$ a local system $\mathbb{V}$ on a family of curves
$\pi^\circ:\mathscr{C}^\circ\to \mathscr{M}$, so that $\mathscr M$ has a
dominant \'etale map to $\mathscr M_{g,n}$ and whose fibral monodromy is given by $\rho$.
By non-abelian Hodge theory, we may deform $\mathbb{V}$ to a local system
$\mathbb V_0$ underlying a complex polarizable
variation of Hodge structure. By
\cite[Theorem 1.2.12]{LL:geometric-local-systems}, $\mathbb{V}_0$ has unitary monodromy when
restricted to a fiber of $\pi^\circ$. By the unitary case,
\autoref{subsubsection:step-1},
$\mathbb{V}_0$ thus has finite monodromy when restricted to a fiber of $\pi^{\circ}$. Note that even if $n=0$, i.e.~$\pi^\circ$ is proper, we here need to use non-abelian Hodge theory for non-proper varieties, as the total space $\mathscr{C}^\circ$ will not be proper.

It remains to argue that the restriction $\mathbb V_0|_{C^\circ}$ of
$\mathbb V_0$ to a fiber $C^\circ$ of
$\pi^\circ$ agrees with the restriction $\mathbb V|_{C^\circ}$,
corresponding to $\rho$.
Recall that 
$\mathbb V_0$ and $\mathbb V$ were only deformation equivalent, so it may be
surprising that they necessarily restrict to the same local system on fibers.
We verify this agreement 
in \autoref{lemma:global-rigidity}
through another application of
\autoref{theorem:unitary-rigid}, which 
which tells us that since the fibral monodromy $\mathbb{V}_0|_{C^\circ}$ 
is unitary of low rank, it
does not admit non-trivial MCG-finite deformations. 

Note that $\mathbb{V}_0$ may not a priori be unitary; we only know that $\mathbb V_0|_{C^\circ}$ is unitary. In particular, it is not clear whether $\mathbb{V}_0$ is necessarily cohomologically rigid.

\subsubsection{Step 3. The general case} 
\label{subsubsection:step-3}
The crucial input for dealing with
non-semisimple representations is our work towards the
Putman-Wieland conjecture. By the above it is enough to show that low-rank
MCG-finite representations are semisimple, i.e.~we wish to verify that certain
extensions split. 
For simplicity, let's suppose for the purpose of this sketch that $\rho$ is an extension of two irreducible MCG-finite representations
$\rho_1$ and $\rho_2$. 
By the previous step, $\rho_1$ and $\rho_2$ have finite monodromy, so after
passing to a finite cover $\Sigma_{g',n'}$ of $\Sigma_{g,n}$, we may assume
$\rho_1$ and $\rho_2$ have trivial monodromy.
The splitting of this extension of $\rho_2$ by $\rho_1$ corresponds to the vanishing of a certain
element in $\on{Ext}^1_{\pi_1(\Sigma_{g',n'})}(\rho_1, \rho_2)$.
Because we arranged that $\rho_1$ and $\rho_2$ have trivial monodromy
on $\pi_1(\Sigma_{g',n'})$, the above extension class corresponds to a map
$\pi_1(\Sigma_{g',n'}) \to \rho_1^\vee \otimes \rho_2$, with unipotent abelian
image. Hence, it factors through $H_1(\Sigma_{g',n'})$,
and so defines a low rank subspace of $H^1(\Sigma_{g',n'})$, stable under a
finite index subgroup of $\on{Mod}_{g,n+1}$.
We verify in \autoref{lemma:semisimplicity}, 
using our main result toward the Putman-Wieland conjecture,
\autoref{theorem:asymptotic-putman-wieland},
that such a subspace cannot exist. Theorem \autoref{theorem:asymptotic-putman-wieland} itself follows more or less immediately from \autoref{theorem:rank-bound-subsystem}.


\subsubsection{The proof of our cohomological vanishing results}
\label{subsubsection:rigidity-idea}
Note that we used \autoref{theorem:rank-bound-subsystem}, or its immediate consequence,
\autoref{theorem:unitary-rigid},
in every step above.
Before explaining the idea of the proof of these results, let us recall the
setting. We begin with a scheme $\mathscr M$ with a dominant \'etale map
$\mathscr M \to \mathscr M_{g,n}$.  We denote the associated family of punctured curves by $\mathscr
C^\circ \to \mathscr M$. We are given a local system $\mathbb V$ on $\mathscr C^\circ$.
\autoref{theorem:rank-bound-subsystem} says that if $\mathbb{V}$ is unitary,
there are no sub-local systems of $R^1 \pi^\circ_* \mathbb V$ of low rank (that is, rank less
than $2g - 2\rk \mathbb V$).
Let $C^\circ$ be a fiber of $\pi^\circ$.
We deduce \autoref{theorem:unitary-rigid}, which says that when $\mathbb
V|_{C^\circ}$
is unitary and has low rank (less than $g$), then $R^1 \pi^\circ_* \mathbb V$ has no global sections.
The benefit of \autoref{theorem:unitary-rigid} is that we only need check
unitarity on fibers, but the cost is that we are only able to rule out global
sections, i.e. trivial sub-local systems, instead of arbitrary low rank
sub-local systems.

First, we deduce \autoref{theorem:unitary-rigid} from
\autoref{theorem:rank-bound-subsystem}.
The idea is to use the assumption that
$\mathbb V$ is unitary on fibers to
reduce via \autoref{lemma:unitary-direct-sum} to the case that $\mathbb V$ is a
tensor product $\mathbb U \otimes (\pi^\circ)^* \mathbb W$, with $\mathbb{U}$ unitary. 
A nonzero global section of $R^1\pi^\circ_*(\mathbb U \otimes (\pi^\circ)^* \mathbb W)=(R^1\pi^\circ_*\mathbb{U})\otimes \mathbb{W}$ yields a nonzero map $\mathbb W^\vee \to R^1 \pi^\circ_* \mathbb U$, and hence a
low rank sub-local system of $R^1 \pi^\circ_* \mathbb U$, contradicting 
\autoref{theorem:rank-bound-subsystem}.

The proof of
\autoref{theorem:rank-bound-subsystem} boils down to an analysis of the derivative of the period map associated to the complex variation of mixed Hodge structure on $R^1\pi^\circ_*\mathbb{V}$, for $\mathbb{V}$ unitary.
We identify this derivative with a natural multiplication map $$H^0(E\otimes \omega_C(D))\otimes H^0(E^\vee\otimes \omega_C)\to H^0(\omega_C^{\otimes 2}(D))$$ in
\autoref{theorem:period-map-computation}, where $C$ is the smooth projective compactification of $C^\circ$ with boundary $D$, and $E$ is the vector bundle on $C$ associated to $\mathbb{V}|_{C^\circ}$ by the Mehta-Seshadri correspondence \cite{mehta1980moduli}. By the theorem of the fixed part, the existence of a low rank sub-local system of $R^1\pi_*^\circ\mathbb{V}$ places restrictions on this map, which we rule out
 using vector bundle methods,
as developed in \cite[\S6]{LL:geometric-local-systems},
ultimately relying on Clifford's theorem for vector bundles.


\subsection{Notation}
\label{subsection:notation}
Unless otherwise stated, we will work over the field of complex numbers $\mathbb{C}$. 
In particular, we will lift this restriction in
\autoref{section:arithmetic-applications}.

\begin{notation}
	\label{notation:versal-family}
	We fix non-negative integers $(g,n)$ so that $n \geq 1$ if $g = 1$ and
	$n \geq 3$ if $g = 0$, i.e., $\Sigma_{g,n}$ is hyperbolic.
	Let $\mathscr{M}$ be a connected complex variety. A \emph{family
	of $n$-pointed curves of genus $g$ over $\mathscr{M}$} is a smooth
	proper morphism $\pi: \mathscr{C}\to \mathscr{M}$ of relative dimension one, with geometrically
	connected genus $g$ fibers, equipped with $n$ sections $s_1,\cdots,
	s_n: \mathscr{M}\to \mathscr{C}$ with disjoint images. Call such a family is \emph{versal} if the
	induced map $\mathscr{M}\to \mathscr{M}_{g,n}$ is dominant and \'etale.
	Here $\mathscr M_{g,n}$ denotes the Deligne-Mumford moduli stack of $n$-pointed genus
	$g$ smooth proper curves with geometrically connected fibers.

	If $\pi: \mathscr{C}\to \mathscr{M}$ is a family of $n$-pointed curves, we let
	$\mathscr D := \coprod_{i=1}^n \on{im}(s_i)$ denote the images of
	the sections, which is finite \'etale of degree $n$ over $\mathscr M$.
	Also let
	$\mathscr{C}^\circ:=\mathscr{C}\setminus \bigcup_i \on{im}(s_i)$,
	let $j: \mathscr C^\circ \hookrightarrow \mathscr{C}$ be the natural
	inclusion, and let $\pi^\circ := \pi \circ j : \mathscr C^\circ \to \mathscr	M$ denote the composition. We will refer to $\pi^\circ: \mathscr{C}^\circ\to \mathscr{M}$ as the \emph{associated family of punctured curves}. If $\pi^\circ$ arises as the family of punctured curves associated to a versal family of $n$-pointed curves, we will call it a \emph{punctured versal family}.
	We will frequently use $m \in \mathscr M$ as a basepoint, and $c \in
	\mathscr C^\circ$ as a basepoint with $\pi^\circ(c) = m$.
	We use $C^\circ$ as notation to denote the fiber $(\pi^\circ)^{-1}(m) = \mathscr
	C^\circ_m$.
	\begin{equation}
		\label{equation:}
		\begin{tikzcd}
			\mathscr C^\circ \ar {r}{j} \ar {rd}{\pi^\circ} &
			\mathscr C \ar {d}{\pi} & \mathscr D \ar{l} \\
			& \mathscr M \ar{ur}{s_1} \ar[bend
			right=70,swap,ur,"s_n"]
			\arrow[bend right = 60, ur, draw=none, "\ddots"]
		 &
		\end{tikzcd}
	\end{equation}
\end{notation}


\begin{notation}
	\label{notation:adjoint}
	For $G$ an algebraic group with derived subgroup $G^{\on{der}}$ and
corresponding Lie algebra $\mathfrak g^{\on{der}}$, we use
$\on{Ad} : G \to \on{GL}(\mathfrak g^{\on{der}})$
to denote the natural action of $G$ on $\mathfrak{g}^{\on{der}}$ by conjugation.
Given a representation $\rho: \pi_1(X, x) \to G(\mathbb C)$, let $\on{ad}(\rho)
:= \rho \circ \on{Ad}: \pi_1(X, x) \to \on{GL}(\mathfrak g^{\on{der}})$. 
In particular, given
$\rho : \pi_1(X, x) \to \on{GL}_r(\mathbb C)$ or 
$\rho : \pi_1(X, x) \to \on{PGL}_r(\mathbb C)$
we use $\on{ad}(\rho)$ to denote the composite map
$\on{ad}(\rho) : \pi_1(X, x) \to \on{GL}(\mathfrak {pgl}_r(\mathbb C))$.
Under the identification between local systems on a connected space and representations of the
fundamental group, if $\mathbb W$ is a local system on $X$ corresponding to some representation $\rho$, 
we use $\on{ad}\mathbb W$ to denote the local system corresponding to
$\on{ad}\rho$.

A representation $\pi_1(X,x)\to \on{GL}_r(\mathbb{C})$ is \emph{unitary} if its image has
compact closure. 
A complex local system is unitary if its monodromy representation is unitary. By
an averaging argument, unitary representations are exactly those that preserve a positive-definite Hermitian form on $\mathbb{C}^n$, i.e.~they are conjugate to a representation that factors through the unitary group $\on{U}(n)$.
\end{notation}

\begin{notation}
For a pointed finite-type scheme or Deligne-Mumford stack $(X, x)$ over
$\mathbb{C}$, we will use $\pi_1(X, x)$ to denote the topological fundamental group 
 of the associated complex-analytic space or analytic stack.
Similarly, for a local system $\Lambda$ on $X$, we use
$H^i(X, \Lambda)$ to denote the singular cohomology of the associated complex-analytic space or stack, unless otherwise stated. We will use $\on{Mod}_{g,n}$ to denote the mapping class group of an orientable surface of genus $g$ with $n$ punctures, and $\on{PMod}_{g,n}=\pi_1(\mathscr{M}_{g,n})$ 
(see \autoref{lemma:mgn-fundamental-group})
to denote pure mapping class group, i.e.~the subgroup of $\on{Mod}_{g,n}$ of index $n!$ preserving the punctures pointwise.
\end{notation}

\subsection{Acknowledgements}

We would like to thank H\'el\`ene Esnault
for extremely helpful comments, and especially for finding and helping us correct a substantial gap in the original proof of \autoref{theorem:finite-image-semisimple}. We thank Andrew Putman for encouraging us in this project, and Ian Agol and Dawid Kielak for bringing the Putman-Wieland conjecture to our attention.
We also thank an anonymous referee,
Philip Boalch,
Anand Deopurkar,
Charles Doran,
Benson Farb,
Matt Kerr,
Mark Kisin,
Christian Klevdal,
Joshua Lam,
Brian Lawrence,
Alexander Lubotzky,
Vladimir Markovi\'c,
Takuro Mochizuki,
Stefan Patrikis,
Alexander Petrov,
Mihnea Popa,
Will Sawin,
Carlos Simpson,
Andrew Snowden,
Junho Peter Whang,
Ben Wieland,
Melanie Matchett Wood, 
and Kang Zuo.
Landesman was supported by the National Science Foundation under Award No. DMS-2102955 
and Litt was supported by NSF grant DMS-2001196.
This material is based upon work supported by the Swedish Research Council under grant no. 2016-06596 while the authors were in residence at Institut Mittag-Leffler in Djursholm, Sweden during the fall of 2021.

\section{Representation-theoretic preliminaries}
\label{section:representation-preliminaries}

In this section, we give group-theoretic constructions which we will use to
analyze representations with finite mapping class group orbit.
In \autoref{subsection:basic-properties},
we verify basic properties of MCG-finite representations. 
In \autoref{subsection:mcg-finite-construction}, given an irreducible representation
$\rho: \pi_1(\Sigma_{g,n},x) \to \on{GL}_r(\mathbb C)$ whose conjugacy class is fixed by a finite-index subgroup $\Gamma\subset\on{PMod}_{g,n+1}$, we construct a representation
$\widetilde{\rho}:  \Gamma \to \on{PGL}_r(\mathbb C)$,
which will be analyzed throughout this paper.
In \autoref{subsection:lifting-projective},
we show how to lift certain projective representations of the fundamental groups of families of curves to honest representations into
$\on{GL}_r(\mathbb C)$,
after passing to a suitable cover. Finally, in
\autoref{subsection:unitary-on-fibers}, we prove some structural results about the representations we've constructed.

We will use the following lemma throughout, to connect properties of mapping class groups to geometry.
\begin{lemma}
	\label{lemma:mgn-fundamental-group}
	For $m \in \mathscr M_{g,n}$ a basepoint and $m'\in \mathscr M_{g, n+1}$ a lift of $m$, there are isomorphisms $\pi_1(\mathscr M_{g,n}, m) \simeq \on{PMod}_{g,n}$, $\pi_1(\mathscr M_{g,n+1}, m') \simeq \on{PMod}_{g,n}$ such that the diagram 
	$$\xymatrix{
	1 \ar[r] & \pi_1(\Sigma_{g,n}) \ar[r] \ar@{=}[d]& \pi_1(\mathscr{M}_{g,n+1}, m') \ar[r] \ar[d]^\sim & \pi_1(\mathscr{M}_{g,n}, m) \ar[r] \ar[d]^\sim & 1 \\
	1 \ar[r] & \pi_1(\Sigma_{g,n}) \ar[r] & \on{PMod}_{g,n+1} \ar[r] & \on{PMod}_{g,n}\ar[r] & 1
	}$$
	commutes, where the vertical maps are given by these isomorphisms, the top row is the exact sequence of homotopy groups induced by the forgetful map $\mathscr{M}_{g,n+1}\to \mathscr{M}_{g,n}$,  and the bottom row is the Birman exact sequence \cite[p.~98]{farbM:a-primer}. 
\end{lemma}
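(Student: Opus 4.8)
The plan is to reduce both claims to standard facts about the moduli stack of curves and about fibrations, being careful about the stacky point. First I would establish the identification $\pi_1(\mathscr{M}_{g,n},m)\simeq\on{PMod}_{g,n}$. The cleanest route is to recall that the analytic stack $\mathscr{M}_{g,n}$ is the quotient stack $[\mathscr{T}_{g,n}/\on{PMod}_{g,n}]$, where $\mathscr{T}_{g,n}$ is Teichm\"uller space of the $n$-pointed genus $g$ surface; since $\mathscr{T}_{g,n}$ is contractible (diffeomorphic to a ball, using the hyperbolicity hypothesis $n\geq 1$ if $g=1$, $n\geq 3$ if $g=0$, and $g\geq 2$ otherwise) and $\on{PMod}_{g,n}$ acts properly discontinuously, the quotient stack is a $K(\on{PMod}_{g,n},1)$ in the appropriate $2$-categorical sense, so its fundamental group (defined via the associated analytic stack as in the Notation section) is $\on{PMod}_{g,n}$. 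A reference such as \cite[Chapters 10, 12]{farbM:a-primer} can be cited for the contractibility of Teichm\"uller space and the properness of the action; one should remark that passing to the analytic stack rather than the coarse space is exactly what retains the full $\on{PMod}$ rather than a quotient of it by the (generically trivial, but not always) automorphisms.

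Next I would treat the top row. The forgetful morphism $\mathscr{M}_{g,n+1}\to\mathscr{M}_{g,n}$ is the universal punctured curve: its fiber over a point $[C,p_1,\dots,p_n]$ is the curve $C\setminus\{p_1,\dots,p_n\}$, i.e.\ a copy of $\Sigma_{g,n}$. Since this is a fibration of analytic stacks with fiber $\Sigma_{g,n}$ (which is a $K(\pi,1)$), and the base $\mathscr{M}_{g,n}$ is connected, the long exact homotopy sequence of the fibration gives
$$\pi_2(\mathscr{M}_{g,n})\to\pi_1(\Sigma_{g,n})\to\pi_1(\mathscr{M}_{g,n+1})\to\pi_1(\mathscr{M}_{g,n})\to 1,$$
and $\pi_2(\mathscr{M}_{g,n})=0$ because $\mathscr{M}_{g,n}$ is a $K(\on{PMod}_{g,n},1)$ by the previous paragraph. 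Exactness on the left of the four-term sequence ($1\to\pi_1(\Sigma_{g,n})\to\cdots$) is then the statement that $\pi_1(\Sigma_{g,n})\to\pi_1(\mathscr{M}_{g,n+1})$ is injective, which follows because $\pi_1(\Sigma_{g,n})$ is centerless (again using hyperbolicity) so it injects into $\on{Aut}$ of itself and hence, via the monodromy of the fibration, into $\pi_1$ of the total space — alternatively, this is precisely the injectivity in the classical Birman exact sequence. This gives the top row as a short exact sequence.

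Finally I would identify the top row with the bottom (Birman) row. The right-hand vertical map is the isomorphism from the first paragraph (applied with $n$ and with $n+1$), and the left-hand vertical map is the identity. The point is to check that these isomorphisms are compatible with the horizontal maps, i.e.\ that the square on the right commutes and the square on the left commutes. Commutativity of the right square is functoriality of the identification $\pi_1(\mathscr{M}_{g,\bullet})\simeq\on{PMod}_{g,\bullet}$ under the forgetful map, which on the mapping class group side is the homomorphism $\on{PMod}_{g,n+1}\to\on{PMod}_{g,n}$ that forgets the last marked point — this is essentially the definition of how Teichm\"uller spaces and their mapping class group actions are compatible under forgetting a point. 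Commutativity of the left square amounts to matching the topological fiber inclusion with the ``point-pushing'' inclusion $\pi_1(\Sigma_{g,n})\hookrightarrow\on{PMod}_{g,n+1}$, which is the content of \cite[\S4.2]{farbM:a-primer}. I expect the main obstacle to be purely expository: setting up $\pi_1$ of the analytic stack carefully enough that ``$\mathscr{M}_{g,n}$ is a $K(\on{PMod}_{g,n},1)$'' and ``the forgetful map is a fibration with fiber $\Sigma_{g,n}$'' are genuinely justified statements rather than slogans, and tracking the two commuting squares through these identifications. Everything else is standard Teichm\"uller theory, and I would keep the write-up short by citing \cite{farbM:a-primer} for the mapping-class-group side and a standard reference for stacky homotopy theory (e.g.\ Noohi) for the fibration sequence.
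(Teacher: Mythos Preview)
Your proposal is correct and follows essentially the same approach as the paper: the paper's proof simply cites the contractibility of the universal cover (Teichm\"uller space) and the identification of $\mathscr{M}_{g,n}$ as its quotient by the properly discontinuous $\on{PMod}_{g,n}$-action, referring to \cite[\S10.6.3 and p.~353]{farbM:a-primer}. Your write-up is more detailed (spelling out the fibration sequence, the vanishing of $\pi_2$, and the commutativity of the two squares), but the underlying argument is the same.
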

\begin{proof}
This follows from the contractibility of the universal cover of $\mathscr{M}_{g,
n}$, and the fact that $\mathscr{M}_{g,n}$ is the quotient of its universal
cover by the properly discontinuous action of $\on{PMod}_{g,n}$. See
\cite[\S10.6.3 and p. 353]{farbM:a-primer}. There is a choice involved here, namely: if $C$ is the Riemann surface associated to the point $m\in \mathscr{M}_{g,n}$, one must choose a homeomorphism between $C$ and our reference surface $\Sigma_{g,n}$, and similarly with $m'$. Changing this homeomorphism replaces the vertical isomorphisms by conjugate isomorphisms.
\end{proof}
The subgroup $ \pi_1(\Sigma_{g,n})\subset \on{PMod}_{g,n+1} $ is often referred to as the \emph{point-pushing subgroup}.

\subsection{Basic properties of MCG-finite representations}\label{subsection:basic-properties}

To acquaint the reader with MCG-finite representation, we now spell out some of
their basic properties.

\begin{proposition}\label{proposition:basic-properties}
	The direct sum and tensor product of two MCG-finite representations of $\pi_1(\Sigma_{g,n})$ is MCG-finite. Any semisimple subquotient of an MCG-finite representation is MCG-finite.
\end{proposition}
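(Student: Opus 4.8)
The plan is to handle the three closure properties separately, in each case exhibiting a map of $\operatorname{Mod}_{g,n}$-sets from an orbit we control to the orbit in question, and bounding the fibers. Throughout I use that $\operatorname{Mod}_{g,n}$ acts on the set $\operatorname{Rep}_r := \{\text{conjugacy classes of representations } \pi_1(\Sigma_{g,n},x)\to \operatorname{GL}_r(\mathbb C)\}$, and that ``MCG-finite'' means finite orbit there.

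\textbf{Direct sums and tensor products.} Given MCG-finite $\rho_1$ of rank $r_1$ and $\rho_2$ of rank $r_2$, consider the map of sets $\operatorname{Rep}_{r_1}\times \operatorname{Rep}_{r_2}\to \operatorname{Rep}_{r_1+r_2}$ sending $([\rho_1],[\rho_2])\mapsto [\rho_1\oplus\rho_2]$, and similarly $\operatorname{Rep}_{r_1}\times \operatorname{Rep}_{r_2}\to \operatorname{Rep}_{r_1 r_2}$, $([\rho_1],[\rho_2])\mapsto[\rho_1\otimes\rho_2]$. Each is $\operatorname{Mod}_{g,n}$-equivariant: the outer action on $\pi_1$ is by a group automorphism, and both $\oplus$ and $\otimes$ are functorial for precomposition with an automorphism (and the conjugacy-class level is well-defined since conjugating $\rho_1$ and $\rho_2$ conjugates $\rho_1\oplus\rho_2$ and $\rho_1\otimes\rho_2$). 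Hence the image of the finite set $\operatorname{Mod}_{g,n}\cdot([\rho_1],[\rho_2])$ (which is finite, being contained in $(\operatorname{Mod}_{g,n}\cdot[\rho_1])\times(\operatorname{Mod}_{g,n}\cdot[\rho_2])$) contains the orbit of $[\rho_1\oplus\rho_2]$ (resp. $[\rho_1\otimes\rho_2]$), so that orbit is finite. No fiber analysis is even needed here.

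\textbf{Semisimple subquotients.} Let $\rho: \pi_1(\Sigma_{g,n},x)\to\operatorname{GL}_r(\mathbb C)$ be MCG-finite and let $\sigma$ be a semisimple subquotient, say of rank $s$. The key point is that $\sigma$ is determined up to isomorphism by the data of which it is a subquotient together with finitely many choices: passing to the semisimplification $\rho^{\mathrm{ss}}$ (whose conjugacy class still has finite orbit, since semisimplification commutes with precomposition by automorphisms), $\sigma$ appears as a sub-representation of $\rho^{\mathrm{ss}}$, and $\rho^{\mathrm{ss}}$ has only finitely many sub-representations up to isomorphism — namely the partial sums of its (finitely many) isotypic factors with multiplicity. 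So consider the set $S$ of isomorphism classes of semisimple representations $\tau$ of rank $\le r$ such that $\tau$ is a direct summand of $\eta^{\mathrm{ss}}$ for some $\eta$ in the (finite) $\operatorname{Mod}_{g,n}$-orbit of $[\rho]$; this $S$ is finite. I claim $S$ is $\operatorname{Mod}_{g,n}$-stable: if $\tau$ is a summand of $\eta^{\mathrm{ss}}$ and $f\in\operatorname{Mod}_{g,n}$, then $\tau\circ f$ is a summand of $(\eta\circ f)^{\mathrm{ss}} = (\eta^{\mathrm{ss}})\circ f$, and $[\eta\circ f]$ is again in the orbit of $[\rho]$. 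Since $[\sigma]\in S$, its orbit is finite.

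\textbf{Anticipated main obstacle.} The genuinely content-bearing point is the semisimple-subquotient case, and within it the observation that ``subquotient'' can be replaced by ``direct summand of the semisimplification,'' so that only finitely many isomorphism classes arise from a fixed $\rho$. The direct-sum and tensor-product cases are formal equivariance statements. I would write the proof in the order: (1) note equivariance of $\oplus$, $\otimes$, and $(-)^{\mathrm{ss}}$ under the outer $\operatorname{Mod}_{g,n}$-action, well-defined on conjugacy classes; (2) deduce $\oplus$ and $\otimes$ immediately; (3) reduce ``semisimple subquotient'' to ``direct summand of $\rho^{\mathrm{ss}}$'' using the Jordan--Hölder / Krull--Schmidt structure of semisimple representations, and conclude by the finiteness of the set $S$ above. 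One should be slightly careful that a subquotient of $\rho$ need not be a sub-representation of $\rho$, but any semisimple subquotient is a composition factor hence does sit inside $\rho^{\mathrm{ss}}$; I would state this explicitly.
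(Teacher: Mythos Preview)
Your proof is correct and follows essentially the same approach as the paper. The paper dismisses the direct sum and tensor product cases as ``clear'' (exactly your equivariance argument) and for the semisimple-subquotient case simply cites \cite[Lemma 2.2.1]{lawrence2019representations}; your self-contained argument via direct summands of $\rho^{\mathrm{ss}}$ is precisely the content behind that citation, so you have in effect unpacked the reference.
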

\begin{proof}
The first statement, about direct sums and tensor products, is clear. The second is immediate from \cite[Lemma 2.2.1]{lawrence2019representations}.
\end{proof}

We next show that restrictions of representations of $\on{Mod}_{g,n+1}$ to the
point-pushing subgroup are MCG-finite. See also \cite[Lemma
2.2]{kasahara2015visualization} for a related result with analogous proof. 

\begin{proposition}\label{proposition:MCG-finite-index-rep}
	Let $\Gamma\subset \on{Mod}_{g,n+1}$ be a finite index subgroup
	containing the point-pushing subgroup $\pi_1(\Sigma_{g,n})\subset
	\on{Mod}_{g,n+1}$. Let $$\rho:\Gamma\to \on{GL}_r(\mathbb{C})$$ be a representation. Then $\rho|_{\pi_1(\Sigma_{g,n})}$ is MCG-finite.
\end{proposition}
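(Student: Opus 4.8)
The plan is to exploit the Birman exact sequence from \autoref{lemma:mgn-fundamental-group}, which exhibits $\pi_1(\Sigma_{g,n})$ as a \emph{normal} subgroup of $\on{Mod}_{g,n+1}$ (indeed of $\on{PMod}_{g,n+1}$, and the point-pushing subgroup is normal in all of $\on{Mod}_{g,n+1}$). Normality is the key structural fact: conjugation by elements of $\Gamma$ preserves $\pi_1(\Sigma_{g,n})$, and the outer action of $\on{Mod}_{g,n+1}/\pi_1(\Sigma_{g,n}) = \on{Mod}_{g,n}$ on $\pi_1(\Sigma_{g,n})$ is precisely the standard mapping class group action (this is the content of why the point-pushing subgroup realizes the $\on{Mod}_{g,n}$-action). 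So for $\gamma \in \Gamma$, the conjugacy class of $\rho|_{\pi_1(\Sigma_{g,n})}\circ (\text{conjugation by }\gamma)$ is, on one hand, a point in the $\on{Mod}_{g,n}$-orbit of the conjugacy class of $\rho|_{\pi_1(\Sigma_{g,n})}$ (via the image of $\gamma$ in $\on{Mod}_{g,n}$), and on the other hand it equals the conjugacy class of $\rho|_{\pi_1(\Sigma_{g,n})}$ itself, since $\rho$ is defined on all of $\Gamma$ and hence $\rho(\gamma)$ conjugates $\rho|_{\pi_1(\Sigma_{g,n})}$ to the precomposed representation.

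First I would set up notation carefully: write $N := \pi_1(\Sigma_{g,n}) \trianglelefteq \on{Mod}_{g,n+1}$, and $\sigma := \rho|_N : N \to \on{GL}_r(\mathbb{C})$. For $\gamma \in \Gamma$, let $c_\gamma : N \to N$ be conjugation $x \mapsto \gamma x \gamma^{-1}$. Then $\sigma \circ c_\gamma = \on{Ad}(\rho(\gamma)) \circ \sigma$ as representations of $N$, so $\sigma \circ c_\gamma$ is conjugate to $\sigma$. Next I would identify the image $\overline\Gamma$ of $\Gamma$ under $\on{Mod}_{g,n+1}\twoheadrightarrow\on{Mod}_{g,n}$: since $\Gamma$ has finite index in $\on{Mod}_{g,n+1}$ and contains $N = \ker$, the image $\overline\Gamma$ has finite index in $\on{Mod}_{g,n}$. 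The outer automorphism class of $c_\gamma$ depends only on the image $\overline\gamma \in \overline\Gamma$, and as $\overline\gamma$ ranges over $\overline\Gamma$ the induced action on conjugacy classes of representations of $N$ is exactly the restriction of the $\on{Mod}_{g,n}$-action. Combining the two displayed facts: the conjugacy class of $\sigma$ is fixed by every element of $\overline\Gamma$, i.e.\ it is fixed by a finite-index subgroup of $\on{Mod}_{g,n}$, hence has finite $\on{Mod}_{g,n}$-orbit. That is the definition of MCG-finiteness.

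There is no serious obstacle here; the proposition is essentially a bookkeeping consequence of normality of the point-pushing subgroup together with the identification of the quotient action. The one point requiring a little care — and probably the only thing worth spelling out — is the compatibility between the \emph{conjugation} action of $\on{Mod}_{g,n+1}$ on its normal subgroup $N$ and the \emph{outer} action of $\on{Mod}_{g,n}$ on $\pi_1(\Sigma_{g,n})$ used in \autoref{definition:mcg-finite}; this is standard (it is, after all, how the Birman exact sequence is usually interpreted, cf.\ \cite[\S4.2]{farbM:a-primer}), but I would state it explicitly as the mechanism by which "fixed by a finite-index subgroup of $\on{PMod}_{g,n}$ acting by conjugation through $\on{Mod}_{g,n+1}$" translates into "finite $\on{Mod}_{g,n}$-orbit of the conjugacy class." A secondary minor point: one should note the distinction between $\on{Mod}$ and $\on{PMod}$ is harmless, since a finite-index subgroup of $\on{PMod}_{g,n}$ is still finite index in $\on{Mod}_{g,n}$, so finiteness of the orbit is unaffected.
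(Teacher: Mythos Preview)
Your proof is correct and follows essentially the same approach as the paper's: both arguments reduce to the observation that for $\gamma\in\Gamma$ one has $\rho(\gamma x\gamma^{-1})=\rho(\gamma)\rho(x)\rho(\gamma)^{-1}$, so the conjugacy class of $\rho|_{\pi_1(\Sigma_{g,n})}$ is fixed by the finite-index image of $\Gamma$ in $\on{Mod}_{g,n}$. Your discussion of the $\on{Mod}$ versus $\on{PMod}$ distinction and the compatibility of the conjugation action with the Birman outer action is a bit more explicit than the paper's, but the content is the same.
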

\begin{proof}
	By replacing $\Gamma$ with 
	$\Gamma \cap \on{PMod}_{g,n+1},$
	we may assume $\Gamma \subset \on{PMod}_{g,n+1}$,
	the pure mapping class group.  We claim that the image of $\Gamma$ in
	$\on{PMod}_{g,n}$ (under the map $\on{PMod}_{g,n+1}\to \on{PMod}_{g,n}$
	arising from the Birman exact sequence) stabilizes the conjugacy class
	of $\rho$.
	It suffices to show that for each $\gamma\in \Gamma$, 
\begin{align*}
	\rho^\gamma:  \pi_1(\Sigma_{g,n})& \rightarrow \on{GL}_r(\mathbb C) \\
	g & \mapsto \rho(\gamma g \gamma^{-1})
\end{align*}
is conjugate to $\rho$. Indeed, $\rho^\gamma(h) = \rho(\gamma) \rho(h)
\rho(\gamma)^{-1}$.
\end{proof}

We later give a partial converse to \autoref{proposition:MCG-finite-index-rep},
for irreducible representations, in \autoref{corollary:converse-to-MCG-rep}.
We now give a geometric counterpart to
\autoref{proposition:MCG-finite-index-rep},
which is closely related to \cite[Theorem A1]{CH:isomonodromic}.
\begin{proposition}
\label{proposition:MCG-finite-family-of-curves}
Let $\pi^\circ: \mathscr{C}^\circ\to \mathscr{M}$ be a punctured versal family
of $n$-pointed genus $g$ curves, as in \autoref{notation:versal-family}. In
particular, $\mathscr M \to \mathscr M_{g,n}$ is dominant and \'etale.
	Let $C^\circ$ be a fiber of $\pi^\circ$. If
	$$\rho:\pi_1(\mathscr{C}^\circ)\to \on{GL}_r(\mathbb{C})$$ is a representation, $\rho|_{\pi_1(C^\circ)}$ is MCG-finite.
\end{proposition}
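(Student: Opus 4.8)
The plan is to exhibit an action of a finite-index subgroup of $\on{PMod}_{g,n}$ on the conjugacy class of $\rho|_{\pi_1(C^\circ)}$, using the topological interpretation of $\mathscr M$ and $\mathscr C^\circ$ supplied by \autoref{lemma:mgn-fundamental-group}. First I would reduce to the case where $\pi^\circ$ is pulled back from the universal family over $\mathscr M_{g,n+1}$: since $\mathscr M \to \mathscr M_{g,n}$ is \'etale, after replacing $\mathscr M$ by a connected finite \'etale cover (which only shrinks $\pi_1(\mathscr M)$ and $\pi_1(\mathscr C^\circ)$ to finite-index subgroups, hence does not affect the MCG-finiteness conclusion for $\rho|_{\pi_1(C^\circ)}$) I may assume $\mathscr M$ maps to $\mathscr M_{g,n}$ by a map inducing an injection on $\pi_1$ with finite-index image. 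Then $\mathscr C^\circ \to \mathscr M$ is the restriction of the punctured universal curve, so $\pi_1(\mathscr C^\circ)$ is identified with a finite-index subgroup $\Gamma \subset \pi_1(\mathscr M_{g,n+1}) \simeq \on{PMod}_{g,n+1}$, and the fiber inclusion $\pi_1(C^\circ) \hookrightarrow \pi_1(\mathscr C^\circ)$ is (up to finite index) the point-pushing subgroup $\pi_1(\Sigma_{g,n}) \subset \on{PMod}_{g,n+1}$ of \autoref{lemma:mgn-fundamental-group}.

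With this identification in hand, the statement follows immediately from \autoref{proposition:MCG-finite-index-rep}: a representation $\rho: \pi_1(\mathscr C^\circ) \to \on{GL}_r(\mathbb C)$ restricted to the point-pushing subgroup is MCG-finite, because conjugation inside $\Gamma$ shows that for $\gamma \in \Gamma$ the twisted representation $\rho^\gamma$ is conjugate to $\rho$, so the image of $\Gamma$ in $\on{PMod}_{g,n}$ (a finite-index subgroup, since $\Gamma$ has finite index in $\on{PMod}_{g,n+1}$ and the Birman sequence of \autoref{lemma:mgn-fundamental-group} is exact) stabilizes the conjugacy class of $\rho|_{\pi_1(\Sigma_{g,n})}$. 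Finiteness of the orbit under all of $\on{Mod}_{g,n}$ then follows since $\on{PMod}_{g,n}$ has finite index in $\on{Mod}_{g,n}$ and any finite-index subgroup has only finitely many cosets to move the class around.

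The one point requiring care — and the main obstacle — is the bookkeeping around the various finite-index passages and the fact that $\mathscr M$ is only \'etale, not an open subvariety, over $\mathscr M_{g,n}$; one must check that replacing $\mathscr M$ by a cover does not change whether $\rho|_{\pi_1(C^\circ)}$ is MCG-finite. But MCG-finiteness of a representation of $\pi_1(\Sigma_{g,n})$ is a property of the conjugacy class together with the $\on{Mod}_{g,n}$-action that is unaffected by such replacements: the fiber $C^\circ$ and the restricted representation $\rho|_{\pi_1(C^\circ)}$ literally do not change when we pass to a cover $\mathscr M' \to \mathscr M$ (we just restrict $\rho$ to the finite-index subgroup $\pi_1(\mathscr C'^\circ) \subset \pi_1(\mathscr C^\circ)$, which still contains $\pi_1(C^\circ)$), so the conclusion transfers. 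Thus the whole argument is a translation between the geometry of $\pi^\circ$ and the group theory of the Birman exact sequence, and then an appeal to \autoref{proposition:MCG-finite-index-rep}.
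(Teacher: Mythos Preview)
Your overall strategy is correct and matches the paper's: use the Birman exact sequence to see that conjugation by elements of $\pi_1(\mathscr C^\circ)$ stabilizes the conjugacy class of $\rho|_{\pi_1(C^\circ)}$, and then observe that the resulting stabilizer in $\on{PMod}_{g,n}$ has finite index. However, your reduction step contains a genuine gap.

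You claim that after passing to a connected finite \'etale cover of $\mathscr M$ you may assume that $\pi_1(\mathscr M)\to \pi_1(\mathscr M_{g,n})$ is injective. This is not true in general: the hypothesis is only that $\mathscr M\to\mathscr M_{g,n}$ is dominant and \'etale, not finite \'etale. Such a map can have nontrivial kernel on $\pi_1$ (think of a Zariski open subset of a finite \'etale cover of $\mathscr M_{g,n}$), and passing to a finite \'etale cover of $\mathscr M$ replaces $\pi_1(\mathscr M)$ by a finite-index subgroup, which will still meet that kernel nontrivially. So you cannot in general arrange that $\pi_1(\mathscr C^\circ)$ injects into $\on{PMod}_{g,n+1}$, and hence you cannot literally invoke \autoref{proposition:MCG-finite-index-rep}.

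The fix is exactly what the paper does, and in fact what your second paragraph already sketches without needing the injectivity: work directly with the exact sequence $1\to\pi_1(C^\circ)\to\pi_1(\mathscr C^\circ)\to\pi_1(\mathscr M)\to 1$ (this is \autoref{lemma:SES-pi1-versal-curves}). The induced outer action of $\pi_1(\mathscr M)$ on $\pi_1(C^\circ)$ factors through $\on{PMod}_{g,n}$ via $\pi_1(\mathscr M)\to\pi_1(\mathscr M_{g,n})$, whose image has finite index by \autoref{lemma:finite-index-from-dominant-map}. For any $\widetilde\gamma\in\pi_1(\mathscr C^\circ)$, the representation $(\rho|_{\pi_1(C^\circ)})^{\widetilde\gamma}$ is conjugate to $\rho|_{\pi_1(C^\circ)}$ by $\rho(\widetilde\gamma)$; this computation does not require $\pi_1(\mathscr C^\circ)$ to sit inside $\on{PMod}_{g,n+1}$. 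So drop the reduction step entirely and run your conjugation argument on $\pi_1(\mathscr C^\circ)$ itself.
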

We prove \autoref{proposition:MCG-finite-family-of-curves}, in
\autoref{subsubsection:proof-family-of-curves},
but we first require some well-known lemmas about families of curves. 
\begin{lemma}
	\label{lemma:finite-index-from-dominant-map}
	For $\mathscr M$ a scheme and any dominant \'etale map
	$\mathscr M \to \mathscr M_{g,n}$,
	$\on{im}(\pi_1(\mathscr M) \to \pi_1(\mathscr M_{g,n}))$
	has finite index in 
$\pi_1(\mathscr M_{g,n})\simeq \on{PMod}_{g,n}$.
\end{lemma}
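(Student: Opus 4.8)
The plan is to reduce the statement to the basic fact that a dominant morphism of irreducible (Deligne--Mumford) stacks induces a finite-index image on fundamental groups, once we know the morphism is also \'etale. First I would recall that $\mathscr M_{g,n}$ is an irreducible, smooth Deligne--Mumford stack, so its associated analytic stack is connected and has a well-defined fundamental group $\pi_1(\mathscr M_{g,n}, m) \simeq \on{PMod}_{g,n}$ by \autoref{lemma:mgn-fundamental-group}. Since $\mathscr M \to \mathscr M_{g,n}$ is \'etale, it is in particular a local isomorphism in the analytic topology, so the image $\mathscr U$ of $\mathscr M$ is an open substack of $\mathscr M_{g,n}$; since the map is dominant, $\mathscr U$ is dense. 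But a dense open substack of an irreducible smooth stack is the complement of a closed substack of positive codimension, hence $\pi_1(\mathscr U, m) \to \pi_1(\mathscr M_{g,n}, m)$ is surjective (removing a positive-codimension locus does not change $\pi_1$ for smooth, or more generally normal, spaces — here one can pull back to a smooth cover to see this, or invoke that $\mathscr M_{g,n}$ is a smooth orbifold). So it suffices to show $\on{im}(\pi_1(\mathscr M) \to \pi_1(\mathscr U))$ has finite index.

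Next I would handle the map $\mathscr M \to \mathscr U$, which is dominant, \'etale, and now also \emph{surjective}. The key point is that a surjective \'etale morphism of finite type need not be finite, but it does admit a finite \'etale ``refinement'' in the following sense: after replacing $\mathscr M$ by a connected component, $\mathscr M \to \mathscr U$ is an \'etale morphism whose image is all of $\mathscr U$; one way to proceed is to pick a point $u \in \mathscr U$ and a point $\tilde m$ over it, and note that the local monodromy / the fiber over the generic point is finite (the fiber of an \'etale morphism of finite type is finite and \'etale over the base point, but the fibers can vary). The cleanest route: the induced map on fundamental groups $\pi_1(\mathscr M) \to \pi_1(\mathscr U)$ has image equal to the subgroup corresponding to the connected \'etale cover of $\mathscr U$ through which $\mathscr M \to \mathscr U$ factors; since $\mathscr M \to \mathscr U$ is of finite type and \'etale and surjective, the degrees of the fibers are bounded (by upper semicontinuity of fiber dimension/length on a finite type scheme, or by spreading out), say by $d$. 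Then I would argue that $\on{im}(\pi_1(\mathscr M) \to \pi_1(\mathscr U))$ has index at most $d$: if it had index $> d$, the corresponding connected \'etale cover $\mathscr U' \to \mathscr U$ through which $\mathscr M \to \mathscr U$ factors would have degree $> d$, forcing every fiber of $\mathscr M \to \mathscr U$ to have length $> d$, a contradiction. Combining with the surjectivity from the previous paragraph, $\on{im}(\pi_1(\mathscr M) \to \pi_1(\mathscr M_{g,n}))$ has finite index in $\pi_1(\mathscr M_{g,n}) \simeq \on{PMod}_{g,n}$.

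The main obstacle I anticipate is making the bookkeeping with stacks rather than schemes fully rigorous: $\mathscr M_{g,n}$ is a Deligne--Mumford stack, so ``$\pi_1$'' means the fundamental group of the associated topological/analytic stack (a good orbifold), and one must be a little careful that the standard topological facts — surjectivity of $\pi_1$ under removal of a positive-codimension closed substack, and the correspondence between finite-index subgroups of $\pi_1$ and connected finite \'etale covers — hold in this orbifold setting. This is standard (one can work with a fixed smooth scheme cover, e.g. $\mathscr M_{g,n}$ with full level structure, which is a scheme for level $\geq 3$, and transfer the statement, using that passing to such a cover changes $\pi_1$ only by a finite-index subgroup), but it is the point that requires care. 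An alternative that sidesteps some of this: pull everything back along a finite \'etale cover $\mathscr M_{g,n}[\ell] \to \mathscr M_{g,n}$ with $\mathscr M_{g,n}[\ell]$ a smooth quasi-projective \emph{scheme}, prove the finite-index statement there using only facts about schemes, and conclude, since $\pi_1(\mathscr M_{g,n}[\ell])$ already has finite index in $\pi_1(\mathscr M_{g,n})$.
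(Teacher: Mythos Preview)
Your proposal is essentially correct and follows the same route as the paper. The paper does exactly your ``alternative'': pull back along a finite \'etale scheme cover $\mathscr N \to \mathscr M_{g,n}$ to reduce to a dominant \'etale map of schemes $\mathscr M \to \mathscr N$, and then cite \cite[Lemma 4.19]{debarre:higher-dimensional}. The paper summarizes that argument as: pass to a dense open over which the map becomes \emph{finite} \'etale (generic finiteness for quasi-finite morphisms), where the index of the image of $\pi_1$ equals the degree, and use that removing a proper closed subset from a normal variety does not change surjectivity on $\pi_1$.

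One small imprecision in your main argument: factoring $\mathscr M \to \mathscr U' \to \mathscr U$ through a connected degree-$e$ cover does not force \emph{every} fiber of $\mathscr M \to \mathscr U$ to have size $\geq e$, since the lift $\mathscr M \to \mathscr U'$ is only dominant, not surjective. It does force the \emph{generic} fiber to have size $\geq e$, which suffices when $e$ is finite; but your contrapositive formulation also does not directly treat the infinite-index case, where $\mathscr U'$ is not an algebraic variety and the degree argument is unavailable. Replacing your fiber-bound step with the ``restrict to a dense open where the map is finite \'etale'' argument fixes both issues at once and is exactly the content of Debarre's lemma that the paper invokes.
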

\begin{proof}
	After pulling back to a finite \'etale cover $\mathscr
	N \to \mathscr M_{g,n}$ which is a scheme (such exists by e.g.~\cite[Proposition 2.3.4]{pikaart-dejong}),
	we may assume there is a dominant \'etale map of schemes
	$\alpha: \mathscr M \to \mathscr N$. It is enough to show $\on{im}(\pi_1(\mathscr M) \to \pi_1(\mathscr N))$
	has finite index in $\pi_1(\mathscr N)$, which follows from \cite[Lemma
	4.19]{debarre:higher-dimensional}.
	The basic idea here is to pass to opens over which the map is finite
	flat, and use that passing to opens will preserve the property that the
	image of the map of fundamental groups has finite index.
%
\end{proof}

\begin{lemma}\label{lemma:SES-pi1-versal-curves}
If $\mathscr{C} \to \mathscr{M}$ is a versal family of $n$-pointed curves of
genus $g$, with $3g - 3 + n \geq 0$, as in \autoref{notation:versal-family},
so that $C^\circ$ is a fiber of $\pi^\circ$,
the sequence
\begin{equation}
	\label{equation:fibration-sequence}
	\begin{tikzcd}
		1 \ar {r} & \pi_1({C}^\circ)  \ar {r} &
		\pi_1(\mathscr{C}^\circ) \ar {r} & \pi_1(\mathscr{M}) \ar {r} &
		1
\end{tikzcd}\end{equation}
is exact.
\end{lemma}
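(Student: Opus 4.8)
The statement is that for a versal family $\mathscr C \to \mathscr M$ of $n$-pointed genus $g$ curves, the fiber sequence on fundamental groups
$$1 \to \pi_1(C^\circ) \to \pi_1(\mathscr C^\circ) \to \pi_1(\mathscr M) \to 1$$
is exact. The plan is to deduce this from the corresponding statement for the universal case $\mathscr M_{g,n}$, which is essentially \autoref{lemma:mgn-fundamental-group} together with the Birman exact sequence, and then descend along the étale map $\mathscr M \to \mathscr M_{g,n}$.

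First I would recall that the family of punctured curves associated to the universal family over $\mathscr M_{g,n}$ is $\mathscr M_{g,n+1} \to \mathscr M_{g,n}$ (forgetting the last point), so that by \autoref{lemma:mgn-fundamental-group} the sequence is exact in the universal case: $1 \to \pi_1(\Sigma_{g,n}) \to \pi_1(\mathscr M_{g,n+1}) \to \pi_1(\mathscr M_{g,n}) \to 1$, with $\pi_1(\Sigma_{g,n}) = \pi_1(C^\circ)$ the point-pushing subgroup. Next, since $\mathscr M \to \mathscr M_{g,n}$ is étale and $\mathscr C^\circ$ is obtained by base change, $\mathscr C^\circ \to \mathscr M_{g,n+1}$ is also étale, and the square $\mathscr C^\circ \to \mathscr M_{g,n+1}$, $\mathscr M \to \mathscr M_{g,n}$ is Cartesian. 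In particular, for a basepoint $c \in \mathscr C^\circ$ lying over $m \in \mathscr M$, the fiber $C^\circ = \mathscr C^\circ_m$ maps isomorphically to the fiber $(\mathscr M_{g,n+1})_{\bar m}$ over the image $\bar m$ of $m$ — because the fiber of an étale base change of a morphism is the pullback fiber, and here that pullback is along the identity on the point-set level once we're over a single point. So $\pi_1(C^\circ) \hookrightarrow \pi_1(\mathscr C^\circ)$ maps isomorphically onto its image in $\pi_1(\mathscr M_{g,n+1})$, which lands inside the point-pushing subgroup $\pi_1(\Sigma_{g,n})$.

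The main work is then a diagram chase. I would set up the commutative ladder
$$\begin{array}{ccccccccc}
& & \pi_1(C^\circ) & \to & \pi_1(\mathscr C^\circ) & \to & \pi_1(\mathscr M) & & \\
& & \downarrow \wr & & \downarrow & & \downarrow & & \\
1 & \to & \pi_1(\Sigma_{g,n}) & \to & \pi_1(\mathscr M_{g,n+1}) & \to & \pi_1(\mathscr M_{g,n}) & \to & 1
\end{array}$$
where the left vertical map is an isomorphism (onto the point-pushing subgroup), the bottom row is exact, and the two right verticals are the injections with finite-index image coming from \autoref{lemma:finite-index-from-dominant-map} applied to $\mathscr M \to \mathscr M_{g,n}$ and to $\mathscr C^\circ \to \mathscr M_{g,n+1}$ (the latter being étale and dominant since $\mathscr M \to \mathscr M_{g,n}$ is). Exactness of the top row at $\pi_1(\mathscr M)$: the composite $\pi_1(\mathscr C^\circ) \to \pi_1(\mathscr M)$ is surjective because $\mathscr C^\circ \to \mathscr M$ admits sections étale-locally (or: its fibers are connected and nonempty, so by the homotopy exact sequence for the fibration it is surjective) — alternatively this is a general fact for a smooth surjection with connected fibers. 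Exactness at $\pi_1(\mathscr C^\circ)$: an element killed in $\pi_1(\mathscr M)$ is, by commutativity, killed in $\pi_1(\mathscr M_{g,n})$, hence by bottom exactness comes from $\pi_1(\Sigma_{g,n})$; one then checks it actually comes from $\pi_1(C^\circ)$ using that $\pi_1(C^\circ) \to \pi_1(\mathscr C^\circ) \cap \pi_1(\Sigma_{g,n})$ is onto — this is where I would use that $C^\circ \to (\mathscr M_{g,n+1})_{\bar m}$ is an isomorphism together with the fact that $\pi_1(\mathscr C^\circ) \to \pi_1(\mathscr M_{g,n+1})$ is injective, comparing the two fibration sequences over $\mathscr M$ and over $\mathscr M_{g,n}$. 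Injectivity of $\pi_1(C^\circ) \to \pi_1(\mathscr C^\circ)$ is immediate since the composite to $\pi_1(\mathscr M_{g,n+1})$ is already injective.

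The step I expect to be the main obstacle is the exactness at the middle term $\pi_1(\mathscr C^\circ)$, specifically verifying that the kernel of $\pi_1(\mathscr C^\circ) \to \pi_1(\mathscr M)$ is not merely \emph{contained in} but \emph{equal to} the image of $\pi_1(C^\circ)$; this requires carefully relating the intersection $\pi_1(\mathscr C^\circ) \cap \pi_1(\Sigma_{g,n})$ inside $\pi_1(\mathscr M_{g,n+1})$ with $\pi_1(C^\circ)$, for which the cleanest route is probably to invoke the long exact homotopy sequence of the fibration $\mathscr C^\circ \to \mathscr M$ directly (using that $\mathscr M$ has contractible universal cover or at least that the relevant $\pi_2$ vanishes, since $\mathscr M$ is a smooth variety that is a $K(\pi,1)$ after passing to a finite cover — or simply that $\pi_2$ of a quasi-projective curve family behaves well) — in the interest of brevity I would likely just cite the general topological fact that a fiber bundle (here the smooth proper, resp. punctured, family over $\mathscr M$ with connected base and connected fibers, which is a fibration in the topological sense) gives a long exact sequence of homotopy groups, and note that $\pi_2(\mathscr M) \to \pi_1(C^\circ)$ combined with $\pi_1(C^\circ) \to \pi_1(\mathscr C^\circ) \to \pi_1(\mathscr M)$ gives exactly the claimed four-term exact sequence, the left-exactness (injectivity of $\pi_1(C^\circ)\to\pi_1(\mathscr C^\circ)$) following from the argument above via $\mathscr M_{g,n+1}$.
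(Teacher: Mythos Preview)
Your proposal ultimately lands on the paper's own argument: use the long exact homotopy sequence of the fibration $\mathscr{C}^\circ \to \mathscr{M}$ to get exactness at $\pi_1(\mathscr{C}^\circ)$ and surjectivity onto $\pi_1(\mathscr{M})$, and then obtain left-injectivity by comparison with the universal case $\mathscr{M}_{g,n}$ (whose universal cover, Teichm\"uller space, is contractible, so $\pi_2(\mathscr{M}_{g,n})=0$ and the Birman sequence is left-exact). That is exactly how the paper proceeds, in two sentences.

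One caution about your intermediate diagram chase: you assert that the vertical maps $\pi_1(\mathscr{M}) \to \pi_1(\mathscr{M}_{g,n})$ and $\pi_1(\mathscr{C}^\circ) \to \pi_1(\mathscr{M}_{g,n+1})$ are \emph{injections}, citing \autoref{lemma:finite-index-from-dominant-map}. That lemma only gives finite-index image, not injectivity, and in general a dominant \'etale map need not induce an injection on $\pi_1$ (e.g.\ the open inclusion $\mathbb{C}^\times \hookrightarrow \mathbb{C}$). Fortunately this does not damage your final argument: middle exactness comes cleanly from the homotopy long exact sequence without any diagram chase, and for left-injectivity you only need the \emph{composite} $\pi_1(C^\circ) \to \pi_1(\mathscr{C}^\circ) \to \pi_1(\mathscr{M}_{g,n+1})$ to be injective, which it is by \autoref{lemma:mgn-fundamental-group}. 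So drop the unjustified injectivity claims for the vertical maps and go straight to the long exact sequence plus the base-change comparison, as you yourself suggest at the end.
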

\begin{proof}
Except for injectivity, the result is immediate from the long exact sequence in
homotopy groups and the fact that ${C}^\circ$ is connected. Injectivity holds in
the universal case where $\mathscr{M}=\mathscr{M}_{g,n}$ is the Deligne-Mumford
moduli stack of curves and $\mathscr{C}^\circ = \mathscr{C}^\circ_{g,n}$ is the
universal punctured curve, as the universal cover of $\mathscr{M}_{g,n}$ is contractible. 
We next show the map $\pi_1(C^\circ) \to
\pi_1(\mathscr C^\circ)$ is injective in the general case via pullback.
The composite map $C^\circ \to \mathscr C \to \mathscr{C}^\circ_{g,n}$
is the natural inclusion of $C^\circ$ as a fiber of $\mathscr{C}^\circ_{g,n}$
over $\mathscr M_{g,n}$.
Hence, the composition $\pi_1(C^\circ) \to \pi_1(\mathscr C^\circ) \to \pi_1(\mathscr
C^\circ_{g,n})$ is injective, and so the first map is injective.
\end{proof}

\subsubsection{}
\label{subsubsection:proof-family-of-curves}
\begin{proof}[Proof of {\autoref{proposition:MCG-finite-family-of-curves}}]
	The exact sequence \eqref{equation:fibration-sequence}
induces an outer action of $\pi_1(\mathscr M)$ on
$\pi_1(C^\circ)$, which factors through the outer action of $\on{PMod}_{g,n}$ on
$\pi_1(C^\circ)$ induced by the Birman exact sequence.
Specifically, this outer action sends $\gamma \in \on{PMod}_{g,n}$
to the action of conjugation by $\widetilde{\gamma}$ on $\pi_1(C^\circ)$, for
$\widetilde{\gamma}\in \on{PMod}_{g,n+1}$ 
a lift of $\gamma$.
By construction, this outer action is compatible with the outer action of $\on{Mod}_{g,n}$ on
$\pi_1(C^\circ)$ of \autoref{definition:mcg-finite}.

To show $\rho|_{\pi_1(C^\circ)}$ is MCG-finite,
it therefore suffices to show the conjugacy class of $\rho|_{\pi_1(C^\circ)}$ is stable under
the image $\Gamma$ of $$\pi_1(\mathscr{C}^\circ)\twoheadrightarrow \pi_1(\mathscr{M})\to \pi_1(\mathscr{M}_{g,n})\simeq \on{PMod}_{g,n}.$$
This image has finite index by \autoref{lemma:finite-index-from-dominant-map}. 
The conjugacy class of $\rho|_{\pi_1(C^\circ)}$ is stable under this image because, 
if $\widetilde{\gamma} \in \pi_1(\mathscr{C}^\circ)$ is any lift of  
$\gamma \in \Gamma$, then $\rho^{\widetilde{\gamma}}$ is conjugate to $\rho$ by $\rho(\widetilde{\gamma})$, as in the proof of \autoref{proposition:MCG-finite-index-rep}.
\end{proof}
\subsection{Projective local systems associated to MCG-finite representations}
\label{subsection:mcg-finite-construction}

We next give a construction lifting an irreducible MCG-finite representation
of $\pi_1(\Sigma_{g,n})$ to a projective representation of a finite index subgroup of $\Mod_{g,n+1}$.

\begin{notation}
	\label{notation:finite-orbit}
	Suppose $\rho$ is an irreducible MCG-finite representation of $\pi_1(\Sigma_{g,n},x)$.
	Let $\Gamma \subset \on{PMod}_{g,n}$ denote the finite index stabilizer
	of the conjugacy class of $\rho$ and let $\widetilde{\Gamma} \subset
	\on{PMod}_{g,n+1}$ denote the preimage of $\Gamma$ under the surjection
	$\on{PMod}_{g,n+1} \to \on{PMod}_{g,n}$ coming from the Birman exact
	sequence.
\end{notation}
	
\begin{lemma}
	\label{lemma:mcg-rep-construction}
	In the setup of \autoref{notation:finite-orbit}, there exists a unique representation $\widetilde{\rho}:
	\widetilde{\Gamma} \to \on{PGL}_r(\mathbb C)$ so that
	\begin{equation}
		\label{equation:mcg-rho-and-tilde-rho-compatibility}
		\begin{tikzcd} 
			\pi_1(\Sigma_{g,n},x)  \ar[r, "\rho"] \ar {d} &
			 \on{GL}_r(\mathbb C) \ar {d} \\
			 \widetilde{\Gamma}  \ar [r, "\widetilde \rho"] & \on{PGL}_r(\mathbb C)
	\end{tikzcd}\end{equation}
	commutes,
	where $\pi_1(\Sigma_{g,n}, x) \subset\widetilde{\Gamma}\subset \on{PMod}_{g,n+1}$ is the inclusion
	of the point-pushing subgroup from the Birman exact sequence.
\end{lemma}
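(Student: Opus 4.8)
The plan is to build $\widetilde\rho$ by a Tannakian / "universal monodromy" argument. Fix a versal family $\pi^\circ : \mathscr C^\circ \to \mathscr M$ with $\mathscr M \to \mathscr M_{g,n}$ dominant étale whose image in $\on{PMod}_{g,n}$ lands in $\Gamma$ (possible by \autoref{lemma:finite-index-from-dominant-map}, after shrinking), and pull back along the universal pointed curve so that $\mathscr M$ also receives a dominant étale map from (a cover of) $\mathscr M_{g,n+1}$; concretely, one arranges $\pi_1(\mathscr C^\circ)$ to surject onto a finite-index subgroup of $\on{PMod}_{g,n+1}$ containing $\pi_1(\Sigma_{g,n})$, with the point-pushing subgroup identified with $\pi_1(C^\circ)$ via \autoref{lemma:SES-pi1-versal-curves}. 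The local system on $C^\circ$ attached to $\rho$ is MCG-finite, hence its isomorphism class is fixed by the whole $\pi_1(\mathscr M)$-action; since $\rho$ is irreducible, $\on{PGL}_r(\mathbb C)$ acts freely on the conjugating elements, and one can spread $\rho$ out to a local system $\mathbb V$ of rank $r$ on some finite étale cover of $\mathscr C^\circ$ whose class really is Galois-invariant — but the cocycle measuring failure of honest invariance lives in $\on{PGL}_r$, which is exactly the data of $\widetilde\rho$. More precisely: for $\widetilde\gamma \in \widetilde\Gamma$ with image $\gamma\in\Gamma$, the representations $\rho$ and $\rho^{\widetilde\gamma} := \rho\circ(\text{conj by }\widetilde\gamma)$ are conjugate, so there is $g_{\widetilde\gamma}\in\on{GL}_r(\mathbb C)$, unique up to scalar by Schur's lemma (using irreducibility of $\rho$), with $\rho^{\widetilde\gamma} = g_{\widetilde\gamma}\,\rho\,g_{\widetilde\gamma}^{-1}$. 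Define $\widetilde\rho(\widetilde\gamma)$ to be the class of $g_{\widetilde\gamma}$ in $\on{PGL}_r(\mathbb C)$.

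The steps, in order, are: (1) existence of $g_{\widetilde\gamma}$ — this is just "conjugate representations are conjugate by an element", valid because for $\widetilde\gamma$ in the point-pushing subgroup $\pi_1(\Sigma_{g,n})$ we have $\rho^{\widetilde\gamma}=\rho(\widetilde\gamma)\rho\,\rho(\widetilde\gamma)^{-1}$ and for general $\widetilde\gamma\in\widetilde\Gamma$ it is the defining property of $\Gamma$; (2) uniqueness of $g_{\widetilde\gamma}$ up to scalar — Schur's lemma applied to the irreducible $\rho$, so $\widetilde\rho(\widetilde\gamma)\in\on{PGL}_r(\mathbb C)$ is well defined; (3) homomorphism property — for $\widetilde\gamma_1,\widetilde\gamma_2$, both $g_{\widetilde\gamma_1}g_{\widetilde\gamma_2}$ and $g_{\widetilde\gamma_1\widetilde\gamma_2}$ conjugate $\rho$ to $\rho^{\widetilde\gamma_1\widetilde\gamma_2}$, so they agree up to scalar, giving $\widetilde\rho(\widetilde\gamma_1\widetilde\gamma_2)=\widetilde\rho(\widetilde\gamma_1)\widetilde\rho(\widetilde\gamma_2)$ in $\on{PGL}_r$; (4) compatibility square \eqref{equation:mcg-rho-and-tilde-rho-compatibility} — for $\widetilde\gamma = h\in\pi_1(\Sigma_{g,n})$ one may take $g_h = \rho(h)$, so $\widetilde\rho|_{\pi_1(\Sigma_{g,n})}$ is the composite $\pi_1(\Sigma_{g,n})\xrightarrow{\rho}\on{GL}_r(\mathbb C)\to\on{PGL}_r(\mathbb C)$, which is the assertion of the diagram; (5) uniqueness of $\widetilde\rho$ — any $\widetilde\rho'$ making the square commute must send $\widetilde\gamma$ to an element of $\on{PGL}_r$ conjugating $\overline\rho$ (the composite into $\on{PGL}_r$) to $\overline\rho^{\widetilde\gamma}$, and such an element is unique because $\overline\rho$ is irreducible (its centralizer in $\on{PGL}_r$ is trivial), forcing $\widetilde\rho'=\widetilde\rho$. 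Strictly, for (5) one needs that $\widetilde\Gamma$ is generated by $\pi_1(\Sigma_{g,n})$ together with lifts of elements of $\Gamma$, and that the value on a lift is pinned down by the conjugacy relation — both immediate from the Birman exact sequence of \autoref{lemma:mgn-fundamental-group}.

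The one genuine subtlety — and the step I expect to be the main obstacle — is step (3), the cocycle/homomorphism check: a priori the assignment $\widetilde\gamma\mapsto g_{\widetilde\gamma}$ is only a projective (1-)cocycle, and one must verify it is well defined \emph{as a map to $\on{PGL}_r$}, i.e. that the scalar ambiguities are consistent across products. This is where passing to $\on{PGL}_r$ rather than $\on{GL}_r$ is essential: the obstruction to lifting $\widetilde\rho$ to $\on{GL}_r$ is a genuine class in $H^2(\widetilde\Gamma,\mathbb C^\times)$ which need not vanish, but in $\on{PGL}_r$ the homomorphism property holds on the nose by the uniqueness in (2). So the proof is really: (2) $\Rightarrow$ the map to $\on{PGL}_r$ is well defined $\Rightarrow$ (3) is automatic, since $\widetilde\rho(\widetilde\gamma_1\widetilde\gamma_2)$ and $\widetilde\rho(\widetilde\gamma_1)\widetilde\rho(\widetilde\gamma_2)$ are two elements of $\on{PGL}_r$ both conjugating $\overline\rho$ to $\overline\rho^{\widetilde\gamma_1\widetilde\gamma_2}$, hence equal by triviality of the centralizer of an irreducible. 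I would write the argument in this order — define the map, invoke Schur for well-definedness into $\on{PGL}_r$, deduce the homomorphism property and commutativity of the square from triviality of the relevant centralizers, and finish with uniqueness — keeping the geometric family in the background only as motivation, since the statement and proof are purely group-theoretic.
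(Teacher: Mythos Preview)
Your proposal is correct and, once you strip away the geometric framing in the first paragraph (which you yourself note is only motivation), the argument is essentially identical to the paper's: define $\widetilde\rho(\widetilde\gamma)$ as the image in $\on{PGL}_r$ of any $g_{\widetilde\gamma}$ conjugating $\rho$ to $\rho^{\widetilde\gamma}$, use Schur's lemma for well-definedness, deduce the homomorphism property from uniqueness, verify the diagram by taking $g_h=\rho(h)$ for $h$ in the point-pushing subgroup, and prove uniqueness by the same conjugation constraint. The only remark is that your aside in step (5) about needing $\widetilde\Gamma$ to be generated by $\pi_1(\Sigma_{g,n})$ together with lifts is unnecessary---the conjugation relation $\widetilde\rho'(\widetilde\gamma)\overline\rho(h)\widetilde\rho'(\widetilde\gamma)^{-1}=\overline\rho(\widetilde\gamma h\widetilde\gamma^{-1})$ already pins down $\widetilde\rho'(\widetilde\gamma)$ for \emph{every} $\widetilde\gamma$ directly, with no generation argument needed.
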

\begin{proof}
	First, we construct the representation $\widetilde{\rho}$. 
	By identifying 
	$\on{PMod}_{g,n+1} \subset \on{Aut}(\pi_1(\Sigma_{g,n},x))$, 
	we obtain an action of 
	$\on{PMod}_{g,n+1}$ on $r$-dimensional representations of 
	$\pi_1(\Sigma_{g,n},x)$.
	Since $\widetilde{\Gamma}$ preserves $\rho$ up to
	conjugacy, for every $\gamma \in \widetilde{\Gamma}$, there is some matrix $M_\gamma \in \on{GL}_r(\mathbb C)$
	so that $M_\gamma \rho M_\gamma^{-1} = \gamma(\rho)$.
	Since $\rho$ is irreducible, $M_\gamma$ is unique up to scaling by
	Schur's lemma.
We let $\overline{M}_\gamma$ denote the image of $M_\gamma$ in
	$\on{PGL}_r(\mathbb C)$; $\overline{M}_\gamma$ is well-defined by the previous sentence.
	Define $\widetilde{\rho}$ by  
	\begin{align*}
		\widetilde{\rho}:  \widetilde{\Gamma} &\rightarrow \on{PGL}_r(\mathbb C) \\
		\gamma &\mapsto \overline{M}_\gamma.
	\end{align*}

	Uniqueness of 
	$\overline{M}_\gamma \in \on{PGL}_r(\mathbb C)$
	implies that $\widetilde{\rho}$ is a representation, as $\overline{M}_\gamma\overline{M}_{\gamma'}=\overline{M}_{\gamma\gamma'}$. 

	Commutativity of 	
	\eqref{equation:mcg-rho-and-tilde-rho-compatibility}
	follows from the definition of the inclusion $\iota: \pi_1(\Sigma_{g,n}, x) \hookrightarrow
	\on{PMod}_{g,n+1}\subset \on{Aut}(\pi_1(\Sigma_{g,n}, x))$ as the
	point-pushing subgroup, realizing the set of inner automorphisms.
  Indeed, for $\eta\in \pi_1(\Sigma_{g,n}, x)$, $\iota$ sends
	$\eta$ to the automorphism of $\pi_1(\Sigma_{g,n}, x)$ given by $\beta\mapsto \eta \beta
	\eta^{-1}.$
	Hence we may take $M_{\iota(\eta)}=\rho(\eta)$, implying 
	\eqref{equation:mcg-rho-and-tilde-rho-compatibility} commutes.
	
	Finally, uniqueness of $\widetilde{\rho}$ follows from commutativity of \eqref{equation:mcg-rho-and-tilde-rho-compatibility} and uniqueness of $\overline{M}_\gamma$. Indeed, for any $\gamma\in \widetilde{\Gamma}$ and for all $\eta\in \pi_1(\Sigma_{g,n}, x)$, we must have $$\widetilde{\rho}(\gamma\eta\gamma^{-1})=\widetilde{\rho}(\gamma)\widetilde{\rho}(\eta)\widetilde{\rho}(\gamma)^{-1},$$ and hence $\widetilde{\rho}(\gamma)$ must equal $\overline{M}_\gamma$.
\end{proof}
See e.g.~the proof of \cite[Theorem 4]{simpson1992higgs} for a similar argument.

We will also need a geometric variant of \autoref{lemma:mcg-rep-construction}.
We give the geometric rephrasing of \autoref{lemma:mcg-rep-construction},
implicitly using \autoref{lemma:mgn-fundamental-group}. Below $\mathscr{M}$ is
the finite \'etale cover of $\mathscr{M}_{g,n}$ corresponding to a finite-index subgroup of $\Gamma\subset \on{PMod}_{g,n}$, chosen so that $\mathscr{M}$ is a scheme (as opposed to a Deligne-Mumford stack).

\begin{lemma}
	\label{lemma:geometric-mcg-rep-construction}
	Let
	$\rho: \pi_1(\Sigma_{g,n},x) \to \on{GL}_r(\mathbb C)$ be an irreducible MCG-finite representation. 
	There exists a scheme $\mathscr M$ with a finite \'etale map $\mathscr M \to \mathscr
	M_{g,n}$, with associated family of $n$-times punctured curves $\pi^\circ: \mathscr{C}^\circ\to \mathscr{M}$, a point $c\in \mathscr{C}^\circ$ with $m:=\pi^\circ(c)$,
	and a representation $\widetilde{\rho}: \pi_1(\mathscr C^\circ, c) \to
	\on{PGL}_r(\mathbb C)$ so that the following holds. Upon identifying
	$\pi_1(\Sigma_{g,n},x)  \simeq \pi_1(\mathscr C^\circ_m,c)$, the diagram
	\begin{equation}
		\label{equation:geometric-rho-and-tilde-rho}
		\begin{tikzcd} 
			\pi_1(\mathscr C^\circ_m,c)  \ar[r, "\rho"] \ar {d} &
			 \on{GL}_r(\mathbb C) \ar {d} \\
			 \pi_1(\mathscr C^\circ, c) \ar[r, "\widetilde\rho"]& \on{PGL}_r(\mathbb C)
	\end{tikzcd}\end{equation}
	commutes.
\end{lemma}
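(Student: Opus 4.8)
We want to promote the group-theoretic statement of \autoref{lemma:mcg-rep-construction} to a geometric one, where $\widetilde\Gamma$ is replaced by the fundamental group of a suitable finite étale cover of $\mathscr M_{g,n}$. The first step is to choose the scheme $\mathscr M$. By \autoref{lemma:mgn-fundamental-group} we have $\pi_1(\mathscr M_{g,n}) \simeq \on{PMod}_{g,n}$, and the finite index stabilizer $\Gamma \subset \on{PMod}_{g,n}$ of the conjugacy class of $\rho$ corresponds to a finite étale cover of the stack $\mathscr M_{g,n}$. After passing to a further finite index subgroup $\Gamma' \subset \Gamma$ we may arrange that the cover is represented by an honest scheme $\mathscr M$ (for instance by intersecting with a level structure subgroup acting freely), and we may also arrange $\Gamma' \triangleleft \Gamma$ if convenient, though this is not essential. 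By construction $\pi_1(\mathscr M, m) \simeq \Gamma'$, compatibly with the inclusion into $\on{PMod}_{g,n}$.

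The second step is to identify $\pi_1(\mathscr C^\circ, c)$. Since the family of punctured curves $\pi^\circ : \mathscr C^\circ \to \mathscr M$ is obtained by base change along $\mathscr M \to \mathscr M_{g,n}$ from the universal punctured family over $\mathscr M_{g,n+1}$, the commuting diagram of \autoref{lemma:mgn-fundamental-group} together with \autoref{lemma:SES-pi1-versal-curves} shows that $\pi_1(\mathscr C^\circ, c)$ is precisely the preimage $\widetilde{\Gamma'}$ of $\Gamma'$ under $\on{PMod}_{g,n+1} \to \on{PMod}_{g,n}$, with $\pi_1(\mathscr C^\circ_m, c) \simeq \pi_1(\Sigma_{g,n}, x)$ sitting inside it as the point-pushing subgroup. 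Concretely, one pulls back the Birman exact sequence along $\Gamma' \hookrightarrow \on{PMod}_{g,n}$ to get
\begin{equation*}
1 \to \pi_1(\Sigma_{g,n}, x) \to \widetilde{\Gamma'} \to \Gamma' \to 1,
\end{equation*}
and this is exactly the sequence \eqref{equation:fibration-sequence} for the family $\mathscr C^\circ \to \mathscr M$.

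The third step is then immediate: since $\Gamma' \subset \Gamma$ still stabilizes the conjugacy class of $\rho$, so does $\widetilde{\Gamma'} \subset \widetilde\Gamma$, and we may apply \autoref{lemma:mcg-rep-construction} (or simply restrict the $\widetilde\rho$ it produces on $\widetilde\Gamma$) to obtain the representation $\widetilde\rho : \widetilde{\Gamma'} = \pi_1(\mathscr C^\circ, c) \to \on{PGL}_r(\mathbb C)$ fitting into \eqref{equation:geometric-rho-and-tilde-rho}; commutativity of that diagram is just the commutativity of \eqref{equation:mcg-rho-and-tilde-rho-compatibility} transported along the identifications. I do not expect any real obstacle here: the only points requiring care are the bookkeeping of which index-subgroup one passes to in order to make $\mathscr M$ a scheme, and checking that the base-change identification of $\pi_1(\mathscr C^\circ)$ with $\widetilde{\Gamma'}$ is compatible with the point-pushing inclusion — both of which are routine given \autoref{lemma:mgn-fundamental-group} and \autoref{lemma:SES-pi1-versal-curves}.
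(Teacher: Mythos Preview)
Your proposal is correct and follows essentially the same approach as the paper. The paper itself does not give a formal proof of this lemma; it simply states (in the paragraph preceding the lemma) that it is the geometric rephrasing of \autoref{lemma:mcg-rep-construction} via \autoref{lemma:mgn-fundamental-group}, with $\mathscr{M}$ taken to be the finite \'etale cover of $\mathscr{M}_{g,n}$ corresponding to a finite-index subgroup of $\Gamma$ chosen so that $\mathscr{M}$ is a scheme---exactly the construction you spell out.
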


\subsection{Lifting Projective representations}
\label{subsection:lifting-projective}

The main goal of this section is to prove \autoref{proposition:gl-lift},
which says that, \'etale locally on the base $\mathscr{M}$, we can lift the representation $\widetilde{\rho}$ into
$\on{PGL}_r(\mathbb C)$
constructed in \autoref{lemma:geometric-mcg-rep-construction}
to a representation $\rho'$ into $\on{GL}_r(\mathbb C)$.

Because the obstruction to lifting is related to $\mathbb C^\times =
\ker(\on{GL}_r(\mathbb C) \to
\on{PGL}_r(\mathbb C))$, we will use the following classification when
$r = 1$.

\begin{proposition}[\protect{~\cite[Lemma 3.2]{biswas2017surface}}]
	\label{proposition:1-dim-finite-image}
	Let $g\geq 1, n\geq 0$. Then any MCG-finite representation $$\rho: \pi_1(\Sigma_{g,n})\to \mathbb{C}^\times$$ has finite image.
\end{proposition}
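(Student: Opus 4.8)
The plan is to reduce to the abelianization and use the structure of $H_1(\Sigma_{g,n};\mathbb{Z})$ as a module over the mapping class group. Since $\mathbb{C}^\times$ is abelian, $\rho$ factors through $H_1(\Sigma_{g,n};\mathbb{Z})$. If $n = 0$, this is $\mathbb{Z}^{2g}$ with the symplectic action of $\on{Mod}_{g,0}$ (which surjects onto $\on{Sp}_{2g}(\mathbb{Z})$); if $n \geq 1$, it is a free abelian group $\mathbb{Z}^{2g+n-1}$, but the image of $\pi_1(\Sigma_{g,n})$ in $\pi_1(\Sigma_g)$ still carries the symplectic action, and the $n$ loops around punctures are permuted (up to the relation that their product is trivial). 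So $\on{Mod}_{g,n}$ acts on $\on{Hom}(H_1(\Sigma_{g,n};\mathbb{Z}),\mathbb{C}^\times) \cong (\mathbb{C}^\times)^{2g+n-1}$, and being MCG-finite means $\rho$ has finite orbit.

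First I would handle the symplectic part. The composite $\pi_1(\Sigma_{g,n}) \to \pi_1(\Sigma_g) \to H_1(\Sigma_g;\mathbb{Z}) = \mathbb{Z}^{2g}$ gives a character $\chi \in \on{Hom}(\mathbb{Z}^{2g},\mathbb{C}^\times) = (\mathbb{C}^\times)^{2g}$, and $\on{Sp}_{2g}(\mathbb{Z})$ acts on this torus through the standard representation. The key claim is that a point of $(\mathbb{C}^\times)^{2g}$ with finite $\on{Sp}_{2g}(\mathbb{Z})$-orbit must be a torsion point, i.e. $\chi$ has finite image. Indeed, writing $\chi$ in terms of its coordinates $\chi = (t_1,\dots,t_{2g})$, the orbit under the elementary symplectic transvections includes all $t_i \mapsto t_i t_j^k$ for appropriate $i,j,k$; finiteness of the orbit forces each $t_i$ to be a root of unity (any non-torsion coordinate would generate infinitely many distinct values under repeated multiplication). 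This uses $g \geq 1$ so that $\on{Sp}_{2g}(\mathbb{Z})$ is nontrivial and acts with enough transvections.

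Next I would handle the puncture loops. The remaining coordinates correspond to characters on the loops $\gamma_1,\dots,\gamma_n$ around the punctures, subject to $\prod \gamma_i \in [\pi_1,\pi_1]$ so that their images $s_i := \rho(\gamma_i) \in \mathbb{C}^\times$ satisfy $\prod_i s_i$ lying in the subgroup generated by the (already-known-to-be-torsion) symplectic part — actually $\prod_i s_i = \rho(\prod_i \gamma_i)$ is a product of commutators hence $1$. The subgroup of $\on{Mod}_{g,n}$ that fixes the symplectic data can still permute the punctures and perform "point-pushing" moves, which act on $(s_1,\dots,s_n)$ by multiplying $s_i$ by elements $t_j^{\pm 1}$ of the symplectic coordinates (point-pushing a puncture around a homology class multiplies the puncture-character by that class's character). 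Since the $t_j$ are roots of unity, these operations have finite order on each $s_i$-coordinate only if $s_i$ itself lies in a finitely generated (hence, by finiteness of orbit, finite, i.e. torsion) subgroup; alternatively, the symmetric group permuting punctures together with these shifts generates a group whose finite orbits force all $s_i$ to be roots of unity. Either way $\rho$ has finite image.

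The main obstacle I expect is the $n \geq 1$ case: being careful about exactly which mapping classes are available (point-pushing subgroup from the Birman exact sequence, half-twists swapping punctures) and verifying that they really do act on the puncture-characters by the multiplication-by-symplectic-character rule, so that finiteness of the orbit propagates from the (torsion) symplectic coordinates to the puncture coordinates. The $n=0$ case is essentially the statement that torsion points are the only finite-orbit points for the $\on{Sp}_{2g}(\mathbb{Z})$-action on $(\mathbb{C}^\times)^{2g}$, which is elementary but should be stated cleanly. It is also worth noting the hypothesis $g \geq 1$ is genuinely needed: for $g = 0$ the symplectic group is trivial and one must argue purely with the puncture loops and the braid-type action, which is why the authors restrict to $g \geq 1$ here.
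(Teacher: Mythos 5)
Your reduction to the abelianization and the handling of the symplectic coordinates is sound and is essentially the same Dehn-twist argument the paper attributes to Biswas--Gupta--Mj--Whang: finiteness of orbit under the transvections in $\on{Sp}_{2g}(\mathbb{Z})$ forces each $\rho(a_i),\rho(b_i)$ to be a root of unity (here $g\geq 1$ is used so that these transvections exist). The genuine gap is in the puncture characters $s_j := \rho(c_j)$: the mechanism you invoke is stated backwards. Pushing the $j$-th puncture around a loop $\alpha$ is the composition $T_{\alpha_1}T_{\alpha_2}^{-1}$, where $\alpha_1,\alpha_2$ bound an annulus about $\alpha$ containing the puncture and satisfy $[\alpha_1]-[\alpha_2]=\pm[c_j]$ in $H_1$. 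On homology this map \emph{fixes} $[c_j]$ and sends $[\epsilon]\mapsto[\epsilon]\pm(\alpha\cdot\epsilon)[c_j]$; that is, it multiplies the characters of curves crossing $\alpha$ by $s_j^{\pm 1}$, not (as you wrote) multiplying $s_j$ by the character of $\alpha$. As written, your step ``$s_i\mapsto s_i t_j^{\pm 1}$'' gives no information, since you already know the $t_j$ are roots of unity; and the two fallback sentences (permuting punctures, or combining permutations with these torsion shifts) also cannot constrain the $s_i$, since a tuple like $(t,t^{-1},1,\dots,1)$ with $t$ transcendental has $\prod s_i=1$ and remains finite-orbit under both operations.

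The correct version of your step, again using $g\geq 1$: push puncture $j$ around $a_1$; on homology $[b_1]\mapsto[b_1]\pm[c_j]$, so iterating sends $\rho(b_1)\mapsto\rho(b_1)\,s_j^{\pm k}$, which produces infinitely many distinct (hence non-conjugate, the target being abelian) representations unless $s_j$ is a root of unity. Equivalently, Dehn-twist about a simple closed curve homologous to $a_1+c_j$, which sends $\rho(b_1)\mapsto\rho(b_1)\bigl(\rho(a_1)s_j\bigr)^k$. Either repair brings your argument in line with the paper's cited sketch, which is precisely a Dehn-twist argument of this kind.
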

For the reader's benefit, we recall the idea of the proof of
\autoref{proposition:1-dim-finite-image}.
The idea is to show
that if any standard generator of $\pi_1(\Sigma_{g,n})$ corresponding to a loop
$\gamma$ in $\Sigma_{g,n}$ has infinite order under $\rho$, then acting on $\rho$ by powers of the
Dehn twist about $\gamma$ gives an infinite collection of distinct
representations.

In order to lift 
$\on{PGL}_r(\mathbb C)$-reps to
$\on{GL}_r(\mathbb C)$-reps, 
we will need to know 
that the image of $\rho$ has finite
intersection with the center of $\on{GL}_r(\mathbb C)$.
\begin{lemma}
	\label{lemma:finite-central-part}
	Let $g\geq 1$.
	Suppose $\rho: \pi_1(\Sigma_{g,n}) \to \on{GL}_r(\mathbb C)$ is an MCG-finite representation.
	Then $\rho$ has finite determinant.
	Moreover, $\rho$ factors through a subgroup $G$ with $\on{SL}_r(\mathbb C) \subset G
	\subset \on{GL}_r(\mathbb C)$ and $\mu := G \cap \ker(\on{GL}_r(\mathbb
	C) \to \on{PGL}_r(\mathbb C))$ a finite group.
\end{lemma}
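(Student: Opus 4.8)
Let $g\geq 1$. Suppose $\rho: \pi_1(\Sigma_{g,n}) \to \on{GL}_r(\mathbb C)$ is an MCG-finite representation. Then $\rho$ has finite determinant. Moreover, $\rho$ factors through a subgroup $G$ with $\on{SL}_r(\mathbb C) \subset G \subset \on{GL}_r(\mathbb C)$ and $\mu := G \cap \ker(\on{GL}_r(\mathbb C) \to \on{PGL}_r(\mathbb C))$ a finite group.

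\textbf{The plan.} The strategy is to strip off the determinant using the one-dimensional case and then build $G$ by hand. First I would observe that $\chi := \det\circ\,\rho\colon \pi_1(\Sigma_{g,n})\to\mathbb C^\times$ is itself MCG-finite, and apply \autoref{proposition:1-dim-finite-image} (which applies since $g\geq 1$) to conclude that $\chi$ has finite image, say of order $N$. Then I would take $G$ to be the preimage under $\det$ of the group $\mu_N\subset\mathbb C^\times$ of $N$-th roots of unity, equivalently $G=\ker\bigl(\on{GL}_r(\mathbb C)\to\mathbb C^\times,\ A\mapsto\det(A)^N\bigr)$. This is a normal subgroup with $\on{SL}_r(\mathbb C)\subseteq G\subseteq\on{GL}_r(\mathbb C)$, through which $\rho$ factors by construction, and then one computes $\mu$ directly.

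\textbf{MCG-finiteness of $\det\rho$.} For any outer automorphism class $[\phi]$ arising from $\on{Mod}_{g,n}$ and any representation $\sigma$, one has $\det(\sigma\circ\phi)=(\det\sigma)\circ\phi$, while $\det(B\sigma B^{-1})=\det\sigma$ for every $B$. Since $\mathbb C^\times$ is abelian, a one-dimensional representation coincides with its own conjugacy class, so $\sigma\mapsto\det\sigma$ descends to an $\on{Mod}_{g,n}$-equivariant map from conjugacy classes of $r$-dimensional representations of $\pi_1(\Sigma_{g,n})$ to one-dimensional representations. The orbit of the conjugacy class of $\rho$ is finite by hypothesis, hence so is the orbit of $\chi=\det\rho$; that is, $\chi$ is MCG-finite. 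By \autoref{proposition:1-dim-finite-image}, $\chi$ has finite image; set $N:=\#\chi(\pi_1(\Sigma_{g,n}))$, so that $\chi(\gamma)^N=1$ for all $\gamma$.

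\textbf{Conclusion.} With $G:=\{A\in\on{GL}_r(\mathbb C):\det(A)^N=1\}$, the map $A\mapsto\det(A)^N$ is a homomorphism to $\mathbb C^\times$, so $G$ is a normal subgroup; visibly $\on{SL}_r(\mathbb C)\subseteq G\subseteq\on{GL}_r(\mathbb C)$, and $\rho$ factors through $G$ because $\det(\rho(\gamma))^N=\chi(\gamma)^N=1$. Finally $\ker(\on{GL}_r(\mathbb C)\to\on{PGL}_r(\mathbb C))$ is the center of scalar matrices $\mathbb C^\times\cdot I$, and $\lambda I\in G$ if and only if $\lambda^{rN}=\det(\lambda I)^N=1$; hence $\mu=G\cap(\mathbb C^\times\cdot I)$ is the group of $rN$-th roots of unity realized as scalars, which is finite of order $rN$.

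\textbf{Main obstacle.} There is essentially no serious obstacle here: the only nontrivial input is \autoref{proposition:1-dim-finite-image}, and the single point requiring care is that passing to the determinant is compatible with the $\on{Mod}_{g,n}$-action on conjugacy classes --- which holds precisely because determinants are conjugation-invariant and one-dimensional representations have trivial conjugacy class. Everything else is elementary group theory.
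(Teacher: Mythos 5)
Your proof is correct and follows essentially the same route as the paper: apply \autoref{proposition:1-dim-finite-image} to $\det\circ\rho$ and take $G$ to be the preimage under $\det$ of a finite subgroup of $\mathbb C^\times$ containing the image (your $\mu_N$ versus the paper's $H=\on{im}(\det\circ\rho)$, which coincide since a finite subgroup of $\mathbb C^\times$ of order $N$ is exactly $\mu_N$). Your explicit verification that $\det\rho$ is MCG-finite, and the computation that $\mu$ has order $rN$, fill in details the paper leaves implicit.
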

\begin{proof}
	By \autoref{proposition:1-dim-finite-image}, the composition
	$\pi_1(\Sigma_{g,n}) \xrightarrow{\rho} \on{GL}_r(\mathbb C) \xrightarrow{\on{det}}
	\mathbb C^\times$ has finite image $H$.
	We can take $G := \det^{-1}(H) \subset \on{GL}_r(\mathbb{C})$,
	which indeed has finite intersection with the center of
	$\on{GL}_r(\mathbb C)$, and hence finite intersection with
	$\ker(\on{GL}_r(\mathbb C) \to \on{PGL}_r(\mathbb C))$
\end{proof}

In order to lift representations from
$\on{PGL}_r(\mathbb C)$ to
$\on{GL}_r(\mathbb C)$,
we will need to kill certain cohomological obstructions using dominant \'etale maps.
\begin{lemma}
	\label{lemma:vanish-on-etale}
	Let $i > 0$ and $\mathscr{M}$ be a complex variety. Suppose $\mu$ is a finite abelian group and
	$\sigma \in H^i(\pi_1(\mathscr M, m), \mu)$
	a cohomology class.
	Then there is a dominant \'etale map $\mathscr M' \to \mathscr M$
	so that $\sigma|_{\mathscr M'} =0 \in H^i(\pi_1(\mathscr M'), \mu)$.
\end{lemma}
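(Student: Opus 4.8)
The plan is to induct on $i$, after reducing to the case where $\mathscr{M}$ is smooth and affine. For the reduction, I would replace $\mathscr{M}$ by a dense affine open $U$ of its smooth locus: since $U\hookrightarrow\mathscr{M}$ is dominant étale and the complement has codimension $\geq 1$, the map $\pi_1(U,m)\to\pi_1(\mathscr{M},m)$ is surjective, so it suffices to kill the pullback of $\sigma$ over a dominant étale cover of $U$ and then compose (in the situations where this lemma is applied $\mathscr{M}$ is in fact already smooth, being étale over $\mathscr{M}_{g,n}$). Choosing an isomorphism $\mu\simeq\prod_j\mu_{n_j}$ with groups of roots of unity in $\mathbb{C}$, we may moreover take $\mu=\mu_n$ cyclic. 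Two standard facts will be used repeatedly: by the Riemann existence theorem, every finite-index subgroup of $\pi_1(\mathscr{M},m)$ is realised by a connected finite étale --- in particular dominant étale --- cover of $\mathscr{M}$ by a variety; and the classifying map $\mathscr{M}\to K(\pi_1(\mathscr{M},m),1)$ is a $2$-equivalence, so $H^i(\pi_1(\mathscr{M},m),\mu)\hookrightarrow H^i(\mathscr{M},\mu)\cong H^i_{\text{\'et}}(\mathscr{M},\mu_n)$ for $i\leq 2$, and in those degrees it is therefore enough to kill the class in étale cohomology.

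For $i=1$, $\sigma\in H^1(\pi_1(\mathscr{M},m),\mu)=\on{Hom}(\pi_1(\mathscr{M},m),\mu)$ is a homomorphism, and the connected finite étale cover $\mathscr{M}'\to\mathscr{M}$ corresponding to $\ker\sigma$ (a subgroup of finite index $|\on{im}\sigma|$) has $\pi_1(\mathscr{M}',m')=\ker\sigma$, whence $\sigma|_{\mathscr{M}'}=0$. For $i=2$, I would use the Kummer sequence $1\to\mu_n\to\mathbb{G}_m\xrightarrow{n}\mathbb{G}_m\to 1$, under which $H^2_{\text{\'et}}(\mathscr{M},\mu_n)$ is an extension of $\on{Br}(\mathscr{M})[n]$ by $\on{Pic}(\mathscr{M})/n$. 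The image of $\sigma$ in $\on{Br}(\mathscr{M})[n]$ is the class of a central simple algebra over the function field of $\mathscr{M}$; such an algebra is split by a finite separable field extension, and spreading this out produces a connected finite étale cover of a dense affine open of $\mathscr{M}$ over which that image vanishes. After restricting to that cover, $\sigma$ is the image of some $L\in\on{Pic}$; since the cover is affine, $L$ is globally generated, and deleting the vanishing locus of a nonzero section of $L$ --- a dominant étale open immersion --- trivialises $L$ and hence kills $\sigma$. This already handles the degrees used elsewhere in the paper.

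For general $i$ the plan is to continue the induction, and I expect the main obstacle to be that for $i\geq 3$ the map $H^i(\pi_1(\mathscr{M},m),\mu)\to H^i(\mathscr{M},\mu)$ is no longer injective, so one cannot simply work in étale cohomology. I would handle this either homotopy-theoretically --- $\sigma$ classifies a fibration over $\mathscr{M}$ with fibre $K(\mu,i-1)$ whose total space kills $\sigma$, and the task is to exhibit a dominant étale algebraic cover of $\mathscr{M}$ dominating it (for $i=2$ this fibration is a $\mu$-torsor and one recovers the finite-cover argument above) --- or via the Hochschild--Serre spectral sequence run along a tower of finite étale covers, using that restriction to an index-$d$ cover followed by corestriction is multiplication by $d$; the latter succeeds once $\pi_1(\mathscr{M},m)$ has sufficiently many finite quotients, which holds in the case at hand, where $\mathscr{M}$ dominates $\mathscr{M}_{g,n}$ and $\pi_1(\mathscr{M},m)$ surjects onto a finite-index subgroup of a mapping class group.
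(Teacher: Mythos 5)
Your argument is correct for $i\le 2$, and since those are the only degrees the paper actually invokes in the proof of Proposition~\ref{proposition:gl-lift}, you have in effect reproved what is needed, by a more concrete route than the paper's. But as a proof of the lemma as stated — for all $i>0$ — it has a gap.

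For $i\le 2$, your use of the $2$-equivalence $\mathscr{M}\to K(\pi_1(\mathscr{M},m),1)$ to reduce to sheaf/\'etale cohomology is exactly right, and the $i=1$ (kernel of a homomorphism) and $i=2$ (Kummer sequence, split the Brauer class of $K(\mathscr{M})$ over a finite extension, spread out, then trivialize the $\on{Pic}/n$ contribution by passing to the affine complement of the zero locus of a section) arguments both go through; the last step of the $i=2$ case quietly uses that for a regular integral scheme the Brauer group injects into that of its function field, but that is standard. This is genuinely different from, and more explicit than, the paper's argument, which does not decompose $H^2_{\mathrm{\acute et}}$ at all.

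The gap is for $i\ge 3$, and neither of your proposed repairs is convincing. The homotopy-theoretic one — building the $K(\mu,i-1)$-fibration classified by $\sigma$ — produces a topological total space with no reason to dominate, or be dominated by, any algebraic variety \'etale over $\mathscr{M}$; for $i=2$ it happens to be a $\mu$-torsor, which is algebraic, but that is special to $i=2$. The transfer argument runs in the wrong direction: $\on{cor}\circ\on{res}$ being multiplication by the index $d$ tells you that if $\on{res}_H(\sigma)=0$ then $d\sigma=0$ — a constraint on $\sigma$, not a way to force restrictions to vanish — and having ``sufficiently many finite quotients'' does not fix this.

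What the paper does instead, and what you would need for general $i$, has two ingredients you don't use. First, it kills the image of $\sigma$ in $H^i_{\mathrm{\acute et}}(\mathscr{M},\mu)$ by restricting to an \'etale neighborhood of the \emph{geometric} generic point: represent $\sigma$ by a cocycle in an injective resolution; exactness of the resolution in positive degrees means the cocycle becomes a coboundary on the strict Henselization at the geometric generic point (which is just $\overline{K(\mathscr{M})}$, where all positive-degree cohomology vanishes), hence already on some finite-type \'etale neighborhood, which is a variety dominant \'etale over $\mathscr{M}$. This works uniformly in $i$, whereas your Brauer-group argument is the $i=2$ shadow of it. Second, since for $i\ge 3$ the map $H^i(\pi_1(\mathscr{M}'),\mu)\to H^i(\mathscr{M}',\mu)$ is no longer injective (as you note), the paper shrinks $\mathscr{M}'$ further to a Zariski open that is a $K(\pi,1)$ — this is the cited SGA4 result — to turn the injection you rely on into an isomorphism in all degrees. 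Without some substitute for these two steps, your induction does not close.
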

\begin{proof}
%

First, we claim that for any class
$\alpha \in H^i(\mathscr M, \mu)$,
there is a dominant \'etale map $\mathscr N \to \mathscr M$ so that
$\alpha|_{\mathscr N} = 0$.
Since $\mu$ is finite, we can use the comparison between singular
and \'etale cohomology to reduce to showing that for any
$\alpha \in H^i_{\mathrm{\acute et}}(\mathscr M, \mu)$ there is a dominant \'etale map
$\mathscr N \to \mathscr M$ for which $\alpha|_{\mathscr N}= 0$.
Choose an injective resolution $I^\bullet$ of $\mu$ and represent
$\alpha$ by some $\bar\alpha \in \ker(I^i(\mathscr M) \to I^{i+1}(\mathscr M))$.
Because $I^\bullet$ is exact, there exists for each $x\in \mathscr{M}$ an \'etale neighborhood $U_x$ of $x$  
so that $\alpha|_{U_x}$ is in the image of $I^{i-1}(U_x)\to I^{i}(U_x)$. Taking $x$ to be the generic point of $\mathscr{M}$ gives the claim.

Now, let $\alpha$ as above denote the image of $\sigma$ under the natural map
$H^i(\pi_1(\mathscr M, m), \mu) \to H^i(\mathscr M, \mu),$
and take $\mathscr N$ as above so that $\alpha|_\mathscr N = 0$.
By
\cite[Expos\'e XI, 4.6]{SGA4}
we can find a Zariski open $\mathscr M'
\subset \mathscr N$ which is a 
$K(\pi_1(\mathscr M', m'), 1)$ Eilenberg-Maclane space.
This implies the natural map $H^i(\pi_1(\mathscr M', m'), \mu) \to H^i(\mathscr M', \mu)$
is an isomorphism.
Therefore, since $\alpha|_{\mathscr N} = 0$, we also have $\alpha|_{\mathscr M'}
= 0$, and hence $\sigma|_{\pi_1(\mathscr M')} = 0$, as desired.
\end{proof}

We now combine the above results to show that the projective representation $\widetilde{\rho}$ constructed in \autoref{lemma:geometric-mcg-rep-construction}
can be lifted to a linear representation.

\begin{proposition}
	\label{proposition:gl-lift}
	Suppose $\mathscr C \to \mathscr M$ is a versal family of $n$-pointed
	curves of genus $g\geq 1$, with associated family of punctured curves 
	$\pi^\circ: \mathscr{C}^\circ\to \mathscr{M}$. Let $c\in \mathscr{C}^\circ$ be a point and $m=\pi^\circ(c)$.
	Suppose we have representations $\rho: \pi_1(\mathscr C_m^\circ, c) \to
	\on{GL}_r(\mathbb C)$
	and $\widetilde{\rho}: \pi_1(\mathscr C^\circ, c) \to \on{PGL}_r(\mathbb
	C)$ so that the diagram
	\begin{equation}
		\label{equation:rho-and-tilde-rho-compatibility}
		\begin{tikzcd} 
			 \pi_1(\mathscr C_m^\circ, c) \ar[r, "\rho"] \ar {d} &
			 \on{GL}_r(\mathbb C) \ar {d} \\
			 \pi_1(\mathscr C^\circ, c) \ar[r, "\widetilde\rho"] & \on{PGL}_r(\mathbb C)
	\end{tikzcd}\end{equation}
	commutes.
	Moreover, assume $\rho$ is MCG-finite.
	Then, there exists 
	\begin{enumerate}
		\item a dominant \'etale map $\mathscr M' \to \mathscr M$,
		\item corresponding relative curve $\mathscr C' := \mathscr M'
	\times_{\mathscr M} \mathscr C$ with associated family of punctured curves ${\mathscr{C}'}^\circ$,
	and 
\item a representation $\rho' : \pi_1(\mathscr C'^\circ, c') \to
	\on{GL}_r(\mathbb C)$ for some basepoint $c' \in \mathscr C'^\circ$ lying over $c$
	\end{enumerate}
so that, 
	upon choosing some $m'$ over $m$ with $\mathscr C_{m'} \simeq \mathscr C_m$,
	\begin{equation}
		\label{equation:cover-rep-compatibility}
		\begin{tikzcd} 
			\pi_1(\mathscr C_{m'}^\circ, c') \ar{r} \ar[bend left]{rr}{\rho} &
			\pi_1(\mathscr C'^\circ, c') \ar {r}{\rho'} \ar {d} &
			 \on{GL}_r(\mathbb C) \ar {d} \\
			 & \pi_1(\mathscr C^\circ, c) \ar
			 {r}{\widetilde{\rho}} & \on{PGL}_r(\mathbb C)
	\end{tikzcd}\end{equation}
	commutes.	
	Moreover, $\on{im} \rho'$ is contained a subgroup $G$ with $\on{SL}_r(\mathbb
	C) \subset G \subset
	\on{GL}_r(\mathbb C)$, such that $\on{SL}_r(\mathbb C) \subset G$ has
	finite index.
\end{proposition}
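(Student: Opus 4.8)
The plan is to lift the projective representation $\widetilde{\rho}$ to a linear representation after making a dominant \'etale base change, by systematically killing the relevant cohomological obstructions using \autoref{lemma:vanish-on-etale}. First I would reduce to the central extension problem: $\on{PGL}_r(\mathbb C) = \on{GL}_r(\mathbb C)/\mathbb C^\times$, so given $\widetilde{\rho} : \pi_1(\mathscr C^\circ, c) \to \on{PGL}_r(\mathbb C)$ the obstruction to lifting to $\on{GL}_r(\mathbb C)$ lives in $H^2(\pi_1(\mathscr C^\circ, c), \mathbb C^\times)$. Since $\mathbb C^\times$ is not finite, I cannot directly apply \autoref{lemma:vanish-on-etale}; instead I would replace $\on{GL}_r$ with a subgroup of bounded index in the center. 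Concretely, by \autoref{lemma:finite-central-part} applied to the MCG-finite representation $\rho$ on the fiber, $\rho$ factors through $G$ with $\on{SL}_r(\mathbb C) \subset G \subset \on{GL}_r(\mathbb C)$ and $\mu := G \cap \mathbb C^\times$ finite; note $G/\mu \cong$ the image of $G$ in $\on{PGL}_r(\mathbb C)$, which is an open subgroup containing $\on{PSL}_r(\mathbb C)$.

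Next I would observe that since the diagram \eqref{equation:rho-and-tilde-rho-compatibility} commutes, $\widetilde{\rho}$ restricted to the image of $\pi_1(\mathscr C_m^\circ, c)$ lands in $G/\mu \subset \on{PGL}_r(\mathbb C)$; since $\pi_1(\mathscr C_m^\circ,c)$ is normal in $\pi_1(\mathscr C^\circ, c)$ with quotient $\pi_1(\mathscr M)$ (by \autoref{lemma:SES-pi1-versal-curves}) and $G/\mu$ is normalized inside $\on{PGL}_r$ by... actually more carefully: I would pull back the extension $1 \to \mu \to G \to G/\mu \to 1$ along $\widetilde{\rho}$, using that $\widetilde\rho$ lands in $G/\mu$ after possibly enlarging $G$ by the finite group generated by conjugation. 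This gives a class $\sigma \in H^2(\pi_1(\mathscr C^\circ, c), \mu)$ with $\mu$ finite abelian, to which \autoref{lemma:vanish-on-etale} applies: there is a dominant \'etale $\mathscr N \to \mathscr C^\circ$ on which $\sigma$ vanishes. The subtlety is that I want a base change of $\mathscr M$, not of $\mathscr C^\circ$; so I would instead apply \autoref{lemma:vanish-on-etale} with $\mathscr M$ in place of $\mathscr C^\circ$ — but the class naturally lives on $\pi_1(\mathscr C^\circ,c)$. The resolution is to push the obstruction forward: after killing the class on an \'etale cover $\mathscr N \to \mathscr C^\circ$, I would use \autoref{lemma:finite-index-from-dominant-map} and a spreading-out argument to arrange that $\mathscr N$ is pulled back from a dominant \'etale $\mathscr M' \to \mathscr M$, possibly after further shrinking; alternatively, and more cleanly, observe that the obstruction class is the pullback under $\pi^\circ$ of a class on $\pi_1(\mathscr M)$ once we have trivialized it on the fiber $\pi_1(\mathscr C_m^\circ)$ — because restricting to the fiber the extension splits (as $\rho$ provides an honest lift), the obstruction is inflated from $\pi_1(\mathscr M)$ via the exact sequence \eqref{equation:fibration-sequence}, so \autoref{lemma:vanish-on-etale} applied to $\mathscr M$ directly kills it.

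Once the obstruction vanishes on $\mathscr M'$ (with $\mathscr C'^\circ$ the corresponding punctured curve), there exists a lift $\rho' : \pi_1(\mathscr C'^\circ, c') \to G \subset \on{GL}_r(\mathbb C)$ of $\widetilde{\rho}|_{\pi_1(\mathscr C'^\circ,c')}$. To get the compatibility \eqref{equation:cover-rep-compatibility} with $\rho$ on the nose — not just up to a character — I would compare $\rho'|_{\pi_1(\mathscr C_{m'}^\circ,c')}$ with $\rho$: both lift the same $\on{PGL}_r$-representation, so they differ by a character $\chi : \pi_1(\mathscr C_{m'}^\circ, c') \to \mathbb C^\times$. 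This character is MCG-finite (it is a subquotient-type construction, or directly: the fiber sits in a versal family so \autoref{proposition:MCG-finite-family-of-curves}-type reasoning applies, or one argues $\chi$ extends to $\pi_1(\mathscr C'^\circ)$), hence has finite image by \autoref{proposition:1-dim-finite-image}; after passing to a further finite \'etale cover of $\mathscr M'$ trivializing $\chi$ (which exists since finite characters of $\pi_1(\mathscr C_{m'}^\circ)$ that are MCG-finite extend to the total space), I can correct $\rho'$ by $\chi^{-1}$ to match $\rho$. Finally, $\on{im}\rho' \subset G$ and $G \supset \on{SL}_r(\mathbb C)$ with $G/\on{SL}_r(\mathbb C)$ finite, since $G = \det^{-1}(H)$ for the finite group $H$ from \autoref{lemma:finite-central-part}.

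The main obstacle I expect is the bookkeeping in the second paragraph: namely ensuring that the $\mu$-valued obstruction class genuinely descends to (or is inflated from) $\pi_1(\mathscr M)$ so that a base change of $\mathscr M$ — rather than an arbitrary \'etale cover of the total space $\mathscr C^\circ$ — suffices, and simultaneously that this base change keeps $\mathscr M' \to \mathscr M_{g,n}$ dominant \'etale and preserves the identification of a fiber with $\mathscr C_m$. Making \autoref{lemma:vanish-on-etale} interact correctly with the fibration sequence \eqref{equation:fibration-sequence}, and handling the character ambiguity in a way that is again controlled by a base change of $\mathscr M$, is where the real care is needed; the existence of the lift itself once the class is trivial is formal.
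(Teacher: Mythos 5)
Your overall strategy matches the paper's: use \autoref{lemma:finite-central-part} to replace $\on{GL}_r$ by a subgroup $G$ with $\mu := G\cap\mathbb{C}^\times$ finite, treat the obstruction as a class in $H^2(\pi_1(\mathscr{C}^\circ),\mu)$, and kill it by a dominant \'etale base change via \autoref{lemma:vanish-on-etale}. Two of your worries are actually non-issues, but there is a genuine gap in the middle of the argument.

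First, the worry about whether $\widetilde\rho$ lands in $G/\mu$ is vacuous: since $\mathbb{C}$ is algebraically closed, every $[A]\in\on{PGL}_r(\mathbb{C})$ has a representative with determinant $1$, so $G\to\on{PGL}_r(\mathbb{C})$ is \emph{surjective} and $G/\mu=\on{PGL}_r(\mathbb{C})$. No enlargement of $G$ is needed; the relevant central extension is simply $1\to\mu\to G\to\on{PGL}_r(\mathbb{C})\to 1$.

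The real gap is the claim that, because the obstruction class $\varepsilon(\widetilde\rho)\in H^2(\pi_1(\mathscr{C}^\circ),\mu)$ restricts to zero on $\pi_1(\mathscr{C}^\circ_m)$, it is ``inflated from $\pi_1(\mathscr{M})$.'' That is false in general. By the Hochschild--Serre spectral sequence for the extension of \autoref{lemma:SES-pi1-versal-curves}, the kernel of the restriction $H^2(\pi_1(\mathscr{C}^\circ),\mu)\to H^2(\pi_1(\mathscr{C}^\circ_m),\mu)$ sits in a short exact sequence $0\to H^{2,0}\to\ker\to H^{1,1}\to 0$, where $H^{2,0}$ is a quotient of $H^2(\pi_1(\mathscr{M}),\mu)$ (the genuinely inflated part) and $H^{1,1}$ is a subquotient of $H^1(\pi_1(\mathscr{M}), H^1(\pi_1(\mathscr{C}^\circ_m),\mu))$. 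The $H^{1,1}$ component is not inflated and your argument does not kill it. To handle it one must first pass to a finite \'etale cover of $\mathscr{M}$ on which $\pi_1(\mathscr{M})$ acts trivially on the finite group $H^1(\pi_1(\mathscr{C}^\circ_m),\mu)$, and then apply \autoref{lemma:vanish-on-etale} to a class in $H^1(\pi_1(\mathscr{M}),H^1(\pi_1(\mathscr{C}^\circ_m),\mu))$ as well as to a class in $H^2(\pi_1(\mathscr{M}),\mu)$. This is exactly the extra step the paper takes and your proposal omits. Your ``dirty'' alternative (kill the class on an arbitrary \'etale cover of $\mathscr{C}^\circ$, then descend to a base change of $\mathscr{M}$) also does not work: an \'etale cover of $\mathscr{C}^\circ$ is generally not pulled back from $\mathscr{M}$, and \autoref{lemma:finite-index-from-dominant-map} does not rescue this.

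Your final correction step has a smaller problem of the same flavor. The discrepancy between $\rho'|_{\pi_1(\mathscr{C}^\circ_{m'})}$ and $\rho$ is automatically a $\mu$-valued character (both land in $G$ with the same projectivization), so it is finite and MCG-finite for free; invoking \autoref{proposition:1-dim-finite-image} is unnecessary. More importantly, ``passing to a cover of $\mathscr{M}'$ trivializing $\chi$'' does nothing to $\chi$, since covers of the base do not change the fiber; what you actually need is that $\chi$ is in the image of the restriction $H^1(\pi_1(\mathscr{C}'^\circ),\mu)\to H^1(\pi_1(\mathscr{C}^\circ_{m'}),\mu)$ after a further base change. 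The obstruction to this is a transgression class in $H^2(\pi_1(\mathscr{M}'),\mu)$, and one again kills it with \autoref{lemma:vanish-on-etale} --- but only after first ensuring $\pi_1(\mathscr{M}')$ acts trivially on $H^1(\pi_1(\mathscr{C}^\circ_{m'}),\mu)$ so that the relevant $H^0(\pi_1(\mathscr{M}'),-)$ group is the full group of characters. As stated, your ``extend to the total space'' assertion is essentially the content of the proposition itself in rank one and would need a real argument.
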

\begin{proof}
	Taking $G$ and $\mu$ as in \autoref{lemma:finite-central-part},
	there is a commuting diagram
\begin{equation}
	\label{equation:pgl-and-birman}
	\begin{tikzcd} 
H^1(\pi_1(\mathscr C^\circ,c), \mu) \ar
		{r}{\lambda} \ar
		{d}{\nu} &  H^1(\pi_1(\mathscr
				C_m^\circ,c),
		\mu) \ar {d}{\xi} \\
		\Hom(\pi_1(\mathscr C^\circ,c), G) \ar
		{r}{\alpha} \ar
		{d}{\beta} & \Hom(\pi_1(\mathscr
				C_m^\circ,c),
			G) \ar {d}{\gamma} \\
		\Hom(\pi_1(\mathscr C^\circ,c), \on{PGL}_r(\mathbb C)) \ar
		{r}{\delta}\ar{d}{\varepsilon}
		&\Hom(\pi_1(\mathscr C_m^\circ,c),
		\on{PGL}_r(\mathbb C))\ar{d}{\zeta} \\
			H^2(\pi_1(\mathscr C^\circ,c), \mu) \ar {r}{\eta}
		& H^2(\pi_1(\mathscr C_m^\circ,c),
		\mu)
\end{tikzcd}\end{equation}
where the columns are exact sequences of sets.
We start with a representation $\widetilde{\rho} \in \on{Hom}(\pi_1(\mathscr C^\circ,c),
\on{PGL}_r(\mathbb C))$ 
and $\rho \in \on{Hom}(\pi_1(\mathscr C_m^\circ, c), G)$.
The commutativity of \eqref{equation:rho-and-tilde-rho-compatibility} can be equivalently
expressed as the statement that $\delta(\widetilde{\rho}) = \gamma(\rho)$. 
We seek to construct $\rho'$ such that \eqref{equation:cover-rep-compatibility} commutes,
meaning that, after replacing $\mathscr M$ with a dominant \'etale $\mathscr M'$ and $\mathscr{C}$ with $\mathscr{C}'$,
there is some $\rho'$ with $\beta(\rho') = \widetilde{\rho}$ and $\alpha(\rho')
= \rho$.

First, we argue we can pass to some $\mathscr M'$ so that $\widetilde{\rho}$
lies in the image of $\beta$. 
It is equivalent to arrange that $\varepsilon(\widetilde{\rho}) = 0$.
As a first step, we claim $\eta(\varepsilon(\widetilde{\rho})) = 0$.
This holds because $\eta(\varepsilon(\widetilde{\rho})) =
\zeta(\delta(\widetilde{\rho}))$, and
$\delta(\widetilde{\rho}) = \gamma(\rho)$ lies in the image of $\gamma$.
We therefore have that $\varepsilon(\widetilde{\rho}) \in \ker \eta$. 
The Hochschild-Serre spectral sequence associated to the short exact sequence  $$1\to \pi_1(\mathscr{C}_m^\circ)\to \pi_1(\mathscr{C}^\circ)\to \pi_1(\mathscr{M})\to 1$$ of
\autoref{lemma:SES-pi1-versal-curves} yields a short exact sequence $$0\to H^{2,0}\to \ker \eta \overset{\iota}{\to} H^{1,1}\to 0,$$ where $H^{i,j}$ is a subquotient of $H^i(\pi_1(\mathscr M, m), H^j(\pi_1(\mathscr C^\circ_m,c),
\mu))$.

By passing to a finite \'etale cover $\mathscr{M}_1$ of $\mathscr{M}$, we can assume $\pi_1(\mathscr{M}, m)$ acts trivially on the finite group $H^1(\pi_1(\mathscr{C}_m^\circ, c), \mu)$.  Now as $H^i(\pi_1(\mathscr M, m), H^j(\pi_1(\mathscr C^\circ_m,c),
\mu))$ is finite, we may by \autoref{lemma:vanish-on-etale} pass to a further
dominant \'etale scheme over $\mathscr{M}$ killing every element of $H^{i,j}$,
$(i,j)=(2,0) \text{ or } (1,1)$, and hence every element of $\ker \eta$. Thus
after passing to any component $\mathscr{M}_2$ of this dominant \'etale scheme
over $\mathscr M_2$, we have $\epsilon(\widetilde\rho)=0$. Let $\mathscr{C}_2^\circ$ be the pullback of $\mathscr{C}^\circ$ to $\mathscr{M}_2$. 

Let $\overline{\rho} \in H^1(\pi_1(\mathscr C_2^\circ, c), G)$
be an element with $\beta(\overline{\rho}) = \widetilde{\rho}$; such a $\overline{\rho}$ exists as we have arranged $\epsilon(\widetilde{\rho})=0$. 
We wish to arrange $\alpha(\overline{\rho}) =\rho$. 
This may not be the case, but it is enough to show that after passing to a further dominant
\'etale $\mathscr M'$,
that we can modify $\overline{\rho}$ by an element of $\on{im}\nu$ so that this
does hold.
More precisely, note that $\alpha(\overline{\rho})$ differs from $\rho$ by an
element of $\sigma\in H^1(\pi_1(\mathscr{C}^\circ_m, c), \mu)$, as by
construction $\gamma(\alpha(\overline{\rho}))=\gamma(\rho)$. It suffices to show
that, after passing to a dominant \'etale scheme over $\mathscr{M}_2$, $\sigma$ is in the image of $\lambda$. After passing to a finite \'etale cover of $\mathscr{M}_2$, we may assume that $H^1(\pi_1(\mathscr{C}^\circ_m, c), \mu)$ is fixed by $\pi_1(\mathscr{M}, m)$.
The Hochschild-Serre spectral sequence gives a map
\begin{align*}
	H^0(\pi_1(\mathscr M, m), H^1(\pi_1({\mathscr C}_{m}^\circ,c), \mu))
\xrightarrow{\chi}
H^2(\pi_1(\mathscr M, m), H^0(\pi_1({\mathscr C}^\circ_{m}, c), \mu))
\end{align*}
such that $\chi(\sigma) = 0$ if an only if $\sigma \in \on{im}\lambda$.
Since $H^0(\pi_1({\mathscr C}^\circ_{m}, c), \mu)=\mu$
is finite, we can apply
\autoref{lemma:vanish-on-etale} so as to assume, after replacing $\mathscr M$ by
a dominant \'etale $\mathscr M'$
that $\chi(\sigma) = 0$.
This implies that $\sigma \in \on{im}\lambda$, as desired.
\end{proof}

We can now give a partial converse to \autoref{proposition:MCG-finite-family-of-curves}.
\begin{corollary}\label{corollary:converse-to-MCG-rep}
Let $\rho:\pi_1(\Sigma_{g,n})\to \on{GL}_r(\mathbb{C})$ be an irreducible MCG-finite
representation, with $g\geq 1$. 
There is a punctured versal family of curves
$\mathscr{C}^\circ\to \mathscr{M}$, and a representation $\rho'$ of
$\pi_1(\mathscr{C}^\circ)$ whose determinant has finite order, with the following property: For $C^\circ$ a fiber of
$\pi^\circ$, there is an identification $\pi_1(\Sigma_{g,n})\simeq
\pi_1(C^\circ)$ such that, under this identification, $\rho'|_{\pi_1(C^\circ)}=\rho$.
\end{corollary}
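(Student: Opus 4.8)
The plan is to assemble \autoref{lemma:geometric-mcg-rep-construction} and \autoref{proposition:gl-lift} essentially verbatim. First, apply \autoref{lemma:geometric-mcg-rep-construction} to the irreducible MCG-finite representation $\rho$: it produces a scheme $\mathscr M$ with a finite \'etale map $\mathscr M \to \mathscr M_{g,n}$, the associated punctured family $\pi^\circ : \mathscr C^\circ \to \mathscr M$, a point $c \in \mathscr C^\circ$ with $m = \pi^\circ(c)$, and a projective representation $\widetilde\rho : \pi_1(\mathscr C^\circ, c) \to \on{PGL}_r(\mathbb C)$ whose restriction to $\pi_1(\mathscr C^\circ_m, c) \simeq \pi_1(\Sigma_{g,n})$ is the projectivization of $\rho$, i.e.\ making \eqref{equation:geometric-rho-and-tilde-rho} commute. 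This is precisely the input data required by \autoref{proposition:gl-lift}: the remaining hypotheses there, namely that $\mathscr C \to \mathscr M$ is versal with $g \geq 1$ and that $\rho$ is MCG-finite, hold by construction and by assumption.

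Next, apply \autoref{proposition:gl-lift} to $(\rho, \widetilde\rho)$. It yields a dominant \'etale map $\mathscr M' \to \mathscr M$, the pulled-back relative curve $\mathscr C' := \mathscr M' \times_{\mathscr M} \mathscr C$ with punctured family ${\mathscr C'}^\circ \to \mathscr M'$, and a representation $\rho' : \pi_1({\mathscr C'}^\circ, c') \to \on{GL}_r(\mathbb C)$ fitting into the commuting diagram \eqref{equation:cover-rep-compatibility}. Since $\mathscr M \to \mathscr M_{g,n}$ is finite \'etale and $\mathscr M' \to \mathscr M$ is dominant \'etale, the composite $\mathscr M' \to \mathscr M_{g,n}$ is dominant \'etale, so ${\mathscr C'}^\circ \to \mathscr M'$ is a punctured versal family in the sense of \autoref{notation:versal-family}. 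Choosing $m'$ over $m$ with $\mathscr C_{m'} \simeq \mathscr C_m$ as in \autoref{proposition:gl-lift}, and setting $C^\circ := {\mathscr C'}^\circ_{m'}$, the outer (curved) arrow of \eqref{equation:cover-rep-compatibility} furnishes an identification $\pi_1(\Sigma_{g,n}) \simeq \pi_1(C^\circ)$ under which $\rho'|_{\pi_1(C^\circ)} = \rho$.

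It remains to check the determinant of $\rho'$ has finite order. By the last assertion of \autoref{proposition:gl-lift}, $\on{im}\rho'$ is contained in a subgroup $G$ with $\on{SL}_r(\mathbb C) \subset G \subset \on{GL}_r(\mathbb C)$ and $\on{SL}_r(\mathbb C) \subset G$ of finite index; the determinant then identifies $G/\on{SL}_r(\mathbb C)$ with a finite subgroup of $\mathbb C^\times$, so $\det \circ \rho' : \pi_1({\mathscr C'}^\circ) \to \mathbb C^\times$ has finite image and hence finite order. Relabeling $\mathscr M'$, ${\mathscr C'}^\circ$, and $c'$ as $\mathscr M$, $\mathscr C^\circ$, and $c$ gives the statement. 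There is no serious obstacle beyond this bookkeeping: all the real work — the cohomology-killing arguments of \autoref{lemma:vanish-on-etale}, the central-part finiteness of \autoref{lemma:finite-central-part}, and the diagram chase in the Hochschild--Serre spectral sequence — is already carried out in the proof of \autoref{proposition:gl-lift}.
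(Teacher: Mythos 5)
Your proof is correct and is exactly the approach the paper takes: the paper's proof simply reads "This follows by combining \autoref{lemma:geometric-mcg-rep-construction} and \autoref{proposition:gl-lift}," and you have spelled out that combination faithfully, including the finite-determinant observation from the final clause of \autoref{proposition:gl-lift}.
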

\begin{proof}
	This follows by combining \autoref{lemma:geometric-mcg-rep-construction} and \autoref{proposition:gl-lift}.
\end{proof}

\begin{remark}
	\label{remark:cousin-heu}
	Combining \autoref{corollary:converse-to-MCG-rep} with
	\autoref{proposition:MCG-finite-family-of-curves} gives a proof of
	\cite[Theorem A]{CH:isomonodromic}
	in the case that $\rho$ is semisimple and the flat vector bundle $(E, \nabla_0)$
	of \cite[Theorem A]{CH:isomonodromic} 
	is taken to be the Deligne canonical extension of
	$(\rho|_{C^\circ}\otimes \mathscr{O}, \on{id}\otimes d)$ to $C$.
\end{remark}

\subsection{Representations that are unitary on fibers}
\label{subsection:unitary-on-fibers}

The main goal of this section is to verify
\autoref{lemma:unitary-direct-sum}, which allows us to write local systems on a family of punctured curves $\mathscr{C}^\circ\to \mathscr{M}$ in terms of unitary local systems and local systems pulled back from $\mathscr{M}$, after passing to a dominant \'etale
map.

The following criterion for unitarity and finiteness will be useful when analyzing the representations produced by \autoref{lemma:mcg-rep-construction} and \autoref{proposition:gl-lift}.
\begin{lemma}
	\label{lemma:finite-rho-implies-finite-lift}
Let $G$ be a group and $H\trianglelefteq G$ a normal subgroup. Let $$\rho: G\to \on{GL}_r(\mathbb{C})$$ be a representation such that $\det\rho$ has finite order, and suppose $\rho|_H$ is irreducible.
\begin{enumerate}
\item If $\rho|_H$ has finite image, then $\rho$ has finite image.
\item if $\rho|_H$ is unitary, then $\rho$ is unitary.
\end{enumerate}
\end{lemma}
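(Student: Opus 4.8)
The plan is to deduce both parts from the fact that $\rho|_H$ being irreducible forces the image $\rho(G)$ to normalize, and indeed act by conjugation on, a group that is only ``finitely bigger'' than $\rho(H)$. First I would reduce to the case where $\rho$ is irreducible as well: since $\rho|_H$ is irreducible, $\rho$ itself is irreducible (any $G$-invariant subspace is $H$-invariant), so Schur's lemma applies and the center of $\rho(G)$ consists of scalars. For part (1), assume $\rho|_H$ has finite image. For each $g \in G$, the representation $h \mapsto \rho(ghg^{-1})$ of $H$ is isomorphic to $\rho|_H$ (conjugation by $\rho(g)$), so $\rho(H)$ is a normal subgroup of $\rho(G)$ which is finite; hence $\rho(G)$ acts on the finite group $\rho(H)$ by conjugation, giving a map $\rho(G) \to \mathrm{Aut}(\rho(H))$ with kernel the centralizer $Z$ of $\rho(H)$ in $\rho(G)$. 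Since $\rho|_H$ is irreducible, $Z$ consists of scalar matrices by Schur's lemma, so $Z \subseteq \mathbb{C}^\times \cdot I$. The image of $Z$ in $\mathbb{C}^\times$ is a subgroup of $\mathbb{C}^\times$; I claim it is finite because $\det$ restricted to it is injective up to an $r$-th root of unity (a scalar $\lambda I$ has $\det = \lambda^r$) and $\det \rho$ has finite order by hypothesis, so $Z$ is finite. Thus $\rho(G)$ is an extension of a subgroup of the finite group $\mathrm{Aut}(\rho(H))$ by the finite group $Z$, hence finite.

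For part (2), the argument is parallel, replacing ``finite'' with ``bounded/relatively compact.'' Assume $\rho|_H$ is unitary, i.e. $\rho(H)$ preserves a positive-definite Hermitian form $\langle\,,\rangle$ on $\mathbb{C}^r$. For each $g \in G$, the form $\langle \rho(g) v, \rho(g) w\rangle$ is another $\rho(H)$-invariant positive-definite Hermitian form (since $\rho(H)$ is normal), but $\rho|_H$ being irreducible, the space of $\rho(H)$-invariant Hermitian forms is one-dimensional over $\mathbb{R}$ — any two differ by a positive real scalar. Hence for each $g$ there is a positive real $c(g) > 0$ with $\langle \rho(g) v, \rho(g) w\rangle = c(g)\langle v, w\rangle$, and $g \mapsto c(g)$ is a homomorphism $G \to \mathbb{R}_{>0}$. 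Taking determinants of the corresponding matrix identity shows $|\det \rho(g)|^2 = c(g)^r$; since $\det \rho$ has finite order, $|\det \rho(g)| = 1$, so $c(g) = 1$ for all $g$. Therefore $\rho(G)$ preserves $\langle\,,\rangle$, i.e. $\rho$ is unitary.

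The main obstacle — really the only subtle point — is justifying in each case that the ``scaling'' ambiguity left over by Schur's lemma (a scalar in part (1), a positive real in part (2)) is killed by the finite-determinant hypothesis; this is where the assumption on $\det \rho$ is essential and cannot be dropped. Everything else is a direct application of irreducibility plus normality of $H$. I would also remark that one could phrase this uniformly: the hypotheses give a map $\rho(G) \to \mathrm{PGL}_r(\mathbb{C})$ whose image normalizes the image of $\rho(H)$, and finiteness/compactness of $\rho(H)$ together with finiteness of $\rho(G)/\rho(H)$ in $\mathrm{PGL}_r$ and of the central kernel gives the result — but the explicit form-theoretic argument above is cleanest to write down.
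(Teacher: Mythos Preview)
Your proof is correct, and for part (2) it is essentially identical to the paper's argument: both fix an $H$-invariant Hermitian form $h$, observe via Schur's lemma that each $\rho(g)$ scales $h$ by some constant $c(g)$, and use the finite-determinant hypothesis to force $c(g)=1$. You are slightly more careful in noting that $c(g)$ is a positive real (since $h^g$ is again positive-definite Hermitian) and in writing out the determinant identity $|\det\rho(g)|^2=c(g)^r$ explicitly; the paper simply says $\on{im}(\rho)\subset GU(h)$ and then invokes finite determinant to land in $U(h)$.

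For part (1) your route is genuinely different and more elementary. The paper projectivizes, argues that $\mathbb{P}\rho(g^{t!})=\on{id}$ for $t=\#\rho(H)$ (since conjugation by $\rho(g)$ permutes the finite set $\rho(H)$), and then invokes Burnside's theorem that a linear group of finite exponent is finite. You instead use the short exact sequence
\[
1 \to Z \to \rho(G) \to \text{(subgroup of }\on{Aut}(\rho(H))\text{)} \to 1,
\]
where $Z$ is the centralizer of $\rho(H)$ in $\rho(G)$, observe $Z\subset\mathbb{C}^\times\cdot I$ by Schur, and kill $Z$ using the finite-determinant hypothesis. This avoids Burnside's theorem entirely and is arguably the cleaner argument; the paper's version has the mild advantage of giving an explicit exponent bound on $\mathbb{P}\rho(G)$, but that is not used anywhere.
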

\begin{proof}
	We first prove $(1)$.
		As $\det\rho$ has finite order, it suffices to show that the
	projectivization $$\mathbb{P}\rho: G\overset{\rho}{\longrightarrow}
	\on{GL}_r(\mathbb{C})\to \on{PGL}_r(\mathbb{C})$$ has finite image. Let
$t=\#\on{im}(\rho|_H)$. For each $g\in G$, $\mathbb{P}\rho(g)$ is the unique
(by Schur's lemma) element of $\on{PGL}_r(\mathbb{C})$ such that
$$\mathbb{P}\rho(g)\mathbb{P}\rho(h)\mathbb{P}\rho(g)^{-1}=\mathbb{P}\rho(ghg^{-1})$$ for all $h\in H$.
Since $\rho(g)$ acts by conjugation on the order $t$ set $\on{im}(\rho|_H)$, its action 
has order dividing $t!$, so $\rho(h)=\rho(g^{t!})\rho(h)\rho(g^{-t!})$. Hence we have $\mathbb{P}\rho(g^{t!})=\on{id}$ by
uniqueness. Thus the image of $\mathbb{P}\rho$ has exponent dividing $t!$. But a linear group with finite exponent is finite \cite{burnside1905criteria}.

We now turn to the unitary case and verify $(2)$.
Let $h$ be a positive-definite Hermitian inner product preserved by $\rho|_H$. 
We first check $\on{im}\rho$ lies in the general unitary group, i.e.~it preserves $h$ up to scaling.
	Let $V$ be the underlying vector space of our
	representation $\rho$. The invariant inner product $h: V \times V \to \mathbb C$ corresponds to a $\mathbb{C}$-linear isomorphism of $H$-representations $\phi: V \to \overline{V}^\vee$.
	The linear map $\phi$, and hence $h$, is unique up to scaling by Schur's lemma. 
	For any $g\in G$, $$h^g: (v, w)\mapsto h(\rho(g)v, \rho(g)w)$$ is
	another $H$-invariant Hermitian form, and hence $h^g=c\cdot h$ for some
	$c\in\mathbb{C}^\times.$ 
	Equivalently, $\on{im}(\rho)\subset GU(h)$. 
	Since $\rho$ has finite determinant, we moreover obtain
	$\on{im}(\rho)\subset U(h)$.\qedhere
\end{proof}
We conclude the section by giving a convenient description of local systems
$\mathbb V$ on a family of punctured curves $\pi^\circ: \mathscr C^\circ \to
\mathscr M$, with unitary fibral monodromy. It may be useful for the reader to consider the case $A=\mathbb{C}$.

As in \autoref{notation:versal-family}, let $\pi: \mathscr{C}\to \mathscr{M}$ be a versal family of $n$-pointed curves, and let $\mathscr{C}^\circ\to \mathscr{M}$ be the associated family of punctured curves. Let $m\in\mathscr{M}$ be a point.

\begin{lemma}
	\label{lemma:unitary-direct-sum}
	Let $A$ be an Artin local $\mathbb{C}$-algebra with residue field $\mathbb{C}$.
	Suppose $\mathbb V$ is a local system of free $A$-modules on $\mathscr C^\circ$
	such that $\mathbb V|_{\mathscr C^\circ_m}$ is a constant deformation of a unitary local system, i.e.~there exists a unitary $\mathbb{C}$-local system $\mathbb{V}_0$ on $\mathscr C^\circ_m$ such that $\mathbb V|_{\mathscr C^\circ_m}\simeq \mathbb{V}_0\otimes_\mathbb{C} A$.  
	
	There is a dominant \'etale map $\mathscr M' \to \mathscr M$, with $\mathscr{C}'=\mathscr{M}'\times_\mathscr{M}\mathscr{C}$ and ${\pi'}^\circ: {\mathscr{C}'}^\circ\to \mathscr{M}'$  the associated family of punctured curves, over
	which 
	\begin{align*}
	\mathbb V|_{ {\mathscr{C}'}^\circ} \simeq \oplus_{i=1}^s \mathbb U_i
\otimes ({\pi'}^\circ)^* \mathbb
W_i,
	\end{align*}
where the $\mathbb W_i$ are locally constant sheaves of free $A$-modules on $\mathscr M'$ and
	the $\mathbb U_i$ are unitary local systems on ${\mathscr{C}'}^\circ$.
	Moreover, for $C'^\circ$ a fiber of $\mathscr{C}'^\circ \to
	\mathscr{M}'$, each $\mathbb{U}_i|_{C'^\circ}$ is irreducible, the
	$\mathbb{U}_i|_{C'^\circ}$ are pairwise non-isomorphic,
	and $\mathbb W_i \simeq {\pi'}^\circ_*\Hom(\mathbb U_i, \mathbb V|_{ {\mathscr{C}'}^\circ} )$.
\end{lemma}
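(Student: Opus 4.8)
\textbf{Proof strategy for Lemma~\ref{lemma:unitary-direct-sum}.}
The plan is to combine semisimplicity of unitary representations on the fiber with a rigidity/deformation argument to spread out the fibral decomposition over a dominant \'etale base. First I would work on the fiber $C^\circ = \mathscr C^\circ_m$: since $\mathbb V_0$ is unitary, it is a direct sum of irreducible unitary local systems, which I group as $\mathbb V_0 \simeq \bigoplus_{i=1}^s \mathbb U_{0,i} \otimes_{\mathbb C} W_{0,i}$, where the $\mathbb U_{0,i}$ are pairwise non-isomorphic irreducible unitary local systems on $C^\circ$ and $W_{0,i} = H^0(C^\circ, \Hom(\mathbb U_{0,i}, \mathbb V_0))$ are the multiplicity spaces. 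Each $\mathbb U_{0,i}$ is a point of the character variety of $\pi_1(C^\circ)$; the key input is that each such point, being a \emph{rigid} point in a suitable sense once we pass up the family, deforms uniquely over the base. Here I would invoke the construction of \autoref{lemma:mcg-rep-construction} / \autoref{lemma:geometric-mcg-rep-construction} combined with cohomological rigidity (ultimately \autoref{theorem:unitary-rigid} applied to $\on{ad}$): the irreducible unitary $\mathbb U_{0,i}$ extends, after a dominant \'etale base change $\mathscr M' \to \mathscr M$, to a local system $\mathbb U_i$ on $\mathscr C'^\circ$ restricting to $\mathbb U_{0,i}$ on the fiber. (One passes to a single $\mathscr M'$ working for all $i$ simultaneously by taking a common refinement of the finitely many base changes.)

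Next I would assemble the decomposition over $\mathscr M'$. Set $\mathbb W_i := {\pi'}^\circ_* \Hom(\mathbb U_i, \mathbb V|_{\mathscr C'^\circ})$; this is a locally constant sheaf of $A$-modules on $\mathscr M'$ by proper base change together with the computation on the fiber, where ${\pi'}^\circ_* \Hom(\mathbb U_i, \mathbb V|_{\mathscr C'^\circ})|_{m} = H^0(C^\circ, \Hom(\mathbb U_{0,i}, \mathbb V_0)) \otimes_{\mathbb C} A = W_{0,i} \otimes_{\mathbb C} A$, which has constant rank, so $R^1$ does not interfere and $\mathbb W_i$ is indeed locally constant and free. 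There is then a natural evaluation map $\bigoplus_i \mathbb U_i \otimes ({\pi'}^\circ)^* \mathbb W_i \to \mathbb V|_{\mathscr C'^\circ}$. To check it is an isomorphism it suffices to check this on each fiber of ${\pi'}^\circ$, where it becomes the canonical isotypic decomposition $\bigoplus_i \mathbb U_{0,i} \otimes_{\mathbb C} (W_{0,i}\otimes_\mathbb{C} A) \xrightarrow{\sim} \mathbb V_0 \otimes_{\mathbb C} A$; an isomorphism of local systems which is an isomorphism on one fiber (of a connected base) is an isomorphism, and a further shrinking of $\mathscr M'$ guarantees connectedness. The unitarity of $\mathbb U_i$ as a local system on all of $\mathscr C'^\circ$ (not just on the fiber) I would get from \autoref{lemma:finite-rho-implies-finite-lift}(2): $\mathbb U_i|_{C'^\circ}$ is irreducible unitary, and after arranging the determinant of $\mathbb U_i$ to have finite order on $\pi_1(\mathscr C'^\circ)$ --- using \autoref{proposition:gl-lift} to control the central part and \autoref{lemma:vanish-on-etale} to kill obstructions after a further dominant \'etale base change --- part (2) of that lemma upgrades fibral unitarity to unitarity of $\mathbb U_i$.

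The remaining assertions are then formal: pairwise non-isomorphism of the $\mathbb U_i|_{C'^\circ}$ and their irreducibility hold by construction on the fiber, and the identification $\mathbb W_i \simeq {\pi'}^\circ_* \Hom(\mathbb U_i, \mathbb V|_{\mathscr C'^\circ})$ is our definition of $\mathbb W_i$. I expect the main obstacle to be the first step --- producing the extensions $\mathbb U_i$ of the fibral summands $\mathbb U_{0,i}$ over a dominant \'etale cover and simultaneously arranging that they are unitary on the total space rather than just on fibers. This is where one genuinely needs rigidity: a priori a unitary irreducible local system on the fiber need not extend to the family at all, and even if it extends as a flat family of representations the extension could fail to be unitary; both issues are resolved by the low-rank hypothesis feeding into \autoref{theorem:unitary-rigid} (via \autoref{lemma:global-rigidity}-type reasoning) and by \autoref{lemma:finite-rho-implies-finite-lift}. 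The bookkeeping of passing to a single dominant \'etale $\mathscr M'$ that works for every index $i$ at once, while also killing the finitely many cohomological lifting obstructions for each $\mathbb U_i$ via \autoref{lemma:vanish-on-etale}, is routine but should be stated carefully.
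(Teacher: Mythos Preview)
Your overall architecture---decompose on the fiber, extend each irreducible summand to the family, define $\mathbb W_i$ as the pushforward of $\Hom$, and check the evaluation map on fibers---matches the paper. The gap is in how you propose to extend the $\mathbb U_{0,i}$.

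You identify rigidity (\autoref{theorem:unitary-rigid}, \autoref{lemma:global-rigidity}) as ``the key input'' and speak of ``the low-rank hypothesis feeding into \autoref{theorem:unitary-rigid}.'' But \autoref{lemma:unitary-direct-sum} carries no rank hypothesis, so \autoref{theorem:unitary-rigid} is not available. Worse, \autoref{theorem:unitary-rigid} is \emph{proved using} \autoref{lemma:unitary-direct-sum} (see the first paragraph of its proof), so invoking it here is circular; the same goes for \autoref{lemma:global-rigidity}, which sits downstream of both.

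The point you are missing is that no rigidity is needed to extend the $\mathbb U_{0,i}$. Because $\mathbb V$ already lives on all of $\mathscr C^\circ$, the fibral representation $\rho$ associated to $\mathbb V_0$ is MCG-finite by \autoref{proposition:MCG-finite-family-of-curves}; hence each irreducible summand $\rho_i$ is MCG-finite by \autoref{proposition:basic-properties}. Then \autoref{corollary:converse-to-MCG-rep} (which packages \autoref{lemma:geometric-mcg-rep-construction} and \autoref{proposition:gl-lift}) produces, after a dominant \'etale base change, lifts $\rho_i'$ with finite determinant restricting to $\rho_i$ on fibers. Unitarity of the resulting $\mathbb U_i$ on the total space then follows from \autoref{lemma:finite-rho-implies-finite-lift}(2), exactly as you say. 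So your worry that ``a priori a unitary irreducible local system on the fiber need not extend to the family at all'' is answered not by rigidity but by MCG-finiteness, which is automatic in this setting. Once you replace the rigidity appeal with this observation, the rest of your outline is correct and coincides with the paper's argument.
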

\begin{proof}
Since $\mathbb V_0$ is unitary, we can express it as
as a sum of irreducible unitary local systems, 
$\mathbb V_0 \simeq \oplus_{i=1}^s \mathbb
S_i^{\oplus n_i}$, with the $\mathbb{S}_i$ pairwise non-isomorphic.
Let $\rho:\pi_1(\mathscr C_m^\circ) \to \on{GL}_r(\mathbb C)$ be the monodromy representation associated to $\mathbb{V}_0$, and let
$\rho_i$ be the (irreducible) representation associated to $\mathbb{S}_i$.
By \autoref{proposition:MCG-finite-family-of-curves}, $\rho$ is MCG-finite. Hence each
$\rho_i$ is MCG-finite by \autoref{proposition:basic-properties}.

By repeatedly applying \autoref{corollary:converse-to-MCG-rep},
there is a dominant \'etale map $\mathscr M' \to \mathscr M$ and representations
$\rho'_i: \pi_1({\mathscr {C}'}^\circ) \to \on{GL}_r(\mathbb C)$ with finite determinant so that for any $m' \in \mathscr
M'$, $\rho'_i$ restricts to a representation
$\pi_1({\mathscr{C}'}^\circ_{m'}) \to \on{GL}_r(\mathbb C)$
identified with $\rho_i$.
Let $\mathbb U_i$ denote the local system on ${\mathscr{C}'}^\circ$ corresponding to
$\rho_i'$.
Each $\mathbb U_i$ is unitary by \autoref{lemma:finite-rho-implies-finite-lift}.
The $\mathbb{U}_i$ 
are irreducible and pairwise non-isomorphic
because the same holds for the $\rho_i$.

Let $\mathbb W_i = \pi'^\circ_*\Hom(\mathbb U_i, \mathbb V|_{\mathscr
C'^\circ})$, as in the statement. The $\mathbb{W}_i$ are locally constant sheaves of free $A$-modules by the hypothesis that $\mathbb V|_{\mathscr C^\circ_m}\simeq \mathbb{V}_0\otimes_\mathbb{C} A$. There is a natural evaluation map
\begin{align*}
	\psi: \oplus_{i=1}^s (\pi'^\circ)^* \mathbb W_i \otimes \mathbb U_i \to \mathbb
	V|_{{\mathscr{C}'}^\circ}.
\end{align*}
It only remains to show $\psi$ is an isomorphism.
We do so after restriction to any fiber
of $\mathscr C'^\circ \to \mathscr M'$, where $\psi$ is identified with the
isomorphism
\begin{align}
	\nonumber
	\oplus_{i=1}^s \Hom(\rho_i, \rho\otimes A) \otimes \rho_i  & \rightarrow \rho\otimes A\\
	\sum_{i=1}^s \alpha_1 \otimes v_i & \mapsto \sum_{i=1}^s \alpha_i(v_i).
	\qedhere
\end{align}
\end{proof}

\section{Deformation-theoretic preliminaries}\label{section:deformation-theory}
In this section we introduce some deformation theory of group representations. In \autoref{section:main-proof}, we will use the theory developed here in conjunction with the vanishing results proven in \autoref{section:vanishing} to show that MCG-finite representations have certain rigidity properties.
\subsection{Deformations of representations}\label{subsection:deformation-basics}
We begin by recalling a standard description of deformations of representations in terms of cohomological data.

Recall 
\autoref{notation:adjoint}:
for $R$ a ring and a representation $\rho: G\to \on{GL}_r(R)$, we use $\on{ad}(\rho)$
to denote the adjoint $G$-representation obtained by composing $\rho$ with
$\on{GL}_r(R)\to \on{GL}(\mathfrak{pgl}_r(R)).$ 

Fix a group $G$ and a representation $\rho_0: G\to GL_r(\mathbb{C})$. Let $\on{Art}$ be the category of local Artin $\mathbb{C}$-algebras with residue field $\mathbb{C}$. If $A\in\on{Art}$, with
maximal ideal $\mathfrak{m}_A$, we say that a representation $\rho_A: G\to
\on{GL}_r(A)$ is a deformation of $\rho_0$ if $\rho_A=\rho_0 \bmod \mathfrak{m}_A$. The \emph{constant} deformation is the deformation obtained via the composition $$G\overset{\rho_0}{\to}GL_r(\mathbb{C})\to GL_r(A).$$ We say a representation $\rho: G\to GL_r(A)$ is \emph{conjugate to a constant representation} if there exists a matrix $M\in GL_r(A)$ such that $M\rho M^{-1}$ factors through $GL_r(\mathbb{C})$.
Say a deformation $\rho_A$ of $\rho_0$ has \emph{constant determinant} if $\det\rho_A=\det\rho_0$, regarded as a map $G\to A^\times$ via the natural inclusion $\mathbb{C}^\times\hookrightarrow A^\times$.

We now define deformations of representations over square-zero extensions in
$\on{Art}$.
  Given a square-zero extension $$0\to I\to B\to A\to 0$$ with $A, B\in\on{Art}$, and a representation $\rho_A: G\to GL_r(A)$, a deformation of $\rho_A$ is a representation $\rho_B: G\to \on{GL}_r(B)$ with $\rho_B=\rho_A\bmod I$. Two such deformations $\rho_B, \rho_B'$ are \emph{equivalent} if there exists a matrix $M\in \on{GL}_r(B)$ with $\rho_B=M\rho_B'M^{-1}$, such that $M=\on{id}\bmod I$. 
  \begin{lemma}
	\label{lemma:tangent-space}
	Fix a group $G$ and a representation $\rho_0 : G \to \on{GL}_r(\mathbb C)$. Consider $$0\to \epsilon A\to A[\epsilon]/\epsilon^2\to A\to 0$$ with $A\in\on{Art}$, and let $\rho_A: G\to \on{GL}_r(A)$ be a deformation of $\rho_0$ over $A$ with constant determinant.
	The set of equivalence classes of deformations of $\rho_A$ to an $A[\epsilon]/\epsilon^2$-representation with constant determinant is naturally in bijection with $H^1(G, \on{ad}(\rho_A))$.
	Moreover, this description is functorial in $G$.
\end{lemma}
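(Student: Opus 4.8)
The plan is to reduce everything to the standard cocycle description of deformations of a group representation and then keep track of the constant-determinant condition.

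First I would set up the cocycle bijection in the absolute case $A = \mathbb{C}$ and then explain why the argument works verbatim over an arbitrary Artinian base $A$. Given a deformation $\rho_{A[\epsilon]} : G \to \on{GL}_r(A[\epsilon]/\epsilon^2)$ of $\rho_A$, write $\rho_{A[\epsilon]}(g) = (1 + \epsilon c(g))\rho_A(g)$ for a uniquely determined function $c : G \to M_r(A)$ (using that $\epsilon^2 = 0$, so $1 + \epsilon c(g)$ is invertible with inverse $1 - \epsilon c(g)$, and that $\rho_A(g)$ is invertible over $A$). Imposing that $\rho_{A[\epsilon]}$ is a homomorphism translates, after expanding $\rho_{A[\epsilon]}(gh) = \rho_{A[\epsilon]}(g)\rho_{A[\epsilon]}(h)$ and collecting the $\epsilon$-terms, into the cocycle identity $c(gh) = c(g) + \on{Ad}(\rho_A(g))c(h)$, i.e. $c \in Z^1(G, \on{ad}(\rho_A))$ where $G$ acts on $M_r(A) = \on{gl}_r(A)$ by $\on{Ad}\circ\rho_A$. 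Next I would check that two deformations $\rho_{A[\epsilon]}$, $\rho'_{A[\epsilon]}$ are equivalent — i.e. conjugate by some $M = 1 + \epsilon m$ with $m \in M_r(A)$ — precisely when the associated cocycles $c, c'$ differ by the coboundary $g \mapsto m - \on{Ad}(\rho_A(g))m$. This gives a bijection between equivalence classes of deformations and $H^1(G, \on{gl}_r(\on{ad}(\rho_A)))$, where I am writing $\on{gl}_r$-coefficients to emphasize that at this stage we have not yet imposed the determinant condition (note $\on{ad}$ in the paper's \autoref{notation:adjoint} means coefficients in $\mathfrak{pgl}_r$, so I must be careful: the relevant module is $\mathfrak{gl}_r$ a priori, and the constant-determinant condition is exactly what cuts it down).

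The key remaining point is the determinant bookkeeping. For $M \in M_r(A)$ small, $\det(1 + \epsilon M) = 1 + \epsilon\,\on{tr}(M)$. So the condition that $\rho_{A[\epsilon]}$ has constant determinant, $\det \rho_{A[\epsilon]} = \det\rho_0 \otimes A[\epsilon]/\epsilon^2$, is equivalent — given that $\rho_A$ already has constant determinant — to $\on{tr}(c(g)) = 0$ for all $g$, i.e. to $c$ taking values in the trace-zero matrices $\mathfrak{sl}_r(A)$. Since $\on{gl}_r = \mathfrak{sl}_r \oplus \mathbb{C}$ as $G$-modules (the scalar matrices being a trivial summand, because $\on{Ad}$ fixes scalars, and this splitting is preserved over $A$), the constant-determinant cocycles are exactly $Z^1(G, \on{ad}(\rho_A))$ with $\on{ad}$ meaning $\mathfrak{sl}_r = \mathfrak{pgl}_r$-coefficients, and constant-determinant equivalences correspond to coboundaries valued in $\mathfrak{sl}_r$ — note any conjugating $M = 1 + \epsilon m$ automatically has $\det M = 1 + \epsilon\,\on{tr}(m)$, and replacing $m$ by its traceless part does not change the conjugation action on traceless cocycles while the scalar part acts trivially, so one may always choose $m$ traceless. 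Hence equivalence classes of constant-determinant deformations are in bijection with $H^1(G, \on{ad}(\rho_A))$ in the paper's notation.

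For functoriality in $G$: a group homomorphism $\phi : G' \to G$ pulls back a deformation $\rho_{A[\epsilon]}$ of $\rho_A$ to $\rho_{A[\epsilon]} \circ \phi$, a deformation of $\rho_A \circ \phi$, and this is compatible with equivalence; on cocycles this is visibly the restriction map $c \mapsto c \circ \phi$, which is exactly the map inducing $\phi^* : H^1(G, \on{ad}(\rho_A)) \to H^1(G', \on{ad}(\rho_A \circ \phi))$ on group cohomology. So the bijection is natural. I expect the main obstacle to be purely expository rather than mathematical: making the three simultaneous bookkeeping tasks — working over a general Artin base $A$ rather than $\mathbb{C}$, tracking the constant-determinant condition through the $\mathfrak{gl}_r$ versus $\mathfrak{sl}_r$ distinction, and phrasing everything so that functoriality is manifest — cohere cleanly without drowning in indices; the underlying computation is the textbook one (cf. the reference to \cite{simpson1992higgs} made just above for a parallel argument).
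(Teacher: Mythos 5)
Your argument is correct and is exactly the standard cocycle computation: the paper's own proof is just the citation to \cite[Chapter V, \S21, Proposition 1]{mazur1997introduction}, and that reference proves the statement by the same $\rho_{A[\epsilon]}(g) = (1+\epsilon c(g))\rho_A(g)$ bookkeeping and $\det(1+\epsilon M) = 1 + \epsilon\,\on{tr}(M)$ observation you give. The one thing worth flagging is your parenthetical notation $H^1(G,\on{gl}_r(\on{ad}(\rho_A)))$ is slightly garbled (you mean $H^1(G,\mathfrak{gl}_r(A))$ with the $\on{Ad}\circ\rho_A$ action), but the surrounding discussion makes the intent clear and the $\mathfrak{gl}_r = \mathfrak{sl}_r \oplus A$-scalars splitting correctly reconciles your computation with the paper's convention that $\on{ad}$ means $\mathfrak{pgl}_r$-coefficients.
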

\begin{proof}
See \cite[Chapter V, \S21, Proposition 1]{mazur1997introduction}.
\end{proof}

\begin{lemma}\label{lemma:split-constancy}
	Let $$1\to N\to G\to Q\to 1$$ be a short exact sequence of groups (i.e.~$N$ is normal in $G$ and $Q=G/N$). Let $A\in\on{Art}$ and $$\rho: G\to GL_r(A)$$ be a representation, and suppose that $H^0(Q, H^1(N, \on{ad}(\rho)|_N))=0.$ If $\rho_\epsilon$ is any deformation of $\rho$ to $A[\epsilon]/\epsilon^2$ with constant determinant, then $\rho_\epsilon|_N$ is equivalent to $\rho\otimes_A A[\epsilon]/\epsilon^2|_N$.
\end{lemma}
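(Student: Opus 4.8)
The plan is to combine the obstruction-theoretic description of deformations in \autoref{lemma:tangent-space} with the hypothesis $H^0(Q, H^1(N, \on{ad}(\rho)|_N)) = 0$ to kill the difference between $\rho_\epsilon|_N$ and the constant (i.e.\ base-changed) deformation. First I would set up the comparison: both $\rho_\epsilon|_N$ and $(\rho\otimes_A A[\epsilon]/\epsilon^2)|_N = \rho|_N \otimes_A A[\epsilon]/\epsilon^2$ are deformations of $\rho|_N$ over $A[\epsilon]/\epsilon^2$ with constant determinant, reducing mod $\epsilon$ to $\rho|_N$ viewed as an $A$-representation. Wait --- one must be slightly careful: \autoref{lemma:tangent-space} as stated classifies deformations of a fixed $A$-representation $\rho_A$ over $A[\epsilon]/\epsilon^2$, and here the ``$A$-representation'' being deformed is $\rho|_N : N \to \on{GL}_r(A)$. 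So by that lemma, the set of equivalence classes of such deformations is a torsor under $H^1(N, \on{ad}(\rho|_N)) = H^1(N, \on{ad}(\rho)|_N)$, and the class of $\rho_\epsilon|_N$ relative to the constant deformation is a well-defined element $\sigma \in H^1(N, \on{ad}(\rho)|_N)$. The goal is to show $\sigma = 0$.

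The key point is that $\sigma$ is $Q$-invariant, i.e.\ lands in $H^0(Q, H^1(N, \on{ad}(\rho)|_N))$, which vanishes by hypothesis; hence $\sigma = 0$ and $\rho_\epsilon|_N$ is equivalent to the constant deformation, as claimed. To see the $Q$-invariance, I would argue as follows. The group $Q = G/N$ acts on $H^1(N, \on{ad}(\rho)|_N)$ in the usual way (via conjugation by lifts to $G$, using that $\rho$ --- not just $\rho|_N$ --- is defined on all of $G$, so the local system $\on{ad}(\rho)|_N$ on $N$ extends to $G$). Now $\rho_\epsilon$ itself is a deformation of $\rho$ over $A[\epsilon]/\epsilon^2$ defined on all of $G$, and its restriction to $N$ is, by construction, $G$-equivariant for the conjugation action (for $g \in G$, $n \in N$, we have $\rho_\epsilon(gng^{-1}) = \rho_\epsilon(g)\rho_\epsilon(n)\rho_\epsilon(g)^{-1}$, and $\rho_\epsilon(g)$ reduces mod $\epsilon$ to $\rho(g)$). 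The same $G$-equivariance holds trivially for the constant deformation $\rho|_N \otimes_A A[\epsilon]/\epsilon^2$. So the cocycle representing $\sigma$ --- which measures the difference $\rho_\epsilon(n) \cdot (\rho(n)\otimes A[\epsilon]/\epsilon^2)^{-1}$ suitably interpreted --- transforms correctly under conjugation by $g \in G$, i.e.\ $g \cdot \sigma = \sigma$ in $H^1(N, \on{ad}(\rho)|_N)$, which is exactly the statement that $\sigma \in H^0(Q, H^1(N, \on{ad}(\rho)|_N))$.

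I expect the main obstacle to be bookkeeping the conjugation actions and making the ``$\sigma$ is $Q$-invariant'' step rigorous at the cocycle level: one has to track how the chosen 1-cocycle $c : N \to \on{ad}(\rho)|_N$ representing $\sigma$ changes when one conjugates $\rho_\epsilon|_N$ by $\rho_\epsilon(g)$ versus when one acts on cohomology by $g \in Q$, and verify these agree up to a coboundary. A clean way to package this is to invoke the functoriality in $G$ already asserted in \autoref{lemma:tangent-space}: applying it to the conjugation automorphism $c_g$ of $G$ (which preserves $N$ and acts on $\on{ad}(\rho)$ compatibly via $\rho(g)$) shows that $c_g^* \rho_\epsilon$ and $c_g^*(\rho \otimes A[\epsilon]/\epsilon^2)$ differ by $c_g^* \sigma = g^{-1}\cdot \sigma$; but $c_g^*\rho_\epsilon$ is equivalent to $\rho_\epsilon$ as a deformation of $\rho$ (via conjugation by $\rho_\epsilon(g)$), and likewise for the constant deformation, so restricting to $N$ gives that $\rho_\epsilon|_N$ and the constant deformation also differ by $g^{-1}\cdot\sigma$, whence $g^{-1}\cdot \sigma = \sigma$ for all $g$. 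This reduces everything to the already-proven functoriality statement plus the observation that conjugation acts trivially on the set of equivalence classes of deformations of a \emph{fixed} representation. Then $\sigma \in H^0(Q, H^1(N, \on{ad}(\rho)|_N)) = 0$ finishes the proof.
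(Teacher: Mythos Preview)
Your proposal is correct and follows essentially the same strategy as the paper: both identify the deformation class of $\rho_\epsilon|_N$ in $H^1(N,\on{ad}(\rho)|_N)$ as the image of the class of $\rho_\epsilon$ under restriction from $H^1(G,\on{ad}(\rho))$, hence $Q$-invariant, hence zero by hypothesis. The paper packages the $Q$-invariance step in one line by citing the five-term exact sequence of the Hochschild--Serre spectral sequence (inflation--restriction), whereas you unwind this fact by hand via the conjugation/functoriality argument.
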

\begin{proof}
	By \autoref{lemma:tangent-space}, the set of equivalence classes of deformations of $\rho$ to $A[\epsilon]/\epsilon^2$ is naturally in bijection with $H^1(G, \on{ad}(\rho))$, and the set of equivalence classes of deformations of $\rho|_N$ is naturally in bijection with $H^1(N, \on{ad}(\rho)|_N)$. By functoriality, the restriction map is given by the natural map $$H^1(G, \on{ad}(\rho))\to H^1(N, \on{ad}(\rho)|_N),$$ which, by the $5$-term exact sequence from the Hochschild-Serre spectral sequence, factors through $H^0(Q, H^1(N, \on{ad}(\rho)|_N))=0.$ Hence $\rho_\epsilon|_N$ is equivalent to $\rho\otimes_A A[\epsilon]/\epsilon^2|_N$ as desired.
\end{proof}

\subsection{A criterion for constancy}
The next proposition gives a criterion for a deformation of a group representation to induce the trivial deformation on a normal subgroup.
This will be used in the proof of the semisimple version of our main theorem,
\autoref{theorem:finite-image-semisimple}.

We define  $$B_n=\mathbb{C}[t]/t^{n+1},$$ $$R_n=B_{n-1}[\epsilon]/\epsilon^2=\mathbb{C}[t, \epsilon]/(t^n,\epsilon^2).$$ Let $d_n: B_n\to R_{n}$ be the map given by $$d_n: f(t)\mapsto f(t)+\epsilon f'(t),$$ where $f'(t)$ is the derivative of $f(t)$. Note that $d_n$ is injective.

Below we say that an element of $GL_r(B_n)$ is constant if it lies in $GL_r(\mathbb{C})\subset GL_r(B_n)$. We say that an element of $GL_r(\mathbb{C}[[t]])$ is constant mod $t^{n+1}$ if its image in $GL_r(B_n)$ is constant. Likewise, a representation $\rho$ into $GL_r(B_n)$ is constant if it factors through $GL_r(\mathbb{C})$; it is conjugate to a constant representation if there exists $M\in GL_r(B_n)$ such that $M\rho M^{-1}$ is constant. Similarly, a representation into $GL_r(\mathbb{C}[[t]])$ is constant mod $t^{n+1}$ if its image in $GL_r(B_n)$ is constant.
\begin{proposition}\label{prop:stacky-split-constancy}
	Let $$1\to N\to G\to Q\to 1$$ be a short exact sequence of groups. Let $$\rho_0: G\to GL_r(\mathbb{C})$$ be a representation, and suppose that for all $n$ and all deformations $\gamma: G\to GL_r(B_n)$ of $\rho_0$ with $\gamma|_N$ constant, we have $H^0(Q, H^1(N, \on{ad}(\gamma)|_N))=0.$ If $$\rho_\infty: G\to GL_r(\mathbb{C}[[t]])$$ is any representation with $\rho_\infty=\rho_0\bmod t$ and constant determinant, then $\rho_\infty|_N$ is conjugate to a constant representation. That is, there exists $M_\infty\in GL_r(\mathbb{C}[[t]])$ such that $M_\infty\rho_\infty|_NM_\infty^{-1}$ factors through $GL_r(\mathbb{C})$.
\end{proposition}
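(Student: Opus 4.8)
The plan is to prove, by induction on $n\ge 0$, that there exists $M_n\in GL_r(B_n)$ with $M_0=\on{id}$, with $M_n\equiv M_{n-1}\pmod{t^{n}}$ for $n\ge 1$, and with $M_n\,(\rho_\infty|_N\bmod t^{n+1})\,M_n^{-1}$ equal to the constant representation $\rho_0|_N\otimes B_n$. Granting this, the $M_n$ form a compatible system and hence assemble into an element $M_\infty\in GL_r(\mathbb{C}[[t]])=GL_r(\varprojlim_n B_n)$ with $M_\infty\rho_\infty|_NM_\infty^{-1}=\rho_0|_N$, which is what we want.

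For the inductive step, fix $n\ge 1$ with $M_{n-1}$ in hand. I would first conjugate $\rho_\infty$ by an arbitrary lift $\widetilde M_{n-1}\in GL_r(B_n)$ of $M_{n-1}$; this changes none of the hypotheses (the reduction mod $t$ stays $\rho_0$ since $M_{n-1}\equiv\on{id}\bmod t$, and the determinant stays constant), and reduces us to the case $M_{n-1}=\on{id}$, i.e.\ $\gamma:=\rho_\infty\bmod t^{n}\colon G\to GL_r(B_{n-1})$ has $\gamma|_N$ constant. Writing $\rho_n:=\rho_\infty\bmod t^{n+1}$, the restriction $\rho_n|_N$ is a constant-determinant deformation of $\gamma|_N$ across the square-zero extension $B_n\to B_{n-1}$, whose kernel $(t^{n})$ is one-dimensional; as in \autoref{lemma:tangent-space} it therefore carries an obstruction class in $H^1(N,\on{ad}(\rho_0)|_N)$, whose vanishing is precisely the assertion that $\rho_n|_N$ is conjugate, by some $M\equiv\on{id}\bmod t^{n}$, to a constant. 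The purpose of the auxiliary rings $R_n$ and maps $d_n$ defined just before the proposition is to feed this into \autoref{lemma:split-constancy}, which is phrased for extensions of the form $A[\epsilon]/\epsilon^2\to A$: I would push $\rho_n$ forward along the ring homomorphism $d_n\colon B_n\to R_n=B_{n-1}[\epsilon]/\epsilon^2$ to obtain a $G$-representation $d_n\circ\rho_n$ over $R_n$, which is a constant-determinant deformation of $\gamma$ reducing mod $\epsilon$ to $\gamma$. Since $\gamma|_N$ is constant and $\gamma$ is a deformation of $\rho_0$, the standing hypothesis gives $H^0(Q,H^1(N,\on{ad}(\gamma)|_N))=0$, so \autoref{lemma:split-constancy} applies and shows $(d_n\circ\rho_n)|_N$ is equivalent to the constant deformation of $\gamma|_N$ over $R_n$.

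The remaining step is to descend this triviality back through $d_n$, and this bookkeeping is the part I expect to require the most care. The key computation is that $d_n(t^{n})=n\,\epsilon\,t^{n-1}$ in $R_n$ (because $d_n(f)=f+\epsilon f'$ and $t^{n}=0$ in $R_n$): concretely, if $\rho_n|_N(g)=\rho_0(g)+A(g)t^{n}$ then $(d_n\circ\rho_n)|_N(g)=\rho_0(g)+n\,A(g)\,\epsilon t^{n-1}$, so the obstruction cocycle of $(d_n\circ\rho_n)|_N$ is $n\,t^{n-1}$ times that of $\rho_n|_N$. Because $\mathbb{C}$ has characteristic $0$, $n$ is invertible, so the induced map on the relevant $H^1$ groups is injective, and vanishing after push-forward forces vanishing before: there is $M\in GL_r(B_n)$ with $M\equiv\on{id}\bmod t^{n}$ and $M(\rho_n|_N)M^{-1}$ constant, necessarily $\rho_0|_N\otimes B_n$ upon reducing mod $t$. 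Undoing the normalization by setting $M_n:=M\widetilde M_{n-1}$ closes the induction. Thus the only genuine subtlety is matching the deformation problem over the one-dimensional extension $B_n\to B_{n-1}$ with the genuine first-order problem over $R_n\to B_{n-1}$ that \autoref{lemma:split-constancy} can see — and the maps $d_n$, together with invertibility of $n$, supply exactly the lossless comparison that makes this go through.
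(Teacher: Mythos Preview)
Your argument is correct and follows essentially the same route as the paper: both set up the same successive-approximation scheme, both push $\rho_n$ forward along $d_n$ to reduce to the dual-numbers situation handled by \autoref{lemma:split-constancy}, and both then descend back to $B_n$ using that $n$ is invertible. The only difference is cosmetic: where the paper explicitly extracts the conjugating matrix $\on{id}+\epsilon\sum C_it^i$, computes that $\rho_n(g)'=[\rho_0(g),C_{n-1}]t^{n-1}$, and then ``integrates'' to $M_n=\on{id}+\tfrac{C_{n-1}}{n}t^n$, you package the same computation as the observation that $[c]\mapsto nt^{n-1}[c]$ is injective on $H^1(N,\on{ad}(\rho_0)|_N)\hookrightarrow H^1(N,\on{ad}(\rho_0)|_N)\otimes B_{n-1}$, so vanishing after $d_n$ forces vanishing before.
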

\begin{proof}
We wish to construct $M_\infty\in GL_r(\mathbb{C}[[t]])$ such that $M_\infty\rho_\infty|_NM_\infty^{-1}$ is constant, i.e.~factors through $GL_r(\mathbb{C})$. We do so by successive approximation. For all $m\geq 0$, set $\rho_m: G\to GL_r(B_m)$ to be $\rho_\infty \bmod t^{m+1}$.

Set $S_1=\on{id}$. Suppose we have found $S_n\in GL_r(\mathbb{C}[[t]])$ such
that $S_n\rho_\infty|_N S_n^{-1}$ is constant modulo $t^n$.  We claim it
suffices to construct an element $M_n\in GL_r(B_n), M_n=\on{id}\bmod t^n$, such
that $M_nS_n\rho_n|_N S_n^{-1}M_n^{-1}$ is constant. Indeed, let
$M_1 = \id$ and for $n \geq 1$, let
$\widetilde{M_n}$ be an arbitrary lift of $M_n$ to $GL_r(\mathbb{C}[[t]])$. 
Then, the representation $\widetilde{M_n}S_n\rho_\infty|_NS_n^{-1}\widetilde{M_n}^{-1}$ is
constant mod $t^{n+1}$. Set $S_{n+1}=\widetilde{M_n}S_n$, so that $S_{n+1} =
\widetilde{M_n} \cdot \widetilde{M_{n-1}} \cdots \widetilde{M_1}$ by induction.
Then, setting $$M_\infty:=\lim_{n\to
\infty}\widetilde{M_n}\cdot\widetilde{M_{n-1}}\cdots
\widetilde{M_1}=\lim_{n\to\infty} S_n,$$ we claim that the limit converges as
$\widetilde{M_n}\to \on{id}$, and $M_\infty\rho_\infty|_N M_{\infty}^{-1}$ is
constant. This claim holds because $M_\infty=S_n\bmod t^n$ for all $n$, so we
have $$M_\infty \rho_\infty|_N M_\infty^{-1}\bmod t^n=S_n\rho_\infty|_N
S_n^{-1}\bmod t^n,$$ which is constant for all $n$ by construction. 

We now construct the desired matrix $M_n\in GL_r(B_n)$. Replacing $\rho_\infty$ by $S_n\rho_\infty S_n^{-1}$, we may assume $\rho_\infty|_N$ is constant modulo $t^n$, i.e.~$\rho_{n-1}|_N$ is constant. We wish to find $M_n\in GL_r(B_n)$ such that
\begin{enumerate}
\item $M_n=\on{id} \bmod t^n$, and
\item $M_n\rho_n|_NM_n^{-1}$ is constant.
\end{enumerate}

Given $g\in N$, we let $\rho_n(g)'\in \on{Mat}_{r\times r}(B_{n-1})$ be the matrix obtained by differentiating the entries of $\rho_n(g)$. The representation $d_n\circ \rho_n|_N$ is given by $$g\mapsto \rho_0(g)+\epsilon\rho_n(g)',$$ as $\rho_n|_N$ is constant mod $t^{n}$.

Note that $d_n\circ \rho_n|_N: N\to GL_r(R_n)$ is constant mod $\epsilon$ (as it is equal to $\rho_{n-1}|_N$ mod $\epsilon$). By hypothesis, $$H^0(Q, H^1(N, \on{ad}(\rho_{n-1})|_N))=0,$$ and so by \autoref{lemma:split-constancy}, $d_n\circ\rho_n|_N$ is conjugate to the constant representation $$\rho_{n-1}|_N\otimes_{B_{n-1}} R_n=\rho_0|_N\otimes_{\mathbb{C}} R_n$$ 
by some matrix $$\on{id}+\epsilon\sum_{i=0}^{n-1}C_it^i\in GL_r(R_n),$$ where
the $C_i\in \on{Mat}_{r\times r}(\mathbb{C})$. The idea of the remainder of the proof is to view the matrix above as a vector field which we may flow along to make $\rho_n|_N$ constant; we find the desired conjugating matrix $M_n$ by ``integrating" this vector field.

We compute that for $g\in N$,
\begin{align*}
	(\rho_0\otimes_{\mathbb{C}} R_n)(g) &= (\rho_{n-1}\otimes_{B_{n-1}}R_n)(g)\\
	&=(\on{id}+\epsilon\sum_{i=0}^{n-1}C_it^i)(d_n\circ \rho_n(g))(\on{id}-\epsilon\sum_{i=0}^{n-1}C_it^i)\\
	&=(\on{id}+\epsilon\sum_{i=0}^{n-1}C_it^i)(\rho_0(g)+\epsilon\rho_n(g)')(\on{id}-\epsilon\sum_{i=0}^{n-1}C_it^i)\\
&=\rho_0(g)+\epsilon\rho_n(g)'+\epsilon\sum_{i=0}^{n-1}[C_i, \rho_{0}(g)]t^i.
\end{align*}
As $\rho(g)$ is constant mod $t^n$, $\rho(g)'$ is $0$ mod $t^{n-1}$. Hence equating coefficients, we find $$[C_i, \rho_0(g)]=0 \text{ for $i<n-1$}$$ and  $$\rho_n(g)'=[\rho_0(g), C_{n-1}]t^{n-1}.$$
Now set $$M_n=\on{id}+\frac{C_{n-1}}{n}t^n\in GL_r(B_n).$$ We claim $M_n\rho_n|_N M_n^{-1}$is constant. Indeed,
\begin{align*}
M_n\rho_n(g) M_n^{-1} &= (\on{id}+\frac{C_{n-1}}{n}t^n)\rho_n(g)(\on{id}-\frac{C_{n-1}}{n}t^n)\\
&=\rho_n(g)+\frac{t}{n}[t^{n-1}C_{n-1}, \rho_n(g)]\\
&=\rho_n(g)+\frac{t}{n}[t^{n-1}C_{n-1}, \rho_0(g)]\\
&=\rho_n(g)-\frac{t}{n}\rho_n(g)'.
\end{align*}
But the only non-vanishing coefficient of $\rho(g)'$ is the coefficient of $t^{n-1}$, so the above expression is constant as desired.
\end{proof}

\section{Hodge-theoretic preliminaries}\label{section:Hodge-theoretic-preliminaries}
In this section we recall some preliminaries on polarizable complex variations
of Hodge structure (complex PVHS), mixed Hodge theory, and Simpson-Mochizuki's non-abelian Hodge theory.

\subsection{Variations of mixed Hodge structure}
\label{subsection:vmhs}
Our main goal in this section is \autoref{theorem:unitary-MHS},
which describes the variation of mixed Hodge structure on the cohomology of a
unitary local system on a family of punctured curves. 
We then describe the resulting bigrading explicitly in
\autoref{lemma:three-part-hodge-structure}.

We now review pertinent notation.
We refer the reader to \cite[\S5]{LL:geometric-local-systems} for basic definitions
related to complex PVHS.
Let $\mathbb V$ be a unitary local system on a smooth quasi-projective variety.
We say that \emph{$\mathbb{V}$ has a real structure} if there exists a real orthogonal local system $\mathbb{V}_\mathbb{R}$ such that 
$\mathbb{V}\simeq \mathbb{V}_\mathbb{R}\otimes_\mathbb{R}\mathbb{C}$. Note that
for any unitary local system $\mathbb{V}$, the local system
$\mathbb{V}\oplus\mathbb{V}^\vee$ has a natural real structure.

Let $\pi: \mathscr{C} \to \mathscr{M}$ be a family of $n$-pointed curves as in
\autoref{notation:versal-family}. 
Further assume that $\mathscr{M}$ is smooth. 
Let $\pi^\circ: \mathscr{C}^\circ\to
\mathscr{M}$ be the associated family of punctured curves, 
$j : \mathscr C^\circ \to \mathscr C$ the inclusion, 
$\mathscr{D}=\mathscr{C}\setminus\mathscr{C}^\circ$,
and $\mathbb V$ a unitary local system on $\mathscr C^\circ$.
Below we will use the notion of the Deligne canonical extension of a flat vector bundle from $\mathscr C^\circ$
to $\mathscr C$, as described in
\cite[Remarques 5.5(i)]{deligne:regular-singular} or \cite[Definition
4.1.2]{LL:geometric-local-systems}. 

See \cite[Definitions 14.44, 14.45, and 14.49]{peters2008mixed} for the definitions of variations of mixed Hodge structure, graded-polarizability
and 
admissibility in the rational case; the real case is analogous and discussed in \cite[Definition 3.4]{steenbrink1985variation}.

The following is well-known but perhaps difficult to extract from the
literature.
\begin{theorem}\label{theorem:unitary-MHS}
	Suppose $\mathbb{V}$ is a unitary local system on $\mathscr{C}^\circ$
	with real structure $\mathbb{V}_\mathbb{R}$. Then,
	$R^1\pi^\circ_*\mathbb{V}_\mathbb{R}$ underlies an admissible
	graded-polarizable real variation of mixed Hodge structure with weights
	in $[1,2]$, and weight filtration given by 
$$W^1R^1\pi^\circ_*\mathbb{V}_\mathbb{R}:=R^1\pi_*j_*\mathbb{V}_\mathbb{R}\subset R^1\pi^\circ_*\mathbb{V}_\mathbb{R}=:W^2R^1\pi^\circ_*\mathbb{V}_\mathbb{R}.$$ 
Let $(\mathscr{E}, {\nabla})$ denote the Deligne canonical extension of $(V\otimes \mathscr{O}_{\mathscr{C}^\circ}, \on{id}\otimes d)$ to $\mathscr{C}$. The Hodge filtration on $R^1\pi^\circ_*\mathbb{V}\otimes \mathscr{O}_{\mathscr{M}}$ is given, under the canonical identification $$R^1\pi^\circ_*\mathbb{V}\otimes \mathscr{O}_{\mathscr{M}}\overset{\sim}{\longrightarrow} R^1\pi_*(\mathscr{E}\overset{{\nabla}}{\longrightarrow}\mathscr{E}\otimes \Omega^1_{\mathscr{C}/\mathscr{M}}(\log \mathscr{D})),$$ by the filtration induced by the \emph{filtration b\^{e}te} on the relative de Rham complex $\mathscr{E}\overset{{\nabla}}{\longrightarrow}\mathscr{E}\otimes \Omega^1_{\mathscr{C}/\mathscr{M}}(\log \mathscr{D})$. 
That is, $$F^1R^1\pi^\circ_*\mathbb{V}\otimes \mathscr{O}_\mathscr{M}:=\on{im}(\pi_*(\mathscr{E}\otimes \Omega^1_{\mathscr{C}/\mathscr{M}}(\log \mathscr{D}))\to R^1\pi^\circ_*\mathbb{V}\otimes \mathscr{O}_\mathscr{M}).$$
\end{theorem}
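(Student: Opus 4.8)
The statement is a compendium of known facts about the mixed Hodge structure on $R^1$ of a family of punctured curves with unitary coefficients, so the proof is really an exercise in citing the right references and checking compatibilities. I would proceed in four stages: (1) establish the variation of mixed Hodge structure abstractly and identify the weight filtration; (2) identify the Hodge filtration via the de Rham complex of the Deligne canonical extension; (3) check admissibility and graded-polarizability; (4) handle the passage between the real and complex pictures.

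\textbf{Stage 1: the VMHS and its weight filtration.} For the case $n=0$ (no punctures), $R^1\pi_*\mathbb{V}_\mathbb{R}$ is a polarizable pure real VHS of weight $1$ by Deligne's theory (the unitary local system $\mathbb{V}_\mathbb{R}$ underlies a trivial weight-$0$ VHS, and one applies the relative Hodge theory of $\pi$; see \cite[\S5]{LL:geometric-local-systems}). For $n>0$, I would invoke the theory of the mixed Hodge structure on cohomology of open varieties with coefficients in a VHS — relatively, this is the variation of mixed Hodge structure studied by Steenbrink--Zucker and El Zein; the key input is that $R^1\pi^\circ_*\mathbb{V}_\mathbb{R}$ fits in a short exact sequence
\begin{equation}
0 \to R^1\pi_*j_*\mathbb{V}_\mathbb{R} \to R^1\pi^\circ_*\mathbb{V}_\mathbb{R} \to \pi_*\big(R^1(j_*)\mathbb{V}_\mathbb{R}\big) \to 0,
\end{equation}
coming from the Leray spectral sequence for $j$, whose outer terms are a pure weight-$1$ and a pure weight-$2$ VHS respectively (the weight-$2$ piece being the ``residue along $\mathscr D$'' term, a sum of unitary VHS on $\mathscr D$ Tate-twisted). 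This identifies $W^1$ as claimed. I expect the main obstacle to be precisely this step: assembling the relative mixed Hodge theory in the generality of a unitary VHS coefficient on a non-proper relative curve, and checking that the weight filtration is the one written, since the literature (Steenbrink, Zucker, El Zein, Saito) states things either absolutely or for constant coefficients, and I would need to either cite Saito's mixed Hodge modules or El Zein's explicit logarithmic complexes and transcribe carefully.

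\textbf{Stage 2: the Hodge filtration.} Here the tool is the logarithmic de Rham complex. Using the Deligne canonical extension $(\mathscr{E},\nabla)$ of $(\mathbb V\otimes\mathscr O,\operatorname{id}\otimes d)$, the comparison isomorphism $R^1\pi^\circ_*\mathbb{V}\otimes\mathscr O_{\mathscr M}\xrightarrow{\sim} R^1\pi_*\big(\mathscr E\xrightarrow{\nabla}\mathscr E\otimes\Omega^1_{\mathscr C/\mathscr M}(\log\mathscr D)\big)$ is Deligne's (relative form, \cite[Remarques 5.5]{deligne:regular-singular}; unitarity guarantees the residues have eigenvalues with real part in $[0,1)$, in fact all zero, so the canonical extension computes the right cohomology). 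The Hodge filtration being the \emph{filtration b\^ete} (stupid filtration) on this two-term complex — so $F^1$ is the image of $\pi_*(\mathscr E\otimes\Omega^1_{\mathscr C/\mathscr M}(\log\mathscr D))$ — is then the content of the theory of Hodge filtrations on VHS cohomology (Katz--Oda, Zucker for curves, Steenbrink--Zucker with coefficients). I would cite \cite[\S4]{LL:geometric-local-systems} for the precise form of the statement in the unitary case and note that it is compatible with the weight filtration identification.

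\textbf{Stages 3--4: admissibility and the real/complex dictionary.} Admissibility and graded-polarizability in the sense of \cite[Definitions 14.44, 14.45, 14.49]{peters2008mixed} (real case via \cite[Definition 3.4]{steenbrink1985variation}) follow from Kashiwara's theorem that the cohomology of an admissible VMHS along a proper morphism is admissible, together with the observation that the constant VHS $\mathbb V_\mathbb{R}$ on $\mathscr C^\circ$ is admissible and the degeneration at the boundary $\mathscr D$ is tame because $\mathbb V$ is unitary (finite monodromy up to a torus, residues nilpotent). Graded-polarizability is inherited from the polarization on $\mathbb V_\mathbb{R}$ (an orthogonal form) via cup product and Poincar\'e duality on the curve fibers. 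Finally, the statement is phrased for $\mathbb{V}_\mathbb{R}$ but the Hodge-filtration formula is phrased for $\mathbb V$; these are related by $\mathbb V = \mathbb V_\mathbb{R}\otimes_\mathbb{R}\mathbb C$, so $R^1\pi^\circ_*\mathbb V$ is the complexification of $R^1\pi^\circ_*\mathbb V_\mathbb{R}$ and the Hodge and weight filtrations extend $\mathbb C$-linearly; I would remark on this at the end. Throughout, none of these steps involves a genuinely new idea — the work is locating and correctly applying the relative mixed Hodge theory, which is why the statement is flagged as ``well-known but perhaps difficult to extract from the literature.''
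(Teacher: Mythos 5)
Your proposal takes a route different from the paper's, and the gap you flag in Stage~1 is genuine; the reference that closes it is missing from your list. The paper first observes that since the Hodge filtration has only two steps, Griffiths transversality is vacuous, so every property of the VMHS other than admissibility and graded-polarizability can be verified pointwise. This reduces Stages~1 and~2 to the absolute case $\mathscr M=\{\mathrm{pt}\}$, which is exactly what Timmerscheidt proves for unitary local systems on open curves \cite{timmerscheidt:mixed-hodge-structure-for-unitary} — the theorem on p.~152 for the MHS, Lemma~6.2 for the weight filtration, and Theorem~7.1(a) for Hodge--de Rham degeneration and hence the filtration b\^{e}te description. This sidesteps the Leray short exact sequence you write, which is where you correctly anticipate trouble: the differential $d_2\colon\pi_*R^1j_*\mathbb V_\mathbb R\to R^2\pi_*j_*\mathbb V_\mathbb R$ is not obviously zero, and killing it by strictness would presuppose the MHS you are trying to build.

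For admissibility and graded-polarizability the paper invokes Saito's theory \cite{saito1990mixed}: $\mathbb V_\mathbb R$ is orthogonal, hence underlies a pure real Hodge module of weight $0$, so $R\pi^\circ_*\mathbb V_\mathbb R=R\pi_*Rj_*\mathbb V_\mathbb R$ is an object of the derived category of mixed Hodge modules; then — and this is the step your Stages~3--4 omit — topological triviality of $\pi^\circ$ makes $R^1\pi^\circ_*\mathbb V_\mathbb R$ a local system, so it underlies an admissible graded-polarizable VMHS by \cite[Theorem~21.1]{schnell2014overview}, after which one identifies this structure with the one in the statement by another fiberwise comparison via Timmerscheidt. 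Your appeal to Kashiwara's theorem and to cup-product polarizations on fibers is the right spirit, but it does not supply the local-constancy argument needed to pass from a derived-category object to a bona fide VMHS, and without Timmerscheidt the pointwise content (weight and Hodge filtrations in the unitary setting on an open curve) is left unreferenced.
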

\begin{proof}	
First, Griffiths transversality holds vacuously, as the Hodge filtration only has two steps. 
To verify the remaining properties of a variation of mixed Hodge structure
we may do so pointwise. 
Over a point, 
the verification that 
$R^1\pi^\circ_*\mathbb{V}_\mathbb{R}$ underlies a real variation of mixed Hodge
structure is carried out in 
\cite[Theorem, p. 152]{timmerscheidt:mixed-hodge-structure-for-unitary}.
The description of the weight filtration follows from \cite[Lemma
6.2]{timmerscheidt:mixed-hodge-structure-for-unitary}.
The description of the Hodge filtration follows from the definition of the
Hodge filtration and the degeneration of the Hodge-de Rham spectral sequence
\cite[Theorem 7.1(a)]{timmerscheidt:mixed-hodge-structure-for-unitary}.

It remains only to verify graded-polarizability and admissibility. This follows from Saito's theory \cite{saito1990mixed} (also see
\cite{schnell2014overview}) as we now explain. 
As $\mathbb{V}_\mathbb{R}$ is orthogonal, it underlies a real PVHS with trivial Hodge filtration and polarization arising from the orthogonal structure.
Hence, $Rj_*(\mathbb{V}_\mathbb{R})$ underlies an
object of the derived category of (real) mixed Hodge modules on $\mathscr{C}$. Hence
$R\pi^\circ_*\mathbb{V}_\mathbb{R}=R\pi_*Rj_*\mathbb{V}_\mathbb{R}$ 
underlies an object of the derived category of (real) mixed Hodge
modules on $\mathscr{M}$. As $\pi^\circ$ is a fiber bundle and hence $R^1\pi^\circ_*\mathbb{V}_\mathbb{R}$ is locally constant, this implies (by \cite[Theorem 21.1]{schnell2014overview}) that
$R^1\pi^\circ_*\mathbb{V}_\mathbb{R}$ in fact underlies a graded-polarizable
admissible variation of mixed Hodge structure. It remains only to compare it with
the structure described in the theorem. But this follows by compatibility
when $\mathscr{M}$ is a point, by base change.
\end{proof}
Let $\mathbb{V}$ be a unitary local system on $\mathscr{C}^\circ$, and let $\mathscr{H}_\mathbb{V}:=R^1\pi^\circ_*\mathbb{V}\otimes \mathscr{O}_\mathscr{M}$.   If $\mathbb{V}$ has a real structure $\mathbb{V}_\mathbb{R}$, as in \autoref{theorem:unitary-MHS}, then  $R^1\pi_*^\circ \mathbb{V}_\mathbb{R}$ underlies a natural polarized variation of $\mathbb{R}$-mixed Hodge structure. In general, we let $W^\bullet, F^\bullet$ be the weight and Hodge filtrations respectively, and $\overline{F}^\bullet$ the conjugate Hodge filtration induced by the natural isomorphism $$\mathscr{H}_{\mathbb{V}}\simeq \overline{\mathscr{H}_{\mathbb{V}^\vee}}.$$ Define $$\mathscr{H}_{\mathbb{V}}^{1,0}:= F^1\cap W^1,$$ $$\mathscr{H}^{0,1}_{\mathbb{V}} := \overline{F^1}\cap W^1,$$ and $$\mathscr{H}^{1,1}_{\mathbb{V}}=F^1\cap \overline{F^1}.$$ 
We give a schematic picture of the structures on $\mathscr{H}_{\mathbb{V}}$ in \autoref{figure:Hodge-diagram}.

\begin{figure}[h]
	\centering
	\includegraphics[scale=.3, trim={0cm 10.0cm 0 5cm}]{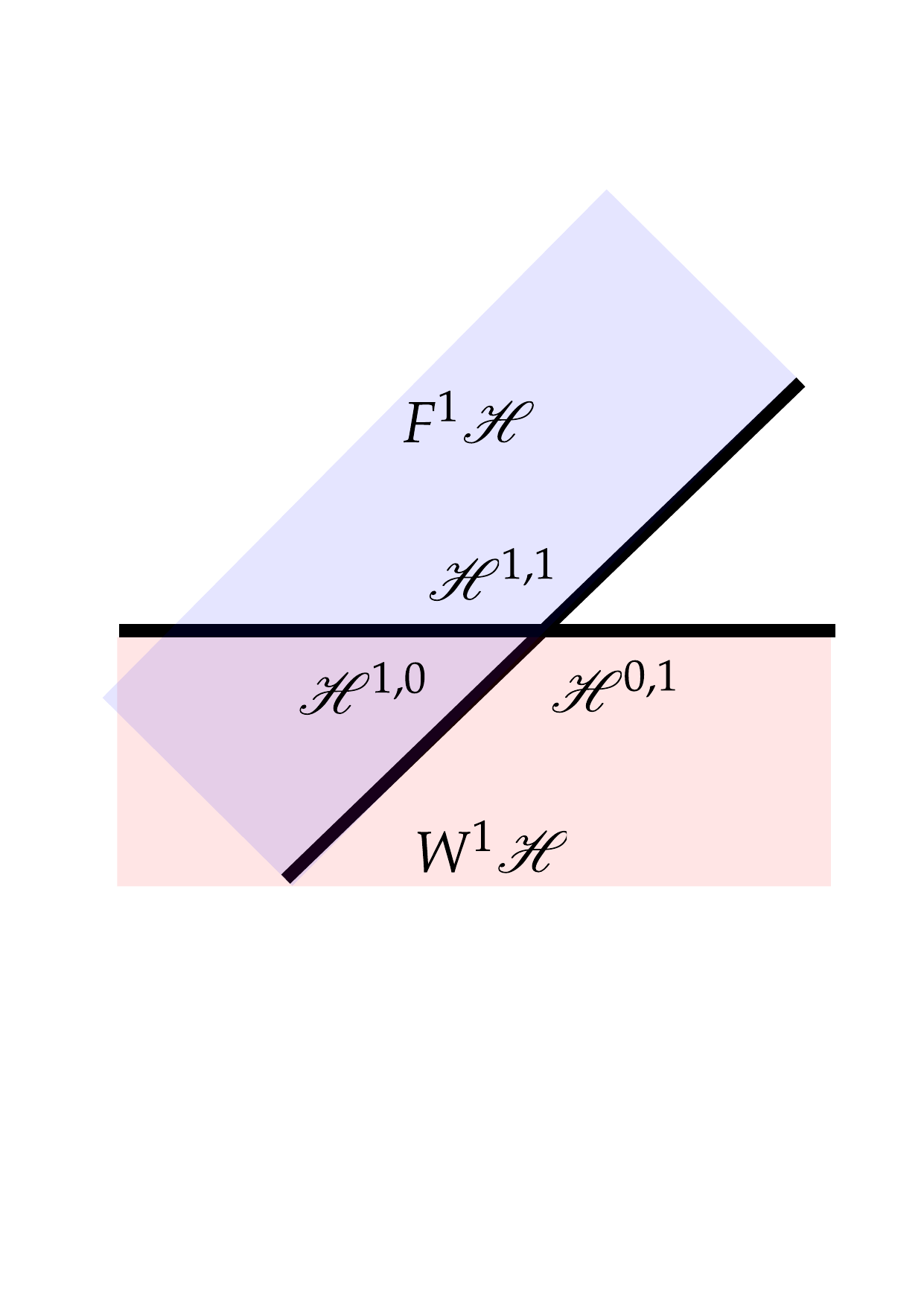}
\caption{A schematic diagram of the mixed Hodge structure on the cohomology of a unitary local system.}
\label{figure:Hodge-diagram}
\end{figure}
%
\begin{lemma}
	\label{lemma:three-part-hodge-structure}
	For $\mathbb V$ a unitary local system on $\mathscr C^\circ$, there is a natural
	direct sum decomposition
\begin{equation}
\label{eqn:hodge-decomposition-cpx}\mathscr{H}_\mathbb{V}=\mathscr{H}_{\mathbb{V}}^{1,0}\oplus
\mathscr{H}_{\mathbb{V}}^{0,1}\oplus
\mathscr{H}_{\mathbb{V}}^{1,1}\end{equation}
Further, under the natural isomorphism
$\overline{\mathscr{H}_\mathbb{V}}=\mathscr{H}_{\mathbb{V}^\vee}$, 
\begin{align}
	\label{equation:dual-conjugate}
\overline{\mathscr{H}_{\mathbb{V}}^{i,j}}=\mathscr{H}_{\mathbb{V}^\vee}^{j,i}.
\end{align}
\end{lemma}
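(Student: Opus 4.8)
\emph{Strategy.} The plan is to extract both statements from the explicit mixed Hodge structure furnished by \autoref{theorem:unitary-MHS} together with the linear algebra of (real‑split) mixed Hodge structures, after first reducing to the case that $\mathbb V$ admits a real structure. For the reduction, recall that $\mathbb V\oplus\mathbb V^\vee$ carries a natural real structure and is self‑dual, that $\mathscr H_{\mathbb V\oplus\mathbb V^\vee}=\mathscr H_{\mathbb V}\oplus\mathscr H_{\mathbb V^\vee}$, and that this splitting is respected by $W^1$ and $F^1$ (the Deligne canonical extension and $R\pi_*j_*$ commute with direct sums) and --- after the natural identification $\overline{\mathscr H_{\mathbb V}}\simeq\mathscr H_{\mathbb V^\vee}$ (``the conjugate of a unitary local system is its dual'') --- also by complex conjugation $c$, which interchanges the two summands. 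Hence $\overline{F^1}=c(F^1)$, and therefore each $\mathscr H^{i,j}_{\mathbb V\oplus\mathbb V^\vee}$, is a direct sum of a subsheaf of $\mathscr H_{\mathbb V}$ and one of $\mathscr H_{\mathbb V^\vee}$; intersecting the decomposition of $\mathscr H_{\mathbb V\oplus\mathbb V^\vee}$ with the two summands then yields \eqref{eqn:hodge-decomposition-cpx} and \eqref{equation:dual-conjugate} for $\mathbb V$ and $\mathbb V^\vee$ once they are known for $\mathbb V\oplus\mathbb V^\vee$. So from now on I assume $\mathbb V=\mathbb V_{\mathbb R}\otimes_{\mathbb R}\mathbb C$, with $c$ its complex conjugation.

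\emph{Structure of the MHS.} By \autoref{theorem:unitary-MHS}, $\mathscr H_{\mathbb V}$ underlies a variation of mixed Hodge structure with $0\subset W^1\subset W^2=\mathscr H_{\mathbb V}$ and $0=F^2\subset F^1\subset F^0=\mathscr H_{\mathbb V}$. First, $\mathrm{Gr}^W_1=W^1$ is a graded‑polarized pure weight‑$1$ variation of Hodge structure, so $W^1=(F^1\cap W^1)\oplus(\overline{F^1}\cap W^1)=\mathscr H^{1,0}_{\mathbb V}\oplus\mathscr H^{0,1}_{\mathbb V}$ with $\mathscr H^{1,0}_{\mathbb V}\cap\mathscr H^{0,1}_{\mathbb V}=0$ (here $c(W^1)=W^1$, as $W^1$ is defined over $\mathbb R$, so $c(F^1\cap W^1)=\overline{F^1}\cap W^1$). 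Second, I claim $\mathrm{Gr}^W_2=W^2/W^1$ is pure of Hodge--Tate type $(1,1)$: over each point it is a sub‑Hodge structure of $\bigoplus_p (R^1 j_*\mathbb V)_p$, the sum over the punctures of the fibre, and since $\mathbb V$ is unitary each local monodromy $T_p$ is semisimple, so $(R^1 j_*\mathbb V)_p\cong H^1(\Delta^*,\mathbb V)\cong(\mathbb V_p)^{T_p}(-1)$ is pure of type $(1,1)$ (this is implicit in \cite{timmerscheidt:mixed-hodge-structure-for-unitary}). Consequently both $F^1$ and $\overline{F^1}$ surject onto $\mathrm{Gr}^W_2$.

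\emph{Splitting and conjugation.} Set $\mathscr H^{1,1}_{\mathbb V}:=F^1\cap\overline{F^1}$. The map $\mathscr H^{1,1}_{\mathbb V}\hookrightarrow\mathscr H_{\mathbb V}\twoheadrightarrow\mathrm{Gr}^W_2$ has kernel $F^1\cap\overline{F^1}\cap W^1=\mathscr H^{1,0}_{\mathbb V}\cap\mathscr H^{0,1}_{\mathbb V}=0$, and it is surjective: given $v\in\mathrm{Gr}^W_2$, its lifts lying in $F^1$ form a coset of $\mathscr H^{1,0}_{\mathbb V}$ and those lying in $\overline{F^1}$ a coset of $\mathscr H^{0,1}_{\mathbb V}$ inside the $W^1$‑torsor of all lifts, and since $\mathscr H^{1,0}_{\mathbb V}+\mathscr H^{0,1}_{\mathbb V}=W^1$ these two cosets necessarily meet. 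Thus $\mathscr H^{1,1}_{\mathbb V}\xrightarrow{\sim}\mathrm{Gr}^W_2$, so $\mathscr H^{1,1}_{\mathbb V}$ is a complement to $W^1$ (it meets $W^1$ in $(F^1\cap W^1)\cap(\overline{F^1}\cap W^1)=0$ and has complementary rank), giving $\mathscr H_{\mathbb V}=\mathscr H^{1,0}_{\mathbb V}\oplus\mathscr H^{0,1}_{\mathbb V}\oplus\mathscr H^{1,1}_{\mathbb V}$, which is \eqref{eqn:hodge-decomposition-cpx}. (Note $\mathscr H^{1,0}_{\mathbb V}=F^1\cap W^1$ is holomorphic and $\mathscr H^{0,1}_{\mathbb V}$ antiholomorphic, so the decomposition is only $C^\infty$.) Finally \eqref{equation:dual-conjugate} follows by applying $c$ to the defining intersections and using $c(W^1)=W^1$, $c(F^1)=\overline{F^1}$, $c^2=\mathrm{id}$: e.g.\ $\overline{\mathscr H^{1,0}_{\mathbb V}}=c(F^1\cap W^1)=\overline{F^1}\cap W^1$, which under the natural identification $\overline{\mathscr H_{\mathbb V}}\simeq\mathscr H_{\mathbb V^\vee}$ is $\mathscr H^{0,1}_{\mathbb V^\vee}$, and likewise $\overline{\mathscr H^{0,1}_{\mathbb V}}=\mathscr H^{1,0}_{\mathbb V^\vee}$ and $\overline{\mathscr H^{1,1}_{\mathbb V}}=\mathscr H^{1,1}_{\mathbb V^\vee}$.

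\emph{Main obstacle.} Essentially everything above is formal once \autoref{theorem:unitary-MHS} is available; the one substantive point is the identification of $\mathrm{Gr}^W_2$ as Hodge--Tate of type $(1,1)$ --- this is where unitarity enters beyond \autoref{theorem:unitary-MHS}, through semisimplicity of the local monodromies, and it is exactly what forces $F^1$ and $\overline{F^1}$ to surject onto $\mathrm{Gr}^W_2$ --- together with, as a secondary matter, the compatibility bookkeeping in the reduction step.
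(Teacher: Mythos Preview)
Your proof is correct and follows essentially the same approach as the paper: reduce to the case where $\mathbb V$ has a real structure via $\mathbb V\oplus\mathbb V^\vee$, then verify the decomposition fiberwise from the structure of the mixed Hodge filtration. The paper's proof is much terser, simply calling the real case ``a straightforward verification using the definitions of the weight and Hodge filtrations''; you have spelled out what that verification actually entails---in particular the point that $\mathrm{Gr}^W_2$ is pure of type $(1,1)$ (via semisimplicity of the unitary local monodromies), which is exactly what makes $F^1$ and $\overline{F^1}$ surject onto it and hence allows the common-lift argument producing $\mathscr H^{1,1}_{\mathbb V}\xrightarrow{\sim}\mathrm{Gr}^W_2$.
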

\begin{proof}
	One may check both statements hold
	by doing so on fibers, in which case it is a straightforward
	verification using the definitions of the weight and Hodge filtrations.
\end{proof}

\subsection{Variations on the theorem of the fixed part}
\label{subsection:fixed-part}

Our main goal here is to prove
\autoref{proposition:subrep-real-variation}, which is a variant of the theorem of the fixed part.
\begin{theorem}[Theorem of the fixed part, {\cite[4.19]{steenbrink1985variation}}, {\cite[14.52]{peters2008mixed}}]\label{theorem:theorem-of-the-fixed-part}
	Let $\mathscr{X}$ be a smooth quasiprojective complex variety, and let $\mathbb{W}$ be an admissible graded-polarizable real variation of mixed Hodge structure on $\mathscr{X}$. Then there is a natural real Hodge structure on $H^0(\mathscr{X}, \mathbb{W})$ such that the natural inclusion $$H^0(\mathscr{X}, \mathbb{W})\otimes \underline{\mathbb{R}}\to \mathbb{W}$$ is a morphism of real variations of mixed Hodge structure.
\end{theorem}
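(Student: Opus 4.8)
The plan is to obtain this from Saito's theory of mixed Hodge modules, in the spirit of the proof of \autoref{theorem:unitary-MHS}; in the pure case the statement is Deligne's theorem of the fixed part, and in the admissible mixed case it is due to Steenbrink--Zucker, but the mixed Hodge module formalism gives a clean, uniform argument. Write $d=\dim\mathscr X$ and let $a\colon \mathscr X\to \on{pt}$ be the structure morphism. Since $\mathbb W$ is an admissible graded-polarizable (real) variation of mixed Hodge structure on the smooth variety $\mathscr X$, the shift $\mathbb W[d]$ is perverse and underlies a (real) mixed Hodge module $M$ on $\mathscr X$ with $\on{rat}(M)=\mathbb W[d]$; this is the identification of smooth mixed Hodge modules with admissible graded-polarizable variations \cite{saito1990mixed} (see also \cite{schnell2014overview}). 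As in the proof of \autoref{theorem:unitary-MHS}, everything below takes place in the category of real mixed Hodge modules.

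First I would produce the Hodge structure on the fixed part. Applying $a_*$ gives $a_*M\in D^b(\on{MHM}(\on{pt}))=D^b(\on{MHS})$, with $\on{rat}(a_*M)=R\Gamma(\mathscr X,\mathbb W)[d]$; hence each cohomology object $\mathcal H^{j}(a_*M)$ is a graded-polarizable real MHS whose underlying vector space is $H^{j+d}(\mathscr X,\mathbb W)$. Taking $j=-d$ and using $H^{<0}(\mathscr X,\mathbb W)=0$, we obtain a graded-polarizable real MHS, call it $\mathbb H$, on $H^0(\mathscr X,\mathbb W)$, the space of flat global sections. This is the asserted natural structure; for instance it is compatible with restriction to any point, since $x\hookrightarrow \mathscr X$ induces a morphism of MHS $\mathbb H\to \mathbb W_x$ refining the injective evaluation $H^0(\mathscr X,\mathbb W)\hookrightarrow \mathbb W_x$.

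Next I would upgrade the tautological inclusion $\iota\colon \underline{\mathbb H}:=\mathbb H\otimes_{\mathbb R}\underline{\mathbb R}_{\mathscr X}\hookrightarrow \mathbb W$ of local systems to a morphism of variations. Because $\mathscr X$ is smooth, the shifted pullback $a^*[d]$ is $t$-exact and the constant-variation functor $V\mapsto \underline V[d]$ (landing in mixed Hodge modules) is $V\mapsto (a^*V)[d]$, so the $(a^*,a_*)$-adjunction for mixed Hodge modules gives \[\Hom_{\on{MHM}(\mathscr X)}(\underline{\mathbb H}[d],M)=\Hom_{D^b\on{MHM}(\mathscr X)}((a^*\mathbb H)[d],M)=\Hom_{D^b\on{MHS}}(\mathbb H, a_*M[-d]),\] where $a_*M[-d]$ is the complex of MHS refining $R\Gamma(\mathscr X,\mathbb W)$. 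Since $a_*M[-d]\in D^{\geq 0}$ with $\mathcal H^0=\mathbb H$, the distinguished triangle $\mathbb H[0]\to a_*M[-d]\to \tau_{\geq 1}\bigl(a_*M[-d]\bigr)\to \mathbb H[1]$, together with the vanishing of $\Hom$ from a complex in $D^{\leq 0}$ into one in $D^{\geq 1}$, shows the right-hand side above maps isomorphically onto $\Hom_{\on{MHS}}(\mathbb H,\mathbb H)$ via $\phi\mapsto \mathcal H^0(\phi)$. Lifting $\on{id}_{\mathbb H}$ therefore yields a canonical morphism of mixed Hodge modules $\underline{\mathbb H}[d]\to M$. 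Applying $\on{rat}$ and running the identical chain of identities for constructible complexes (the $t$-structure step now trivial, as $\mathbb H$ is a vector space) shows its underlying morphism of local systems is precisely $\iota$, shifted by $d$. As $\on{rat}$ is faithful and exact on $\on{MHM}(\mathscr X)$ and $\iota$ is injective, this morphism of mixed Hodge modules is injective; its image is a sub-mixed-Hodge-module of $M$ isomorphic to $\underline{\mathbb H}[d]$, so $\iota$ realizes $\underline{\mathbb H}$ as a sub-variation of $\mathbb W$, which is the claim.

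The main obstacle is not conceptual but bookkeeping: one must carefully track the shifts and Tate twists in the equivalence between smooth mixed Hodge modules and admissible graded-polarizable variations, and verify the compatibility of $\on{rat}$ with $a^*$, $a_*$, and their adjunction, so that the abstractly produced morphism really restricts to $\iota$ on underlying local systems and not to some scalar multiple of it (ruling out the scalar is exactly where one uses the naturality of the adjunction, not just its existence). A self-contained proof avoiding mixed Hodge modules is possible but substantially harder: in the pure case it rests on Schmid's $\on{SL}_2$-orbit theorem and the semisimplicity of the monodromy of a polarized variation, and in the admissible mixed case on the norm estimates of Cattani--Kaplan--Schmid and Kashiwara--Kawai. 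Since this is precisely the content of \cite[4.19]{steenbrink1985variation} and \cite[14.52]{peters2008mixed}, we could alternatively simply invoke those references.
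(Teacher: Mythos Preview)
Your proof via Saito's theory of mixed Hodge modules is correct and well-executed. However, note that the paper does not actually prove this theorem: it is stated with attribution to \cite[4.19]{steenbrink1985variation} and \cite[14.52]{peters2008mixed} and invoked as a black box. So there is no ``paper's own proof'' to compare against---the paper simply cites the result, exactly as you suggest in your final sentence.

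Your mixed Hodge module argument is a perfectly valid way to obtain the statement, and is in the same spirit as the paper's use of Saito's formalism in the proof of \autoref{theorem:unitary-MHS}. The adjunction argument you give (lifting $\on{id}_{\mathbb H}$ through the $(a^*,a_*)$-adjunction and using the $t$-structure to identify the relevant $\Hom$ group) is clean and standard. One minor quibble: the theorem statement says ``real Hodge structure'' but what you (correctly) produce is a real \emph{mixed} Hodge structure; this is almost certainly what is intended, since the inclusion is required to be a morphism of variations of mixed Hodge structure. Your closing remark that one could simply invoke the cited references is exactly what the paper does.
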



For our main results, will need a variant of the theorem of the fixed part for irreducible representations appearing
the cohomology of a unitary local system on a family of curves.
Given a complex local system $\mathbb L$ on a variety, we define the local system
$\widetilde{\mathbb L}$ by
\begin{align}
	\label{equation:real-construction}
	\widetilde{\mathbb L} :=
	\begin{cases}
		\mathbb L & \text{ if $\mathbb L$ admits a real structure}  \\
		\mathbb L \oplus \overline{\mathbb L} & \text{ otherwise.}  \\
	\end{cases}
\end{align}
Note that $\widetilde{\mathbb L}$ admits a natural real structure. Note that if $\mathbb{L}$ is an irreducible local system underlying a complex variation of Hodge structure (necessarily unique up to reindexing), and if $\mathbb{L}$ admits a (necessarily unique) real structure, then $\mathbb{L}$ in fact underlies a real variation of Hodge structure. Indeed, the complex variation carried by $\mathbb{L}$ must be preserved (up to reindexing) by complex conjugation, as otherwise $\mathbb{L}$ would carry two distinct complex variations.
\begin{proposition}\label{proposition:subrep-real-variation}
	Let $\mathscr{X}$ be a smooth quasiprojective complex variety, and let $\mathbb{W}$ be an admissible graded-polarizable real variation of mixed Hodge structure on $\mathscr{X}$. Let $\mathbb{L}$ be an irreducible complex local system on $\mathscr{X}$ such that $\Hom(\mathbb{L}, \mathbb{W}_\mathbb{C})\not = 0.$ 
	Then, $\widetilde{\mathbb L}$ underlies a polarizable real variation of Hodge structure, and there exists a nonzero \emph{constant} real  mixed Hodge structure $Q$ and a nonzero map of variations of mixed Hodge structure $$Q\otimes \widetilde{\mathbb{L}}\to \mathbb{W}.$$
	\end{proposition}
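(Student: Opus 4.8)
The plan is to first pin down where $\mathbb{L}$ sits inside $\mathbb{W}$ using the weight filtration, then deduce from Deligne's semisimplicity theorem that $\widetilde{\mathbb{L}}$ carries a polarizable real VHS structure, and finally extract the map $Q\otimes\widetilde{\mathbb{L}}\to\mathbb{W}$ by applying the theorem of the fixed part to the internal Hom $\widetilde{\mathbb{L}}^\vee\otimes\mathbb{W}$ and invoking tensor--Hom adjunction.

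\emph{Step 1 (locating $\mathbb{L}$).} Since $\mathbb{L}$ is irreducible, a nonzero element of $\Hom(\mathbb{L},\mathbb{W}_\mathbb{C})$ is injective, so $\mathbb{L}$ is a sub-local-system of $\mathbb{W}_\mathbb{C}$. Let $W_\bullet\mathbb{W}$ be the weight filtration; it is a filtration by sub-variations whose graded pieces $\on{Gr}^W_k\mathbb{W}$ are polarizable real VHS. Let $k$ be minimal with $\mathbb{L}\subseteq W_k\mathbb{W}_\mathbb{C}$. Then $\mathbb{L}\cap W_{k-1}\mathbb{W}_\mathbb{C}$ is a proper sub-local-system of the irreducible $\mathbb{L}$, hence $0$, so $\mathbb{L}$ embeds into $\on{Gr}^W_k\mathbb{W}_\mathbb{C}$, a polarizable complex VHS.

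\emph{Step 2 ($\widetilde{\mathbb{L}}$ underlies a polarizable real VHS).} Here I would invoke Deligne's semisimplicity theorem in the forms: a polarizable real (resp.\ complex) VHS is semisimple as a local system, and every sub-local-system of a polarizable VHS underlies a polarizable sub-VHS (see \cite[\S5]{LL:geometric-local-systems} and the references therein). Consider the real span $\mathbb{M}:=\mathbb{L}+\overline{\mathbb{L}}$ inside $\on{Gr}^W_k\mathbb{W}_\mathbb{C}$, where the bar denotes conjugation for the real structure on $\on{Gr}^W_k\mathbb{W}$; being defined over $\mathbb{R}$, it is the complexification of a polarizable real sub-VHS $\mathbb{M}_0\subseteq\on{Gr}^W_k\mathbb{W}$. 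A routine analysis of real structures --- according to whether $\mathbb{M}_0$ is of real, complex, or quaternionic type --- identifies $\widetilde{\mathbb{L}}$ with a direct summand of $\mathbb{M}_0\otimes_\mathbb{R}\mathbb{C}$, compatibly with the natural real structures (when $\mathbb{L}$ has a real structure this recovers $\widetilde{\mathbb{L}}=\mathbb{L}$; otherwise $\widetilde{\mathbb{L}}=\mathbb{L}\oplus\overline{\mathbb{L}}$). Since direct summands of a polarizable real VHS are again polarizable real VHS, this gives the first assertion.

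\emph{Step 3 (producing the map).} Now $\widetilde{\mathbb{L}}^\vee$ underlies a polarizable real VHS, so the internal Hom $\widetilde{\mathbb{L}}^\vee\otimes\mathbb{W}$ underlies an admissible graded-polarizable real variation of mixed Hodge structure --- tensoring an admissible graded-polarizable VMHS by a polarizable VHS preserves admissibility and graded-polarizability, which is a standard stability property. Apply \autoref{theorem:theorem-of-the-fixed-part} to it: the real vector space $Q:=H^0(\mathscr{X},\widetilde{\mathbb{L}}^\vee\otimes\mathbb{W})$ carries a natural real mixed Hodge structure, and the inclusion of global sections $Q\otimes\underline{\mathbb{R}}\to\widetilde{\mathbb{L}}^\vee\otimes\mathbb{W}$ is a morphism of real variations of mixed Hodge structure. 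Here $Q\neq0$, because $Q\otimes_\mathbb{R}\mathbb{C}=\Hom(\widetilde{\mathbb{L}},\mathbb{W}_\mathbb{C})$ contains $\Hom(\mathbb{L},\mathbb{W}_\mathbb{C})\neq0$. Finally, tensor--Hom adjunction --- whose unit, counit, and the evaluation pairing $\widetilde{\mathbb{L}}^\vee\otimes\widetilde{\mathbb{L}}\to\underline{\mathbb{R}}$ are all morphisms of variations of mixed Hodge structure --- converts this into a morphism of real variations of mixed Hodge structure $Q\otimes\widetilde{\mathbb{L}}\to\mathbb{W}$, which is nonzero since adjunction is a bijection on morphism sets and the global-sections inclusion is injective. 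This is the desired map.

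\emph{Main obstacle.} The real content is entirely in the two Hodge-theoretic inputs: (i) semisimplicity of polarizable real/complex VHS and the fact that sub-objects are sub-VHS, which is what lets $\widetilde{\mathbb{L}}$ inherit a polarizable real VHS structure; and (ii) the stability of admissibility and graded-polarizability under $\otimes$ with a VHS, together with the compatibility of tensor--Hom adjunction with the mixed Hodge structures, so that the formal manipulation in Step 3 really takes place in the category of admissible VMHS. Both are standard but must be cited carefully. The case analysis of real structures in Step 2 is the only genuinely fiddly bookkeeping; everything else is formal once the theorem of the fixed part is available.
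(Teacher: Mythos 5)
Your proposal is correct and follows essentially the same route as the paper's proof: locate $\mathbb{L}$ inside a weight-graded piece of $\mathbb{W}_\mathbb{C}$, use semisimplicity of polarizable variations to conclude that $\mathbb{L}$ (hence $\widetilde{\mathbb{L}}$) underlies a polarizable (real) VHS, then apply the theorem of the fixed part to $\widetilde{\mathbb{L}}^\vee\otimes\mathbb{W}$ and take the evaluation map. The only cosmetic difference is that you embed $\mathbb{L}$ as a sub-local-system and take the minimal weight step, whereas the paper simply picks a graded piece admitting a nonzero map from $\mathbb{L}$ and cites the isotypic decomposition of a polarizable complex VHS; your extra care with the real/complex/quaternionic cases in Step 2 fills in a point the paper leaves implicit.
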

	\begin{proof}
	First, let $i$ be an integer for which there exists a nonzero map $\mathbb{L}\to
	\on{gr}^i_W\mathbb{W}_\mathbb{C}$. Then $$\on{gr}^i_W
	\mathbb{W}_\mathbb{C}=\oplus_j \mathbb{V}_j\otimes W_j,$$ where the
	$\mathbb{V}_j$ are distinct irreducible local systems carrying
	polarizable complex variations of Hodge structure, and the $W_j$ are
	constant complex variations \cite[Proposition 4.1.4(2)]{LL:geometric-local-systems}. By assumption $\mathbb{L}$
	occurs among the $\mathbb{V}_j$, and hence carries a complex PVHS. 
	Hence
	$\widetilde{\mathbb{L}}$ carries a real PVHS.
	
		Let $Q:=\Hom(\widetilde{\mathbb{L}},
		\mathbb{W})=H^0(\mathscr{X}, \widetilde{\mathbb{L}}^\vee\otimes
		\mathbb{W})$. The real variation
		$\widetilde{\mathbb{L}}^\vee\otimes \mathbb{W}$ is a tensor
		product of a pure (hence admissible) variation with an
		admissible variation, hence is admissible itself. By the theorem
		of the fixed part, \autoref{theorem:theorem-of-the-fixed-part}, $Q$ carries a canonical real mixed Hodge structure such that the evaluation map $$Q\otimes \widetilde{\mathbb{L}}\to \mathbb{W}$$ is a map of variations, as desired. The map is nonzero by construction.
	\end{proof}


\subsection{Consequences of non-abelian Hodge theory}
The main result from non-abelian Hodge theory that we will use is the following result of Mochizuki, generalizing earlier work of Simpson in the projective case:
\begin{theorem}[~\protect{\cite[Theorem 10.5]{mochizuki2006kobayashi}}]
	\label{theorem:deformation-to-pvhs}
	Let $\overline{X}$ be a smooth projective variety and $D\subset
	\overline{X}$ a strict normal crossings divisor. Let
	$X=\overline{X}\setminus D$. 
	Any representation $$\rho: \pi_1(X)\to
	\on{GL}_r(\mathbb{C})$$ with finite determinant admits a
	deformation with constant determinant to a representation underlying a complex PVHS.
\end{theorem}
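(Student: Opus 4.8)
The plan is to run the standard non-abelian Hodge theory argument: translate $\rho$ into a parabolic Higgs bundle on the compactification $\overline X$, flow along Simpson's $\mathbb{C}^\times$-action $(E,\theta)\mapsto (E,t\theta)$, and take the limit as $t\to 0$, which is a system of Hodge bundles and hence underlies a complex PVHS. Since the Corlette--Simpson--Mochizuki correspondence only sees semisimple representations, the first step is to reduce to that case. Given $\rho$, choose a composition series; each successive extension class lies in some $\on{Ext}^1_{\pi_1(X)}(-,-)$, and rescaling it by $t\in\mathbb{A}^1$ produces an algebraic family $\rho_t$ with $\rho_t\cong\rho$ for $t\neq 0$ and $\rho_0$ the corresponding split extension. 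Iterating over the filtration gives a deformation of $\rho$ to its semisimplification $\rho^{\mathrm{ss}}$; this deformation has constant determinant, since the associated graded of a representation has the same determinant as the representation. So it suffices to treat $\rho$ semisimple with $\det\rho$ finite.

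Next, apply Mochizuki's tame Kobayashi--Hitchin correspondence \cite{mochizuki2006kobayashi} (building on Simpson's projective case and Corlette's existence theorem for harmonic metrics): $\rho$ corresponds to a tame pure imaginary harmonic bundle, hence to a polystable parabolic Higgs bundle $(E,\theta)$ on $(\overline X, D)$ with vanishing parabolic Chern classes, which is stable when $\rho$ is irreducible. Consider the family $(E,t\theta)$ for $t\in\mathbb{C}^\times$: the underlying bundle and parabolic structure are unchanged, so these stay polystable with vanishing parabolic Chern classes, and they therefore correspond to a family of representations $\rho_t$ with $\rho_1\cong\rho^{\mathrm{ss}}$, all lying in one connected component of the character variety. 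Because $\det\rho$ is finite, the determinant harmonic line bundle is flat, i.e.\ $\on{tr}\theta=0$ and $\det E$ is torsion with trivial parabolic structure; hence $\det(E,t\theta)=(\det E,0)$ is independent of $t$ and $\det\rho_t=\det\rho$ throughout.

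Finally, let $t\to 0$. By the properness input in Simpson's and Mochizuki's theory, the limit $(E_0,\theta_0):=\lim_{t\to 0}(E,t\theta)$ exists in the moduli space of polystable parabolic Higgs bundles, is fixed by the $\mathbb{C}^\times$-action, and still has vanishing parabolic Chern classes; a polystable $\mathbb{C}^\times$-fixed parabolic Higgs bundle of this type is a system of Hodge bundles, so the associated local system $\rho_0$ underlies a finite direct sum of polarizable complex variations of Hodge structure (one for each stable summand), hence a polarizable complex VHS. Concatenating the deformation $\rho\rightsquigarrow\rho^{\mathrm{ss}}=\rho_1$ with the family $\{\rho_t\}$ and reparametrizing gives the desired deformation, with constant determinant, of $\rho$ to a representation underlying a complex PVHS. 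The substantive content is entirely in the cited inputs --- Mochizuki's extension of non-abelian Hodge theory to the quasi-projective tame setting with the correct parabolic bookkeeping, and the existence and fixed-point description of the $t\to 0$ limit --- so the only points that need care are that the two deformations splice together and that the determinant genuinely stays constant, which is exactly where the finiteness of $\det\rho$ is used.
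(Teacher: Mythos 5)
Your proposal is correct, and it essentially reconstructs Mochizuki's argument, which the paper simply cites. The paper's own proof of this statement is a one-liner: it points at \cite[Theorem 10.5]{mochizuki2006kobayashi} and observes that the constant-determinant clause ``follows by examining the proof of that theorem.'' You have carried out that examination: the reduction of a general $\rho$ to its semisimplification by rescaling extension classes along a composition series, the passage to a polystable parabolic Higgs bundle via the tame Kobayashi--Hitchin correspondence, the $\mathbb{C}^\times$-flow $(E,\theta)\mapsto(E,t\theta)$, and the $t\to 0$ limit to a system of Hodge bundles are exactly Mochizuki's ingredients, and your determinant bookkeeping is precisely what the paper leaves to the reader. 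One tiny imprecision worth flagging: if $\det\rho$ has non-trivial finite local monodromy around a component of $D$, the parabolic line bundle $\det E_\star$ need not have trivial parabolic structure (the parabolic weight at a boundary divisor is governed by the argument of the local monodromy eigenvalue). What you actually need --- and what is true --- is that $\on{tr}\theta=0$ because $\det\rho$ is unitary, so the parabolic Higgs line bundle $\det(E_\star,t\theta)=(\det E_\star,0)$ is independent of $t$ and $\det\rho_t$ is constant along the flow; the precise parabolic structure on $\det E_\star$ plays no role.
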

\begin{proof}
Aside from the statement about determinants, this is precisely	\cite[Theorem 10.5]{mochizuki2006kobayashi}; the statement about determinants follows by examining the proof of that theorem. 
\end{proof}

We also use the related result that cohomologically rigid representations underlie complex PVHS.
\begin{lemma}\label{lemma:rigid-reps}
Let $X, D, \rho$ be as in \autoref{theorem:deformation-to-pvhs}. Suppose in
addition that $\rho$ is semisimple with finite determinant and $H^1(X, \on{ad}\rho)=0$. Then $\rho$ underlies a complex PVHS.
\end{lemma}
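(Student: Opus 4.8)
The plan is to deduce this from Theorem~\ref{theorem:deformation-to-pvhs} by using $H^1(X,\on{ad}\rho)=0$ to force the deformation it produces to be trivial. First I would recall the deformation-theoretic content of the hypothesis. Since $\on{ad}\rho$ takes values in $\mathfrak{pgl}_r\cong\mathfrak{sl}_r$, the group $H^1(X,\on{ad}\rho)=H^1(\pi_1(X),\on{ad}\rho)$ is the tangent space to the functor of constant-determinant deformations of $\rho$ modulo conjugation (this is the analogue of Lemma~\ref{lemma:tangent-space}). Its vanishing therefore says that every constant-determinant deformation of $\rho$ over an Artin local ring is equivalent to the constant one. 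Combined with semisimplicity of $\rho$ (so the stabilizer of $\rho$ under conjugation is reductive and a Luna étale slice applies at $[\rho]$), this says that $[\rho]$ is a reduced, isolated point of the Betti moduli space $M_B$ of semisimple representations $\pi_1(X)\to\on{GL}_r(\mathbb{C})$ of determinant $\det\rho$; in particular the connected component of $M_B$ through $[\rho]$ is the single point $\{[\rho]\}$.

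Next I would invoke Theorem~\ref{theorem:deformation-to-pvhs}: since $\det\rho$ has finite order, $\rho$ admits a constant-determinant deformation to a representation $\rho'$ underlying a complex PVHS, where "deformation" is read — as it is in Mochizuki's and Simpson's proofs, via the $\mathbb{C}^\ast$-action on the Dolbeault moduli space and existence of limits — as the statement that $[\rho]$ and $[\rho']$ lie in the same connected component of $M_B$. By the previous paragraph that component is $\{[\rho]\}$, so $\rho'$ is conjugate to $\rho$. Hence $\rho$ itself underlies a complex PVHS, and polarizability is inherited directly from $\rho'$ with no separate argument (compare the argument in the proof of \cite[Theorem 4]{simpson1992higgs}).

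The step I expect to require the most care is the identification of "$H^1(X,\on{ad}\rho)=0$ and $\rho$ semisimple" with "$[\rho]$ is an isolated reduced point of $M_B$". This is the Goldman--Millson picture at a semisimple representation: one must check that the constant-determinant deformation functor of $\rho$ is pro-represented with tangent space $H^1(X,\on{ad}\rho)$, that the obstruction theory is irrelevant once $H^1$ vanishes, and that the analytic germ of $M_B$ at $[\rho]$ is the GIT quotient of the (now trivial) deformation space by the reductive stabilizer, hence a point. A second point worth spelling out, since it is what lets the isolatedness be brought to bear, is the precise meaning of "deformation" in Theorem~\ref{theorem:deformation-to-pvhs} as "belongs to the same connected component of the moduli space", which is how that theorem is proved (every component of $M_B$ contains a representation underlying a VHS). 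Everything else is formal.
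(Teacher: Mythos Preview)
Your proposal is correct and follows the same overall strategy as the paper: deform $\rho$ to a representation underlying a complex PVHS, then use the vanishing of $H^1(X,\on{ad}\rho)$ to force that deformation to be trivial.

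The paper's execution differs in one respect worth knowing. Rather than working in the fixed-determinant $\on{GL}_r$ moduli space and invoking Luna's slice theorem to translate $H^1=0$ into isolatedness of $[\rho]$, the paper passes to the Zariski closure $G$ of $\on{im}(\rho)$. Since $\rho$ is semisimple, $G$ is reductive; since $\det\rho$ is finite, $\mathfrak{g}\subset\on{ad}\rho$, and since $\on{ad}\rho$ is semisimple this inclusion splits, giving $H^1(X,\mathfrak{g})=0$. The paper then cites Mochizuki's Lemma~10.13 (the $G$-equivariant version of \autoref{theorem:deformation-to-pvhs}), which deforms $\rho$ \emph{within $G$-representations} to a PVHS; cohomological rigidity as a $G$-representation immediately forces this deformation to be trivial. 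This route sidesteps the Goldman--Millson/GIT discussion entirely, at the cost of citing a slightly more refined result of Mochizuki. Your route is more self-contained (it only needs \autoref{theorem:deformation-to-pvhs} as stated) but requires spelling out the moduli-theoretic translation you correctly flagged as the step needing care.
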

\begin{proof}
	Let $G$ denote the Zariski-closure of the image of $\rho$.
	Note $G$ is reductive by semisimplicity of $\rho$. Let $\mathfrak{g}$ be
	the Lie algebra of $G$, viewed as a $\pi_1(X)$-representation. Note
	that $\mathfrak{g}$ is a subalgebra of $\on{ad}\rho$ as $\rho$ has
	finite determinant, and is moreover a summand of $\on{ad}\rho$ as
	$\on{ad}\rho$ is semisimple. Hence $\rho$ is cohomologically rigid as a
	$G$-representation, as $H^1(X, \mathfrak{g}) \subset H^1(X, \on{ad}
	\rho) =0$.
	
	We may apply \cite[Lemma 10.13]{mochizuki2006kobayashi},
	which yields that 
	$\rho$ admits a deformation as a $G(\mathbb C)$-representation to 
	$\rho_0: \pi_1(X) \to G(\mathbb C)$, which
	underlies a PVHS.
	But 
	$\rho$ is rigid and hence admits no
	non-trivial deformations.	This implies $\rho = \rho_0$, so $\rho$ underlies a PVHS.
\end{proof}

\section{The period map associated to a unitary representation}
\label{section:unitary-period-map}

In this section, we give an explicit description of the derivative of the period map
on the cohomology of a unitary vector bundle over a punctured curve.
In \autoref{subsection:period-map-derivative},
we set up notation to describe the derivative of the period map,
and state our description in
\autoref{theorem:period-map-computation}.
In \autoref{subsection:bilinear-pairing},
we study a natural bilinear pairing on vector bundles,
which is closely related to the derivative of the period map.

\subsection{The period map}
	\label{subsection:period-map-derivative}
	We now describe the period map associated to the 
first cohomology of a unitary local system over a punctured curve.
We aim to state \autoref{theorem:period-map-computation}, which is a
generalization of the classical statement that, for $C$ a curve, the multiplication map
$$H^0(C, \omega) \otimes H^0(C, \omega) \to H^0(C, \omega^{\otimes 2})$$ can be
identified via Serre duality with the derivative of the period map associated to the Hodge structure on
$H^1(C, \mathbb C)$ \cite[Lemma 10.22]{Voisin:hodgeTheory}.

\begin{notation}
	\label{notation:period-map}
	Suppose $\pi: \mathscr{C} \to \mathscr{B}$ is a relative smooth proper curve over a smooth
	contractible complex-analytic base $\mathscr{B}$. Suppose $s_1, \cdots,
	s_n: \mathscr{B}\to \mathscr{C}$ are sections to $\pi$ with disjoint
	images $\mathscr{D}_1, \cdots, \mathscr{D}_n$, and let $\mathscr{D}$ be
	the union of the $\mathscr{D}_i$. Let
	$\mathscr{C}^\circ=\mathscr{C}\setminus \mathscr{D}$,
	$j:\mathscr{C}^\circ\to \mathscr{C}$ be the natural inclusion, and let $\pi^\circ: \mathscr{C}^\circ \to \mathscr{B}$ be the composition $\pi^\circ=\pi\circ j$.
Let $\mathbb V$ be a unitary $\mathbb C$-local system on $\mathscr{C}^\circ$ and
let 
$\mathbb{W}:=R^1 \pi^\circ_* \mathbb V$ denote the higher direct image local system on $\mathscr{B}$.
Let $(\mathscr E, \nabla)$ denote the Deligne canonical extension 
\cite[Remarques 5.5(i)]{deligne:regular-singular}
of $(\mathbb{V}\otimes \mathscr{O}_{\mathscr{C}^\circ}, \on{id}\otimes d)$ to $\mathscr{C}$; this is a vector bundle with flat connection with regular singularities along $\mathscr{D}$. Note that for each $b\in \mathscr{B}$, $\mathscr{E}|_{\mathscr{C}_b}$ with the parabolic structure induced by $\nabla$ (as in \cite[Definition 3.3.1]{LL:geometric-local-systems}) is the semistable parabolic vector bundle associated to $\mathbb{V}|_{\mathscr{C}_b^\circ}$ by the Mehta-Seshadri correspondence \cite{mehta1980moduli, simpson1990harmonic}.

\end{notation}

\subsubsection{Constructing the period map}
\label{subsubsection:constructing-period-map}
We next construct the relevant period map in \eqref{equation:period-map}.
Set $\mathscr H:=\mathbb{W} \otimes_{\mathbb C}
\mathscr O_\mathscr{B} \simeq  \mathbb R^1
\pi_*( \mathscr E \overset{\nabla}{\to} \mathscr E \otimes \Omega^1_{\mathscr{C}/\mathscr{B}}(\log \mathscr{D}))$, the
right hand side being
relative hypercohomology \cite[Corollaire 6.10, Proposition 6.14]{deligne:regular-singular}.
As the vector bundle $\mathscr{H}$ arises from a local system, it carries a natural Gauss-Manin connection $\nabla_{GM}$.
Because we are assuming $\mathbb V$ is unitary, the Hodge-de Rham spectral sequence
degenerates \cite[Theorem 7.1(a)]{timmerscheidt:mixed-hodge-structure-for-unitary} (see also \cite[Th\'eor\`eme 5.3.1]{saito1988modules} for a much more general result), 
giving a $2$-step Hodge filtration of 
$\mathscr H = \mathbb W \otimes \mathscr O_\mathscr{B}$, satisfying
\begin{equation}
	\label{equation:h-filtration}
\begin{aligned}
F^1 \mathscr H &:= \pi_* (\mathscr E \otimes
\Omega^1_{\mathscr{C}/\mathscr{B}}(\log\mathscr{D})) \\
\mathscr H/ F^1 \mathscr H &\simeq R^1 \pi_* \mathscr E,
\end{aligned}
\end{equation}
as described in \autoref{theorem:unitary-MHS}.

Since $\mathscr{B}$ is contractible, we can globally trivialize the local system $\mathbb W$, yielding a flat trivialization of $\mathscr{H}$. 
We therefore obtain a period map
\begin{align}
	\label{equation:period-map}
	P: \mathscr{B} \to \on{Gr}(\rk F^1 \mathscr H, \rk \mathscr H),
\end{align}
for $\on{Gr}(s, r)$ the Grassmannian of $s$-dimensional subspaces of an $r$
dimensional vector space, which sends $b\in \mathscr{B}$ to the subspace $F^1\mathscr{H}_b\subset \mathscr{H}_b$. 

\begin{remark}
	\label{remark:}
	In the above \autoref{notation:period-map}, and in what follows,
	when $\mathscr C \to \mathscr B$ is a relative curve, we will tend to use
	$\Omega^1_{\mathscr C/\mathscr B}(\mathscr D)$ instead of the isomorphic but arguably more correct
	$\Omega^1_{\mathscr C/\mathscr B}(\log \mathscr D)$
	in order to simplify notation. 
	We will also often identify $\Omega^1_{\mathscr{C}/\mathscr{B}}$ with the relative dualizing sheaf $\omega_{\mathscr{C}/\mathscr{B}}$, and will pass between the two without comment.
\end{remark}

\subsubsection{The derivative of the period map}
\label{subsubsection:derivative-period-map-2}

Having set up notation for the period map, we now describe its derivative.
The derivative of the period map is an $\mathscr{O}_{\mathscr{B}}$-linear map
\begin{align*}
	dP : T_{\mathscr B} \to P^* T_{\on{Gr}(\rk F^1 \mathscr H, \rk \mathscr H)} \simeq (F^1
	\mathscr H)^\vee \otimes (\mathscr H/F^1 \mathscr H),
\end{align*}
where the latter canonical identification follows from \cite[Theorem 3.5]{eisenbudH:3264-&-all-that}
and the fact that the universal sub and quotient bundles on the Grassmannian
pull back under $P$ to $F^1 \mathscr H$ and 
$\mathscr H/F^1 \mathscr H$.
This is dual to a map
$$dP^\vee: (F^1\mathscr{H})\otimes
(\mathscr{H}/F^1\mathscr{H})^\vee\to \Omega^1_{\mathscr B}.$$ As $\mathscr{C},
s_1, \cdots, s_n$ is a family of $n$-pointed curves over $\mathscr{B}$, we also
obtain a classifying map $c: \mathscr{B}\to \mathscr{M}_{g,n}$, inducing a
pullback map $$c^*: c^*\Omega^1_{\mathscr{M}_{g,n}}\to \Omega^1_{\mathscr B}.$$

\subsubsection{The derivative of the period map at a point}
\label{subsubsection:derivative-period-map}

We now identify the derivative of the period map at a point $b \in \mathscr B$.
Given a point $b\in \mathscr{B}$, set $C:=\mathscr{C}_b$,
$D:=\mathscr{D}_b$, and $E:=\mathscr{E}|_C$. Via Serre duality, we identify $dP^\vee_b$ as a map $$dP^\vee_b: H^0(C, {E}\otimes \Omega^1_C(\log D))\otimes H^0(C, {E}^\vee \otimes \omega_C)\to \Omega^1_{\mathscr{B},b},$$ and $c^*_b$ as a map $$c^*_b: H^0(C, \omega_C^{\otimes 2}(D))\to \Omega^1_{\mathscr{B}, b}.$$
We also have a natural $\mathscr{O}_\mathscr{B}$-linear map $\overline{\nabla}: F^1 \mathscr H \to \mathscr H/F^1 \mathscr H \otimes
\Omega^1_{\mathscr B}$ given as the composition
\begin{align}
	\label{equation:shift-derivative}
	\overline{\nabla}: F^1 \mathscr H \to \mathscr H \xrightarrow{\nabla_{GM}} \mathscr H \otimes
	\Omega^1_{\mathscr B} \to \mathscr H/F^1 \mathscr H \otimes
	\Omega^1_{\mathscr B}.
\end{align}

A computation similar to
\cite[Lemma 10.19, Theorem 10.21, and Lemma 10.22]{Voisin:hodgeTheory},
which we carry out in 
\autoref{appendix} and complete in
\autoref{subsection:proof-period-map},
yields:
\begin{theorem}\label{theorem:period-map-computation}
	Under the identifications above, the map $dP^\vee_b$ factors as 
	\begin{equation}
	\xymatrix{
	H^0(C, {E}\otimes \Omega^1_C(\log D))\otimes H^0(C, {E}^\vee \otimes \omega_C) \ar[r]^-{dP^\vee_b}  \ar[d]^{\otimes} & \Omega^1_{\mathscr{B},b}\\
	H^0(C, {E}\otimes {E}^\vee \otimes \omega_C^{\otimes 2}(D)) \ar[r]^-{\on{tr}} & H^0(C, \omega_C^{\otimes 2}(D)) \ar[u]^{c^*_b}
	}
		\label{equation:multiplication-to-period}
	\end{equation}
where the left vertical map $\otimes$ is the tensor product of global sections, and the bottom map $\on{tr}$ is induced by the trace pairing ${E}\otimes {E}^\vee\to \mathscr{O}_C$.
	Moreover, $dP^\vee_b$ is adjoint to $\overline \nabla_b$.
\end{theorem}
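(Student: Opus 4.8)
The plan is to split the statement into two parts: the formal identification of $dP^\vee_b$ with the adjoint of $\overline{\nabla}_b$, which is general nonsense about period maps to Grassmannians equipped with a flat trivialization, and the explicit computation of $\overline{\nabla}_b$, which after Serre duality becomes the multiplication-then-trace map. For the first part, since $\mathscr{B}$ is contractible we flat-trivialize $\mathbb{W}$, so $\mathscr{H}=\mathbb{W}\otimes\mathscr{O}_{\mathscr{B}}$ and the period map $P$ of \eqref{equation:period-map} is literally $b\mapsto (F^1\mathscr{H}_b\subset \mathscr{H}_b=\mathbb{W}_b)$. Under the standard identification $T_{[U]}\on{Gr}(s,r)\simeq \Hom(U,\mathbb{C}^r/U)$ used already above (\cite[Theorem 3.5]{eisenbudH:3264-&-all-that}), the derivative of any morphism to a Grassmannian is computed by lifting a local frame of the tautological subbundle, differentiating it with respect to the ambient flat structure, and reducing modulo the subbundle; since our trivialization is flat, differentiating with the flat structure agrees with applying $\nabla_{GM}$. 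Hence $dP_b=\overline{\nabla}_b$ as maps $F^1\mathscr{H}_b\to (\mathscr{H}/F^1\mathscr{H})_b\otimes\Omega^1_{\mathscr{B},b}$, and dualizing gives the ``moreover'' clause.

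The heart of the argument is the computation of $\overline{\nabla}_b$ via the short exact sequence of logarithmic de Rham complexes on $\mathscr{C}$ near the fiber $C=\mathscr{C}_b$, twisted by $(\mathscr{E},\nabla)$:
\[
0\to \pi^*\Omega^1_{\mathscr{B}}\otimes \mathscr{E}\otimes\Omega^{\bullet-1}_{\mathscr{C}/\mathscr{B}}(\log\mathscr{D})\to \mathscr{E}\otimes\Omega^\bullet_{\mathscr{C}}(\log\mathscr{D})\to \mathscr{E}\otimes\Omega^\bullet_{\mathscr{C}/\mathscr{B}}(\log\mathscr{D})\to 0 ,
\]
whose boundary map on relative hypercohomology is $\nabla_{GM}$. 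Restricting to $F^1\mathscr{H}=\pi_*(\mathscr{E}\otimes\Omega^1_{\mathscr{C}/\mathscr{B}}(\log\mathscr{D}))$ and projecting the image modulo $F^1=\ker(\mathscr{H}\to R^1\pi_*\mathscr{E})$, a \v{C}ech computation over an affine open cover of $C$ shows that $\overline{\nabla}_b$ sends $\eta\in H^0(C,E\otimes\omega_C(D))$ to $\kappa\lrcorner\eta\in H^1(C,E)\otimes\Omega^1_{\mathscr{B},b}$, where $\kappa\in H^1(C,T_C(-D))\otimes\Omega^1_{\mathscr{B},b}$ is the Kodaira--Spencer class of the $n$-pointed family and $\lrcorner$ is induced by the contraction $(E\otimes\omega_C(D))\otimes T_C(-D)\to E$ coming from $\omega_C(D)\otimes T_C(-D)=\mathscr{O}_C$. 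This is the content deferred to \autoref{appendix}; the point requiring care is that $\nabla$ has regular singularities along $\mathscr{D}$, so one must verify the \v{C}ech lifts of log forms can be chosen compatibly with $\nabla$ and that no residue correction terms intervene, using the Mehta--Seshadri/parabolic normalization of $\mathscr{E}$ that underlies the description of the Hodge filtration in \autoref{theorem:unitary-MHS}.

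Finally one dualizes. Serre duality on $C$ gives $H^1(C,E)^\vee\simeq H^0(C,E^\vee\otimes\omega_C)$ and $H^1(C,T_C(-D))^\vee\simeq H^0(C,\omega_C^{\otimes 2}(D))$, the latter using $(T_C(-D))^\vee\otimes\omega_C=\omega_C^{\otimes 2}(D)$. By definition of the classifying map $c$ and the identification of $\Omega^1_{\mathscr{M}_{g,n}}$ at $[C,D]$ with $H^0(C,\omega_C^{\otimes 2}(D))$, the Kodaira--Spencer map $T_{\mathscr{B},b}\to H^1(C,T_C(-D))$ is dual to $c^*_b$. Under these three dualities, the bilinear map $(\eta,\kappa)\mapsto \kappa\lrcorner\eta$ is adjoint to the pairing $H^0(C,E\otimes\omega_C(D))\otimes H^0(C,E^\vee\otimes\omega_C)\to H^0(C,\omega_C^{\otimes 2}(D))$ obtained by tensoring global sections and applying the trace $E\otimes E^\vee\to\mathscr{O}_C$; this is a formal compatibility of cup products with Serre duality, checked on Dolbeault or \v{C}ech representatives. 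Composing with the identification $dP^\vee_b=(\overline{\nabla}_b)^\vee$ from the first step yields exactly the factorization \eqref{equation:multiplication-to-period}.

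\textbf{Main obstacle.} I expect the genuine difficulty to lie entirely in the second step: carrying out the local \v{C}ech computation of $\overline{\nabla}_b$ in the logarithmic, twisted setting and confirming that the regular-singular connection $\nabla$ and the parabolic structure produce no extra terms, so that the answer is cleanly cup product with Kodaira--Spencer. The first and third steps are formal once this is in hand.
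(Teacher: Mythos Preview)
Your three–step outline matches the paper's exactly: identify $dP_b$ with $\overline{\nabla}_b$ via the flat trivialization (the paper's \autoref{lemma:connection-and-period-map}), compute $\overline{\nabla}_b$ as cup product with the Kodaira--Spencer class (the paper's \autoref{proposition:period-map}), then dualize via Serre duality to obtain the multiplication-then-trace map (the paper's \autoref{lemma:multiplication-map}). The identification of $\kappa$ as Serre dual to $c^*_b$ is also exactly how the paper concludes.

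The one genuine methodological difference is in your second step. You propose a \v{C}ech computation through the Katz--Oda short exact sequence of log de Rham complexes. The paper instead works with Dolbeault representatives, following \cite[\S9--10]{Voisin:hodgeTheory}: it lifts a class $\sigma\in H^0(C,E\otimes\Omega^1_C(\log D))$ to a $C^\infty$ section $\widetilde\sigma$ that is \emph{holomorphic on fibers} (this is where unitarity is invoked, to define the $\overline\partial$-Laplacian and obtain a harmonic lift), then computes $\nabla_{GM}$ via interior products against lifts of tangent vectors (\autoref{lemma:connection-to-interior}), and manipulates $\overline\partial$ to identify the result with $\alpha(\kappa(u)\cup\sigma)$. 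Your Katz--Oda route is perfectly valid and arguably more algebraic; in fact your ``main obstacle'' is overstated, since the log de Rham formalism handles the poles cleanly and neither unitarity nor the parabolic structure enters the \v{C}ech computation at all---the identification of $\overline{\nabla}$ with cup product by Kodaira--Spencer holds for any relative flat bundle with log poles. By contrast, the paper's Dolbeault argument does use unitarity, but only to produce the fiberwise-holomorphic lift. Either way, the output is the same cup-product formula, and the remaining steps are identical.
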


\subsection{A bilinear pairing}\label{subsection:bilinear-pairing}

Let $C$ be a smooth proper connected curve of genus $g$, $D\subset C$ a reduced divisor, and $E$
a vector bundle on $C$. 
In this section, we study the natural perfect pairing
\begin{align}
	\label{equation:bilinear-pairing}
	B_E:
(E \otimes \omega_C(D))\times (E^\vee\otimes \omega_C) \to
\omega_C^{\otimes 2}(D)
\end{align}
given as the composition $$B_E: (E \otimes \omega_C(D))\times (E^\vee\otimes \omega_C)\overset{\otimes}{\longrightarrow} (E\otimes E^\vee)\otimes \omega_C^{\otimes 2}(D)\overset{\on{tr}\otimes \on{id}}{\longrightarrow} \omega_C^{\otimes 2}(D),$$
where $\on{tr}$ denotes the trace pairing ${E}\otimes {E}^\vee\to
\mathscr{O}_C.$ 
Our motivation for considering this pairing is the close relationship between
\eqref{equation:bilinear-pairing} and
\autoref{theorem:period-map-computation}: for $E$ as in \autoref{theorem:period-map-computation}, the pairing on global sections induced by $B_E$ computes the derivative of the period map.

The main result of this section is \autoref{proposition:pairing-result},
which gives a sufficient criterion for the rank of $E$ to be large in terms of
this bilinear pairing. It is this lower bound on the
rank of $E$ that ultimately leads to the bound of $\sqrt{g+1}$ in our main result,
\autoref{theorem:finite-image}.
In \autoref{section:vanishing} and \autoref{section:Putman-Wieland} we use
\autoref{proposition:pairing-result} to analyze mapping class group representations appearing in the
cohomology of unitary local systems. 

We next review background on parabolic bundles, in order to state
\autoref{proposition:pairing-result}.
For a more detailed and leisurely introduction to parabolic bundles, we suggest
the reader consult
\cite[\S2]{LL:geometric-local-systems}
and references therein.

\subsubsection{A lightning review of parabolic bundles}

Fix a smooth proper curve $C$ and let $D = x_1 + \cdots + x_n$ be a reduced divisor on
$C$. 
Recall that a {\em parabolic bundle} on $(C,D)$ is a vector bundle $E$ on $C$, a
decreasing filtration $E_{x_j} = E_j^1 \supsetneq E_j^2 \supsetneq \cdots
\supsetneq E_j^{n_j+1} =
0$ for each $1 \leq j \leq n$, and an increasing sequence of real numbers $0\leq
\alpha_j^1<\alpha_j^2<\cdots<\alpha_j^{n_j}<1$ for each $1 \leq j \leq n$.
referred to as {\em weights}. 
We use $E_\star = (E, \{E^i_j\}, \{\alpha^i_j\})$ to denote the data of a parabolic bundle.

Given a parabolic bundle
$E_\star = (E, \{E^i_j\}, \{\alpha^i_j\})$,
let $J \subset \{1, \ldots, n\}$ denote the set of 
integers $j \in \{1, \ldots, n\}$ for which $\alpha^1_j = 0$, and define
\begin{align*}
\widehat{E}_0 : = \ker( E \to \oplus_{j \in J} E_{x_j}/E_j^2).
\end{align*}
(This is a special case of more general notation used for coparabolic bundles
as in \cite[2.2.8]{LL:geometric-local-systems} or the equivalent
\cite[Definition 2.3]{bodenY:moduli-spaces-of-parabolic-higgs-bundles}, but is all we will need for this
paper.)
In particular, $\widehat{E}_0 \subset E$ is a subbundle.

Parabolic
bundles admit a notion of \emph{parabolic stability}, analogous to the usual
notion of stability for vector bundles, which we next recall. 
First, the {\em parabolic degree} of a parabolic bundle $E_\star$ is
\begin{align*}
	\on{par-deg}(E_\star) := \deg(E) + \sum_{j=1}^n \sum_{i=1}^{n_j}
	\alpha^i_j \dim(E_j^i/E_j^{i+1}).
\end{align*}
Then, the {\em parabolic slope} is defined by $\mu_\star(E_\star) :=
\on{par-deg}(E_\star)/\rk(E_\star)$.
Any subbundle $F \subset E$ has an induced parabolic structure $F_\star \subset
E_\star$ defined as
follows:
We take the filtration
over $x_j$ on $F$ is obtained
	from the filtration $$F_{x_j} = E^1_j \cap F_{x_j} \supset E^2_j \cap
	F_{x_j} \supset \cdots \supset E^{n_j+1}_j \cap F_{x_j} = 0$$
	by removing redundancies. For the weight associated to $F^i_j \subset
	F_{x_j}$ one takes $$\max_{\substack{k, 1 \leq k \leq n_j}} \{ \alpha^k_j : F^i_j = E^k_j \cap
	F_{x_j}\}.$$
A parabolic bundle $E_\star$ is {\em parabolically semi-stable} if for every
nonzero subbundle $F \subset E$ with
induced parabolic structure $F_\star$, we have $\mu_\star(F_\star) \leq
\mu_\star(E_\star)$.

Mehta and Seshadri 
\cite[Theorem 4.1, Remark 4.3]{mehta1980moduli},
give a correspondence between
\emph{parabolically stable parabolic bundles of parabolic degree zero} on
$(C,D)$ and irreducible unitary local systems on $C\setminus D$.
This bijection sends a local system $\mathbb{V}$ to the Deligne canonical extension of $(\mathbb{V}\otimes \mathscr{O}, \on{id}\otimes d)$ with the parabolic structure induced by the connection (as in \cite[Definition 3.3.1]{LL:geometric-local-systems}). 

\begin{remark}
It is not entirely trivial to extract from \cite{mehta1980moduli} that the parabolic vector bundle they construct from a unitary representation is in fact the parabolic vector bundle associated to the Deligne canonical extension of the associated flat vector bundle. What we will use is the fact that the parabolic vector bundle associated to the Deligne canonical extension is parabolically semistable, which follows from \cite[Theorem 5]{simpson1990harmonic}, for example. A generalization of this fact about unitary local systems (for complex PVHS) is nicely explained in \cite[\S5]{arapura2019vanishing}.
\end{remark}



The main technical result we will use regarding $B_E$ is:
\begin{proposition}\label{proposition:pairing-result}
	Let $E_\star$ be a semistable parabolic bundle on $(C,D)$ of parabolic
	degree zero, with underlying vector bundle $E$. Let $v\in H^0(C, E\otimes \omega_C(D))$ be a nonzero global section, and suppose the map 
\begin{align*}
	H^0(C, B_E(v,-)): H^0(C, E^\vee \otimes \omega_C) & \rightarrow H^0(C, \omega_C^{\otimes 2}(D)) \\
	u & \mapsto B_E(v,u)
\end{align*}
has rank $r$. Then $\on{rk}(E)\geq g-r.$
\end{proposition}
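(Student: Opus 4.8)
The plan is to exploit the perfect pairing $B_E$ together with parabolic semistability to bound the rank of $E$. First I would set $v \in H^0(C, E\otimes\omega_C(D))$ nonzero and consider the subsheaf $\mathcal{K} \subset E^\vee \otimes \omega_C$ given (generically) by the kernel of $u \mapsto B_E(v,u)$; by hypothesis the image has rank $r$, so $H^0(C,\mathcal{K})$ has codimension $r$ inside $H^0(C, E^\vee\otimes\omega_C)$, i.e. $h^0(\mathcal K) \geq h^0(E^\vee\otimes\omega_C) - r$. The key geometric observation is that $v$ defines an inclusion $\mathscr{O}_C \hookrightarrow E\otimes\omega_C(D)$, equivalently a nonzero map $\varphi_v : E^\vee \otimes \omega_C \to \omega_C^{\otimes 2}(D)$ (this is exactly $B_E(v,-)$ at the sheaf level, using the trace). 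The saturation of $\ker\varphi_v$ is a subbundle $F' \subset E^\vee\otimes\omega_C$ of corank one, hence $F' = G^\vee \otimes \omega_C$ for a corank-one quotient bundle $E \twoheadrightarrow Q$ where $Q$ has rank one... more precisely, $\ker\varphi_v$ corresponds to a subbundle $E'' \subseteq E$ which is the saturation of the subsheaf on which $v$ "vanishes against the cokernel"; the upshot is that $E$ has a line bundle quotient $L$ with $v$ projecting to a nonzero section of $L\otimes\omega_C(D)$, and $\ker\varphi_v$ contains (the twist by $\omega_C$ of) the dual of the kernel subbundle $\ker(E\to L)$, which has rank $\rk(E)-1$.

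The second step is a Clifford-type estimate. On the subbundle $\ker(E \to L) =: E'$, of rank $\rk(E)-1$, I would use parabolic semistability of $E_\star$: the induced parabolic subbundle $E'_\star$ has parabolic slope $\leq 0$, so $\deg E' \leq -(\text{parabolic correction}) \leq 0$ up to the weight contributions, and dually the quotient $L$ (with induced parabolic structure) has parabolic slope $\geq 0$. Then $H^0(E^\vee\otimes\omega_C)$ decomposes compatibly with the exact sequence $0 \to L^\vee \to E^\vee \to (E')^\vee \to 0$, and the part of $H^0$ landing in $\ker\varphi_v$ is controlled by $h^0((E')^\vee\otimes\omega_C)$ plus possibly a bit. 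Using Clifford's theorem for semistable bundles (as in \cite[\S6]{LL:geometric-local-systems}) applied to $(E')^\vee\otimes\omega_C$, which is semistable of slope $\leq 2g-2$ — actually I want an upper bound: $h^0((E')^\vee \otimes \omega_C) \leq \tfrac{1}{2}\deg((E')^\vee\otimes\omega_C) + \rk(E') = \rk(E')(g-1) - \tfrac12\deg E' + \rk E'$ roughly — combined with the fact that $h^0(E^\vee\otimes\omega_C) = h^1(E) \geq$ something from Riemann–Roch and semistability (parabolic degree zero forces $\deg E \leq 0$, giving $h^0(E^\vee\otimes\omega_C) = h^1(E) = h^0(E) - \deg E - \rk(E)(1-g) \geq \rk(E)(g-1)$), one extracts the inequality $r \geq h^0(E^\vee\otimes\omega_C) - h^0(\ker\varphi_v) \geq g - \rk(E)$, which rearranges to $\rk(E) \geq g - r$.

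The main obstacle, I expect, is bookkeeping the parabolic weights correctly: all the degree estimates above must be run with parabolic degrees rather than ordinary degrees, and the induced parabolic structures on $E'$, $L$, and their various $\omega_C$-twists and duals need to be tracked so that parabolic semistability genuinely gives $\mu_\star(E'_\star) \leq 0 \leq \mu_\star(L_\star)$ and so that Clifford's theorem is applied to a genuinely (parabolically) semistable bundle. A secondary subtlety is the behavior at the punctures $D$: the section $v$ lives in $E\otimes\omega_C(D)$ rather than $E\otimes\omega_C$, so the line bundle quotient $L$ comes with a section of $L\otimes\omega_C(D)$ and the comparison between $h^0$ over $C$ and the rank $r$ of the map to $H^0(\omega_C^{\otimes 2}(D))$ must account for the twist by $D$; I would handle this by working throughout with the degree-zero parabolic normalization, where these twists are absorbed into weight shifts, exactly as in the parabolic Clifford theorem of \cite[\S6]{LL:geometric-local-systems}, and invoke that machinery rather than reproving it.
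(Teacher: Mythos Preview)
Your setup is correct and matches the paper: let $f_v\colon E^\vee\otimes\omega_C\to \omega_C^{\otimes 2}(D)$ be the sheaf map $B_E(v,-)$, and let $U=\ker f_v$, a corank-one subbundle with $h^0(E^\vee\otimes\omega_C)-h^0(U)=r$. The paper then immediately applies \cite[Proposition~6.3.6]{LL:geometric-local-systems} (quoted here as \autoref{proposition:generic-parabolic-global-generation}(II)) to the parabolic bundle $(E_\star)^\vee\otimes\omega_C(D)$: this bundle has parabolic slope $2g-2+n$, one checks $\reallywidehat{(E_\star)^\vee\otimes\omega_C(D)}_0=E^\vee\otimes\omega_C$, and with $c=1$, $\delta=r$ the proposition gives $\rk E\ge g-r$. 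That is the entire proof.

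The middle of your sketch, where you extract a line quotient $E\twoheadrightarrow L$ and apply Clifford to $(E')^\vee\otimes\omega_C$ for $E'=\ker(E\to L)$, does not work as written. First, the duality is backwards: a corank-one subbundle of $E^\vee\otimes\omega_C$ corresponds to a line \emph{subbundle} of $E$, not a line quotient. Second, and more seriously, even once you identify $U$ correctly, there is no reason for $U$ (or its untwist) to be semistable, so Clifford's inequality does not apply to it. Your slope bound $\mu_\star(E'_\star)\le 0$ coming from semistability of $E_\star$ is not enough; Clifford needs semistability of the bundle you apply it to. This is exactly why the black-box proposition from \cite[\S6]{LL:geometric-local-systems} is stated for an \emph{arbitrary} subbundle $U$ of $\widehat{E}_0$ with only the ambient $E_\star$ assumed semistable, and why its proof (which does ultimately rest on a Clifford-type argument) is more delicate than a single application of Clifford to $U$.

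Your final paragraph is the right instinct: invoke \cite[\S6]{LL:geometric-local-systems} directly rather than redo it. The specific statement you want is Proposition~6.3.6 there, applied not to $E_\star$ but to $(E_\star)^\vee\otimes\omega_C(D)$; the twist by $D$ is what puts the slope at $2g-2+n$ and makes $\widehat{(\cdot)}_0$ land on $E^\vee\otimes\omega_C$, absorbing the $D$-twist bookkeeping you were worried about.
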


Before proceeding with the proof, we
recall \cite[Proposition 6.3.6]{LL:geometric-local-systems}, which the
reader may take as a black box.
For a less technical variant of this statement, see 
\cite[Proposition 6.3.1]{LL:geometric-local-systems}. 
\begin{proposition}[{ \cite[Proposition 6.3.6]{LL:geometric-local-systems}}]
	\label{proposition:generic-parabolic-global-generation}
	Suppose $C$ is a smooth proper connected genus $g$ curve and
	$E_\star = (E, \{E^i_j\}, \{\alpha^i_j\})$ is a nonzero parabolic bundle $C$
	with respect to $D = x_1 + \cdots + x_n$.
		Suppose $E_\star$ is parabolically semistable. 
	Let $U \subset \widehat E_0$ be a (non-parabolic) subbundle with
		$c := \rk E - \rk U$ and
		$\delta := h^0(C, \widehat{E}_0) - h^0(C, U)$. 
	\begin{enumerate}
		\item[(I)] If $\mu_\star({E}_\star)> 2g-2 + n$, 
		then $\rk E >
			gc - \delta$.
		\item[(II)] If $\mu_\star({E}_\star)= 2g-2+n$, then $\rk
			E \geq gc - \delta$.
	\end{enumerate}
\end{proposition}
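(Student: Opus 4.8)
The plan is to prove \autoref{proposition:pairing-result} by reducing it to \autoref{proposition:generic-parabolic-global-generation}, which we treat as a black box. The key is to identify the right subbundle $U\subset \widehat E_0$ whose deficiency in sections is controlled by the rank $r$ of the map $H^0(C,B_E(v,-))$.

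First I would set up the pairing more carefully. The nonzero section $v\in H^0(C,E\otimes\omega_C(D))$ defines, by the trace pairing ${E}\otimes{E}^\vee\to\mathscr O_C$, a sheaf map $\varphi_v : E^\vee\otimes\omega_C \to \omega_C^{\otimes 2}(D)$, equivalently an $\mathscr O_C$-linear map $E^\vee\to\omega_C(D)$, equivalently a map $\omega_C(D)^\vee\to E$, i.e.\ a section of $E\otimes\omega_C(D)$ — this is just $v$ again, but now I want to think of the image subsheaf. Let $F\subset E$ be the saturation of the image of the map $\omega_C(D)^\vee\to E$ induced by $v$; so $F$ is a subbundle of $E$ of rank $\geq 1$. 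Dually, $v$ also gives a quotient $E^\vee\twoheadrightarrow \mathscr G$ where $\mathscr G$ is a line-bundle-like quotient (rank one if $v$ is ``generically of rank one'' as a map, which it is since it factors through a line bundle $\omega_C(D)^\vee$). The map $H^0(C,B_E(v,-))$ then factors as $H^0(C,E^\vee\otimes\omega_C)\to H^0(C,\mathscr G\otimes\omega_C) \hookrightarrow H^0(C,\omega_C^{\otimes 2}(D))$, the last inclusion coming from $\mathscr G\otimes\omega_C\hookrightarrow \omega_C^{\otimes 2}(D)$ since $\mathscr G$ embeds in $\omega_C(D)$. Hence the rank $r$ of $H^0(C,B_E(v,-))$ equals the rank of the map $H^0(C,E^\vee\otimes\omega_C)\to H^0(C,\mathscr G\otimes\omega_C)$, which is $h^0(C, E^\vee\otimes\omega_C) - h^0(C, K\otimes\omega_C)$ where $K := \ker(E^\vee\to\mathscr G)$. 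By Serre duality $h^0(C,E^\vee\otimes\omega_C) = h^1(C,E)$ and $h^0(C,K\otimes\omega_C)=h^1(C, E/K^{\perp})$ — one has to track the dualization, but the upshot is that $r$ measures the drop in $h^1$ between $E$ and a rank-$(r_E-1)$ subbundle, equivalently a drop in $h^0$ of the dual data.

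The crucial translation is: this is exactly the quantity $\delta = h^0(C,\widehat E_0) - h^0(C, U)$ appearing in \autoref{proposition:generic-parabolic-global-generation}, once one passes to the coparabolic/parabolic-dual picture. So I would: (i) replace $E_\star$ by a ``dual'' parabolic bundle — specifically work with the parabolic structure for which $\widehat E_0$ and the subbundle $U$ arising as the orthogonal complement of $F$ (or of the image of $v$) realize the map $H^0(C,B_E(v,-))$ — using that $E_\star$ has parabolic degree zero and is semistable, so its parabolic dual does too and $\mu_\star = 2g-2+n$ holds for the relevant normalization after twisting by $\omega_C(D)$; (ii) set $c = \rk E - \rk U = 1$, since $v$ factors through a line bundle so the relevant subbundle has corank exactly one; (iii) set $\delta = r$, by the computation of the previous paragraph identifying the rank of $H^0(C,B_E(v,-))$ with the section drop; (iv) apply case (II) of \autoref{proposition:generic-parabolic-global-generation} (the boundary case $\mu_\star = 2g-2+n$, which is what $\omega_C(D)$-twisting of a parabolic-degree-zero bundle produces) to conclude $\rk E \geq gc - \delta = g - r$, which is exactly the claim.

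The main obstacle I expect is step (i)–(iii): getting the bookkeeping of parabolic weights and duals exactly right so that the hypotheses of \autoref{proposition:generic-parabolic-global-generation} — namely $E_\star$ parabolically semistable with $\mu_\star(E_\star) = 2g-2+n$, and $U\subset \widehat E_0$ a genuine subbundle — are literally satisfied by the twisted/dualized objects, and so that the invariants $c$ and $\delta$ take the claimed values $1$ and $r$. In particular I need to check that the parabolic structure on $E\otimes\omega_C(D)$ induced from $E_\star$ has parabolic slope exactly $2g-2+n$ (this uses $\on{par-deg}(E_\star)=0$ and $\deg\omega_C(D) = 2g-2+n$), that semistability is preserved under this twist (immediate, as twisting by a line bundle shifts all parabolic slopes equally), and that ``$v$ nonzero'' forces $c=1$ rather than $c=0$ (i.e.\ $U\neq E$) — which holds precisely because a nonzero global section of $E\otimes\omega_C(D)$ gives a nonzero, hence corank-one-image, map out of a line bundle. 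Once these identifications are pinned down, the inequality $\rk E \geq g - r$ is an immediate consequence of the cited proposition, so the entire content of the proof is this dictionary between the pairing $B_E$ and the global-generation statement.
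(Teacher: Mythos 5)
Your proposal is, modulo a proof-reading hiccup (the stated \autoref{proposition:generic-parabolic-global-generation} is a citation from \cite{LL:geometric-local-systems} and is used only as a black box here; what you actually prove is \autoref{proposition:pairing-result}), essentially identical to the paper's proof of \autoref{proposition:pairing-result}: both apply Part~(II) of the cited result to $(E_\star)^\vee\otimes\omega_C(D)$ with $U=\ker(f_v)$, $c=1$, $\delta=r$, and conclude $\rk E\geq g-r$. Your Serre-duality detour through $h^1(C,E)$ is harmless but unnecessary, since the identity $\delta=r$ follows directly from left-exactness of $H^0$ applied to $U\hookrightarrow E^\vee\otimes\omega_C\to\mathscr G\otimes\omega_C$, which is how the paper sets it up.
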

\begin{remark}
\label{remark:}
The statement of \autoref{proposition:generic-parabolic-global-generation} is equivalent to \cite[Proposition
6.3.6]{LL:geometric-local-systems}, but differs slightly in that we write
``$E_\star$ is parabolically stable'' in place of ``$\widehat{E}_\star$ is
coparabolically stable'' and $\mu_\star(E_\star)$ in place of
$\mu_\star(\widehat{E}_\star)$.
However, by definition 
$E_\star$ is parabolically stable if and only if $\widehat{E}_\star$ is
coparabolically stable and $\mu_\star(E_\star) = \mu_\star(\widehat{E}_\star)$
\cite[Definitions 2.2.9 and 2.4.2]{LL:geometric-local-systems}.
\end{remark}

\begin{proof}[Proof of \autoref{proposition:pairing-result}]
The idea is to apply
\autoref{proposition:generic-parabolic-global-generation}(II) with $c =1$ and
$\delta = r$, and we now set up notation to do so.
Set $n=\deg D$. Let $f_v: E^\vee\otimes \omega_C\to \omega_C^{\otimes 2}(D)$ be the map of vector bundles induced by $B_E(v, -)$, and set $U=\ker(f_v)$. 
As $B_E$ is a perfect pairing and $v$ is nonzero by assumption, $U$ has corank one in $E^\vee\otimes \omega_C$.

We conclude by applying \autoref{proposition:generic-parabolic-global-generation}(II) 
to the parabolic bundle $(E_\star)^\vee \otimes
\omega_C(D)$.
This is parabolically stable by definition and
has slope $\mu((E_\star)^\vee \otimes
\omega_C(D)) = 2g - 2+n$,
since $E_\star$ has parabolic slope $0$.

By unwinding the definitions, one may verify $\reallywidehat{(E_\star)^\vee \otimes
\omega_C(D)}_0=E^\vee \otimes \omega_C.$
Therefore, the assumption of the proposition implies
\begin{align*}
	h^0(C,\reallywidehat{(E_\star)^\vee \otimes
\omega_C(D)}_0) - h^0(C, U) = 
h^0(C,E^\vee \otimes \omega_C) - h^0(C, U) = 
r.
\end{align*}
Applying \autoref{proposition:generic-parabolic-global-generation}(II) 
to the parabolic bundle
$(E_\star)^\vee \otimes
\omega_C(D)$
with $U = \ker(f_v)$ as defined above so that $c =
1$ and $\delta = r$ we find $\rk E=\rk \reallywidehat{(E_\star)^\vee \otimes
\omega_C(D)}_0 \geq g - r.$
\end{proof}

\section{The main cohomological results}\label{section:vanishing}

In \autoref{subsection:rank-bound-proof}, we prove our main cohomological result, showing that higher direct images of 
unitary systems on families of curves contain no low rank local systems.
From this we derive a related vanishing result in \autoref{subsection:no-sections},
which we will use later to establish cohomological rigidity of certain local systems.

Throughout this section, we continue to use \autoref{notation:versal-family}.
Namely, we use $\pi:\mathscr{C}\to \mathscr{M}$ for a versal family of
$n$-pointed curves of genus $g$, and $\pi^\circ: \mathscr{C}^\circ \to \mathscr{M}$ for the
associated punctured versal family of curves.

\subsection{A rank bound}
\label{subsection:rank-bound-proof}
We now prove our main result on the cohomology of unitary local systems on versal families of curves,
\autoref{theorem:rank-bound-subsystem}.
That is, for $\mathbb V$ a unitary local system on $\mathscr C^\circ$, we
will show that any nonzero
sub-local system of $R^1 \pi^\circ_* \mathbb V$ has rank at
least $2g-2\rk\mathbb{V}$.

\begin{proof}[Proof of \autoref{theorem:rank-bound-subsystem} assuming
	\autoref{lemma:low-rank-vector} (below)]
Let $m \in \mathscr{M}$, $C=\pi^{-1}(m)$, $C^\circ = (\pi^{\circ})^{-1}(m)$
and $D := C - C^\circ$.
Let $(E, \nabla)$ denote the
Deligne canonical extension
to $C$ of the vector bundle $\mathbb V|_{C^\circ} \otimes \mathscr O_{C^\circ}$ with its tautological connection; $(E,\nabla)$ is a flat vector bundle on $C$ with regular singularities along $D$. The residues of $\nabla$ endow $E$ with the structure of a parabolic bundle $E_\star$, as described in \cite[Definition 3.3.1]{LL:geometric-local-systems}.
By the Mehta-Seshadri correspondence \cite{mehta1980moduli},
the parabolic bundle $E_\star$ is a sum of parabolically stable bundles, hence
parabolically semistable. 

The connection $\nabla$ on $\mathscr{H} := R^1 \pi^\circ_*\mathbb V \otimes_{\mathbb C}
\mathscr O_{\mathscr M}$ induces an $\mathscr O_{\mathscr M}$-linear map
\begin{align*}
	F^1\mathscr{H} \xrightarrow{\overline{\nabla}} \mathscr{H}/F^1 \mathscr{H} \otimes \Omega_{\mathscr M}.
\end{align*}
as in \eqref{equation:shift-derivative}.
\autoref{theorem:period-map-computation} identifies the fiber of this map at the point $m \in \mathscr M$ with the map
\begin{align*}
	\mu: H^0(C, E \otimes \omega_C(D)) &\to \Hom(H^0(C, E^\vee \otimes \omega_C),
	H^0(C, \omega_C^{\otimes	2}(D))) \\
v&\mapsto (u\mapsto B_E(v, u)) \end{align*}
defined via $B_E$ from \eqref{equation:bilinear-pairing}. For $v\in H^0(C, E\otimes \omega_C(D))$, we denote by $\overline{\nabla}_m(v)$ the induced map 
\begin{align*}
 \overline{\nabla}_m(v): H^0(C, E^\vee \otimes \omega_C)& \rightarrow H^0(C, \omega_C^{\otimes	2}(D))\\
u & \mapsto B_E(v, u).
\end{align*}

Our strategy is to observe that a sub-local system of $R^1\pi_*^\circ \mathbb{V}$ of low rank would provide us with a vector $v$ in $F^1\mathscr{H}_m$ so that the linear transformation $\overline{\nabla}_m(v)$ has low rank. This is in tension with \autoref{proposition:pairing-result} above, which will provide us with the required dimension bound.

Now, suppose we have a nonzero irreducible sub-local system $\mathbb L \subset R^1 \pi^\circ_* \mathbb
V$.
We will show in \autoref{lemma:low-rank-vector} that, after possibly replacing $\mathbb{V}$ with its complex conjugate $\overline{\mathbb{V}}$, there is some nonzero $v \in
F^1\mathscr{H}_m \simeq
H^0(C, E \otimes \omega_C(D))$ for which
$\overline{\nabla}_m(v)$ has rank (as a linear map) at most $\frac{\rk \mathbb L}{2}$. 
Under the identification of
\autoref{theorem:period-map-computation},
we obtain that $\mu(v)$ has rank at most $\frac{\rk \mathbb L}{2}$. 
Using \autoref{proposition:pairing-result}, $\rk \mathbb{V}=\rk
E \geq g-\frac{\rk \mathbb L}{2}$. Upon rearranging, we find $\rk\mathbb{L}\geq 2g-2\rk\mathbb{V}$.
\end{proof}

We now complete the proof of \autoref{theorem:rank-bound-subsystem} by verifying
the following lemma, which is essentially an application of the theorem of the
fixed part.
\begin{lemma}\label{lemma:low-rank-vector}
Suppose $\pi^\circ:\mathscr{C}^\circ\to \mathscr{M}$ is a punctured versal family of genus
$g$ curves and $\mathbb{V}$ is a unitary local system on $\mathscr{C}^\circ$. 
For $m \in \mathscr M$, let $\overline{\nabla}_m$ denote the fiber over $m$ of
the map $F^1\mathscr{H} \xrightarrow{\overline{\nabla}} \mathscr{H}/F^1
\mathscr{H} \otimes \Omega_{\mathscr M}$
from \eqref{equation:shift-derivative}.
Suppose 
$\mathbb L \subset R^1 \pi^\circ_* \mathbb V$
is a nonzero irreducible sub-local system.
After possibly replacing $\mathbb{V}$ and $\mathbb L$ with their complex
conjugates
$\overline{\mathbb{V}}$ and $\overline{\mathbb L}$, there is some nonzero $v \in \mathbb L_m \cap F^1
\mathscr H_m$ so that
$\overline{\nabla}_m(v)$ has rank at most $\frac{\rk \mathbb L}{2}$.
\end{lemma}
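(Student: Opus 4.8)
The plan is to play the theorem of the fixed part, in the form of \autoref{proposition:subrep-real-variation}, against the explicit mixed Hodge structure of \autoref{theorem:unitary-MHS} and \autoref{lemma:three-part-hodge-structure}. First I would reduce to the case that $\mathbb V$ admits a real structure, by replacing $\mathbb V$ with $\mathbb V\oplus\mathbb V^\vee$: this is unitary with a natural real structure, complex conjugation interchanges the two summands, the sub-local system $\mathbb L$ lies in the $\mathbb V$-summand, the map $\overline\nabla$ and the rank split compatibly, and replacing $\mathbb L$ by $\overline{\mathbb L}$ for the enlarged system is exactly replacing $(\mathbb V,\mathbb L)$ by $(\overline{\mathbb V},\overline{\mathbb L})$. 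So assume $R^1\pi^\circ_*\mathbb V$ carries the variation of mixed Hodge structure of \autoref{theorem:unitary-MHS}, with $\mathscr H=\mathscr H^{1,0}\oplus\mathscr H^{0,1}\oplus\mathscr H^{1,1}$, $F^1\mathscr H=\mathscr H^{1,0}\oplus\mathscr H^{1,1}$, $\overline{F^1\mathscr H}=\mathscr H^{0,1}\oplus\mathscr H^{1,1}$, $W_1=\mathscr H^{1,0}\oplus\mathscr H^{0,1}$, and $\on{gr}^2_W\mathscr H\cong\mathscr H^{1,1}$ pure of type $(1,1)$.

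The key reduction is the following: since $v\in\mathbb L_m$ and $\mathbb L$ is a flat sub-local system, $\nabla_{GM}(v)$ lies in $\mathbb L_m\otimes\Omega^1_{\mathscr M,m}$, so $\overline\nabla_m(v)$, viewed as a linear map $T_{\mathscr M,m}\to(\mathscr H/F^1\mathscr H)_m$, has image contained in that of $\mathbb L_m$, and hence rank at most $\rk\mathbb L-\dim(\mathbb L_m\cap F^1\mathscr H_m)$. Thus it suffices to show that, after possibly passing to conjugates, $\dim(\mathbb L_m\cap F^1\mathscr H_m)\ge\tfrac12\rk\mathbb L$; this intersection is then automatically nonzero, supplying the desired $v$.

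To prove this, note that since $\mathbb L$ is irreducible and $W_1$ is a sub-local system, either $\mathbb L\subseteq W_1$ or $\mathbb L\cap W_1=0$. In either case \autoref{proposition:subrep-real-variation}, applied with $\mathbb W=R^1\pi^\circ_*\mathbb V_{\mathbb R}$ and using $\Hom(\mathbb L,\mathbb W_{\mathbb C})\ne 0$, shows that $\widetilde{\mathbb L}$ underlies a polarizable \emph{real} variation of Hodge structure — so its Hodge numbers are symmetric — and produces a nonzero morphism $Q\otimes\widetilde{\mathbb L}\to\mathbb W$ of variations of mixed Hodge structure with $Q$ a constant real mixed Hodge structure. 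Taking $Q=\Hom(\widetilde{\mathbb L},\mathbb W)$ and the evaluation map, as in the proof of that proposition, the copy $\mathbb L$ lands inside the sub-variation $\mathbb N:=\on{im}(Q\otimes\widetilde{\mathbb L}\to\mathbb W)$, whose Hodge filtration is cut out from that of $\mathscr H$ by strictness. Matching the bigrading of $\widetilde{\mathbb L}$ against the constraints that $\on{gr}^1_W\mathscr H$ carries only types $(1,0),(0,1)$ and $\on{gr}^2_W\mathscr H$ only type $(1,1)$ pins down the possible bigradings of $Q\otimes\widetilde{\mathbb L}$; feeding in Hodge symmetry of $\widetilde{\mathbb L}$, and passing to $\overline{\mathbb L}$ when needed (which swaps $\mathscr H^{1,0}$ and $\mathscr H^{0,1}$), one extracts $\dim(\mathbb L_m\cap F^1\mathscr H_m)=\dim(\mathbb L_m\cap\mathscr H^{1,0}_m)+\dim(\mathbb L_m\cap\mathscr H^{1,1}_m)\ge\tfrac12\rk\mathbb L$.

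The main obstacle is exactly this last bigrading bookkeeping: a priori a copy of $\mathbb L$ sitting inside an isotypic component of $\mathscr H$ could be positioned so nongenerically with respect to the Deligne bigrading that it meets \emph{both} $F^1\mathscr H$ and its complex conjugate in small dimension, so that the conjugation trick alone fails and the mere semisimplicity of $\mathscr H$ as a local system is insufficient. What rules this out is the theorem-of-the-fixed-part rigidity of \autoref{proposition:subrep-real-variation}, which confines $\mathbb L$ to a sub-variation of the rigid shape $Q\otimes\widetilde{\mathbb L}$ with $\widetilde{\mathbb L}$ pure and $Q$ constant, combined with the special two–step Hodge and weight filtrations that the cohomology of a unitary local system enjoys.
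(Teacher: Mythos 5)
Your proposal is close in spirit to the paper's argument, and the reduction in your second paragraph is correct as far as it goes: if $v\in\mathbb L_m\cap F^1\mathscr H_m$ then $\nabla_{GM}(v)\in\mathbb L_m\otimes\Omega^1_{\mathscr M,m}$, so $\overline\nabla_m(v)$ has rank at most $\rk\mathbb L-\dim(\mathbb L_m\cap F^1\mathscr H_m)$. The problem is the claim you reduce to: that after conjugation $\dim(\mathbb L_m\cap F^1\mathscr H_m)\geq\tfrac12\rk\mathbb L$. This is false, and the failure occurs precisely in the case where $\mathbb L$ is pure of a single Hodge type (the paper's Case (1)).

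Here is the issue concretely. The given inclusion $\mathbb L\hookrightarrow R^1\pi^\circ_*\mathbb V$ corresponds to an element $\phi\in Q_{\mathbb C}$ with $Q=\Hom(\widetilde{\mathbb L},\mathbb W)$. The constant MHS $Q$ has a Deligne bigrading, and nothing forces $\phi$ to live in a single pure piece. Suppose $\mathbb L$ carries a PVHS concentrated in one bidegree and $\phi=\phi_{(1,0)}+\phi_{(0,1)}$ with both summands nonzero (so $Q^{1,0}$ and $Q^{0,1}$ are both nonzero and $\phi$ is a generic element). Then the given copy of $\mathbb L$ sits \emph{diagonally} inside $\mathscr H^{1,0}\oplus\mathscr H^{0,1}$. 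For an element $\iota(\phi\otimes\ell)=\phi_{(1,0)}(\ell)+\phi_{(0,1)}(\ell)$ to lie in $F^1\mathscr H_m=\mathscr H^{1,0}_m\oplus\mathscr H^{1,1}_m$, one needs $\phi_{(0,1)}(\ell)=0$; but $\phi_{(0,1)}$ is a nonzero map of irreducible local systems, hence injective, so $\mathbb L_m\cap F^1\mathscr H_m=0$. Passing to complex conjugates does not help: the conjugate embedding is still diagonal across the weight-one graded piece, so the intersection with $F^1$ is still zero. Relatedly, the equality $\dim(\mathbb L_m\cap F^1\mathscr H_m)=\dim(\mathbb L_m\cap\mathscr H^{1,0}_m)+\dim(\mathbb L_m\cap\mathscr H^{1,1}_m)$ that you assert is only a containment $\supseteq$ in general; in this example the left side and both right-hand terms are all zero, so the purported lower bound $\tfrac12\rk\mathbb L$ fails.

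What saves the paper's argument is that it does \emph{not} insist on taking $v$ in the originally given copy of $\mathbb L$. In Case (1) it chooses a pure element $q\in Q^{i,j}$ with $(i,j)\in\{(1,0),(1,1)\}$ (which exists after possibly conjugating, because $Q\neq 0$ and its bigrading is supported in $\{(1,0),(0,1),(1,1)\}$), and sets $v=\iota_m(q\otimes\ell)$. This $v$ lies in a possibly \emph{different} copy of $\mathbb L$ inside $\mathscr H$ — one that is entirely contained in a pure bigrading piece, hence in $F^1\mathscr H_m$ — and the period map of that copy is identically zero because it is concentrated in one bidegree. Note that the downstream application in the proof of \autoref{theorem:rank-bound-subsystem} only requires $v\in F^1\mathscr H_m$ with $\overline\nabla_m(v)$ of low rank, not $v\in\mathbb L_m$, so this slack is harmless. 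Your Case (2)-type analysis is essentially correct, since there $Q$ is forced into a single bidegree, so the given embedding does respect the bigrading; it is the mixed Case (1) that your proposal does not handle, and where the freedom to change the copy of $\mathbb L$ is essential.
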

\begin{proof}
For a local system $\mathbb W$, let $\widetilde{\mathbb W}$ denote the
corresponding local system with real structure as in
\eqref{equation:real-construction}. Note that $$\widetilde{R^1 \pi^\circ_* \mathbb{V}}=R^1 \pi^\circ_* \widetilde{\mathbb{V}}.$$

By \autoref{proposition:subrep-real-variation}, $\widetilde{\mathbb{L}}$
underlies a real polarizable variation of Hodge structure, and we have a
nonzero real mixed Hodge structure $Q_{\mathbb R}$ and a nonzero map of real
variations of mixed Hodge structures
$$\iota: Q_{\mathbb R} \otimes \widetilde{\mathbb L} \to \widetilde{R^1 \pi^\circ_* \mathbb V}.$$  

Let $Q := (Q_\mathbb R) \otimes_{\mathbb R} \mathbb C$.
As the grading on $(\widetilde{R^1 \pi^\circ_* \mathbb V})_{\mathbb C}$ is supported in degrees $(1,0), (0,1), (1,1)$, we may assume (after regrading and possibly replacing $Q$ with a subspace) that either
\begin{enumerate}
\item the bigrading on $\mathbb{L}$ is supported in degree $(0,0)$ and the bigrading on $Q$ has support contained in  $\{(1,0), (0,1), (1,1)\}$, or
\item the bigrading on $\mathbb{L}$ has support equal to $\{(1,0),(0,1)\}$
	and the bigrading on $Q$ is supported in degree $(0,0)$.
\end{enumerate}
In the two cases above, we choose $v$ as follows.

\emph{Case (1):} After possibly replacing ${\mathbb{V}}, \mathbb{L}$ with their complex conjugates, we may assume that  $$\iota_{i,j}: Q^{i,j}\otimes\mathbb{L}\to R^1\pi_*^\circ \mathbb{V}$$ is nonzero for some $(i,j)\in \{(1,0), (1,1)\}$. Choose $\ell\in \mathbb{L}_m$ and $q\in Q^{i,j}$, $(i,j)\in\{(1,0), (1,1)\}$ such that $v:=\iota_m(q\otimes\ell)$ is nonzero.

\emph{Case (2):} After possibly replacing $\mathbb{V}$ and $\mathbb{L}$ with
their complex conjugates, we may assume $\dim\mathbb{L}_m^{0,1}\leq \dim \mathbb{L}_m^{1,0}$. 
Choose arbitrary nonzero $q\in Q$ and $\ell\in \mathbb{L}_m^{1,0}$ such that $v:=\iota_m(q\otimes\ell)$ is nonzero.

It is enough to argue that for $v$ as above, $\overline{\nabla}(v)$ has rank at
most $\frac{\rk\mathbb{L}}{2}$. Since $\mathbb{L}$ carries a complex PVHS, we
may compute the rank of $\overline{\nabla}(v)$ by computing with the analogous
period map 
\begin{align*}
\overline{\nabla}_{\mathbb L,m}: F^1\mathbb{L}_m\to \mathbb{L}_m/F^1\mathbb{L}_m\otimes \Omega^1_{\mathscr{M},m}
\end{align*}
associated to $\mathbb{L}$. In case (1), this map is identically
zero, so we are done (i.e.~$\rk \overline{\nabla}(v)=0$). In case (2), 
from the definition of $\overline{\nabla}_{\mathbb L,m}$,
it
is enough to show
\begin{align*}
	\dim \mathbb{L}_m/F^1\mathbb{L}_m\leq \frac{\rk \mathbb L}{2}.
\end{align*}
This follows because we have 
$\dim \mathbb{L}_m=\dim F^1 \mathbb{L}_m+ \dim \mathbb{L}_m/F^1 \mathbb L_m$ and
we assumed
\begin{equation*}
	\dim F^1 \mathbb L_m = \dim \mathbb L^{1,0}_m \geq \dim \mathbb
	L^{0,1}_m = \dim \mathbb L_m/F^1 \mathbb L_m.\qedhere
\end{equation*}
\end{proof}

\subsection{A vanishing result}
\label{subsection:no-sections}

We apply \autoref{theorem:rank-bound-subsystem} to prove a vanishing result about local systems on $\mathscr{C}^\circ$ that are not necessarily unitary. We assume only that their restriction to each fiber of $\pi^\circ$ is unitary. The reader may find it useful to consider the case where $A=\mathbb{C}$ below, though we will need the full generality of the result below for our deformation-theoretic applications.
\begin{theorem}
	\label{theorem:unitary-rigid}
	With notation as in \autoref{notation:versal-family}, 
	let $\pi^\circ: \mathscr{C}^\circ\to \mathscr{M}$ be a punctured versal
	family of genus $g$ curves. Let $m\in \mathscr{M}$ be a point and set $C^\circ
	= \mathscr C^\circ_m$. Let $A$ be a local Artin $\mathbb{C}$-algebra with residue field $\mathbb{C}$, and let $\mathbb V$ be a locally constant sheaf of free $A$-modules on $\mathscr C^\circ$
	such that
	\begin{enumerate}
 \item $\mathbb V|_{C^\circ}$ is a constant deformation of a unitary local system, i.e. there exists a unitary complex local system $\mathbb{V}_0$ on $C^\circ$ and an isomorphism $\mathbb V|_{C^\circ}\simeq \mathbb{V}_0\otimes A$, and
\item $\rk_A \mathbb V < g$.
		\end{enumerate}
 Then
	$H^0(\mathscr M, R^1 \pi^\circ_* \mathbb V) = 0$.
\end{theorem}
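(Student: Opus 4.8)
The plan is to deduce this from the rank bound \autoref{theorem:rank-bound-subsystem} by way of the structural description \autoref{lemma:unitary-direct-sum}. First I would reduce to the situation after a dominant \'etale base change. If $\mathscr M' \to \mathscr M$ is dominant \'etale then the formation of $R^1\pi^\circ_*\mathbb V$ commutes with the base change, and for any local system $\mathcal F$ on $\mathscr M$ the restriction map $H^0(\mathscr M, \mathcal F) \to H^0(\mathscr M', \mathcal F|_{\mathscr M'})$ is injective (the vanishing locus of a section of a local system is open and closed, hence all of the connected $\mathscr M$ once it contains the image of $\mathscr M'$). Thus it is harmless to replace $\mathscr M$ by the dominant \'etale cover produced by \autoref{lemma:unitary-direct-sum}, after which $\mathbb V \simeq \bigoplus_{i=1}^s \mathbb U_i \otimes (\pi^\circ)^*\mathbb W_i$ with each $\mathbb U_i$ unitary (and $\mathbb U_i|_{C^\circ}$ irreducible, the $\mathbb U_i|_{C^\circ}$ pairwise non-isomorphic) and each $\mathbb W_i$ a locally constant sheaf of free $A$-modules. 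Writing $r_i := \rk_{\mathbb C}\mathbb U_i$ and $n_i := \rk_A\mathbb W_i$, restriction to a fiber gives $\rk_A\mathbb V = \sum_i r_i n_i$, so $r_i n_i < g$ for every $i$. By the projection formula $R^1\pi^\circ_*\mathbb V \simeq \bigoplus_i (R^1\pi^\circ_*\mathbb U_i)\otimes_{\mathbb C}\mathbb W_i$, so it suffices to show $H^0(\mathscr M, (R^1\pi^\circ_*\mathbb U_i)\otimes_{\mathbb C}\mathbb W_i) = 0$ for each $i$.

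To remove the Artinian coefficients I would filter $\mathbb W_i$ by the $\mathfrak m_A$-adic filtration. Since $\mathbb W_i$ is $A$-free, the graded pieces are $\mathfrak m_A^k\mathbb W_i/\mathfrak m_A^{k+1}\mathbb W_i \simeq (\mathbb W_i\otimes_A\mathbb C)\otimes_{\mathbb C}(\mathfrak m_A^k/\mathfrak m_A^{k+1})$, i.e. direct sums of copies of the $\mathbb C$-local system $\mathbb W_{i,0} := \mathbb W_i\otimes_A\mathbb C$, which has rank $n_i$. Since $\bigcap_k \mathfrak m_A^k = 0$, a hypothetical nonzero global section of $(R^1\pi^\circ_*\mathbb U_i)\otimes_{\mathbb C}\mathbb W_i$ has, for some $k$, a nonzero \emph{leading term} in $H^0(\mathscr M, (R^1\pi^\circ_*\mathbb U_i)\otimes_{\mathbb C}\mathbb W_{i,0})\otimes_{\mathbb C}(\mathfrak m_A^k/\mathfrak m_A^{k+1})$; hence it is enough to prove $H^0(\mathscr M, (R^1\pi^\circ_*\mathbb U_i)\otimes_{\mathbb C}\mathbb W_{i,0}) = 0$. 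But this group is $\Hom(\mathbb W_{i,0}^\vee, R^1\pi^\circ_*\mathbb U_i)$, and a nonzero element gives a nonzero sub-local system $\mathbb L \subseteq R^1\pi^\circ_*\mathbb U_i$ with $1 \leq \rk\mathbb L \leq n_i$. Applying \autoref{theorem:rank-bound-subsystem} to $\mathbb U_i$ forces $\rk\mathbb L \geq 2g - 2r_i$, whence $n_i + 2r_i \geq 2g$.

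The proof then ends with an elementary inequality: if $n,r$ are positive integers with $n + 2r \geq 2g$ and $g \geq 2$, then $nr \geq g$; indeed if $r \geq g$ this is clear, and otherwise $n \geq 2(g-r) \geq 2$, so $nr \geq 2r(g-r) \geq 2(g-1) \geq g$. This contradicts $r_i n_i < g$, so no nonzero section exists. The cases $g \leq 1$ are vacuous, since $\rk_A\mathbb V < g \leq 1$ forces $\mathbb V = 0$. I expect the only delicate bookkeeping to be the passage through the $\mathfrak m_A$-adic filtration and the projection-formula/base-change identifications; the substantive input is \autoref{theorem:rank-bound-subsystem}, which is exactly what produces the quantitative constraint $n_i + 2r_i \geq 2g$.
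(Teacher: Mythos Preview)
Your proposal is correct and follows essentially the same approach as the paper: reduce via dominant \'etale base change to the decomposition of \autoref{lemma:unitary-direct-sum}, use the projection formula and the $\mathfrak m_A$-adic filtration to reduce to $\mathbb C$-coefficients, and then apply \autoref{theorem:rank-bound-subsystem} to get $n_i+2r_i\geq 2g$, finishing with the same elementary inequality. Your write-up is slightly more explicit about why $r_in_i<g$ and about the vacuous small-$g$ cases, but the argument is the same.
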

\begin{proof}
	Given a dominant map $\mathscr M' \to \mathscr M$,
	we have an injection $$H^0(\mathscr M, R^1 \pi^\circ_* \mathbb
	V) \to H^0(\mathscr M', R^1 \pi^\circ_* \mathbb V|_{\mathscr M'}).$$
	Hence, it is enough to prove the claim after replacing $\mathscr M$ by
	some $\mathscr M'$ with a dominant map to  $\mathscr M$.
	Combining this with \autoref{lemma:unitary-direct-sum}, we may assume $\mathbb V \simeq \oplus_{i=1}^s \mathbb U_i
\otimes (\pi^\circ)^* \mathbb
	W_i$, where $\mathbb W_i$ are locally constant sheaves of free $A$-modules on $\mathscr M$ and
	$\mathbb U_i$ are unitary complex local systems on $\mathscr C^\circ$. 
	It is enough to show that 
	$$H^0(\mathscr{M}, R^1\pi^\circ_*(\mathbb{U}_i\otimes
	(\pi^\circ)^*\mathbb{W}_i))=H^0(\mathscr{M},
	(R^1\pi^\circ_*\mathbb{U}_i)
	\otimes \mathbb{W}_i )=0$$ for each $i$. 
	Letting $\mathfrak{m}_A$ be the maximal ideal of $A$, the $\mathfrak{m}_A$-adic filtration on $\mathbb{W}_i$ has associated-graded pieces isomorphic to direct sums of copies of $\mathbb{W}_i^0:=\mathbb{W}_i\otimes_A\mathbb{C}.$ 
	Hence $\mathbb W_i\otimes R^1\pi_*\mathbb{U}_i$ has a filtration with associated-graded pieces isomorphic to $\mathbb{W}_i^0\otimes R^1\pi_*\mathbb{U}_i$. 
	Thus it suffices to prove  $$H^0(\mathscr{M},
	R^1\pi^\circ_*(\mathbb{U}_i\otimes
	(\pi^\circ)^*\mathbb{W}^0_i))=H^0(\mathscr{M}, (R^1\pi^\circ_*\mathbb{U}_i) \otimes \mathbb W^0_i)=0.$$
	 Note that $\on{rk} \mathbb{U}_i\otimes (\pi^\circ)^*\mathbb{W}^0_i<g$.

	It is therefore enough to show that if $\mathbb{U}$ is a unitary complex local system on
	$\mathscr C^\circ$ and $\mathbb{W}$ is an arbitrary complex local system on $\mathscr M$
	with
\begin{align*}
	H^0(\mathscr{M}, R^1\pi^\circ_* (\mathbb{U}\otimes (\pi^\circ)^*\mathbb{W}))=
	H^0(\mathscr M, R^1 \pi^\circ_* \mathbb U \otimes \mathbb W) \neq 0,
\end{align*}
	then $\rk \mathbb U \otimes (\pi^\circ)^*\mathbb W \geq g$.
A nonzero element of $H^0(\mathscr M, R^1 \pi^\circ_* \mathbb U \otimes \mathbb W)$ yields a nonzero map of local systems $$\mathbb{W}^\vee\to R^1\pi^\circ_*\mathbb{U}.$$
Since $\mathbb U$ is unitary, we have
$\rk \mathbb{W} \geq 2g - 2\rk \mathbb U$
by \autoref{theorem:rank-bound-subsystem}. 
Hence $\rk \mathbb{W} + 2\rk \mathbb{U} \geq 2g$.
As $\rk \mathbb{W}, \rk\mathbb{U}$ are positive integers, we find 
$\rk \mathbb W \cdot \rk \mathbb U$ is minimized when $\rk \mathbb W = 1$
in which case $\rk \mathbb W \cdot \rk \mathbb U \geq \lceil (2g-1)/2 \rceil \geq g$. 
Note here we use that $g \geq 2$ in order for condition $(2)$ of the statement to be
satisfied as $\mathbb V \neq 0$.
In general, this implies
\begin{align}
	\label{equation:rank-product-bound}
\rk (\mathbb{U}\otimes (\pi^\circ)^*\mathbb{W})=\rk \mathbb W \cdot \rk \mathbb U \geq g,
\end{align}
as desired.
\end{proof}

\section{The asymptotic Putman-Wieland conjecture}
\label{section:Putman-Wieland}
\subsection{The statement of the Putman-Wieland Conjecture}
\label{subsection:geometric-setup}

We next discuss some applications of our methods to the Putman-Wieland
conjecture (\autoref{conjecture:putman-wieland}), a major open problem in geometric topology. Much of the interest in the Putman-Wieland conjecture arises from its close relationship to Ivanov's conjecture that mapping class groups $\on{Mod}_{g,n}$ do not virtually surject onto $\mathbb{Z}$ for $g\gg 0$ \cite[\S7]{Ivanov:problems}, as explained in
\cite[Theorem 1.3]{putmanW:abelian-quotients}. In particular, the Putman-Wieland conjecture for $g\gg 0$ is equivalent to Ivanov's conjecture for $g\gg 0$.
\begin{notation}
	\label{notation:prym}
Let $\Sigma_g$ be an oriented surface of genus $g \geq 0$, and let
$\Sigma_{g,b,p}$ be the complement of $b\geq 0$ disjoint open discs and $p\geq 0$ disjoint points in $\Sigma_g$, so that $\Sigma_{g,b,p}$ is an oriented genus $g$ surface with $b$ boundary components and $p$ punctures. We let $\on{PMod}_{g,b,p}$ be the subgroup of the mapping class group of $\Sigma_{g,b,p}$ fixing the punctures and boundary components pointwise.
Fix a basepoint $v_0 \in \Sigma_{g,b,p}$, which we count as an additional puncture to
obtain an action of $\on{PMod}_{g,b,p+1}$ on $\pi_1(\Sigma_{g,b,p},v_0)$.
Let $K \trianglelefteq \pi_1(\Sigma_{g,b,p}, v_0)$ be a finite index normal subgroup
corresponding to a finite Galois covering space $\Sigma_{g',b',p'} \to \Sigma_{g,b,p}$,
or equivalently a surjection
$\phi: \pi_1(\Sigma_{g,b,p}, v_0) \twoheadrightarrow H$, with $H=\pi_1(\Sigma_{g,b,p}, v_0)/K$ a finite group.
Let $\Gamma \subset \on{PMod}_{g,b,p+1}$ denote the finite index subgroup preserving
$\phi$ up to conjugacy.
Viewing
$H_1(\Sigma_{g',b',p'},\mathbb C)$ as an $H$-representation,
if $\rho$ is an irreducible $H$-representation we let 
\begin{align*}
	H_1(\Sigma_{g',b',p'},\mathbb C)^\rho := \rho \otimes \Hom_H(\rho,
H_1(\Sigma_{g',b',p'},\mathbb C))
\end{align*}
denote the $\rho$-isotypic component.
We obtain an action of $\Gamma$ on $K^{\text{ab}}\otimes \mathbb{C}=H_1(\Sigma_{g',b',p'},\mathbb C)$
and hence on the characteristic subrepresentation
$H^1(\Sigma_{g',b',p'},\mathbb C)^\rho$.
By filling in the punctures and deleted discs, we also obtain
an action of $\Gamma$ on $V_K := H_1(\Sigma_{g'}, \mathbb C)$, referred to in 
\cite[p. 80-81]{putmanW:abelian-quotients} as a {\em higher Prym
representation}.
\end{notation}

\begin{definition}
	\label{definition:pw}
	Fix a finite group $H$ and nonnegative integers $g, b,$ and $p$.
	Let $\PW_{g,b,p}^H$ be the statement that for any
	surjection $\phi: \pi_1(\Sigma_{g,b,p}, v_0) \twoheadrightarrow
	H$, taking $K := \ker \phi$ and $v \in V_K$ any nonzero vector,
	$v$ has infinite orbit under $\Gamma$.
\end{definition}

\begin{conjecture}[Putman-Wieland, ~\protect{\cite[Conjecture
	1.2]{putmanW:abelian-quotients}}]
	\label{conjecture:putman-wieland}
	$\PW_{g,b,p}^H$ holds for every group $H$ with $g \geq 2, b \geq 0, p \geq 0$.
\end{conjecture}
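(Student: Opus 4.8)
The final statement is Putman--Wieland's \autoref{conjecture:putman-wieland}, which remains open; what the methods developed above actually deliver is an asymptotic-in-$g$ form of it, so it is worth spelling out both the plan that yields that form and the precise place at which it falls short of the conjecture as worded. The plan is to prove $\PW^H_{g,b,p}$ one irreducible $H$-representation $\rho$ at a time. Since $\Gamma$ acts $H$-equivariantly on $K^{\mathrm{ab}}\otimes\mathbb{C}=H_1(\Sigma_{g'},\mathbb{C})$, it preserves the isotypic decomposition $V_K=\bigoplus_\rho V_K^\rho$, so it suffices to show that for each irreducible $\rho$ no nonzero vector of $H_1(\Sigma_{g'},\mathbb{C})^\rho$ has finite $\Gamma$-orbit; capping off boundary circles and punctures collapses the dependence on $b$ and $p$ into the hypothesis $g\geq 2$.

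One then geometrizes. Pick a finite \'etale cover $\mathscr{M}\to\mathscr{M}_{g,n}$ over which the conjugacy class of $\phi$ is constant, obtaining a punctured versal family $\pi^\circ\colon\mathscr{C}^\circ\to\mathscr{M}$ and a local system $\mathbb{V}_\rho$ on $\mathscr{C}^\circ$ with fibral monodromy $\rho\circ\phi$ --- a local system of \emph{finite}, hence unitary, monodromy, with $\rk\mathbb{V}_\rho=\dim\rho$. By a Chevalley--Weil / transfer identification, $H_1(\Sigma_{g'},\mathbb{C})^\rho$ is, as a $\Gamma$-representation, a twist of the fiber of $R^1\pi^\circ_*\mathbb{V}_\rho$. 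A vector $v$ with finite $\Gamma$-orbit spans a finite-dimensional $\Gamma$-subrepresentation on which $\Gamma$ acts through a finite quotient (two group elements inducing the same permutation of the finite orbit induce the same linear map on its span); passing to the finite \'etale cover $\mathscr{M}'\to\mathscr{M}$ killing this quotient --- still dominant \'etale over $\mathscr{M}_{g,n}$ --- the line $\mathbb{C}v$ becomes a nonzero global section of $R^1{\pi'}^\circ_*\mathbb{V}_\rho$, i.e. a trivial, in particular rank-one, sub-local system.

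Now invoke the cohomological rank bound \autoref{theorem:rank-bound-subsystem}: every nonzero sub-local system of $R^1{\pi'}^\circ_*\mathbb{V}_\rho$ has rank at least $2g-2\rk\mathbb{V}_\rho=2g-2\dim\rho$. Comparing with rank one forces $2g-2\dim\rho\leq 1$, i.e. $\dim\rho\geq g$. Contrapositively, as soon as \emph{every} irreducible representation of $H$ has dimension $<g$, no finite-orbit vector can exist and $\PW^H_{g,b,p}$ holds; since $\sum_\rho(\dim\rho)^2=\#H$, the hypothesis $\#H<g^2$ guarantees this, so the conjecture holds asymptotically in $g$ (this is the content of \autoref{theorem:rho-isotypic-PW-intro} and \autoref{theorem:intro-PW}).

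The main obstacle to the conjecture exactly as stated --- for \emph{every} $H$ with $g\geq 2$ --- is the collapse of this last inequality: if $H$ admits an irreducible representation $\rho$ with $\dim\rho\geq g$, then $2g-2\dim\rho\leq 0$ and \autoref{theorem:rank-bound-subsystem} is vacuous, so a priori $R^1{\pi'}^\circ_*\mathbb{V}_\rho$ could carry a trivial sub-local system and $V_K^\rho$ a finite-orbit vector. Closing this gap appears to require an input of a genuinely different character: a lower bound on the ranks of sub-local systems of $R^1\pi^\circ_*\mathbb{V}$ that does not deteriorate as $\rk\mathbb{V}$ grows --- plausibly by exploiting that $\mathbb{V}_\rho$ has \emph{finite} rather than merely unitary monodromy, via an arithmetic or motivic rigidity argument --- or a direct bound on the image of the higher Prym representation in the spirit of \cite{grunewald2015arithmetic}, or a Hodge-theoretic argument specific to $H^1$ of the fixed curve $\Sigma_{g'}$ that does not factor through the generic rank-bound machinery. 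Pending such an advance the conjecture remains open in the regime $\max_\rho\dim\rho\geq g$, and the argument above establishes exactly the complementary range $\max_\rho\dim\rho<g$, in particular $\#H<g^2$.
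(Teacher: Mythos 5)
You have correctly recognized that \autoref{conjecture:putman-wieland} is not proved in the paper: it is an open conjecture, and what the paper establishes is exactly the asymptotic version you describe (the isotypic rank bound of \autoref{theorem:asymptotic-putman-wieland} via a versal family of $\phi$-covers, \autoref{lemma:unitary-direct-sum}, and \autoref{theorem:rank-bound-subsystem}), by essentially the same route you outline. One small correction to your closing framing: the paper records (citing Markovi\'c) that the conjecture as literally stated admits a counterexample when $g=2$, so in that case it is not merely open but false; your diagnosis of where the rank-bound machinery becomes vacuous ($\dim\rho\geq g$) is otherwise exactly the gap the paper leaves.
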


\begin{remark}
\label{remark:putman-wieland-differences}
We note that 
\cite[Conjecture 1.2]{putmanW:abelian-quotients}
is stated without a group $H$ and with $\mathbb Q$ coefficients instead of
$\mathbb C$ coefficients.
We use this equivalent statement to more easily state our results,
which imply that 
$\PW_{g,b,p}^H$ holds whenever $\# H$ is small compared to the genus $g$. 
\end{remark}
\begin{remark}
	\label{remark:}
	There is a counterexample to the Putman-Wieland conjecture when $g = 2$ \cite[Theorem 1.3]{markovic}.
\end{remark}

\subsection{Our results}

We are able to verify $\PW_{g,b,p}^H$ in many new cases.
Moreover, we prove the following more general statement, ruling out
not only invariant vectors in $H^1(\Sigma_{g'}, \mathbb C)$, but even ruling out low-dimensional invariant subspaces
of isotypic components.
The proof is given below in \autoref{subsection:proof-putman-wieland}.
\begin{theorem}
	\label{theorem:asymptotic-putman-wieland}
	With notation as in \autoref{notation:prym}, let $\rho$ be an irreducible
	complex $H$-representation and $\Gamma' \subset \Gamma$ be a finite-index subgroup. Then
	$H^1(\Sigma_{g',b',p'},\mathbb C)^\rho$
	has no nonzero $\Gamma'$-invariant subrepresentations 
	of dimension strictly less than $2g - 2\dim \rho$.
	The same holds for
	$H_1(\Sigma_{g'},\mathbb C)^\rho$ in place of 
	$H^1(\Sigma_{g',b',p'},\mathbb C)^\rho$.
\end{theorem}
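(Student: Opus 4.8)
The plan is to deduce the statement from the cohomological rank bound \autoref{theorem:rank-bound-subsystem}, by realizing the multiplicity space $\Hom_H(\rho,H^1(\Sigma_{g',b',p'},\mathbb{C}))$ of the $\rho$-isotypic part as the fiber of $R^1\pi^\circ_*\mathbb{V}$ for a suitable unitary local system $\mathbb{V}$ of rank $\dim\rho$ on a punctured versal family. Before doing so I would make some harmless reductions. First, the conclusion is vacuous unless $2g-2\dim\rho\geq 2$, so I may assume $g\geq 2$; in particular $\Sigma_{g,b,p}$ is hyperbolic. Second, capping each boundary circle of $\Sigma_{g,b,p}$ with a once-punctured disc is an isomorphism on $\pi_1$ and on $H_1$ — both for $\Sigma_{g,b,p}$ and for its $H$-cover — carries $\phi$ and $K=\ker\phi$ along, and identifies $\Gamma\subset\on{PMod}_{g,b,p+1}$ with a subgroup of $\on{PMod}_{g,0,b+p+1}$ modulo the (central) subgroup generated by the boundary Dehn twists, which act on $K^{\mathrm{ab}}$ through the deck group $H$ and hence trivially on every isotypic multiplicity space; so I may assume $b=0$ and write $n:=b+p$. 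Finally, since we are proving a non-existence statement, we may replace $\Gamma'$ by any finite-index subgroup at will.

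Next recall the standard identification $H^1(\Sigma_{g',b',p'},\mathbb{C})\cong H^1(\Sigma_{g,n},\mathbb{C}[H])$, the cohomology of the local system of the regular representation of $H$ pulled back along $\phi$, equipped with its residual $H$-action; extracting the $\rho$-isotypic part gives $\Hom_H(\rho,H^1(\Sigma_{g',b',p'},\mathbb{C}))\cong H^1(\Sigma_{g,n},\mathbb{L})$, where $\mathbb{L}$ is the local system of $\rho^\vee\circ\phi$, an irreducible representation of rank $\dim\rho$ with finite image. Being of finite image it is MCG-finite, and its conjugacy class is stabilized by the image $\Gamma_*\subset\on{PMod}_{g,n}$ of $\Gamma$ under the Birman sequence, hence by the image $\Gamma'_*$ of $\Gamma'$. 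Applying \autoref{lemma:geometric-mcg-rep-construction} to $\rho^\vee\circ\phi$, then \autoref{proposition:gl-lift} to promote the resulting $\on{PGL}_r$-representation to a $\on{GL}_r$-representation with finite determinant, and then \autoref{lemma:finite-rho-implies-finite-lift}(1) to see that this $\on{GL}_r$-representation has finite (hence unitary) image, I obtain a finite étale scheme cover $\pi^\circ:\mathscr{C}^\circ\to\mathscr{M}$ of a punctured versal family, with $\pi_1(\mathscr{M})$ a finite-index subgroup of $\Gamma'_*$ and a distinguished fiber $C^\circ\cong\Sigma_{g,n}$, together with a unitary local system $\mathbb{V}$ on $\mathscr{C}^\circ$ such that $\mathbb{V}|_{C^\circ}\cong\mathbb{L}$ and $\rk\mathbb{V}=\dim\rho$. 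Then $R^1\pi^\circ_*\mathbb{V}$ has fiber $H^1(C^\circ,\mathbb{L})\cong\Hom_H(\rho,H^1(\Sigma_{g',b',p'},\mathbb{C}))$, and its monodromy action of $\pi_1(\mathscr{M})$ is carried, under this isomorphism, to the restriction of the $\Gamma$-action on the multiplicity space.

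Granting this, the theorem follows. A nonzero $\Gamma'$-invariant subrepresentation $W\subseteq H^1(\Sigma_{g',b',p'},\mathbb{C})^\rho$ is an $H$-subrepresentation of a $\rho$-isotypic module, hence of the form $W=\rho\otimes M'$ with $M'\subseteq\Hom_H(\rho,H^1(\Sigma_{g',b',p'},\mathbb{C}))$ a nonzero subspace invariant under $\Gamma'$, so invariant under $\pi_1(\mathscr{M})$; thus $M'$ extends to a nonzero sub-local system of $R^1\pi^\circ_*\mathbb{V}$ of rank $\dim M'=\dim W/\dim\rho<(2g-2\dim\rho)/\dim\rho\leq 2g-2\dim\rho=2g-2\rk\mathbb{V}$, contradicting \autoref{theorem:rank-bound-subsystem}. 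For the statement with $H_1(\Sigma_{g'},\mathbb{C})^\rho$ in place of $H^1(\Sigma_{g',b',p'},\mathbb{C})^\rho$ the argument is the same, now carried out over the proper versal family $\mathscr{C}\to\mathscr{M}$: by \autoref{theorem:unitary-MHS} the interior cohomology $R^1\pi_*(j_*\mathbb{V})$ is a sub-local system of $R^1\pi^\circ_*\mathbb{V}$, and Poincaré duality for curves together with the isotypic decomposition identifies its fiber with $\Hom_H(\rho,H_1(\Sigma_{g'},\mathbb{C}))$ (after possibly replacing $\rho$ by $\rho^\vee$, which is harmless since $\dim\rho=\dim\rho^\vee$), so the same rank contradiction applies.

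I expect the main difficulty to lie in the bookkeeping asserted at the end of the second paragraph: carefully matching the honest $\Gamma'$-action on the isotypic multiplicity space — which lives naturally at the level of $\on{PMod}_{g,n+1}$ via point-pushing — with the monodromy of $R^1\pi^\circ_*\mathbb{V}$, which lives at the level of $\pi_1(\mathscr{M}_{g,n})=\on{PMod}_{g,n}$. The key points are that the point-pushing subgroup $\pi_1(\Sigma_{g,n})\subset\Gamma$ acts on $K^{\mathrm{ab}}$ through $H$ and so trivially on each multiplicity space (so that the $\Gamma$-action genuinely descends to $\Gamma_*$), that the Schur-type scalar ambiguity in the $\on{PGL}_r$-lift is invisible to subspaces, and that the finite covers introduced — both to make $\mathscr{M}$ a scheme and inside \autoref{proposition:gl-lift} — can be taken compatibly with $\Gamma'$. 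The geometric content, \autoref{theorem:rank-bound-subsystem}, enters only as a black box.
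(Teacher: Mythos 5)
Your strategy is structurally the same as the paper's: realize the multiplicity space $\Hom_H(\rho, H^1(\Sigma_{g',b',p'},\mathbb{C}))$ as the fiber of $R^1\pi^\circ_*\mathbb{V}$ for a unitary local system $\mathbb{V}$ of rank $\dim\rho$ on a punctured versal family, and invoke \autoref{theorem:rank-bound-subsystem}. Where the paper obtains $\mathbb{V}$ (together with a companion local system $\mathbb{W}_\rho$ on the base tracking the multiplicity) from $f_*\mathbb{C}$ on a versal family of $\phi$-covers via \autoref{lemma:unitary-direct-sum}, you build $\mathbb{V}$ directly from the finite-image representation $\rho^\vee\circ\phi$ via \autoref{lemma:geometric-mcg-rep-construction}, \autoref{proposition:gl-lift}, and \autoref{lemma:finite-rho-implies-finite-lift}. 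That construction, the reduction to $b=0$, and the $H_1$ deduction via Poincar\'e duality are all reasonable in outline.

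The gap is the assertion that ``a nonzero $\Gamma'$-invariant subrepresentation $W\subseteq H^1(\Sigma_{g',b',p'},\mathbb{C})^\rho$ is an $H$-subrepresentation of a $\rho$-isotypic module, hence of the form $W=\rho\otimes M'$.'' In the theorem ``$\Gamma'$-invariant subrepresentation'' means a $\Gamma'$-invariant subspace, with no $H$-invariance imposed; this reading is forced by \autoref{corollary:rep-theory-PW}, where the theorem is used to rule out $1$-dimensional $\Gamma'$-invariant subspaces of a $\rho$-isotypic piece with $\dim\rho$ arbitrary. A $\Gamma'$-invariant subspace of $\rho\otimes M$ need not have the form $\rho\otimes M'$ (any nonzero $W$ with $\dim W<\dim\rho$ cannot), so the equality $\dim M'=\dim W/\dim\rho$ is unavailable and the contradiction does not follow as written. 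The fact that your final inequality is a factor of $\dim\rho$ stronger than the theorem's bound is a symptom of the over-optimistic reduction.

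The repair requires an additional idea you omit. After shrinking $\Gamma'$ so that each $\gamma\in\Gamma'$ fixes $\phi$ exactly rather than merely up to conjugacy, the $\Gamma'$-action on the $\rho$-isotypic piece commutes with the residual $H$-action, and Schur's lemma (via $\on{End}_H(\rho\otimes M)=\on{id}_\rho\otimes\on{End}(M)$) shows the $\Gamma'$-action takes the form $\on{id}_\rho\otimes(\text{action on } M)$. One then projects $W$ onto the $\dim\rho$ copies of $M$ corresponding to a basis of $\rho$: some projection is a nonzero $\Gamma'$-invariant subspace of $M$, hence has dimension at least $2g-2\dim\rho$ by the rank bound, and $\dim W$ dominates the dimension of any of its projections. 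This is exactly what the paper obtains by first passing to a cover trivializing $\mathbb{W}_\rho$ and then noting that a nonzero sub-local system of $(R^1\pi^\circ_*\mathbb{U}_\rho)^{\oplus\dim\rho}$ must surject onto a nonzero sub-local system of one of the summands.
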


\begin{remark}
In \autoref{theorem:asymptotic-putman-wieland}, it is important that we restrict our attention to subrepresentations, as opposed to arbitrary subquotients, as the $\Gamma$-representation 	$H^1(\Sigma_{g',b',p'},\mathbb C)$ is not in general semisimple for $b'+p'>0$.
\end{remark}

A perhaps more concrete corollary is the following.
\begin{corollary}
	\label{corollary:rep-theory-PW}
	$\PW_{g,b,p}^H$ holds for every group $H$ 
	such that every irreducible representation $\rho$ of $H$ has dimension
	$\dim \rho < g$.
\end{corollary}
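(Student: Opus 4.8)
The plan is to derive $\PW_{g,b,p}^H$ directly from Theorem \ref{theorem:asymptotic-putman-wieland}. Recall that $\PW_{g,b,p}^H$ asserts that for every surjection $\phi: \pi_1(\Sigma_{g,b,p},v_0)\twoheadrightarrow H$ with kernel $K$, no nonzero vector $v\in V_K = H_1(\Sigma_{g'},\mathbb C)$ has finite orbit under $\Gamma$. So first I would observe that a nonzero vector $v$ with finite $\Gamma$-orbit generates a nonzero finite-dimensional $\Gamma$-subrepresentation $U\subset H_1(\Sigma_{g'},\mathbb C)$ on which a finite-index subgroup $\Gamma'\subset\Gamma$ (the stabilizer of $v$, intersected appropriately) acts trivially; in particular $U$ contains a nonzero $\Gamma'$-invariant subspace. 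Hence it suffices to rule out nonzero finite-dimensional $\Gamma'$-invariant subrepresentations of $H_1(\Sigma_{g'},\mathbb C)$ for every finite-index $\Gamma'\subset\Gamma$.

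Next I would decompose $H_1(\Sigma_{g'},\mathbb C)$ as an $H$-representation into isotypic components $\bigoplus_\rho H_1(\Sigma_{g'},\mathbb C)^\rho$, the sum running over the irreducible representations $\rho$ of $H$. Since the $\Gamma$-action commutes with the $H$-action (as $\Gamma$ acts through automorphisms of $\pi_1$ preserving $K$ and the conjugacy class of $\phi$, hence acts $H$-equivariantly on $K^{\mathrm{ab}}\otimes\mathbb C$), any $\Gamma'$-invariant subrepresentation $W$ decomposes compatibly as $W = \bigoplus_\rho (W\cap H_1(\Sigma_{g'},\mathbb C)^\rho)$. So $W\neq 0$ forces $W\cap H_1(\Sigma_{g'},\mathbb C)^\rho\neq 0$ for some $\rho$, giving a nonzero $\Gamma'$-invariant subrepresentation of $H_1(\Sigma_{g'},\mathbb C)^\rho$ of some finite dimension $d\geq 1$.

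Finally I would invoke Theorem \ref{theorem:asymptotic-putman-wieland}, which says $H_1(\Sigma_{g'},\mathbb C)^\rho$ has no nonzero $\Gamma'$-invariant subrepresentation of dimension strictly less than $2g - 2\dim\rho$. By hypothesis $\dim\rho < g$, i.e. $\dim\rho \leq g-1$, so $2g - 2\dim\rho \geq 2$. Thus any nonzero $\Gamma'$-invariant subrepresentation of $H_1(\Sigma_{g'},\mathbb C)^\rho$ has dimension at least $2$; in particular, there is no $\Gamma'$-fixed vector and no one-dimensional $\Gamma'$-invariant subspace — contradicting the existence of $W$ above (whose $\rho$-component, being generated by orbits of vectors under a group acting trivially through $\Gamma'$, certainly contains a one-dimensional invariant subspace). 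This contradiction proves $\PW_{g,b,p}^H$. The only real content is Theorem \ref{theorem:asymptotic-putman-wieland} itself; the corollary is a formal unwinding, and the main (minor) point to be careful about is the reduction from "finite $\Gamma$-orbit" to "nonzero $\Gamma'$-invariant subspace for a finite-index $\Gamma'$," together with the $H$-equivariance that makes the isotypic decomposition $\Gamma$-stable.
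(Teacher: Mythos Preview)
Your proof is correct and follows essentially the same route as the paper: reduce a finite $\Gamma$-orbit to a $\Gamma'$-fixed vector for the finite-index stabilizer $\Gamma'$, project to a nonzero isotypic component, and apply Theorem~\ref{theorem:asymptotic-putman-wieland} using $2g-2\dim\rho\geq 2$. One small imprecision: the $\Gamma$-action does not literally commute with the $H$-action (since $\Gamma$ only preserves $\phi$ up to conjugacy, it intertwines the $H$-action with its twist by an inner automorphism of $H$), but this is enough to ensure the isotypic components are $\Gamma$-stable, which is all you use.
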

\begin{proof}
	Fix an irreducible representation $\rho$ of $H$. By assumption, $\dim \rho < g$. Then
	$2g - 2\dim \rho > 1$, and so by
	\autoref{theorem:asymptotic-putman-wieland},
	$H_1(\Sigma_{g'},\mathbb C)^\rho$ has no nonzero $\Gamma'$-invariant $1$-dimensional subrepresentations for any $\rho$ and any finite index $\Gamma'\subset\Gamma$.
	In particular, $H_1(\Sigma_{g'},\mathbb C)$ has no nonzero invariant
	vectors under any finite index $\Gamma'$. But if there was a nonzero vector $v\in V_K$ with finite orbit, its stabilizer would yield a finite index $\Gamma'$ with a fixed vector.
\end{proof}
Even more concretely, we can give the following bound on $\#H$ independent of its
representation theory.
\begin{corollary}
	\label{corollary:asymptotic-putman-wieland}
	For fixed $g, b, p \geq 0$,
	$\PW_{g,b,p}^H$ holds for every group $H$ with $\# H < g^2$.
\end{corollary}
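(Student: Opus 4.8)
The strategy is to reduce immediately to \autoref{corollary:rep-theory-PW}. That corollary asserts that $\PW_{g,b,p}^H$ holds as soon as every irreducible complex representation $\rho$ of $H$ satisfies $\dim\rho < g$, so it suffices to check that the hypothesis $\#H < g^2$ forces this dimension bound.

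\textbf{Key step.} For any finite group $H$, the sum of the squares of the dimensions of the irreducible complex representations of $H$ equals $\#H$; in particular, for a single irreducible representation $\rho$ we have $(\dim\rho)^2 \leq \#H < g^2$, hence $\dim\rho < g$. Applying \autoref{corollary:rep-theory-PW} then gives the conclusion. The only mild point to keep in mind is that this argument is uniform in $b$ and $p$, since \autoref{corollary:rep-theory-PW} already is; there is no additional obstacle, as all the work has been pushed into \autoref{theorem:asymptotic-putman-wieland} and its corollary.

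\begin{proof}
By \autoref{corollary:rep-theory-PW}, it suffices to show that every irreducible complex representation $\rho$ of $H$ has dimension $\dim\rho < g$. Since the dimensions of the irreducible complex representations of a finite group satisfy $\sum_i (\dim\rho_i)^2 = \#H$, any single such $\rho$ satisfies $(\dim\rho)^2 \leq \#H < g^2$, and hence $\dim\rho < g$. The conclusion follows.
\end{proof}
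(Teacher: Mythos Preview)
Your proof is correct and essentially identical to the paper's own argument: both observe that $\sum_i(\dim\rho_i)^2=\#H<g^2$ forces $\dim\rho<g$ for every irreducible $\rho$, and then invoke \autoref{corollary:rep-theory-PW}.
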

\begin{proof}
	If $H$ is a finite group of order $\# H < g^2$, then because $\#
	H$ is the sum of the squares of the dimensions of the irreducible representations of
	$H$ by \cite[(2.19)]{fultonH:representation-theory},
	no irreducible representations of $H$ have
	dimension $\geq g$. The result follows
	from \autoref{corollary:rep-theory-PW}.
\end{proof}

In order to prove \autoref{theorem:asymptotic-putman-wieland},
we first note that it suffices to consider the case of surfaces without boundary.
\begin{lemma}
	\label{lemma:ignore-boundary}
	With notation as in \autoref{notation:prym},
	for any $g \geq 0, b, p \geq 0$, 
	and $\rho$ an irreducible $H$-representation, let $\Gamma'\subset
	\Gamma$ be a finite index subgroup. Then there exists a finite index
	$\Gamma''\subset \on{PMod}_{g,0,b+p+1}$ and 
 an isomorphism	
$H_1(\Sigma_{g',b',p'}, \mathbb C)^\rho \to H_1(\Sigma_{g,0,b'+p'}, \mathbb
C)^\rho$
inducing a bijection from $\Gamma'$-invariant subspaces to
$\Gamma''$-invariant subspaces.
\end{lemma}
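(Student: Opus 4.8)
The plan is to reduce $\Sigma_{g,b,p}$ to the surface $\Sigma_{g,0,b+p}$ obtained by filling in each of the $b$ boundary circles with a once-punctured disc. Topologically, capping a boundary circle of $\Sigma_{g,b,p}$ with a once-punctured disc produces a homeomorphism $\Sigma_{g,b,p}\cong \Sigma_{g,b-1,p+1}$, and iterating $b$ times gives a homeomorphism $\Sigma_{g,b,p}\cong \Sigma_{g,0,b+p}$ carrying $\pi_1$ isomorphically; the basepoint $v_0$ can be tracked along the way. Since the surjection $\phi$, its kernel $K$, and the cover $\Sigma_{g',b',p'}$ are defined purely in terms of $\pi_1(\Sigma_{g,b,p},v_0)$, under this homeomorphism $\phi$ becomes a surjection $\pi_1(\Sigma_{g,0,b+p},v_0')\twoheadrightarrow H$ with a homeomorphic cover $\Sigma_{g',b',p'}$ (note the genus $g'$ of the cover and the $H$-action are unchanged, only the presentation of the base changes), so the $H$-module $H_1(\Sigma_{g'},\mathbb C)$ — obtained by filling in \emph{all} punctures and former boundary components of the cover — and its $\rho$-isotypic piece are literally the same. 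This gives the desired isomorphism $H_1(\Sigma_{g',b',p'},\mathbb C)^\rho\xrightarrow{\sim}H_1(\Sigma_{g,0,b'+p'},\mathbb C)^\rho$ (indeed an equality after identifying the groups), compatibly with the $V_K=H_1(\Sigma_{g'},\mathbb C)$ picture.

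The second step is to compare the mapping class group actions. Capping a boundary component with a punctured disc induces a homomorphism on mapping class groups; on pure mapping class groups (fixing punctures and boundary pointwise) the relevant statement is that $\on{PMod}_{g,b,p+1}$ maps to $\on{PMod}_{g,0,b+p+1}$ with image and kernel controlled by the boundary Dehn twists and point-pushing/disc-pushing subgroups (this is standard; see \cite[\S3 and \S4]{farbM:a-primer}). Crucially, the boundary Dehn twists act trivially on the homology $H_1(\Sigma_{g'})$ of the \emph{closed} cover — a Dehn twist about a curve bounding a disc-with-puncture downstairs lifts to twists about curves that bound (possibly punctured) subsurfaces upstairs, which become null-homotopic once the cover is filled in — so the action of $\Gamma\subset\on{PMod}_{g,b,p+1}$ on $H_1(\Sigma_{g'},\mathbb C)^\rho$ factors through its image $\overline{\Gamma}$ in $\on{PMod}_{g,0,b+p+1}$. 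Setting $\Gamma'' :=$ the image of $\Gamma'$ (which has finite index in $\on{PMod}_{g,0,b+p+1}$, since $\Gamma'$ has finite index in $\on{PMod}_{g,b,p+1}$ and capping is surjective onto $\on{PMod}_{g,0,b+p+1}$ up to finite index), the actions of $\Gamma'$ and $\Gamma''$ on the identified homology groups agree, hence $\Gamma'$-invariant subspaces correspond bijectively to $\Gamma''$-invariant subspaces.

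The main obstacle is bookkeeping rather than conceptual: one must be careful that filling in boundary circles versus punctures of the \emph{cover} is handled consistently with the statement's conventions (the statement fills all punctures and boundary components of $\Sigma_{g'}$ regardless), and that "capping a boundary component" is performed with a \emph{once-punctured} disc so that the combinatorial type $(g,b,p)\rightsquigarrow(g,b-1,p+1)$ is preserved and $\pi_1$ is not altered — capping with an honest disc would kill a generator of $\pi_1$ and is not what we want. Modulo this care, the lemma follows from the standard structure theory of mapping class groups under capping, together with the observation that boundary twists act trivially on the homology of the filled-in cover.
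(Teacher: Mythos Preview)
Your approach is the same as the paper's: identify $\Sigma_{g,b,p}$ with $\Sigma_{g,0,b+p}$ (the paper phrases this as the inclusion being a deformation retract; your ``cap each boundary circle with a once-punctured disc'' is the same identification), use the surjection $\on{PMod}_{g,b,p+1}\twoheadrightarrow \on{PMod}_{g,0,b+p+1}$ whose kernel is the $\mathbb Z^b$ generated by boundary Dehn twists, and set $\Gamma''$ equal to the image of $\Gamma'$.

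There is, however, a genuine gap in your second step. You argue that the boundary Dehn twists act trivially on $H_1(\Sigma_{g'},\mathbb C)$ of the \emph{closed} cover, on the grounds that the lifted curves become null-homotopic once everything is filled in. But the lemma is about $H_1(\Sigma_{g',b',p'},\mathbb C)$ of the \emph{open} cover, which surjects onto but is strictly larger than $H_1(\Sigma_{g'},\mathbb C)$ (it contains the peripheral classes). Triviality of the action on the quotient $H_1(\Sigma_{g'})$ does not by itself give triviality on $H_1(\Sigma_{g',b',p'})$, so as written your argument does not establish the bijection of invariant subspaces for the groups appearing in the statement.

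The fix is immediate and uses the same idea. A boundary Dehn twist on $\Sigma_{g,b,p}$ lifts to a product of Dehn twists about boundary components of $\Sigma_{g',b',p'}$, and any Dehn twist about a boundary circle acts trivially on $H_1$ of the bounded surface: under the homotopy equivalence $\Sigma_{g',b',p'}\simeq \Sigma_{g',0,b'+p'}$ it becomes the trivial mapping class (equivalently, boundary curves lie in the radical of the intersection form, so the transvection formula gives the identity). This is exactly what the paper invokes.
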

\begin{proof}
%
Define the natural inclusion $\iota: \Sigma_{g',b',p'} \hookrightarrow
\Sigma_{g,0,b'+p'}$,
shrinking boundary components to punctures.
There is a deformation retraction from $\Sigma_{g,0,b'+p'}$ onto the subspace
$\iota(\Sigma_{g',b',p'})$
and so $\iota$ induces an isomorphism
$H_1(\Sigma_{g',b',p'}, \mathbb C) \to H_1(\Sigma_{g,0,b'+p'}, \mathbb C).$
The kernel of the induced surjection $\on{PMod}_{g, b,p} \to \on{PMod}_{g,0,b+p}$
is generated by $\mathbb Z^b$ acting on $S_{g,b,p}$ by rotating the
boundary components,
\cite[Theorem 3.18]{farbM:a-primer}
and hence this $\mathbb Z^b$ acts trivially on 
$H_1(\Sigma_{g',b',p'}, \mathbb C)$. 
Let $\Gamma''$ be the image of $\Gamma'$ under this natural map $\on{PMod}_{g,
b,p+1} \to \on{PMod}_{g,0,b+p+1}$.
The action of $\Gamma'$ on
$H_1(\Sigma_{g',0,b'+p'}, \mathbb{C})$ factors through the natural map $\Gamma'\to \Gamma''$, inducing the claimed bijection.
\end{proof}
\subsection{}
	\label{subsection:proof-putman-wieland}

We now prove 
\autoref{theorem:asymptotic-putman-wieland}, which
is more or less an immediate consequence of \autoref{theorem:rank-bound-subsystem}
applied to the irreducible representations of $H$,
once one sets up the appropriate local systems.
For the proof, we will need the existence of a versal family of $\phi$-covers,
for $\phi: \pi_1(\Sigma_{g, 0, n}, v_0)\twoheadrightarrow H$,
which we now define.
\begin{definition}
	\label{definition:}
	As in \autoref{notation:prym}, specify a surjection
	$\phi: \pi_1(\Sigma_{g, 0, n}, v_0)\twoheadrightarrow H$.
	A {\em versal family of $\phi$-covers} is the data of 
	\begin{enumerate}
\item a dominant \'etale morphism $\mathscr{M}\to \mathscr{M}_{g,n}$, with $\pi^\circ: \mathscr{C}^\circ\to \mathscr{M}$ the associated family of punctured curves,
\item a point $c\in \mathscr{C}^\circ$, $m=\pi^\circ(c)$, $C^\circ = \mathscr
C^\circ_m$, and an identification $i: \pi_1(\Sigma_{g, 0, n}, v_0)\simeq
\pi_1(C^\circ, c)$, and
\item a finite \'etale Galois $H$-cover $f: \mathscr{X}^\circ\to
	\mathscr{C}^\circ$ inducing a map
	$$\pi_1(C^\circ, c) \to \pi_1(\mathscr{C}^\circ,
	c)/\pi_1(\mathscr{X}^\circ,x) \simeq H$$
	agreeing with the surjection $\phi$ under the
	identification of $(2)$.
\end{enumerate}
\end{definition}

\begin{proof}[Proof of \autoref{theorem:asymptotic-putman-wieland}]
	Using \autoref{lemma:ignore-boundary}, 
	it is enough to prove
	\autoref{theorem:asymptotic-putman-wieland}
	when $b =0$. 

	By \cite[Theorem 4]{wewers:thesis}, there exists a versal family of
	$\phi$-covers $\mathscr X^\circ \xrightarrow{f} \mathscr C^\circ
	\xrightarrow{\pi^\circ} \mathscr M$.
	Technically, 
	\cite[Theorem 4]{wewers:thesis} constructs $\mathscr M$ as a Deligne-Mumford stack,
	where points have isotropy groups isomorphic to the center of $H$,	
	but we may replace this stack by
	a scheme which has a dominant \'etale map to this stack. 
	Further, after replacing $\mathscr M$ with another scheme that has a dominant \'etale map
	to $\mathscr M$, corresponding generically to an \'etale multisection of $\pi^\circ$, we may assume $\pi^\circ$ admits a section, and hence that the map $\mathscr{M}\to \mathscr{M}_{g,n}$ lifts to a map $\mathscr{M}\to \mathscr{M}_{g,n+1}$.

	As $f$ is a Galois finite \'etale $H$-cover, for each irreducible
	representation $\rho$ of $H$, we obtain a local system $\mathbb{U}_\rho$
	on $\mathscr{C}^\circ$, with monodromy representation given by $\rho$,
	and with $\mathbb{U}_\rho|_{\mathscr{X}^\circ}$ trivial. After replacing $\mathscr{M}$ with a dominant \'etale cover, we may write
	$$f_*\mathbb{C}=\bigoplus_\rho \mathbb{U}_\rho\otimes
	(\pi^\circ)^*\mathbb{W}_\rho,$$ where
	$\mathbb{W}_\rho:=\pi^\circ_*\on{Hom}(\mathbb{U}_\rho, f_*\mathbb{C})$,
	by \autoref{lemma:unitary-direct-sum} (where we take $A=\mathbb{C}$). 
	As
	$\mathbb{U}_\rho$ has finite monodromy by
	\autoref{lemma:finite-rho-implies-finite-lift}(1), and $f_*\mathbb{C}$
	has finite monodromy by definition, the same is true for
	$\mathbb{W}_\rho$. Thus after replacing $\mathscr{M}$ with a finite \'etale cover we may assume each
	$\mathbb{W}_\rho$ is a trivial local system.
	
	We next verify the first part of
	\autoref{theorem:asymptotic-putman-wieland}, about $H^1(\Sigma_{g', 0,
	p'}, \mathbb{Q})^\rho$.
	Observe that for $m\in \mathscr{M}$, the action of $\pi_1(\mathscr{M},
	m)$ on $$R^1\pi^\circ_*( \mathbb{U}_\rho\otimes
	(\pi^\circ)^*\mathbb{W}_\rho)_m=(R^1\pi^\circ_* \mathbb{U}_\rho\otimes
	\mathbb{W}_\rho)_m$$ is identified with its action on
	$H^1(\mathscr{X}^\circ_m, \mathbb{C})^\rho=H^1(\Sigma_{g', 0, p'},
	\mathbb{C})^\rho$. 
	Note also that the action of
	$\pi_1(\mathscr{M})$ factors through $\Gamma$, 
	under our given map $\pi_1(\mathscr{M})\to \pi_1(\mathscr{M}_{g,n+1})=\on{PMod}_{g,n+1}$.
	To prove the first part of
	\autoref{theorem:asymptotic-putman-wieland}, about $H^1(\Sigma_{g', 0,
	p'}, \mathbb{Q})^\rho$, it thus suffices to show that, after replacing
	$\mathscr{M}$ with an arbitrary finite \'etale cover, $R^1(\pi^\circ_*
	\mathbb{U}_\rho)\otimes \mathbb{W}_\rho$ contains no non-trivial
	sub-local systems of rank less than $2g-2\dim\rho$. 
	As $\mathbb{W}_\rho$
	is a trivial local system, this holds by applying
	\autoref{theorem:rank-bound-subsystem} to $\mathbb{U}_\rho$.

We have  thus far proven a statement for the cohomology of $\Sigma_{g',n'}$, and to
conclude we deduce a corresponding statement for the homology of $\Sigma_{g'}$.
Using the inclusion 
$H^1(\Sigma_{g'}, \mathbb C)^\rho \subset H^1(\Sigma_{g', n'}, \mathbb{C})^\rho$
we deduce that the former can have no nonzero $\Gamma'$-invariant subrepresentations 
(for $\Gamma' \subset \Gamma$ finite index)
of dimension less than $2g - 2\dim \rho$, as the latter has no such 
subrepresentations.
Using Poincar\'e duality and the intersection pairing on $H_1(\Sigma_{g'}, \mathbb C)$ 
we obtain a $\on{Mod}_{g,n}$ equivariant isomorphism
$H^1(\Sigma_{g'},\mathbb C) \simeq H_1(\Sigma_{g'},\mathbb C)$.
Hence, $H_1(\Sigma_{g'}, \mathbb C)^\rho$ also has no subrepresentations of 
dimension less than $2g - 2\dim\rho$.
\end{proof}

\section{Proof of the main theorem on MCG-finite representations}
\label{section:main-proof}

We have finally developed the tools to verify our main result, \autoref{theorem:finite-image}: that  MCG-finite representations of rank $r<\sqrt{g+1}$ have finite image.
The proof is fairly involved, so the reader may find it useful to refer to the sketch in \autoref{subsection:outline-of-proof}. Briefly, we 
start by reviewing the notion of cohomological rigidity in
\autoref{subsection:background-rigidity}
and prove the necessary rigidity results in
\autoref{subsection:rigidity}, using our main vanishing result,
\autoref{theorem:unitary-rigid}.
We next deduce the integrality of MCG-finite representations of low rank in
\autoref{subsection:integrality}
using a result of 
Klevdal-Patrikis \cite{klevdalP:g-rigid-local-systems-are-integral}, which builds on work of Esnault-Groechenig \cite{esnault2018cohomologically} and ultimately relies on input from the Langlands program and known cases of the companion conjectures.
We then combine the above ingredients with input from non-abelian Hodge theory to deduce that low rank unitary MCG-finite representations
have finite image in \autoref{subsection:finiteness}.
We generalize this to the semisimple but non-unitary case in
\autoref{subsection:semisimple-case}, again using non-abelian Hodge theory.
Finally, in \autoref{subsection:non-semisimple} we bootstrap these results to the general, non-semisimple case,
using \autoref{theorem:asymptotic-putman-wieland} on the Putman-Wieland conjecture.

\subsection{Background on Cohomological rigidity}
\label{subsection:background-rigidity}

We now define the notion of cohomological rigidity, which detects whether a
representation has any first order deformations.

\begin{definition}[Cohomological rigidity]\label{definition:rigidity}
    Let $\overline{X}$ be a smooth projective variety, $Z\subset \overline{X}$ a
    strict normal crossings divisor, and $X=\overline{X}\setminus Z$. Let $G$ be
    a reductive group over $\mathbb{C}$ and $$\rho: \pi_1(X)\to G(\mathbb{C})$$
    a homomorphism such that the monodromy at infinity is quasi-unipotent.   
     The representation $\rho$ is said to be \emph{cohomologically rigid} if
    $$H^1(\overline{X}, j_{!*}\on{ad}\rho)=0,$$ where $j: X\hookrightarrow
    \overline{X}$ is the natural inclusion, $j_{!*}$ is the intermediate
    extension, and $\on{ad}\rho$ is defined as in \autoref{notation:adjoint}.
    
    If $\mathbb{V}$ is the local system on $X$ associated to $\rho$, we call $\mathbb{V}$ cohomologically rigid if $\rho$ is.
\end{definition}
\begin{remark}
	\label{remark:}
	In the case $\rho$ is irreducible, see \cite[Proposition 4.7, Remark
    4.8]{klevdalP:g-rigid-local-systems-are-integral} for a moduli-theoretic
    interpretation of cohomological rigidity. 
    This says that $\rho$ is a smooth isolated point of the appropriate character variety, parametrizing representations with {fixed} local monodromy at infinity. Note that if $\overline{X}$ is a curve, $j_{!*}=j_*.$
\end{remark}

We will typically prove cohomological rigidity by computing 
$H^1(X, \on{ad}\rho)$, via the next lemma.
\begin{lemma}
	\label{lemma:vanishing-h1-implies-rigidity}
		In the setting of \autoref{definition:rigidity}, assume $H^1(X, \on{ad}\rho)
	= 0$. Then $\rho$ is
	cohomologically rigid.
\end{lemma}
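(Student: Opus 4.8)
The plan is to deduce cohomological rigidity, i.e.\ the vanishing of $H^1(\overline{X}, j_{!*}\on{ad}\rho)$, from the hypothesized vanishing of $H^1(X, \on{ad}\rho)$ together with the general relationship between cohomology of the open variety $X$ and intersection cohomology of $\overline{X}$. First I would recall that for any local system $\mathbb{W}$ on $X$ with quasi-unipotent monodromy at infinity — here $\mathbb{W} = \on{ad}\rho$ — there is a natural map $j_{!*}\mathbb{W} \to Rj_* \mathbb{W}$ in the derived category (the intermediate extension is a subobject of the full pushforward, for the middle perversity), and correspondingly a natural map on (hyper)cohomology $H^i(\overline{X}, j_{!*}\mathbb{W}) \to H^i(\overline{X}, Rj_*\mathbb{W}) = H^i(X, \mathbb{W})$. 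The key input is that this map is \emph{injective} in degrees $i \leq 1$; equivalently, $H^1(\overline X, j_{!*}\mathbb W) \hookrightarrow H^1(X,\mathbb W)$.

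The cleanest way to see the injectivity is via the weight filtration: $H^1(X, \mathbb{W})$ carries a mixed Hodge structure (or, for the underlying perverse/Hodge-module statement, a weight filtration coming from Saito's theory, as already invoked in the proof of \autoref{theorem:unitary-MHS}), and the image of intersection cohomology $H^1(\overline X, j_{!*}\mathbb W)$ is exactly the lowest-weight piece — it is pure of weight $1$ (after normalizing $\mathbb W$ to weight $0$), and the natural map identifies it with $W_1 H^1(X,\mathbb W)$, which sits inside $H^1(X,\mathbb W)$. Concretely, one can argue with the factorization $j = \bar\jmath \circ j'$ through the complement of the strata, or simply cite that $j_{!*}\mathbb{W}[\dim X]$ is a summand-free subobject of $Rj_* \mathbb{W}[\dim X]$ in the category of perverse sheaves and that taking cohomology in degree $\leq 1$ (relative to the perverse normalization, degree $\leq 1 - \dim X$ before shift) of a monomorphism of perverse sheaves is left exact enough to preserve injectivity — this is where one is careful, since $H^i(\overline X,-)$ is not exact, but the long exact sequence coming from the triangle $j_{!*}\mathbb W \to Rj_*\mathbb W \to Q \to$ with $Q$ supported on $Z$ and concentrated in perverse degrees $\geq 1$ forces $H^0(\overline X, Q) = 0$ hence injectivity of $H^1(\overline X, j_{!*}\mathbb W) \to H^1(\overline X, Rj_*\mathbb W)$. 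Granting this, since $H^1(X, \on{ad}\rho) = 0$ by hypothesis, we get $H^1(\overline X, j_{!*}\on{ad}\rho) = 0$, which is precisely cohomological rigidity.

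The main obstacle — really the only subtle point — is justifying the injectivity $H^1(\overline X, j_{!*}\mathbb W)\hookrightarrow H^1(X,\mathbb W)$ cleanly without assuming $\mathbb W$ (or $\rho$) is semisimple or that $\overline X$ is a curve; in the curve case it is immediate since $j_{!*} = j_*$ and then $H^1(\overline X, j_* \mathbb W) = H^1(X,\mathbb W)$ is an equality, not merely an injection. I expect the paper handles this either by reducing to the curve case (the applications of \autoref{lemma:vanishing-h1-implies-rigidity} are on fibers $C^\circ$ of $\pi^\circ$, which are curves), or by the weight-filtration argument above. So in the write-up I would give the one-line curve argument as the primary case and remark that the general case follows from the standard fact that intersection cohomology injects into ordinary cohomology of the open part in degrees $\leq 1$ for admissible variations, citing the relevant statement in the Hodge-module references already in use.

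\begin{proof}
We may assume $\rho$ is defined so that $\on{ad}\rho$ has quasi-unipotent monodromy at infinity, which holds by hypothesis. Let $\mathbb W := \on{ad}\rho$, regarded as a local system on $X$. When $\overline X$ is a curve, $j_{!*}\mathbb W = j_*\mathbb W$ and $H^1(\overline X, j_*\mathbb W) = H^1(X, \mathbb W) = 0$, so $\rho$ is cohomologically rigid. In general, there is a distinguished triangle
\[
j_{!*}\mathbb W \longrightarrow Rj_*\mathbb W \longrightarrow Q \xrightarrow{\ +1\ }
\]
in $D^b_c(\overline X)$, where $Q$ is supported on $Z$ and, with the perverse normalization $j_{!*}\mathbb W[\dim X]$ and $Rj_*\mathbb W[\dim X]$, the cone $Q[\dim X]$ lies in perverse degrees $\geq 1$; hence $\mathbb H^0(\overline X, Q[\dim X]) = 0$, equivalently $H^{-\dim X}(\overline X, Q) = 0$ and the relevant long exact sequence yields an injection
\[
H^1(\overline X, j_{!*}\mathbb W) \hookrightarrow \mathbb H^1(\overline X, Rj_*\mathbb W) = H^1(X, \mathbb W).
\]
Since $H^1(X, \on{ad}\rho) = 0$ by assumption, we conclude $H^1(\overline X, j_{!*}\on{ad}\rho) = 0$, i.e.\ $\rho$ is cohomologically rigid.
\end{proof}
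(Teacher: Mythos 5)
Your route is genuinely different from the paper's. The paper cites Esnault--Groechenig (Remark~2.4), which identifies $H^1(\overline X, j_{!*}\on{ad}\rho)$ with $H^1(U, a_*\on{ad}\rho)$ for a suitable dense open $X\subset U\subset \overline X$ with $a\colon X\hookrightarrow U$, and then applies the Leray spectral sequence for $a$ to see the right-hand side injects into $H^1(X,\on{ad}\rho)$. You instead argue directly on $\overline X$ with the triangle $j_{!*}\mathbb W\to Rj_*\mathbb W\to Q$.

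There is a genuine gap, though. The cone $Q[\dim X]$ is \emph{not} concentrated in perverse degrees $\geq 1$ in general; it is only in perverse degrees $\geq 0$. Recall that $j_{!*}(\mathbb W[\dim X])$ is the image of ${}^{p}\mathcal{H}^0(j_!\mathbb W[\dim X])$ inside ${}^{p}\mathcal{H}^0(Rj_*\mathbb W[\dim X])$, and this inclusion has a nonzero cokernel (a perverse sheaf supported on $Z=\overline X\setminus X$) whenever the local monodromy has invariants. Already for $\overline X$ a curve and $\mathbb W$ trivial one has $Q[1]\simeq R^1j_*\mathbb W$, a skyscraper in perverse degree $0$, not $\geq 1$. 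There is also a shift slip: the long exact sequence requires $H^0(\overline X, Q)=H^{-\dim X}(\overline X, Q[\dim X])=0$, whereas $H^0(\overline X, Q[\dim X])=H^{\dim X}(\overline X, Q)$.

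The repair uses the support condition rather than the perverse-degree bound alone. Since $Q$ is supported on $Z$, of complex dimension $\leq \dim X - 1$, and $Q[\dim X]$ lies in perverse degrees $\geq 0$, and since $i_*=i_!$ is $t$-exact for the closed immersion $i\colon Z\hookrightarrow \overline X$, we get $H^j(\overline X, Q[\dim X])=0$ for all $j<-\dim Z$; in particular $H^{-\dim X}(\overline X, Q[\dim X])=0$ because $-\dim X<-\dim Z$. That gives the injection $H^1(\overline X, j_{!*}\mathbb W)\hookrightarrow H^1(X,\mathbb W)$ and the hypothesis finishes. Two small remarks: in your curve shortcut, $H^1(\overline X, j_*\mathbb W)$ is only a subspace of $H^1(X,\mathbb W)$ (by Leray), not equal to it, though that does not affect the conclusion; and for the applications in this paper one could simply restrict to the curve case, which is where $j_{!*}=j_*$ is used.
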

\begin{proof}
	This follows from the identification 
	$H^1(\overline{X}, j_{!*}\on{ad}\rho)\simeq H^1(U, a_*\on{ad}\rho)$ for
	$U \subset \overline{X}$ a certain dense open subscheme containing $X$
	with $a: X \to U$ the inclusion \cite[Remark
	2.4]{esnault2018cohomologically} (see also \cite[Remark
	4.8]{klevdalP:g-rigid-local-systems-are-integral}).
	The Leray spectral sequence gives an injection 
	$H^1(U, a_*\on{ad}\rho) \hookrightarrow H^1(X, \on{ad}\rho)$, implying
	the claim.
\end{proof}
\begin{definition}
If $H^1(X, \on{ad}\rho)=0$, we say that $\rho$ is \emph{strongly cohomologically rigid}.	
\end{definition}
\subsection{Cohomological rigidity and unitary MCG-finite representations}
\label{subsection:rigidity}

Let $\rho: \pi_1(\Sigma_{g,n})\to \on{GL}_r(\mathbb{C})$ be a unitary MCG-finite
representation of rank $<\sqrt{g+1}$. We will study $\rho$ by associating to $\rho$ a certain unitary local system on a well-chosen versal family of curves. Crucially, this unitary local system will be cohomologically rigid.
This will follow from our main vanishing result
\autoref{theorem:unitary-rigid}.

Throughout this section, we use notation as in 
\autoref{notation:versal-family}. In particular, $\pi:\mathscr{C}\to
\mathscr{M}$ is a versal family of $n$-pointed curves of genus $g$,
$\pi^\circ: \mathscr{C}^\circ \to \mathscr{M}$ is the associated family of
punctured curves, $m \in \mathscr M$ is a basepoint, and $C^\circ = \mathscr
C_m^\circ$.

\begin{proposition}
	\label{proposition:cohomologically-rigid}
	Let $\mathbb{V}$ be a $\on{GL}_r$ (respectively, ~$\on{PGL}_r$)-local system on
$\mathscr{C}^\circ$ with $r<\sqrt{g+1}$. Suppose that for $m\in \mathscr{M}$,
$\mathbb{V}|_{{C}^\circ}$ is (respectively, is the projectivization
of) an irreducible,
unitary local system. Then $\mathbb{V}$ is strongly cohomologically rigid. In
particular, $\mathbb V$ is cohomologically rigid.
\end{proposition}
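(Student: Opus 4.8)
The plan is to reduce the cohomological rigidity of $\mathbb{V}$ on the total space $\mathscr{C}^\circ$ to the vanishing result \autoref{theorem:unitary-rigid}, applied to the adjoint local system. By \autoref{lemma:vanishing-h1-implies-rigidity} it suffices to show $H^1(\mathscr{C}^\circ, \on{ad}\mathbb{V}) = 0$, and since $\on{ad}\mathbb{V}$ is the local system attached to $\on{ad}\rho$ where $\rho$ is the fibral monodromy, and $\on{ad}$ of a unitary (projective) representation is again unitary (it factors through $\on{U}(r)$ acting on $\mathfrak{pgl}_r$), the restriction $\on{ad}\mathbb{V}|_{C^\circ}$ is a unitary local system. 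Its rank is $\dim \mathfrak{pgl}_r = r^2 - 1 < g$, using exactly the hypothesis $r < \sqrt{g+1}$. So $\on{ad}\mathbb{V}$ is a local system on $\mathscr{C}^\circ$ which is unitary on the fiber $C^\circ$ and has rank $< g$; this is precisely the input needed for \autoref{theorem:unitary-rigid} (with $A = \mathbb{C}$), which then gives $H^0(\mathscr{M}, R^1\pi^\circ_* \on{ad}\mathbb{V}) = 0$.

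The remaining point is to pass from the vanishing of $H^0(\mathscr{M}, R^1\pi^\circ_*\on{ad}\mathbb{V})$ to the vanishing of $H^1(\mathscr{C}^\circ, \on{ad}\mathbb{V})$ itself. For this I would use the Leray spectral sequence for $\pi^\circ: \mathscr{C}^\circ \to \mathscr{M}$, whose low-degree exact sequence reads
\begin{equation*}
0 \to H^1(\mathscr{M}, R^0\pi^\circ_*\on{ad}\mathbb{V}) \to H^1(\mathscr{C}^\circ, \on{ad}\mathbb{V}) \to H^0(\mathscr{M}, R^1\pi^\circ_*\on{ad}\mathbb{V}).
\end{equation*}
The third term vanishes by the above. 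For the first term, $R^0\pi^\circ_*\on{ad}\mathbb{V} = H^0(C^\circ, \on{ad}\mathbb{V}|_{C^\circ})$ as a local system on $\mathscr{M}$; since $\mathbb{V}|_{C^\circ}$ is irreducible (as a projective, i.e.\ $\on{PGL}_r$, local system — equivalently $\rho$ is irreducible), Schur's lemma forces $H^0(C^\circ, \on{ad}\rho) = 0$, so $R^0\pi^\circ_*\on{ad}\mathbb{V} = 0$ and the first term vanishes too. Hence $H^1(\mathscr{C}^\circ, \on{ad}\mathbb{V}) = 0$, which is strong cohomological rigidity; cohomological rigidity then follows from \autoref{lemma:vanishing-h1-implies-rigidity}.

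There is one bookkeeping subtlety worth flagging as the main obstacle: one must make sure that "$\on{ad}$" is being used consistently — in \autoref{notation:adjoint} the adjoint of a $\on{GL}_r$- or $\on{PGL}_r$-representation is the action on $\mathfrak{pgl}_r$, which has dimension $r^2 - 1$, and the bound $r < \sqrt{g+1}$ is exactly what makes $r^2 - 1 < g$, matching hypothesis (2) of \autoref{theorem:unitary-rigid}. One should also check the mild hypotheses of \autoref{theorem:unitary-rigid} are in force: $\mathscr{C}^\circ \to \mathscr{M}$ is a punctured versal family (so $\mathscr{M} \to \mathscr{M}_{g,n}$ is dominant étale, as assumed in the ambient \autoref{notation:versal-family}), $\on{ad}\mathbb{V}|_{C^\circ}$ is genuinely unitary and a \emph{constant} deformation in the trivial sense $A = \mathbb{C}$, and $g \geq 2$ — which holds since $r \geq 1$ already forces $g+1 > 1$, and in fact for a nontrivial conclusion $r \geq 2$ forces $g \geq 4$; for $r = 1$ the statement $H^1(\mathscr{C}^\circ, \on{ad}\mathbb{V}) = 0$ is automatic since $\on{ad}\mathbb{V} = 0$. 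With these checks done, the proof is a short diagram chase; no further input is needed beyond \autoref{theorem:unitary-rigid}, \autoref{lemma:vanishing-h1-implies-rigidity}, and Schur's lemma.
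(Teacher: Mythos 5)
Your proof is correct and takes essentially the same approach as the paper: both reduce via \autoref{lemma:vanishing-h1-implies-rigidity} and the Leray spectral sequence for $\pi^\circ$ to the vanishing of $H^0(\mathscr M, R^1\pi^\circ_*\on{ad}\mathbb V)$ (supplied by \autoref{theorem:unitary-rigid}, using $\rk \on{ad}\mathbb V = r^2-1 < g$) and of $\pi^\circ_*\on{ad}\mathbb V$ (Schur's lemma). The bookkeeping points you flag --- that $g\geq 2$ is automatic for $r\geq 2$ under the hypothesis $r<\sqrt{g+1}$, and that the $r=1$ case is trivial since $\on{ad}\mathbb V = 0$ --- are sound.
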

\begin{proof}
	By \autoref{lemma:vanishing-h1-implies-rigidity}, it is enough to show
	$\mathbb V$ is strongly cohomologically rigid.
	We let $\on{ad}\mathbb V$ denote the adjoint local system as in
	\autoref{notation:adjoint}.
	We will check 
	$H^1(\mathscr C^\circ, \on{ad} \mathbb V) = 0$.
	Using the Leray spectral sequence associated to the map $\pi^\circ$, it suffices to show that  
	$$H^0(\mathscr M, R^1 \pi^\circ_* \on{ad} \mathbb V) = 
	H^1(\mathscr M, \pi^\circ_* \on{ad} \mathbb V) = 0.$$
	We have 
	$H^0(\mathscr M, R^1 \pi^\circ_* \on{ad} \mathbb V) = 0$
	by \autoref{theorem:unitary-rigid} (taking $A=\mathbb{C}$), as $\rk \on{ad}\mathbb{V}=r^2-1<g$ and $\on{ad}\mathbb{V}|_{{C}^\circ}$ is unitary by assumption.

	To conclude, it is enough to show $\pi^\circ_* \on{ad} \mathbb V = 0$.
	This is follows from Schur's lemma because
	$\mathbb{V}|_{{C}^\circ}$ is (the projectivization of) an
	irreducible local system, and so $\on{ad}\mathbb{V}|_{{C}^\circ}$ has no $\pi_1({C}^\circ)$-invariants.
\end{proof}
\subsection{Verifying integrality}
\label{subsection:integrality}

The next step of the proof is to use the cohomological rigidity provided by
\autoref{proposition:cohomologically-rigid} to deduce integrality of MCG-finite unitary representations.

\begin{definition}
	\label{definition:}
	Let $G$ be a group scheme over $\mathbb{Z}$. We say a representation $\pi_1(X, x) \to G(\mathbb C)$ is {\em integral}
if it is conjugate to a representation which factors through $G(\mathscr
O_K)$ for some number field $K$.
A local system on a connected space $X$ is integral if the corresponding monodromy representation is.
\end{definition}
We will primarily be concerned with integrality with respect to the group
schemes $G = \on{PGL}_r$ and $G = \on{GL}_r$.
The basic idea to show our local system is integral is to apply the main result of
\cite{klevdalP:g-rigid-local-systems-are-integral}, which builds on \cite{esnault2018cohomologically} and ultimately relies on Lafforgue's work on the Langlands program for function fields and consequent work on Deligne's companion conjectures.
To apply this result, we need to know the monodromy of our cohomologically rigid
local system $\mathbb{V}$ around boundary components of a strict normal crossings compactification of $\mathscr{C}^\circ$
is quasi-unipotent, which we will verify using the following result.

\begin{proposition}[\protect{~\cite[Proposition 2.4]{aramayona2016rigidity}}]
			\label{proposition:quasi-unipotent-dehn}
	Suppose that $g \geq 3$, $\gamma \subset \Sigma_{g,n}$ is a simple closed curve, 
	and $\Gamma\subset \on{Mod}_{g,n}$ is a 
finite index subgroup. Let $\delta_\gamma\in\on{Mod}_{g,n}$ be the Dehn twist about $\gamma$. Let $\rho: \Gamma\to \on{GL}_r(\mathbb{C})$ be a representation. Then for $m$ such that $\delta_\gamma^m\in \Gamma$, $\rho(\delta_\gamma^m)$ is quasi-unipotent.
\end{proposition}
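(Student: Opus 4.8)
The plan is to follow the argument of Aramayona--Souto \cite{aramayona2016rigidity}, reducing the statement to an elementary fact in linear algebra together with an application of the lantern relation, the latter being the point where the hypothesis $g \geq 3$ enters.

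First I would make two reductions. Replacing $\Gamma$ by its normal core, we may assume $\Gamma \trianglelefteq \on{Mod}_{g,n}$; and since the set of $m$ with $\delta_\gamma^m \in \Gamma$ is a subgroup $m_0\mathbb{Z}$ of $\mathbb{Z}$, while any matrix some power of which is quasi-unipotent is itself quasi-unipotent, it suffices to treat $m = m_0$. I would also reduce to the case where $\gamma$ is non-separating: if $\gamma$ separates then, since $g \geq 3$, some component of $\Sigma_{g,n}\setminus\gamma$ has positive genus, and a chain relation expresses a power of $\delta_\gamma$ as a word in Dehn twists about non-separating curves supported in a subsurface, reducing to the non-separating case applied inside that subsurface. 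Write $A := \rho(\delta_\gamma^{m_0}) \in \on{GL}_r(\mathbb{C})$.

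The linear-algebra core is the following claim: if $A \in \on{GL}_r(\mathbb{C})$ and the set $\{A^k : k \geq 1\}$ meets only finitely many conjugacy classes of $\on{GL}_r(\mathbb{C})$, then $A$ is quasi-unipotent. I would prove this by choosing $k_1 < k_2 < \cdots$ with all $A^{k_j}$ conjugate; then the multiset $\Lambda$ of eigenvalues of $A^{k_j}$ is independent of $j$, so writing $\lambda_1,\dots,\lambda_r$ for the eigenvalues of $A$ we get $\{\lambda_i^{k_j}\}_i = \Lambda$ for every $j$. Comparing the largest modulus appearing on the two sides forces $|\lambda_i| = 1$ for all $i$. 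Finally, the finitely generated subgroup of $S^1$ generated by the $\lambda_i$ is $\mathbb{Z}^t \oplus F$ with $F$ finite, and the permutation relating $\{\lambda_i^{k_1}\}$ to $\{\lambda_i^{k_2}\}$ exhibits the vector of $\mathbb{Z}^t$-components of the $\lambda_i$ as an eigenvector of a permutation matrix with eigenvalue $k_1/k_2$; since permutation matrices have no eigenvalue of modulus $\neq 1$, this vector vanishes, so every $\lambda_i \in F$ is a root of unity.

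It remains to show that $\{A^k\}$ lies in finitely many conjugacy classes, and this is where $g \geq 3$ and the lantern relation are used. Since $g \geq 3$, one can embed a four-holed sphere in $\Sigma_{g,n}$ so that all seven curves occurring in the associated lantern relation are non-separating, with one of them equal to $\gamma$; because $\on{Mod}_{g,n}$ acts transitively on non-separating simple closed curves, the lantern relation then writes $\delta_\gamma$ as a product of boundedly many conjugates of $\delta_\gamma^{\pm 1}$, and by combining several such relations one bounds the complexity of $\delta_\gamma^k$ in terms of conjugates of a fixed power of $\delta_\gamma$, uniformly in $k$. Crucially, since $\Gamma$ is normal and $\delta_\gamma^{m_0} \in \Gamma$, every conjugate $\psi\,\delta_\gamma^{m_0}\,\psi^{-1}$ lies in $\Gamma$, so $\rho$ is defined on all the relevant elements; hence each $A^k = \rho(\delta_\gamma^{km_0})$ is a word of bounded length in a fixed finite set of conjugates of $A^{\pm 1}$, and therefore lies in one of finitely many conjugacy classes. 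I expect the main obstacle to be exactly this last step: the combinatorial bookkeeping needed to extract from the lantern relation an expression for arbitrary powers of $\delta_\gamma$ of complexity bounded independently of the exponent, and to arrange that all mapping classes involved lie in $\Gamma$ after passing to the fixed power $m_0$. An alternative route, still using $g \geq 3$, is to work inside the centralizer of $\delta_\gamma$, which contains the mapping class group of a genus $\geq 2$ surface on which $\delta_\gamma$ is a product of commutators of elements commuting with it; combined with Schur's lemma this again forces quasi-unipotence of the image, though making the argument uniform over an arbitrary finite-index $\Gamma$ requires additional care.
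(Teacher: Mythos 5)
The paper does not prove this proposition but simply cites \cite[Proposition 2.4]{aramayona2016rigidity}, so there is no in-paper proof to compare against; what follows is an assessment of your proposed argument on its own terms.

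Your linear-algebra lemma is fine, and so are the reductions to $\Gamma$ normal and $m = m_0$. The problem is the key geometric step. You want to show that $\delta_\gamma^{k m_0}$ can be written, uniformly in $k$, as a word of bounded length in a \emph{fixed finite} set of $\Gamma$-conjugates of $\delta_\gamma^{\pm m_0}$; this is what would make $\{A^k\}$ land in finitely many $\on{GL}_r$-conjugacy classes. But this is impossible for a simple counting reason: there are only finitely many words of length $\le L$ in a fixed finite alphabet, whereas the elements $\delta_\gamma^{k m_0}$ for $k\ge 1$ are pairwise distinct (indeed pairwise non-conjugate in $\on{Mod}_{g,n}$, since the twisting exponent is a conjugacy invariant of a Dehn-twist power). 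The lantern relation expresses $\delta_\gamma$ once as a product of six other twists, but iterating it produces words whose length grows with $k$; it does not produce a uniform bounded-length expression, and no amount of bookkeeping will fix this. So the step you flagged as ``the main obstacle'' is not a bookkeeping issue but a genuine obstruction.

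The alternative route you mention at the end is essentially the correct one, and can be closed up as follows. Cut $\Sigma_{g,n}$ along $\gamma$; since $g\ge 3$, the resulting (possibly disconnected) surface has a component $S'$ of genus $\ge 2$ with $\gamma$ among its boundary curves, and the boundary twist $\delta_\gamma$ is \emph{central} in $\on{Mod}(S')$. Set $H := \on{Mod}(S')\cap \Gamma$, a finite-index normal subgroup of $\on{Mod}(S')$ containing the central element $z := \delta_\gamma^{m_0}$. Because $S'$ has genus $\ge 2$ one has $H_1(\on{Mod}(S'),\mathbb{Q})=0$ (perfection for genus $\ge 3$, finite abelianization $\mathbb{Z}/10$ for genus $2$), and a transfer argument then shows $[z]=0$ in $H_1(H,\mathbb{Q})$: the class $[z]$ is $\on{Mod}(S')/H$-invariant since $z$ is central, its pushforward to $H_1(\on{Mod}(S'),\mathbb{Q})=0$ vanishes, and pulling back by transfer gives $[\on{Mod}(S'):H]\cdot[z]=0$. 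Now decompose $\mathbb{C}^r$ into generalized eigenspaces $V_\lambda$ for $\rho(z)$; each is $\rho(H)$-stable because $z$ is central in $H$, so $h\mapsto\det(\rho(h)|_{V_\lambda})$ is a character of $H$ which kills $[z]\in H_1(H,\mathbb{Q})$, forcing $\det(\rho(z)|_{V_\lambda})=\lambda^{\dim V_\lambda}$ to be a root of unity, hence $\lambda$ a root of unity. This handles separating and non-separating $\gamma$ uniformly and makes clear exactly where $g\ge 3$ enters (it guarantees a genus $\ge 2$ complementary piece, hence rationally trivial $H_1$); it does not require the ``product of commutators'' phrasing, only the rational triviality of the abelianization together with centrality of the boundary twist.
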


We now verify that the projectivization of a unitary MCG-finite representation is integral.
\begin{lemma}
	\label{lemma:defined-over-number-field}
	Let $g\geq 3$ be an integer, and let $\rho: \pi_1(\Sigma_{g,n})\to \on{GL}_r(\mathbb{C})$ be an irreducible, unitary, MCG-finite representation, with $r<\sqrt{g+1}$. Then the composition $$\mathbb{P}\rho: \pi_1(\Sigma_{g,n})\overset{\rho}{\to} \on{GL}_r(\mathbb{C})\to \on{PGL}_r(\mathbb{C})$$ is integral.
\end{lemma}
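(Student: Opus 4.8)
The plan is to package the given data geometrically and feed it into the Klevdal--Patrikis integrality theorem for cohomologically rigid $\on{PGL}_r$-local systems. First I would apply \autoref{lemma:geometric-mcg-rep-construction} to $\rho$: since $\rho$ is irreducible and MCG-finite, there is a scheme $\mathscr M$ with a finite \'etale map $\mathscr M \to \mathscr M_{g,n}$, an associated punctured versal family $\pi^\circ : \mathscr C^\circ \to \mathscr M$, a point $c\in \mathscr C^\circ$ with $m=\pi^\circ(c)$, and a representation $\widetilde\rho : \pi_1(\mathscr C^\circ, c)\to \on{PGL}_r(\mathbb C)$ restricting on the fiber $\pi_1(\mathscr C^\circ_m,c)\simeq\pi_1(\Sigma_{g,n},x)$ to $\mathbb P\rho$. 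Let $\mathbb V$ be the $\on{PGL}_r$-local system on $\mathscr C^\circ$ associated to $\widetilde\rho$. Its fibral monodromy $\mathbb V|_{C^\circ}$ is the projectivization of $\rho$, which is irreducible (as $\rho$ is) and unitary (as $\rho$ is), so by \autoref{proposition:cohomologically-rigid}, using $r<\sqrt{g+1}$, the local system $\mathbb V$ is (strongly) cohomologically rigid on $\mathscr C^\circ$.

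Next I would arrange the hypotheses of the integrality theorem at infinity. Choose a smooth projective compactification $\overline{\mathscr C^\circ}$ of $\mathscr C^\circ$ with simple normal crossings boundary $Z$. The boundary divisors come in two flavors: those lying over the boundary of a compactification of $\mathscr M$, and the relative boundary coming from the $n$ punctures of the curves (the closures of the sections $s_i$). For the latter, the loop around $s_i$ is a point-pushing/Dehn-twist-type element of $\on{PMod}_{g,n+1}$ — more precisely it lies in the point-pushing subgroup $\pi_1(\Sigma_{g,n})\subset\on{PMod}_{g,n+1}$, on which $\widetilde\rho$ restricts to $\mathbb P\rho$, and $\rho$ unitary $\Rightarrow$ $\mathbb P\rho$ has image in a compact group, hence quasi-unipotent monodromy (indeed finite order on that generator up to the usual subtleties). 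For the divisors over $\partial\overline{\mathscr M}$, the monodromy is the image under $\widetilde\rho$ of a power of a Dehn twist in a finite-index subgroup of $\on{Mod}_{g,n}$ (via \autoref{lemma:finite-index-from-dominant-map} and \autoref{lemma:mgn-fundamental-group}), so \autoref{proposition:quasi-unipotent-dehn} — valid since $g\geq 3$ — gives quasi-unipotence. Thus $\mathbb V$ is a cohomologically rigid $\on{PGL}_r$-local system on a smooth quasiprojective variety with quasi-unipotent monodromy at infinity.

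Now the main result of \cite{klevdalP:g-rigid-local-systems-are-integral} applies to $\mathbb V$ as a $\on{PGL}_r$-local system, yielding that $\widetilde\rho$ is integral: after conjugation it factors through $\on{PGL}_r(\mathscr O_K)$ for some number field $K$. Restricting to the fiber $\pi_1(C^\circ,c)\simeq\pi_1(\Sigma_{g,n},x)$, and identifying this with $\mathbb P\rho$ via the commuting square \eqref{equation:geometric-rho-and-tilde-rho}, we conclude $\mathbb P\rho$ itself is conjugate into $\on{PGL}_r(\mathscr O_K)$, i.e.\ integral. The main obstacle is the bookkeeping at infinity: verifying that \emph{every} component of the snc boundary $Z$ of a chosen compactification of the total space $\mathscr C^\circ$ has quasi-unipotent $\widetilde\rho$-monodromy, cleanly separating the ``horizontal'' boundary (handled by unitarity on fibers) from the ``vertical'' boundary (handled by \autoref{proposition:quasi-unipotent-dehn}), and confirming that the Klevdal--Patrikis hypotheses — in particular the quasi-unipotence and the $\on{PGL}_r$-cohomological rigidity in the sense of \autoref{definition:rigidity} — are met in precisely the form their theorem requires. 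Everything else is a direct assembly of results already established in the excerpt.
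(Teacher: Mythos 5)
Your overall strategy matches the paper's: construct $\widetilde\rho$ on $\pi_1(\mathscr C^\circ)$ via \autoref{lemma:geometric-mcg-rep-construction}, establish cohomological rigidity via \autoref{proposition:cohomologically-rigid}, verify quasi-unipotent local monodromy at infinity, and apply the main theorem of \cite{klevdalP:g-rigid-local-systems-are-integral}. But there is a genuine gap in your quasi-unipotence argument for the ``horizontal'' boundary, namely the divisors that are the closures of the sections $s_i$. You justify quasi-unipotence there by observing that $\rho$ is unitary and concluding ``$\mathbb P\rho$ has image in a compact group, hence quasi-unipotent monodromy.'' That implication is false: a unitary matrix can have eigenvalue $e^{2\pi i\alpha}$ with $\alpha$ irrational, which is not a root of unity. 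Indeed, the paper itself records in \autoref{remark:sharp-g-1} that the special dihedral MCG-finite representations of \cite{biswas2017surface} are unitary yet have non-quasi-unipotent local monodromy at the punctures. So the unitarity of $\rho$ cannot be what supplies the quasi-unipotence you need.

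What saves the argument is the other half of the sentence you wrote: the loop around $s_i$, viewed through the identification $\pi_1(\mathscr C^\circ)\hookrightarrow\on{PMod}_{g,n+1}$, is a point-push along a simple loop around $x_i$, and a point-push along a simple closed curve is a product of two commuting Dehn twists. Hence \autoref{proposition:quasi-unipotent-dehn} (valid since $g\ge 3$) applies to these boundary components by exactly the same mechanism you invoked for the ``vertical'' boundary over $\partial\overline{\mathscr M}$---unitarity plays no role here. The paper actually avoids your horizontal/vertical split entirely: it exploits that $\mathscr C^\circ$ is itself a finite \'etale cover of $\mathscr M_{g,n+1}$, takes $\overline{\mathscr C}$ to be the normalization in $K(\mathscr C^\circ)$ of a strict normal crossings blowup $\overline{\mathscr M_{g,n+1}}'$ of the Deligne--Mumford compactification, and invokes \cite[Lemma~2.1.1]{li2020surface} to see that \emph{every} boundary component of $\overline{\mathscr M_{g,n+1}}'$ has local monodromy given by a product of commuting Dehn twists, after which \autoref{proposition:quasi-unipotent-dehn} finishes uniformly. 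Your proposal can be repaired by replacing the unitarity argument with this Dehn-twist argument, but as written it contains a false step.
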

\begin{proof}
By \autoref{lemma:geometric-mcg-rep-construction}, there exists a scheme $\mathscr{M}$ with a finite \'etale map $\mathscr{M}\to \mathscr{M}_{g,n}$, with associated family of punctured curves $\pi^\circ: \mathscr{C}^\circ\to \mathscr{M}$, and a representation $$\widetilde{\rho}: \pi_1(\mathscr{C}^\circ)\to \on{PGL}_r(\mathbb{C})$$ agreeing with $\mathbb{P}\rho$ on restriction to a fiber of $\pi^\circ$. It suffices to show that $\widetilde{\rho}$ is integral.  
	By \autoref{proposition:cohomologically-rigid},
	$\widetilde{\rho}$ is cohomologically rigid.
	Because the abelianization of $\on{PGL}_r(\mathbb C)$ is
	trivial, it follows from
\cite[Theorem 1.2]{klevdalP:g-rigid-local-systems-are-integral}	that $\widetilde\rho$
is integral once we verify that $\widetilde\rho$ has quasi-unipotent local monodromy around the boundary components of
	a good (i.e.~strict normal crossing) compactification of $\mathscr{C}^\circ$.

One may construct a strict normal crossing compactification as follows.
$\mathscr{C}^\circ$ is a finite \'etale cover of $\mathscr{M}_{g,n+1}$ by
construction; let $\overline{\mathscr{M}_{g, n+1}}'$ be a strict normal crossing
compactification of $\mathscr{M}_{g,n+1}$ obtained by blowing up boundary strata
of the Deligne-Mumford compactification $\overline{\mathscr{M}_{g,n+1}}$ (which
is only a normal crossing compactification, and is not in general strict). Let
$\overline{\mathscr{C}}$ be the normalization of $\overline{\mathscr{M}_{g,
n+1}}'$ in the function field of $\mathscr{C}^\circ$. By
\autoref{proposition:quasi-unipotent-dehn}, it suffices to check that the local
monodromy about the boundary components of $\overline{\mathscr{C}}$ correspond
to products of commuting Dehn twists about simple closed curves, under the
identification of $\pi_1(\mathscr{C}^\circ)$ with a subgroup of
$\on{PMod}_{g,n+1}=\pi_1(\mathscr{M}_{g,n+1})$. 
This is because a product of
commuting quasi-unipotent matrices is quasi-unipotent. To check the claim about
local monodromy about boundary components, it suffices to check the
corresponding claim for $\overline{\mathscr{M}_{g, n+1}}'$. But this is
\cite[Lemma 2.1.1]{li2020surface}.
\end{proof}

The next result shows we can lift integrality from the projectivization of a unitary irreducible MCG-finite representation to the original representation. 

\begin{lemma}
	\label{lemma:gl-defined-over-number-field}
	Let $g\geq 3$ be an integer, and let $\rho: \pi_1(\Sigma_{g,n})\to \on{GL}_r(\mathbb{C})$ be an irreducible, unitary, MCG-finite representation, with $r<\sqrt{g+1}$. Then $\rho$ is integral.
	\end{lemma}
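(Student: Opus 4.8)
The plan is to bootstrap from the integrality of $\mathbb{P}\rho$ (\autoref{lemma:defined-over-number-field}) together with the finiteness of $\det\rho$ (\autoref{proposition:1-dim-finite-image} applied to $\det\rho$), using that the central extension $1 \to \mu_r \to \on{SL}_r \to \on{PGL}_r \to 1$ is controlled arithmetically. First I would observe, via \autoref{lemma:finite-central-part}, that $\rho$ factors through a group $G$ with $\on{SL}_r(\mathbb{C}) \subset G \subset \on{GL}_r(\mathbb{C})$ and $\mu := G \cap Z(\on{GL}_r)$ finite; in particular $\det\rho$ takes values in a finite group of roots of unity, so after enlarging the eventual number field we may assume $\det\rho$ is valued in $\mathscr{O}_K^\times$ for any $K$ containing those roots of unity. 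By \autoref{lemma:defined-over-number-field}, $\mathbb{P}\rho$ is conjugate to a representation valued in $\on{PGL}_r(\mathscr{O}_{K_0})$ for some number field $K_0$.

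The key step is a descent/lifting argument: given $\mathbb{P}\rho : \pi_1(\Sigma_{g,n}) \to \on{PGL}_r(\mathscr{O}_{K_0})$ and the lift $\rho$ to $\on{GL}_r(\mathbb{C})$, I want to produce a lift valued in $\on{GL}_r(\mathscr{O}_K)$ for some finite extension $K/K_0$. The point is that lifting a homomorphism to $\on{GL}_r$ from one to $\on{PGL}_r$ is governed by the class in $H^2(\pi_1(\Sigma_{g,n}), \mathbb{C}^\times)$, but for a surface group $\pi_1(\Sigma_{g,n})$ with $n \geq 1$ this group is free, so $H^2$ vanishes and a lift always exists over any algebraically closed field; and for $n = 0$, $H^2(\pi_1(\Sigma_g), A) \cong A$ is finite whenever $A$ is, so after noting the obstruction class is torsion of order dividing $r$ one can always lift after adjoining $r$-th roots of the relevant scalars, hence over a number field. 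Concretely: pick generators, take the $\on{GL}_r(\mathscr{O}_{K_0})$-valued matrices representing the images of $\mathbb{P}\rho$ on generators (arbitrary lifts of the $\on{PGL}_r(\mathscr{O}_{K_0})$ matrices), adjust by scalars so that the surface-group relation is satisfied — this requires extracting an $r$-th root of a single element of $K_0^\times$ (and, when $n=0$, also correcting the product-of-commutators relation by an $r$-th root of unity, using finiteness of $\det$) — and work over the resulting finite extension $K$. One then checks that the representation $\rho'$ so produced is conjugate to $\rho$: both are lifts of the same irreducible $\mathbb{P}\rho$, so by Schur's lemma they differ by a character $\chi: \pi_1(\Sigma_{g,n}) \to \mathbb{C}^\times$, and $\chi = \det(\rho)^{1/r}\cdot\det(\rho')^{-1/r}$ up to a root of unity is valued in finitely many roots of unity (again \autoref{proposition:1-dim-finite-image} and finiteness of $\det\rho$, $\det\rho'$), so $\chi$ is itself integral; twisting $\rho'$ by $\chi$ keeps us in $\on{GL}_r(\mathscr{O}_{K'})$ for a further finite extension $K'$ and recovers $\rho$ up to conjugacy.

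The main obstacle will be handling the $n = 0$ case cleanly, where $\pi_1(\Sigma_g)$ is not free and the lifting obstruction in $H^2(\pi_1(\Sigma_g), \mathbb{C}^\times)$ is genuinely present: one must argue that this obstruction is killed after a finite base change, which amounts to checking it is torsion (it is $r$-torsion, being the obstruction to lifting through $\mu_r \to \on{SL}_r \to \on{PGL}_r$ after passing to the finite group $G$) and then extracting appropriate roots. An alternative, possibly cleaner, route would be to avoid the surface-group lifting entirely and instead apply \cite[Theorem 1.2]{klevdalP:g-rigid-local-systems-are-integral} directly with $G = \on{GL}_r$ (or $G = $ the finite-determinant subgroup) rather than $G = \on{PGL}_r$: the hypotheses — cohomological rigidity and quasi-unipotent local monodromy around the boundary of a good compactification of $\mathscr{C}^\circ$ — have already been verified for $\mathbb{P}\rho$ in the proof of \autoref{lemma:defined-over-number-field}, and strong cohomological rigidity of the $\on{GL}_r$-local system follows verbatim from \autoref{proposition:cohomologically-rigid} (which is stated for $\on{GL}_r$-local systems as well), while quasi-unipotence of the $\on{GL}_r$-valued local monodromy follows from \autoref{proposition:quasi-unipotent-dehn} applied directly to $\rho$ itself together with finiteness of $\det\rho$. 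This second approach reduces the lemma to a citation plus the already-established rigidity and monodromy facts, so I would lead with it and relegate the elementary lifting argument to a remark.
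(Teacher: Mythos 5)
Your proposal correctly identifies the two ingredients — the integrality of $\mathbb{P}\rho$ from \autoref{lemma:defined-over-number-field} and the finiteness of $\det\rho$ from \autoref{proposition:1-dim-finite-image} — but the route you say you would ``lead with'' (applying \cite[Theorem 1.2]{klevdalP:g-rigid-local-systems-are-integral} directly with $G = \on{GL}_r$ or the finite-determinant subgroup) has a real gap. The paper's invocation of that theorem explicitly relies on the fact that the abelianization of $\on{PGL}_r$ is trivial; for $G = \on{GL}_r$ the abelianization is $\mathbb{G}_m$, and for the finite-determinant subgroup $G = \det^{-1}(\mu_m)$ it is $\mu_m$ — finite but still nontrivial, and $G$ is moreover disconnected. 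So the hypotheses of the Klevdal--Patrikis theorem, as the paper uses it, are not satisfied, and this is precisely the reason the paper takes the two-step route: prove integrality for the $\on{PGL}_r$-valued representation first, then bootstrap. You cannot simply relegate the bootstrap ``to a remark.''

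Your first route (lift $\mathbb{P}\rho$ and compare lifts via a character) is in the same spirit as the paper's proof, and with care it works, but it is more involved than necessary and has two soft spots. First, a point of $\on{PGL}_r(\mathscr{O}_{K_0})$ need not lift to $\on{GL}_r(\mathscr{O}_{K_0})$ at all — the obstruction lives in $\on{Pic}(\mathscr{O}_{K_0})$ — so ``arbitrary lifts'' to $\on{GL}_r(\mathscr{O}_{K_0})$ of the generator images are not available without first passing to a finite extension. Second, the worry about $H^2(\pi_1(\Sigma_g), \mathbb{C}^\times)$-obstructions is a red herring here: a lift $\rho$ already exists by hypothesis, so the obstruction class vanishes; there is nothing to argue. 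The paper's actual argument avoids all of this by a cleaner device: since $\rho$ factors through $G$ with $G\to\on{PGL}_r$ finite (\autoref{lemma:finite-central-part}) and $\mathbb{P}\rho$ is valued in $\on{PGL}_r(\mathscr{O}_K)$, for each generator $\alpha$ the element $\rho(\alpha)$ is a $\mathbb{C}$-point of the fiber $G\times_{\on{PGL}_r,\mathbb{P}\rho(\alpha)}\on{Spec}\mathscr{O}_K$, which is a torsor under the finite group scheme $\mu := \ker(G\to\on{PGL}_r)$ and hence a finite $\mathscr{O}_K$-scheme. Thus $\rho(\alpha)$ is automatically integral over $\mathscr{O}_K$, i.e.\ lies in $G(\mathscr{O}_{K'})$ for a finite extension $K'$, and since there are finitely many generators $\rho$ itself is integral. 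No auxiliary lift $\rho'$ and no character comparison is needed.
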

\begin{proof}
By \autoref{proposition:1-dim-finite-image}, $\det\rho$ has finite image, and
hence $\rho$ factors through $G(\mathbb{C})$, where $G\subset \on{GL}_r$ is a
flat affine $\mathbb{Z}$-group scheme containing $\on{SL}_r$ with finite index (for
example, we can take $G=\det^{-1}(\mu_m)$, where $\det\rho$ factors through
$\mu_m$). Note that the natural morphism $G\to \on{PGL}_r$ is finite. After
conjugating $\rho$ by some matrix we may assume $\mathbb{P}\rho$ factors through
$\on{PGL}_r(\mathscr{O}_K)$ for some number field $K$, by \autoref{lemma:defined-over-number-field}.

As $\pi_1(\Sigma_{g,n})$ is finitely generated, it suffices to show that for a generator $\alpha\in \pi_1(\Sigma_{g,n})$, there exists some finite extension $K'/K$ such that the matrix $\rho(\alpha)$ lies in $G(\mathscr{O}_{K'})$. 
The obstruction to lifting any given $\mathscr O_K$ point of $\on{PGL}_r$ to
$G$ is a $\ker(G\to \on{PGL}_r)$ torsor, which becomes trivial on some finite
flat extension, and hence over $\mathscr O_{K'}$ for some finite $K'/K$.
\end{proof}

\subsection{Verifying finiteness}
\label{subsection:finiteness}
We now show that a unitary MCG-finite representation $\rho$ of low rank has finite image. We have already shown that, in the irreducible setting, such representations are defined over $\mathscr{O}_K$. We will use the techniques of \cite{LL:geometric-local-systems} to deduce that they have finite monodromy. To do so, we will argue that for each embedding $\tau: \mathscr{O}_K\hookrightarrow\mathbb{C}$, $\rho\otimes_{\mathscr{O}_K, \tau}\mathbb{C}$ has unitary monodromy. We know this for one such $\tau$ but not for the rest.  
To prove unitarity for all such $\tau$, we will use \cite[Theorem
1.2.12]{LL:geometric-local-systems}, and to verify its hypotheses we will need
to use non-abelian Hodge theory to construct certain complex PVHS.

\begin{proposition}\label{proposition:unitary-implies-finite}
Let  $$\rho:\pi_1(\Sigma_{g,n})\to
\on{GL}_r(\mathbb{C})$$ be a unitary MCG-finite representation with $r<\sqrt{g+1}$. Then $\rho$ has finite image.
\end{proposition}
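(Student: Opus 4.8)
The plan is to reduce to the irreducible case, then combine integrality (\autoref{lemma:gl-defined-over-number-field}) with the low-rank unitarity criterion of \cite[Theorem 1.2.12]{LL:geometric-local-systems}. First I would dispose of the small-genus cases: since $r < \sqrt{g+1}$ forces $r = 1$ when $g \leq 2$, those cases follow directly from \autoref{proposition:1-dim-finite-image}, so we may assume $g \geq 3$. Next, since $\rho$ is unitary it is semisimple, so write $\rho = \bigoplus_i \rho_i$ with each $\rho_i$ irreducible; each $\rho_i$ is unitary, is MCG-finite by \autoref{proposition:basic-properties}, and has rank $\leq r < \sqrt{g+1}$. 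A finite direct sum has finite image if and only if each summand does, so it suffices to treat a single irreducible unitary MCG-finite $\rho$ with $g \geq 3$ and $\rk \rho < \sqrt{g+1}$.

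For such a $\rho$, \autoref{lemma:gl-defined-over-number-field} provides a number field $K$ and a conjugate of $\rho$ factoring through $\on{GL}_r(\mathscr{O}_K)$; fix this integral model. For each embedding $\tau : K \hookrightarrow \mathbb{C}$ we obtain a representation $\rho_\tau := \rho \otimes_{\mathscr{O}_K,\tau} \mathbb{C}$, all of which are MCG-finite (the Galois conjugate of an MCG-finite representation is MCG-finite, since the mapping class group action is defined over $\mathbb{Q}$ and commutes with $\on{Aut}(\mathbb{C})$) and irreducible, and $\rho_{\tau_0}$ is unitary for the distinguished embedding $\tau_0$. The key claim is that every $\rho_\tau$ is in fact unitary. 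Granting this, $\on{im}\rho$ lands in a subgroup of $\on{GL}_r(\mathscr{O}_K)$ whose image under every archimedean embedding is bounded (contained in a conjugate of a unitary group); since $\mathscr{O}_K$ is discrete in $\prod_\tau \mathbb{C}^{r \times r}$, the image is finite, as desired. The hard part is verifying this unitarity claim for the embeddings $\tau \neq \tau_0$.

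To prove each $\rho_\tau$ is unitary, I would run the argument sketched in \autoref{subsubsection:step-1}: realize $\rho_\tau$ as the fibral monodromy of a local system on a punctured versal family of curves via \autoref{lemma:geometric-mcg-rep-construction} and \autoref{proposition:gl-lift} (passing to a suitable dominant \'etale cover so that the projective local system lifts), so that the associated local system is cohomologically rigid by \autoref{proposition:cohomologically-rigid}. Rigidity of $\rho_\tau$ itself then follows (it is a smooth isolated point of the appropriate character variety with fixed quasi-unipotent local monodromy, the latter by \autoref{proposition:quasi-unipotent-dehn}), and \autoref{lemma:rigid-reps} shows that a cohomologically rigid semisimple representation with finite determinant underlies a complex polarizable variation of Hodge structure — for any complex structure on $\Sigma_{g,n}$. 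Since $\rk \rho_\tau < \sqrt{g+1}$, \cite[Theorem 1.2.12]{LL:geometric-local-systems} then forces $\rho_\tau$ to be unitary. (One should check the determinant and quasi-unipotence hypotheses carefully, and confirm that cohomological rigidity is preserved under the Galois conjugation $\tau_0 \rightsquigarrow \tau$ — this holds because $H^1(\mathscr{C}^\circ, \on{ad}\mathbb{V})$ is computed by a complex of $\overline{\mathbb{Q}}$-vector spaces whose dimension is Galois-invariant, or more directly because rigidity transfers along $\on{Aut}(\mathbb{C})$.) This yields unitarity of all $\rho_\tau$ and completes the proof; the main obstacle, as indicated, is controlling the rigidity and Hodge-theoretic input for the non-distinguished embeddings, which is exactly where the low-rank hypothesis $r < \sqrt{g+1}$ is consumed.
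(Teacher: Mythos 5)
Your overall strategy matches the paper's: reduce to irreducible, use \autoref{lemma:gl-defined-over-number-field} to get integrality, show all Galois conjugates $\rho_\tau$ are unitary via rigidity and the low-rank PVHS criterion of \cite[Theorem 1.2.12]{LL:geometric-local-systems}, and conclude by discreteness of $\mathscr{O}_K$. The key references you hit --- \autoref{proposition:1-dim-finite-image}, \autoref{proposition:basic-properties}, \autoref{lemma:geometric-mcg-rep-construction}, \autoref{proposition:gl-lift}, \autoref{proposition:cohomologically-rigid}, \autoref{lemma:rigid-reps} --- are exactly the ones the paper uses (the last step is \cite[Lemma 7.2.1]{LL:geometric-local-systems} in the paper, which you argue directly).

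However, there is a genuine gap in the middle of your third paragraph. You propose to run \autoref{lemma:geometric-mcg-rep-construction} and \autoref{proposition:gl-lift} on each $\rho_\tau$ to get a local system on a versal family, and then apply \autoref{proposition:cohomologically-rigid} to conclude it is cohomologically rigid. But \autoref{proposition:cohomologically-rigid} requires the fibral monodromy to be \emph{unitary}, which for $\tau \ne \tau_0$ is precisely what you are trying to prove --- so you cannot apply it directly. The correct move, which you flag only in your closing parenthetical as something to ``confirm,'' is actually the crux: you must run the geometric construction once for the distinguished (unitary) embedding $\tau_0$, apply \autoref{proposition:cohomologically-rigid} there to get \emph{strong} cohomological rigidity of $\rho'_{\tau_0}$, and then \emph{transfer} this rigidity to $\rho'_\tau := \rho'_{\tau_0} \otimes_{\mathbb{C},\sigma}\mathbb{C}$ (for $\sigma \in \on{Aut}(\mathbb{C})$ with $\sigma\circ\tau_0 = \tau$), noting that $H^1(\mathscr{C}^\circ, \on{ad}\rho')=0$ is preserved under such a $\sigma$-twist. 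With rigidity of $\rho'_\tau$ in hand (together with irreducibility and finite determinant, both of which also transfer), \autoref{lemma:rigid-reps} gives the PVHS and \cite[Theorem 1.2.12]{LL:geometric-local-systems} gives unitarity of $\rho_\tau$. Your parenthetical contains the right fact, but the main argument needs to be reorganized so that Galois transfer of rigidity is the load-bearing step rather than an afterthought; as written, the sentence invoking \autoref{proposition:cohomologically-rigid} on $\rho_\tau$ for $\tau \neq \tau_0$ is circular.
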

\begin{proof}
As $\rho$ is unitary, it is semisimple. As an irreducible sub-representation of an MCG-finite representation is MCG-finite by \autoref{proposition:basic-properties}, we may without loss of generality assume $\rho$ is irreducible.

If $g< 3$, $r\leq 1$, and so we may conclude by \autoref{proposition:1-dim-finite-image}. Thus we may assume $g\geq 3$. By \autoref{lemma:gl-defined-over-number-field}, we may assume $\rho$ factors through $\on{GL}_r(\mathscr{O}_K)$ for some number field $K$ and some embedding $\iota: \mathscr{O}_K\hookrightarrow \mathbb{C}$.
We now adjust notation so that $\rho$ denotes this $\mathscr O_K$-representation and $\rho_\iota=\rho\otimes_{\mathscr{O}_K, \iota}\mathbb{C}$
denotes our original MCG-finite representation. By \autoref{corollary:converse-to-MCG-rep}, there exists a punctured versal family $\mathscr C^\circ \to \mathscr M$
and a representation $\rho'_\iota: \pi_1(\mathscr C^\circ) \to
\on{GL}_r(\mathbb{C})$ with finite determinant, restricting to $\rho_\iota$ on
the fundamental group of a fiber of $\pi^\circ$, i.e.~ for $C^\circ$ a fiber of
$\pi^\circ$, $\rho'_\iota|_{\pi_1(
C^\circ)}=\rho_\iota$. Note that $\rho'_\iota$ is irreducible (as $\rho_\iota$
is irreducible), and strongly cohomologically rigid, by \autoref{proposition:cohomologically-rigid}.

Choose any embedding $\tau: \mathscr O_K \hookrightarrow \mathbb C$ and let
$\rho_\tau := \rho\otimes_{\mathscr{O}_K, \tau}\mathbb{C}$. Note that $\rho_\tau$ is MCG-finite, as the same is true for $\rho_\iota$. 
We aim to show $\rho_\tau$ is unitary.

Choose $\sigma: \mathbb{C}\overset{\sim}{\to} \mathbb{C}$ so that $\sigma\circ\iota=\tau$, and set $\rho_\tau'=\rho_{\iota}'\otimes_{\mathbb{C}, \sigma} \mathbb{C}$. Note that $\rho_\tau'|_{\pi_1(
C^\circ)}=\rho_\tau$.
Now observe that $\rho_\tau'$ is strongly cohomologically rigid, as the same is true for $\rho_\iota'$, and strong cohomological rigidity (being a cohomological condition) is preserved by automorphisms of $\mathbb{C}$. Moreover $\rho_\tau'$ is irreducible with finite determinant, as the same is true for $\rho_\iota'$.

We next show $\rho_\tau$ is unitary.
By \cite[Theorem 1.2.12]{LL:geometric-local-systems}, 
in order to show $\rho_\tau$ is unitary, it is enough to show $\rho'_\tau$ underlies
a complex PVHS on $\mathscr{C}^\circ$.
Since $\rho'_\tau$ is strongly cohomologically rigid and irreducible with finite determinant,
this follows from \autoref{lemma:rigid-reps}.

We are thus in the following situation: $\rho: \pi_1(\Sigma_{g,n})\to \on{GL}_r(\mathscr{O}_K)$ is a representation such that for each $\tau: \mathscr{O}_K\hookrightarrow \mathbb{C}$, $\rho\otimes_{\mathscr{O}_K, \tau} \mathbb{C}$ is unitary. Such a representation has finite image by \cite[Lemma 7.2.1]{LL:geometric-local-systems}.
\end{proof}

\subsection{Reduction from the semisimple case to the unitary case}
\label{subsection:semisimple-case}
In this section we will prove \autoref{theorem:finite-image} for semisimple
representations.
The idea is to first deform our representation to a complex PVHS via
non-abelian Hodge theory. Then we will argue that this complex PVHS is in fact unitary, using the results of \cite{LL:geometric-local-systems}, so that we
can apply \autoref{proposition:unitary-implies-finite}. Finally we will use the rigidity properties of unitary MCG-finite representations to argue 
that our deformation was in fact trivial---the deformed representation was in fact the one we started with.

\begin{lemma}\label{lemma:infinitesmal-rigidity}
Let $\pi^\circ: \mathscr{C}^\circ\to \mathscr{M}$ be a punctured versal family
of curves of genus $g$. Let $c\in \mathscr{C}^\circ$ be a point, set
$m=\pi^\circ(c)$, and let $C^\circ=\mathscr{C}^\circ_m$. Let $$\rho_\infty: \pi_1(\mathscr{C}^\circ, c)\to GL_r(\mathbb{C}[[t]])$$ be a representation with $r<\sqrt{g+1}$ and constant determinant, and let $\rho_0=\rho_\infty\otimes\mathbb{C}$. Suppose $\rho_0|_{\pi_1(C^\circ, c)}$ is unitary. Then $\rho_\infty|_{\pi_1(C^\circ, c)}$ is conjugate to a constant representation.
\end{lemma}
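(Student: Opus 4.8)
The plan is to combine \autoref{prop:stacky-split-constancy} with the cohomological vanishing provided by \autoref{theorem:unitary-rigid}. We apply \autoref{prop:stacky-split-constancy} to the short exact sequence
$$1\to \pi_1(C^\circ, c)\to \pi_1(\mathscr{C}^\circ, c)\to \pi_1(\mathscr{M}, m)\to 1$$
of \autoref{lemma:SES-pi1-versal-curves}, with $N=\pi_1(C^\circ,c)$ and $Q=\pi_1(\mathscr{M},m)$, the representation $\rho_0=\rho_\infty\otimes\mathbb{C}$ playing the role of the residual representation, and $\rho_\infty$ the given $\mathbb{C}[[t]]$-representation with constant determinant. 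The conclusion of that proposition is exactly that $\rho_\infty|_N$ is conjugate to a constant representation, which is what we want. So the entire task reduces to verifying the hypothesis of \autoref{prop:stacky-split-constancy}: for every $n$ and every deformation $\gamma: \pi_1(\mathscr{C}^\circ,c)\to GL_r(B_n)$ of $\rho_0$ with $\gamma|_N$ constant, we must show
$$H^0\big(\pi_1(\mathscr{M},m),\, H^1(\pi_1(C^\circ,c),\, \on{ad}(\gamma)|_N)\big)=0.$$

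The key observation is that since $\gamma|_N$ is constant, $\on{ad}(\gamma)|_N$ is a constant deformation of $\on{ad}(\rho_0)|_N$, which is a local system of free $B_n$-modules on $C^\circ$ whose reduction mod $t$ is $\on{ad}(\rho_0|_N)$. By hypothesis $\rho_0|_N$ is unitary, hence so is $\on{ad}(\rho_0|_N)$ (the adjoint of a unitary representation is unitary, since it factors through conjugation by a compact group on $\mathfrak{pgl}_r$). Moreover $\on{ad}(\rho_0|_N)$ has rank $r^2-1<g$, using $r<\sqrt{g+1}$, exactly as in the proof of \autoref{proposition:cohomologically-rigid}. Now $\on{ad}(\gamma)$ is a locally constant sheaf of free $B_n$-modules on all of $\mathscr{C}^\circ$ restricting on $C^\circ$ to the constant deformation $\on{ad}(\rho_0|_N)\otimes_{\mathbb{C}} B_n$ of the unitary local system $\on{ad}(\rho_0|_N)$. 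The group $H^0(\pi_1(\mathscr{M},m), H^1(\pi_1(C^\circ,c),\on{ad}(\gamma)|_N))$ is precisely $H^0(\mathscr{M}, R^1\pi^\circ_*\on{ad}(\gamma))$ under the identification of \autoref{lemma:mgn-fundamental-group} and the usual interpretation of higher direct images via monodromy. Applying \autoref{theorem:unitary-rigid} with $A=B_n$ and $\mathbb{V}=\on{ad}(\gamma)$ — whose hypotheses are that the fibral restriction is a constant deformation of a unitary local system and that $\rk_A\mathbb{V}<g$, both of which we have just checked — yields $H^0(\mathscr{M}, R^1\pi^\circ_*\on{ad}(\gamma))=0$, as desired.

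One technical point to be careful about: \autoref{prop:stacky-split-constancy} requires the vanishing for \emph{all} $n$ and all such $\gamma$, and \autoref{theorem:unitary-rigid} as stated requires $g\geq 2$ (it uses this at the end of its proof). If $g\leq 1$ then $r<\sqrt{g+1}\leq\sqrt{2}$ forces $r=1$, in which case $\on{ad}(\rho_0|_N)=0$ and the cohomology group vanishes trivially, so that case is immediate and we may assume $g\geq 2$. I also need that $\pi^\circ$ genuinely is a versal family in the sense of \autoref{notation:versal-family} so that \autoref{theorem:unitary-rigid} applies; this is part of the hypothesis ``punctured versal family of curves of genus $g$.'' Finally, the constant-determinant hypothesis on $\rho_\infty$ is needed to invoke \autoref{prop:stacky-split-constancy}, and it is included in the statement.

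\textbf{Main obstacle.} The real content is entirely bundled into \autoref{theorem:unitary-rigid}, so at the level of this lemma the only thing requiring care is the bookkeeping that identifies the group-cohomological $H^0(Q, H^1(N,-))$ appearing in \autoref{prop:stacky-split-constancy} with the sheaf-cohomological $H^0(\mathscr{M}, R^1\pi^\circ_* -)$ appearing in \autoref{theorem:unitary-rigid}, together with the verification that $\on{ad}(\gamma)$ is a locally constant sheaf of \emph{free} $B_n$-modules whose fibral restriction is a \emph{constant} deformation (this uses that $\gamma|_N$ is assumed constant, not merely constant up to conjugacy). These are routine but must be stated cleanly; I expect no genuine difficulty beyond this translation.
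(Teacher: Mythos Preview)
Your proof is correct and follows essentially the same approach as the paper: apply \autoref{prop:stacky-split-constancy} to the short exact sequence of \autoref{lemma:SES-pi1-versal-curves}, and verify its hypothesis by invoking \autoref{theorem:unitary-rigid} applied to $\on{ad}(\gamma)$, using $\rk_{B_n}\on{ad}(\gamma)=r^2-1<g$. Your additional remarks on the $g\leq 1$ edge case and the identification of group cohomology with $H^0(\mathscr{M},R^1\pi^\circ_*-)$ are reasonable elaborations but not points of divergence.
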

\begin{proof}
	We apply \autoref{prop:stacky-split-constancy} to the short exact sequence $$1\to \pi_1(C^\circ, c)\to \pi_1(\mathscr{C}^\circ, c)\to \pi_1(\mathscr{M},m)\to 1.$$ We must verify that for any $n$ and any deformation $\gamma: \pi_1(\mathscr{C}^\circ, c)\to GL_r(\mathbb{C}[t]/t^{n+1})$ of $\rho_0$ with $\gamma|_{\pi_1(C^\circ, c)}$ constant, we have $$H^0(\mathscr{M}, R^1\pi_*\on{ad}(\gamma))=0.$$ But this follows from \autoref{theorem:unitary-rigid}. Indeed, $$\rk_{\mathbb{C}[t]/t^{n+1}} \on{ad}(\gamma)=r^2-1<g$$ as $r<\sqrt{g+1}$ by assumption.
\end{proof}

\begin{lemma}\label{lemma:global-rigidity}
Let $\pi^\circ: \mathscr{C}^\circ\to \mathscr{M}$ be a punctured versal family
of curves of genus $g$. Let $c\in \mathscr{C}^\circ$ be a point, set
$m=\pi^\circ(c)$, and let $C^\circ=\mathscr{C}^\circ_m$. Let $X$ be a
finite-type connected $\mathbb{C}$-scheme, and let $$\rho:
\pi_1(\mathscr{C}^\circ, c)\to \on{GL}_r(\mathscr{O}_X(X))$$ be a representation
with $r<\sqrt{g+1}$ and constant determinant. For a closed point $x\in X$ with residue field
$\kappa(x)=\mathbb{C}$, let $\rho_x:=\rho\otimes_{\mathscr{O}_X(X)}\kappa(x)$.
Suppose there exists a closed point $y\in X$ such that $\rho_y|_{\pi_1(C^\circ,
c)}$ is unitary. Then for each
closed point $x\in X$, $\rho_{x}|_{\pi_1(C^\circ, c)}$ is conjugate to $\rho_y|_{\pi_1(C^\circ, c)}$.
\end{lemma}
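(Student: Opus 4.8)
The plan is to reduce the global statement to the infinitesimal rigidity already established in \autoref{lemma:infinitesmal-rigidity}, via a standard connectedness-and-deformation argument along curves in $X$. First I would reduce to the case where $X$ is a smooth affine curve: since $X$ is a finite-type connected $\mathbb{C}$-scheme, any two closed points $x,y$ can be joined by a chain of irreducible curves in $X$, and after normalizing and shrinking we may replace $X$ by a smooth affine curve and work one link of the chain at a time; it therefore suffices to prove that the conjugacy class of $\rho_x|_{\pi_1(C^\circ,c)}$ is locally constant on the closed points of a smooth affine curve $X$.

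Next I would introduce the function $\Phi$ on (closed points of) $X$ sending $x$ to the conjugacy class of the semisimplification of $\rho_x|_{\pi_1(C^\circ,c)}$, or more precisely the point it defines in the character variety (equivalently GIT quotient) of $\pi_1(C^\circ,c)$-representations of rank $r$ with the given finite determinant. This is a morphism of varieties, so its image is constructible; to show it is constant it suffices to show it is constant in a formal neighborhood of each closed point. Completing $\mathscr{O}_{X,x}$ and choosing a uniformizer $t$, the representation $\rho$ induces $\rho_\infty:\pi_1(\mathscr{C}^\circ,c)\to \on{GL}_r(\mathbb{C}[[t]])$ with $\rho_\infty\otimes\mathbb{C}=\rho_x$ and constant determinant. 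The key input is that at \emph{some} point the fibral monodromy is unitary, but a priori this is only known at $y$; here I would first run the argument starting at $y$: by \autoref{proposition:unitary-implies-finite}, $\rho_y|_{\pi_1(C^\circ,c)}$ has finite image, hence in particular $\rho_x|_{\pi_1(C^\circ,c)}$ is unitary for all $x$ in the connected component of $y$ under $\Phi^{-1}$ of that point — wait, more carefully: apply \autoref{lemma:infinitesmal-rigidity} at $x=y$ to conclude $\rho_\infty|_{\pi_1(C^\circ,c)}$ is conjugate to the constant representation $\rho_y|_{\pi_1(C^\circ,c)}$ in $\on{GL}_r(\mathbb{C}[[t]])$; this forces $\Phi$ to be constant on the formal neighborhood of $y$, so $\Phi^{-1}(\Phi(y))$ contains a Zariski-open neighborhood of $y$.

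To propagate this to all of $X$, I would observe that the locus $U\subset X$ where $\rho_x|_{\pi_1(C^\circ,c)}$ is unitary and conjugate to $\rho_y|_{\pi_1(C^\circ,c)}$ is nonempty (contains $y$) and, by the formal-neighborhood argument via \autoref{lemma:infinitesmal-rigidity} applied at each point of $U$, is open; the content of \autoref{lemma:infinitesmal-rigidity} is exactly that unitarity of the fibral monodromy at a point $x$ forces the formal deformation to be fibrally constant there, so $U$ is open. For closedness, note that unitary representations of a fixed rank and determinant form a closed subset of the character variety (the unitary condition is a closed condition, being the existence of an invariant positive semi-definite Hermitian form of full rank together with the constraint coming from semisimplicity), so $\Phi(U)$ being a single point and $\Phi$ continuous gives that $U=\Phi^{-1}(\Phi(y))$ is closed. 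Since $X$ (now a smooth connected affine curve, hence irreducible and connected) has $U$ nonempty, open, and closed, we get $U=X$, and then unwinding the chain of curves yields the statement for the original $X$. The main obstacle I anticipate is bookkeeping the passage between closed points, formal neighborhoods, and the character variety cleanly — in particular making sure the conjugacy class (not just the semisimplification) is the right invariant to track, and handling the determinant constraint consistently; but the essential mathematical content is entirely carried by \autoref{lemma:infinitesmal-rigidity} together with \autoref{proposition:unitary-implies-finite}, so no new ideas should be needed beyond a careful topological/constructibility argument on $X$.
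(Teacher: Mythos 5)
Your overall architecture matches the paper's: reduce to a smooth connected curve, complete at the base point to invoke \autoref{lemma:infinitesmal-rigidity}, and then run an open-closed argument. The reduction step is fine, and the observation that formal constancy forces $\Phi^{-1}(\Phi(y))$ to contain a Zariski-open neighborhood of $y$ (via Krull intersection / faithful flatness of completion applied to the ideal of a closed subscheme) is correct. The incidental appeal to \autoref{proposition:unitary-implies-finite} is unnecessary, as you noticed mid-sentence.

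The gap is in the closedness step, and it comes from working with the character variety (GIT quotient) rather than the representation variety. A point of the character variety only records the semisimplification of a fibral monodromy representation, so $\Phi(x)=\Phi(y)$ tells you $(\rho_x|_{\pi_1(C^\circ,c)})^{ss}\simeq \rho_y|_{\pi_1(C^\circ,c)}$, not that $\rho_x|_{\pi_1(C^\circ,c)}$ itself is conjugate to $\rho_y|_{\pi_1(C^\circ,c)}$. You therefore have not shown $U=\Phi^{-1}(\Phi(y))$, which is exactly what your closedness argument needs. The patch you suggest — that unitary representations of fixed rank and determinant form a Zariski-closed subset of the character variety — is false: unitarity is an analytically closed condition (by compactness of $\mathrm{U}(r)$), but it is not Zariski-closed; the unitary locus is typically a compact real-analytic subset of positive codimension, not a subvariety.

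The paper avoids this by mapping $X$ into the \emph{representation} variety $\on{Hom}(\pi_1(C^\circ,c),\on{GL}_r(\mathbb{C}))$ rather than its GIT quotient. The formal rigidity step produces a dense open $U\ni y$ with $f(U)$ contained in the $\on{GL}_r(\mathbb{C})$-orbit $O$ of $\rho_y|_{\pi_1(C^\circ,c)}$. Since $\rho_y|_{\pi_1(C^\circ,c)}$ is unitary, hence semisimple, its orbit $O$ is Zariski-closed in the representation variety (citing Sikora, or classically Artin). Then $f(X)=f(\overline{U})\subset\overline{f(U)}\subset\overline{O}=O$, which is exactly the desired conclusion about $\rho_x|_{\pi_1(C^\circ,c)}$ for every $x$, with no semisimplification loss. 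If you replace the character variety with the representation variety and invoke closedness of semisimple orbits in place of your false Zariski-closedness claim, your argument lands on the paper's proof.
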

\begin{proof}
Without loss of generality $X$ is a reduced (not necessarily irreducible) connected curve, as any closed point $x\in X$ is connected to $y$ by such a curve. We claim it suffices to prove the statement if the parameter space $X$ is smooth and irreducible. 

To reduce to this case, suppose $X$ is neither smooth nor irreducible, and let $\tilde Y$ be the normalization of the component $Y$ of $X$ containing $y$, and $\iota:
\tilde Y\to X$ the natural map. By the case of smooth irreducible curves, $\iota^*\rho|_{\pi_1(C^\circ, c)}\otimes \kappa(y')$ is independent of $y'\in \tilde Y(\mathbb{C})$, and thus $\rho|_{\pi_1(C^\circ, c)}\otimes \kappa(y'')$ is independent of $y''\in Y$ as well, and in particular unitary for all $y''$. Let $Z$ be a connected component of the closure of $X-\on{im}(\iota)$. As $X$ is connected, $Z$ intersects $Y$ non-trivially, say at some point $z\in Y$. But as $z$ is in $Y$, $\rho_z|_{\pi_1(C^\circ, c)}$ is unitary, and hence we are done by induction on the number of components of $X$.

So we now assume $X$ is a smooth connected curve. Let $\on{Hom}(\pi_1(C^\circ, c), \on{GL}_r(\mathbb{C}))$ be the representation variety parametrizing $r$-dimensional complex representations of $\pi_1(C^\circ, c)$. There is a natural $\on{GL}_r(\mathbb{C})$ action on $\on{Hom}(\pi_1(C^\circ, c), \on{GL}_r(\mathbb{C}))$ induced by conjugation. Let $$f: X\to \on{Hom}(\pi_1(C^\circ, c), \on{GL}_r(\mathbb{C}))$$ be the map sending a point $x$ to $\rho_x|_{\pi_1(C^\circ, c)}$. We wish to show that the image of $f$ lies in a single $\on{GL}_r(\mathbb{C})$-orbit, namely that of $\rho_y|_{\pi_1(C^\circ, c)}$.

Choose a local parameter $t$ of $X$ at $y$, so that $\widehat{\mathscr{O}_{X,y}}\simeq \mathbb{C}[[t]]$. Let $$\rho_\infty: \pi_1(\mathscr{C}^\circ, c)\to GL_r(\mathbb{C}[[t]])$$ be the corresponding representation. By \autoref{lemma:infinitesmal-rigidity}, $\rho_\infty|_{\pi_1(C^\circ, c)}$ is conjugate to a constant representation. It follows that there exists a dense open
subset $U\subset X$ containing $y$ such that $f(U)$ is contained in the
$\on{GL}_r(\mathbb{C})$-orbit of $\rho_y|_{\pi_1(C^\circ, c)}$.

As $\rho_y|_{\pi_1(C^\circ, c)}$ is unitary, hence semisimple, its $\on{GL}_r(\mathbb{C})$-orbit in $\on{Hom}(\pi_1(C^\circ, c), \on{GL}_r(\mathbb{C}))$ is closed \cite[Theorem 30]{sikora2012character}. Hence the closure $\overline{U}$ of $U$ maps into the $\on{GL}_r(\mathbb{C})$-orbit of $\rho_y|_{\pi_1(C^\circ, c)}$. But as $X$ is irreducible, $\overline{U}=X$, completing the proof.
\end{proof}
\begin{theorem}
	\label{theorem:finite-image-semisimple}
	Let $$\rho: \pi_1(\Sigma_{g,n})\to \on{GL}_r(\mathbb{C})$$ be a semisimple MCG-finite representation, and suppose $r<\sqrt{g+1}$. Then $\rho$ has finite image.
\end{theorem}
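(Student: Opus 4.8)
The plan is to reduce the semisimple case to the unitary case (\autoref{proposition:unitary-implies-finite}) by deforming $\rho$ to a complex PVHS and then showing that this deformation is trivial on the relevant fiber, so that $\rho$ itself becomes unitary. First I would write $\rho$ as a direct sum of irreducibles; since a summand of an MCG-finite representation is MCG-finite by \autoref{proposition:basic-properties}, and a finite direct sum of finite-image representations has finite image, I may assume $\rho$ is irreducible. The case $g \le 2$ forces $r \le 1$, handled by \autoref{proposition:1-dim-finite-image}, so assume $g \ge 3$. By \autoref{lemma:finite-central-part} we may also assume $\det\rho$ has finite order; after passing to a finite cover (which is harmless for the finiteness conclusion) we can even take it to have constant/trivial determinant in the deformation-theoretic sense.

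Next I would invoke \autoref{corollary:converse-to-MCG-rep} to realize $\rho$ as the fibral monodromy of a representation $\rho': \pi_1(\mathscr C^\circ) \to \on{GL}_r(\mathbb C)$ with finite determinant on a punctured versal family $\pi^\circ: \mathscr C^\circ \to \mathscr M$. By non-abelian Hodge theory for the (quasi-projective, non-proper) total space $\mathscr C^\circ$ --- \autoref{theorem:deformation-to-pvhs} --- there is a deformation with constant determinant of $\rho'$ to a representation $\rho'_0$ underlying a complex PVHS on $\mathscr C^\circ$. Restricting the PVHS to a fiber $C^\circ$, the monodromy $\rho'_0|_{\pi_1(C^\circ)}$ underlies a complex PVHS of rank $r < \sqrt{g+1}$, hence is unitary by \cite[Theorem 1.2.12]{LL:geometric-local-systems}. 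Since it is also MCG-finite (being a fibral restriction, by \autoref{proposition:MCG-finite-family-of-curves}), \autoref{proposition:unitary-implies-finite} gives that $\rho'_0|_{\pi_1(C^\circ)}$ has finite image.

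The final and most delicate step is to transfer finiteness back from $\rho'_0|_{C^\circ}$ to $\rho = \rho'|_{C^\circ}$, even though $\rho'$ and $\rho'_0$ are only deformation-equivalent over a formal or positive-dimensional parameter space. This is exactly the content of \autoref{lemma:global-rigidity} (which rests on \autoref{lemma:infinitesmal-rigidity}, and ultimately on the vanishing theorem \autoref{theorem:unitary-rigid} applied to $\on{ad}$ of the deforming family, where $\rk \on{ad} = r^2 - 1 < g$): a connecting family over a finite-type base $X$ joining $\rho'_0$ (at a point $y$ where the fibral monodromy is unitary) to $\rho'$ has the property that all fibral restrictions $\rho'_x|_{\pi_1(C^\circ)}$ are conjugate, using that the $\on{GL}_r$-orbit of a semisimple representation in the representation variety is closed. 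Concretely, spreading out the formal deformation of \autoref{theorem:deformation-to-pvhs} to an actual family over a curve $X$ and applying \autoref{lemma:global-rigidity} yields $\rho \cong \rho'_0|_{C^\circ}$ up to conjugacy, which has finite image; hence so does $\rho$. The main obstacle I expect is this last rigidity transfer: one must be careful that the deformation produced by non-abelian Hodge theory can be realized over a finite-type (or at least formal, then algebraized) base so that \autoref{lemma:global-rigidity} applies, and that the constant-determinant hypotheses are preserved throughout --- but all the needed ingredients are in place in the preceding sections.
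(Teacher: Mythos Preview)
Your proposal is correct and follows essentially the same approach as the paper: reduce to the irreducible case, realize $\rho$ on a versal family via \autoref{corollary:converse-to-MCG-rep}, deform to a PVHS via \autoref{theorem:deformation-to-pvhs}, use \cite[Theorem 1.2.12]{LL:geometric-local-systems} and \autoref{proposition:unitary-implies-finite} to get finite fibral monodromy for the deformed system, and then invoke \autoref{lemma:global-rigidity} to conclude that the deformation was fibrally trivial. A couple of minor points: the small-genus reduction you include is already absorbed into \autoref{proposition:unitary-implies-finite}, and the aside about passing to a finite cover to trivialize the determinant is unnecessary (finite determinant, which \autoref{corollary:converse-to-MCG-rep} already provides, suffices throughout); also, Mochizuki's deformation is already algebraic over a finite-type base (the $\mathbb{C}^*$-orbit in the moduli of Higgs bundles), so no ``spreading out'' of a formal family is needed before applying \autoref{lemma:global-rigidity}.
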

\begin{proof}
	Observe that if $\rho$ is a sum of irreducible representations, each of
	which have finite monodromy, then $\rho$ has finite monodromy as well.
	As a summand of an MCG-finite representation is MCG-finite by
	\autoref{proposition:basic-properties}, it suffices to treat the case that $\rho$ is irreducible.

	By \autoref{corollary:converse-to-MCG-rep}, there exists a dominant
	\'etale map $\mathscr{M}\to \mathscr{M}_{g,n}$, and a local system
	$\mathbb{V}$ with finite determinant on the total space $\mathscr{C}^\circ$ of the associated
	family of $n$-times punctured curves of genus $g$, such that for
	$C^\circ$ a fiber of $\pi^\circ$, $\mathbb{V}|_{{C}^\circ}$ has monodromy representation given by $\rho$. 
		By \autoref{theorem:deformation-to-pvhs}, $\mathbb V$ admits a deformation with constant determinant to a
	local system $\mathbb V_0$ underlying a complex PVHS.
	Since $r < 2\sqrt{g+1}$, it follows from \cite[Theorem
	1.2.12]{LL:geometric-local-systems} that
	the local system $\mathbb V_0|_{{C}^\circ}$ is unitary. Moreover, $\mathbb V_0|_{{C}^\circ}$ is MCG-finite by \autoref{proposition:MCG-finite-family-of-curves}.
	
	It follows from \autoref{proposition:unitary-implies-finite}
	that $\mathbb V_0|_{{C}^\circ}$ has finite monodromy.
	To conclude, it's enough to show that $\mathbb V|_{{C}^\circ} =
	\mathbb V_0|_{{C}^\circ}$.
	But this is immediate from \autoref{lemma:global-rigidity}.
\end{proof}

\subsection{Completion of the non-semisimple case}
\label{subsection:non-semisimple}
We come to the final step, where we verify the non-semisimple case of \autoref{theorem:finite-image}.
The idea is to show that if we do have a non-semisimple MCG-finite representation of $\pi_1(\Sigma_{g,n})$ of low rank, we
can produce a certain
 finite cover $\Sigma_{g',n'}$ of $\Sigma_{g,n}$, which violates 
our results toward the Putman-Wieland conjecture.

If $\rho$ is a $G$-representation and $\rho'\subset \rho$ is a subrepresentation, we say that $\rho'$ is \emph{characteristic} if it is stable under all $G$-automorphisms of $\rho$ (for example, the socle of $\rho$, or an isotypic component thereof).
\begin{lemma}
	\label{lemma:semisimplicity}
Let $\rho$ be an MCG-finite representation of $\pi_1(\Sigma_{g,n})$, and let $\rho_1\subset\rho$ be a semisimple, characteristic subrepresentation of $\rho$. Let $\rho_2=\rho/\rho_1$, and suppose $\rho_2$ is semisimple as well. If $\rho_1, \rho_2$ have finite image and $\rk\rho<\sqrt{g+1}$, then $\rho$ has finite image as well, i.e.~the extension of $\rho_2$ by $\rho_1$ splits.
\end{lemma}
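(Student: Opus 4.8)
The plan is to assume the extension of $\rho_2$ by $\rho_1$ does not split and to contradict our main result toward the Putman--Wieland conjecture, \autoref{theorem:asymptotic-putman-wieland}. Write $V=\mathbb C^r$ for the space of $\rho$, $V_1=\operatorname{socle}(\rho)$, and $V_2=V/V_1$. Since $\rho_1$ and $\rho_2$ have finite image, I would choose a \emph{characteristic} finite-index subgroup $N\trianglelefteq\pi_1(\Sigma_{g,n})$ contained in $\ker\rho_1\cap\ker\rho_2$ (for instance the intersection of all subgroups of $\pi_1(\Sigma_{g,n})$ of index $[\pi_1(\Sigma_{g,n}):\ker\rho_1\cap\ker\rho_2]$), let $\phi\colon\pi_1(\Sigma_{g,n})\twoheadrightarrow H:=\pi_1(\Sigma_{g,n})/N$, and let $\Sigma'$ be the corresponding finite characteristic cover of $\Sigma_{g,n}$, a surface of the form $\Sigma_{g',n'}$. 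Because $V_1$ is a subrepresentation and $\rho_1|_N,\rho_2|_N$ are trivial, $\rho|_N$ takes values in the abelian unipotent subgroup of $\operatorname{GL}(V)$ canonically identified with $\operatorname{Hom}(V_2,V_1)$. If $\rho|_N$ is trivial then $\rho$ factors through the finite group $H$, hence is semisimple, so the extension splits and we are done; therefore assume $\rho|_N\neq 0$ and aim for a contradiction.

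Since $\operatorname{Hom}(V_2,V_1)$ is a $\mathbb C$-vector space, $\rho|_N$ factors through $N^{\mathrm{ab}}\otimes_{\mathbb Z}\mathbb C=H_1(\Sigma';\mathbb C)$, yielding a nonzero linear map $\delta\colon H_1(\Sigma';\mathbb C)\to\operatorname{Hom}(V_2,V_1)$. A conjugation computation shows $\delta$ is $H$-equivariant for the natural $H$-actions: $\rho(\gamma)$ preserves the socle $V_1$ for every $\gamma$, conjugation by a unipotent element acts trivially on the off-diagonal block $\operatorname{Hom}(V_2,V_1)$, and hence the $\pi_1(\Sigma_{g,n})$-action on $\operatorname{Hom}(V_2,V_1)\cong\rho_1\otimes\rho_2^\vee$ by conjugation descends to $H$ and is the action through $\rho_1,\rho_2$. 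Dualizing $\delta$ gives a nonzero $H$-equivariant map $\rho_1^\vee\otimes\rho_2\cong\operatorname{Hom}(V_2,V_1)^\vee\to H^1(\Sigma';\mathbb C)$ whose image $W$ is a nonzero $H$-subrepresentation; every irreducible constituent $\sigma$ of $W$ occurs in $\rho_1^\vee\otimes\rho_2$, so $\dim\sigma\le(\dim\rho_1)(\dim\rho_2)$ and $\dim W\le(\dim\rho_1)(\dim\rho_2)$. Writing $a=\dim\rho_1$, $b=\dim\rho_2$, we have $a+b=r$, and $r^2<g+1$ forces $r^2\le g$, so $ab\le r^2/4\le g/4$. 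Consequently $\dim\sigma<g$, and for every $\sigma$ we get $\dim W^\sigma\le ab<2g-2ab\le 2g-2\dim\sigma$.

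It remains to check that each isotypic component $W^\sigma\subseteq H^1(\Sigma';\mathbb C)^\sigma$ is invariant under a finite-index subgroup of the group $\Gamma\subseteq\operatorname{PMod}_{g,n+1}$ preserving $\phi$ up to conjugacy, as in \autoref{notation:prym}. Here I would use that $\rho$ is MCG-finite: a finite-index subgroup $\Gamma_\rho\subseteq\operatorname{PMod}_{g,n}$ fixes the conjugacy class of $\rho$, and for $\gamma$ in its preimage $\Gamma'\subseteq\operatorname{PMod}_{g,n+1}$ under the Birman sequence the induced automorphism $\gamma_*$ of $\pi_1(\Sigma_{g,n})$ satisfies $\rho\circ\gamma_*\cong\rho$, say with $(\rho\circ\gamma_*)(g)=A_\gamma\rho(g)A_\gamma^{-1}$ for an intertwiner $A_\gamma$. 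Since the socle of a representation is unchanged, as a subspace, under precomposition with an automorphism, $A_\gamma$ maps $V_1=\operatorname{socle}(\rho)$ onto $\operatorname{socle}(\rho\circ\gamma_*)=V_1$; comparing off-diagonal blocks then gives $\delta\circ\gamma_*^{\mathrm{ab}}=\operatorname{Ad}(A_\gamma)\circ\delta$ on $H_1(\Sigma';\mathbb C)$ (with $\gamma_*^{\mathrm{ab}}$ the automorphism of $H_1(\Sigma';\mathbb C)$ induced by $\gamma_*$, which preserves $N$), and dualizing shows $\gamma$ maps $W$ isomorphically onto itself. As $N$ is characteristic, $\Gamma$ is finite-index in $\operatorname{PMod}_{g,n+1}$, so $\Gamma'\cap\Gamma$ is finite-index in $\Gamma$; and since for $\gamma\in\Gamma$ the induced automorphism of $H$ is inner, $\gamma$ acts on $H^1(\Sigma';\mathbb C)$ compatibly with the $H$-action up to an element of that action, hence preserves every isotypic component. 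Therefore $\Gamma'\cap\Gamma$ preserves each $W^\sigma$.

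Choosing a constituent $\sigma$ with $W^\sigma\neq 0$ then produces a nonzero $(\Gamma'\cap\Gamma)$-invariant subrepresentation of $H^1(\Sigma';\mathbb C)^\sigma$ of dimension strictly less than $2g-2\dim\sigma$, contradicting \autoref{theorem:asymptotic-putman-wieland}. I expect the main obstacle to lie in the equivariance bookkeeping of the previous paragraph --- pinning down the finite-index subgroup of $\operatorname{PMod}_{g,n+1}$ that stabilizes the extension data, reconciling it with the group $\Gamma$ attached to $\phi$ in the Putman--Wieland setup, and tracking the socle and the $H$-action through the conjugating matrices $A_\gamma$. By contrast, the construction of $\delta$, the identification of $\rho|_N$ with a low-dimensional subspace of $H^1(\Sigma';\mathbb C)$, and the dimension count are routine. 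Finally, the small-genus cases are vacuous: $r<\sqrt{g+1}$ forces $r=1$, hence $\rho$ irreducible with $\rho_1=\rho$, unless $g\ge 4$.
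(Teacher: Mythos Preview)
Your argument is correct and follows essentially the same strategy as the paper's proof: pass to a finite cover on which $\rho_1$ and $\rho_2$ trivialize, identify the restriction $\rho|_N$ with an $H$-equivariant map $H_1(\Sigma';\mathbb C)\to\operatorname{Hom}(V_2,V_1)$, dualize to a low-dimensional subspace of $H^1(\Sigma';\mathbb C)$, and contradict \autoref{theorem:asymptotic-putman-wieland} on each isotypic piece. The paper's version differs only in minor bookkeeping---it chooses the finite-index subgroup of $\operatorname{PMod}_{g,n+1}$ directly as one stabilizing the conjugacy classes of the irreducible constituents $\sigma_i$ (rather than via a characteristic $N$), and runs the numerics with the bound $n_i\dim\sigma_i<(g+1)/4$ rather than your $ab\le g/4$; your observation that $r^2<g+1$ with $r,g$ integers gives $r^2\le g$ makes the inequalities slightly cleaner. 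Your more careful treatment of the $\Gamma'$-equivariance (tracking that $A_\gamma$ preserves the socle and hence is block upper-triangular, so that $\ker\delta$ is $\gamma_*$-stable) fills in a step the paper leaves implicit.
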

\begin{proof}
If $g=0$, $\rho$ is the zero representation, so without loss of generality we may assume $g$ is positive.

	Decompose $\rho_2^\vee \otimes \rho_1$ as $\rho_2^\vee \otimes \rho_1 \simeq \oplus_i \sigma_i^{\oplus n_i}$, with the $\sigma_i$ irreducible and pairwise non-isomorphic, and the $n_i$ positive integers.
	Since $\dim \rho_1 + \dim \rho_2 = \dim \rho < \sqrt{g+1}$, 
	we obtain from the AM-GM inequality that $\dim \rho_1 \cdot \dim \rho_2
	< (\sqrt{g+1}/2)^2 = (g+1)/4$. Therefore, for every $i$, $$n_i \cdot \dim
	\sigma_i < (g+1)/4$$
	and hence $n_i, \dim\sigma_i < (g+1)/4$. 
	To put ourselves in the setting of 
	\autoref{theorem:asymptotic-putman-wieland}, we choose an additional
	basepoint $v \in \Sigma_{g,n}$ and let $\Sigma_{g,n+1} := \Sigma_{g,n} \setminus\{v\}$. Let $\Gamma\subset \on{Mod}_{g, n+1}$ be a finite index subgroup stabilizing the conjugacy class of each $\sigma_i$.

	Let $\Sigma_{g',n'} \to \Sigma_{g,n}$ be a finite Galois cover, with Galois group $H$, upon which the local
	systems corresponding to both
	$\rho_1$ and $\rho_2$ trivialize (for example, the cover defined by $\ker(\rho_1\oplus\rho_2)$).
	It suffices to show that $\rho|_{\pi_1(\Sigma_{g',n'})}$ is trivial. As
	$\rho|_{\pi_1(\Sigma_{g',n'})}$ has abelian image, it factors through an
	$H$-equivariant map $$H_1(\Sigma_{g',n'})\to \rho_2^\vee\otimes
	\rho_1.$$ Equivalently, this is the extension class corresponding to
	$\rho$ in $$\on{Ext}^1_{\pi_1(\Sigma_{g', n'})}(\rho_2,
	\rho_1)=H^1(\pi_1(\Sigma_{g', n'}), \Hom(\rho_2,
	\rho_1))=\Hom(\pi_1(\Sigma_{g', n'}), \Hom(\rho_2, \rho_1)).$$
	Projecting onto the $\sigma_i$-isotypic piece of $\rho_2^\vee\otimes
	\rho_1$ yields a $\Gamma$-stable quotient of
	$H_1(\Sigma_{g',n'})^{\sigma_i}$, or equivalently a $\Gamma$-stable
	subspace of $H^1(\Sigma_{g',n'})^{\sigma_i}$, of rank at most $(g+1)/4$.
	Call this subspace $Q_i$.
	But by
	\autoref{theorem:asymptotic-putman-wieland}, we have that any nonzero
	$\Gamma$-stable subspace of $H^1(\Sigma_{g',n'})^{\sigma_i}$ has rank at
	least $2g-2\dim \sigma_i$, which satisfies the inequality
	\begin{align*}
		2g-2\dim \sigma_i >2g-(g+1)/2=\frac{3g}{2}-\frac{1}{2}>(g+1)/4
		> n_i \dim \sigma_i.
	\end{align*}
	Hence $Q_i$ equals zero. As this holds for all $i$, $\rho|_{\pi_1(\Sigma_{g',n'})}$ is trivial as desired.
\end{proof}

\subsection{}
\label{subsection:main-proof}
We finally complete the proof of our main theorem, that 
MCG-finite representations $$\rho: \pi_1(\Sigma_{g,n})\to
\on{GL}_r(\mathbb{C})$$ with $r<\sqrt{g+1}$ have finite image.
\begin{proof}[Proof of \autoref{theorem:finite-image}]
The proof is by induction on the length of the socle filtration. The base case is \autoref{theorem:finite-image-semisimple}. Now let $\rho_1$ be the socle of $\rho$.	Because the socle of a representation is characteristic both $\rho_1$
	and $\rho/\rho_1$ are
	 MCG-finite. 
	Therefore, both $\rho_1$ and $\rho/\rho_1$ have finite image by induction.
	We conclude by \autoref{lemma:semisimplicity}.
\end{proof}

\section{Consequences for arithmetic representations}\label{section:arithmetic-applications}
The main arithmetic consequence of \autoref{theorem:finite-image} is \autoref{theorem:arithmetic-consequence} below, which verifies a prediction of the Fontaine-Mazur conjecture, as we now explain. 
Throughout this section, we will no longer be working over $\mathbb C$.

\subsection{Application to relative Fontaine-Mazur}
\label{subsection:application-fontaine-mazur}
\begin{definition}
	\label{definition:arithmetic}
	Let $X$ be a variety over a finitely-generated field $K$ with algebraic closure $\overline{K}$, and 
$$\rho: \pi_1^{\text{\'et}}(X_{\overline{K}})\to
\on{GL}_r(\overline{\mathbb{Q}}_\ell)$$ a
continuous representation. We
say that $\rho$ is \emph{arithmetic} if its conjugacy class has finite orbit
under the action of $\on{Gal}(\overline{K}/K)$ on the set of isomorphism classes of
$\pi_1^{\text{\'et}}(X_{\overline{K}})$-representations. Here the action is
induced by the natural outer action of $\on{Gal}(\overline{K}/K)$ on
$\pi_1^{\text{\'et}}(X_{\overline{K}})$. 
\end{definition}

The relative form of the Fontaine-Mazur conjecture predicts
that all semisimple arithmetic representations are of geometric origin
\cite[Conjecture 1 bis]{petrov2020geometrically}. Our main arithmetic result, a
straightforward application of \autoref{theorem:finite-image}, is that this is true for representations of low rank on the \emph{generic} $n$-pointed curve of genus $g$. 
In fact, we show such representations are not only of geometric origin:
they even have finite image.
\begin{theorem}\label{theorem:arithmetic-consequence}
Let $K$ be a finitely-generated field of characteristic zero and $(C, x_1,
\cdots, x_n)$ a smooth $n$-pointed geometrically connected curve of genus $g$ 
over $K$,
such that the corresponding map $\on{Spec}(K)\to \mathscr{M}_{g,n, \mathbb{Q}}$ factors through the generic point. 
If $r<\sqrt{g+1}$, any continuous arithmetic representation 
\begin{align*}
\rho: \pi_1^{\text{\'et}}(C_{\overline{K}}\setminus\{x_1, \cdots, x_n\})\to \on{GL}_r(\overline{\mathbb{Q}}_\ell) 
\end{align*}
has finite image.
\end{theorem}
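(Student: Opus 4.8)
The plan is to deduce \autoref{theorem:arithmetic-consequence} from \autoref{theorem:finite-image} by showing that an arithmetic representation in the sense of \autoref{definition:arithmetic} gives rise to an MCG-finite representation after a comparison between the (profinite completion of the) topological fundamental group and the \'etale fundamental group. First I would reduce to the case where $K$ is a number field: since $C$ is defined over a finitely-generated field $K$ and $\rho$ is continuous with finite image hypothesis as the goal, one spreads out $(C,x_1,\dots,x_n)$ and $\rho$ over a finitely generated $\mathbb Z$-algebra and specializes to a closed point, using that finiteness of image can be checked after such a specialization (the image is a finitely generated profinite group, and specialization is injective on the relevant profinite quotients). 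After this reduction, $\on{Gal}(\overline K/K)$ surjects onto $\on{PMod}_{g,n}$-type quotients through the outer action on $\pi_1^{\text{\'et}}(C_{\overline K}\setminus\{x_i\})$; more precisely, because $(C,x_i)$ corresponds to the generic point of $\mathscr M_{g,n,\mathbb Q}$, the outer Galois action factors through (an open subgroup of) $\pi_1^{\text{\'et}}(\mathscr M_{g,n,\overline{\mathbb Q}})$, whose image in $\on{Out}(\widehat{\pi_1(\Sigma_{g,n})})$ is exactly $\widehat{\on{Mod}_{g,n}}$ (or a finite-index subgroup thereof) by the theory of the arithmetic fundamental group of $\mathscr M_{g,n}$.

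Next I would use this to see that the $\on{Gal}(\overline K/K)$-orbit of the conjugacy class of $\rho$, being finite by hypothesis, means the conjugacy class of $\rho$ is fixed by a finite-index subgroup of $\on{Out}(\widehat{\pi_1(\Sigma_{g,n})})$ containing (a finite-index subgroup of) $\widehat{\on{Mod}_{g,n}}$. Restricting along the natural map $\on{Mod}_{g,n} \to \on{Out}(\widehat{\pi_1(\Sigma_{g,n})})$, and noting that the conjugacy class of $\rho$ is then fixed by a finite-index subgroup of $\on{Mod}_{g,n}$, we would like to conclude that $\rho$, viewed as a representation of $\widehat{\pi_1(\Sigma_{g,n})}$ and hence of $\pi_1(\Sigma_{g,n})$, is MCG-finite in the sense of \autoref{definition:mcg-finite}. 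Here one must be slightly careful: $\rho$ is an $\ell$-adic representation, so one first invokes a standard finiteness/rigidity statement — the image of $\rho$ lies in $\on{GL}_r(\mathscr O)$ for $\mathscr O$ the ring of integers in a finite extension of $\mathbb Q_\ell$, and after choosing an abstract field isomorphism $\overline{\mathbb Q}_\ell \simeq \mathbb C$ we obtain a representation $\pi_1(\Sigma_{g,n}) \to \on{GL}_r(\mathbb C)$ whose conjugacy class still has finite $\on{Mod}_{g,n}$-orbit, because all the relevant data (the orbit, the conjugacies) is algebraic. Thus this complex representation is MCG-finite, and by \autoref{theorem:finite-image} it has finite image since $r < \sqrt{g+1}$; transporting back, $\rho$ itself has finite image.

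The main obstacle I anticipate is the precise comparison between the profinite and discrete mapping class group actions, namely ensuring that "finite orbit under the outer Galois action" genuinely translates to "finite orbit under $\on{Mod}_{g,n}$" rather than merely under its profinite completion. The subtlety is that a priori a conjugacy class of $\pi_1^{\text{\'et}}$-representations fixed by a finite-index subgroup of $\widehat{\on{Mod}_{g,n}}$ corresponds to a finite $\widehat{\on{Mod}_{g,n}}$-orbit, and one needs that this pulls back to a finite $\on{Mod}_{g,n}$-orbit — which is automatic because $\on{Mod}_{g,n} \to \widehat{\on{Mod}_{g,n}}$ has dense image, so the stabilizer pulls back to a finite-index subgroup of $\on{Mod}_{g,n}$. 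A secondary technical point is matching up the basepoint/section issues: one needs $(C,x_1,\dots,x_n)$ to give rise to a point of $\mathscr M_{g,n+1}$, i.e.\ to use a $K$-point as an extra puncture, or to argue that the outer action already suffices; this is handled just as in \autoref{proposition:MCG-finite-family-of-curves} and \autoref{lemma:mgn-fundamental-group}. Once these comparisons are in place, the deduction from \autoref{theorem:finite-image} is immediate, so the entire content is in setting up the dictionary between the arithmetic and topological pictures.
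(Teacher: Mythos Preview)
Your core argument---translating arithmeticity into finite orbit under the mapping class group via the comparison between \'etale and topological fundamental groups, then applying \autoref{theorem:finite-image} after choosing an isomorphism $\overline{\mathbb{Q}}_\ell \simeq \mathbb{C}$, and finally using density of the topological $\pi_1$ in the \'etale $\pi_1$ to conclude---is correct and is essentially what the paper does. The paper sets this up via a map of homotopy exact sequences comparing $\pi_1^{\text{\'et}}(C_{\overline{K}}^\circ) \hookrightarrow \pi_1^{\text{\'et}}(C_K^\circ) \twoheadrightarrow \on{Gal}(\overline{K}/K)$ with the analogous universal sequence over $\mathscr{M}_{g,n,\mathbb{Q}}$, and uses that a dominant map $\on{Spec} K \to \mathscr{M}_{g,n,\mathbb{Q}}$ induces a finite-index image on $\pi_1^{\text{\'et}}$.

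However, your opening reduction to $K$ a number field is both unnecessary and incorrect. Specializing $(C, x_1, \ldots, x_n)$ to a closed point of a spread-out family destroys the hypothesis that the classifying map factors through the generic point of $\mathscr{M}_{g,n,\mathbb{Q}}$: the specialized curve then corresponds to a closed point, and the Galois image in $\pi_1^{\text{\'et}}(\mathscr{M}_{g,n,\mathbb{Q}})$ is no longer of finite index, so you lose exactly the input you need. Your very next sentence (``because $(C,x_i)$ corresponds to the generic point'') is then inconsistent with the reduction just performed. The paper avoids this entirely by working directly with the given finitely-generated $K$: dominance of $\on{Spec} K \to \mathscr{M}_{g,n,\mathbb{Q}}$ already gives that $\pi_1^{\text{\'et}}(\on{Spec} K) \to \pi_1^{\text{\'et}}(\mathscr{M}_{g,n,\mathbb{Q}})$ has finite-index image, which is all that is required. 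Simply drop the specialization step and your argument goes through as written.
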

We prove this at the end of \autoref{subsection:application-fontaine-mazur}
after spelling out
some consequences.
\begin{remark}\label{remark:esnault-kerz}
	Let $K, (C, x_1, \cdots, x_n)$ be as in the theorem, and choose any
	embedding $K\hookrightarrow \mathbb{C}$. Let $C^{\on{an}}\setminus\{x_1,
	\cdots, x_n\}$ be the associated Riemann surface. By \cite[Lemma 7.5.1]{LL:geometric-local-systems}, 
	\autoref{theorem:arithmetic-consequence} implies that arithmetic
	representations are not Zariski-dense in the
	$\overline{\mathbb{Q}}_\ell$-points of the character variety
	parametrizing semisimple representations of
	$\pi_1(C^{\on{an}}\setminus\{x_1, \cdots, x_n\})$ of rank $r$, for
	$1<r<\sqrt{g+1}$. This answers negatively a question of Esnault and Kerz
	\cite[Question 9.1(1)]{esnault2020arithmetic}.
\end{remark}

Recall that a local system $\mathbb V$ on a smooth complex variety $X$ is of geometric origin if there is
some Zariski open $U \subset X$ and a smooth proper morphism $f: Y \to U$ so that
$\mathbb V|_U$ is a subquotient of $R^i f_* \mathbb C$ for some $i$.
\begin{corollary}
	\label{corollary:fontaine-mazur}
	Let $K$ and $(C, x_1, \cdots, x_n)$ be as in
	\autoref{theorem:arithmetic-consequence}, 
	and let
	$\rho$ be a continuous representation 
\begin{align*}
\rho: \pi_1^{\text{\'et}}(C_{\overline{K}}\setminus\{x_1, \cdots, x_n\})\to \on{GL}_r(\overline{\mathbb{Q}}_\ell) 
\end{align*}
with $\dim \rho < \sqrt{g+1}$. The following are equivalent.
	\begin{enumerate}
		\item $\rho$ has finite image.
		\item $\rho$ is arithmetic.
		\item For any embedding $K\hookrightarrow \mathbb{C}$ and any isomorphism $\mathbb{C}\simeq \overline{\mathbb{Q}}_\ell$, the local system corresponding to $\rho_\mathbb{C}|_{C^{\on{an}}\setminus\{x_1,
	\cdots, x_n\}}$ on $C^{\on{an}}\setminus\{x_1,
	\cdots, x_n\}$ is of geometric
			origin. 
		\item For any embedding $K\hookrightarrow \mathbb{C}$ and any isomorphism $\mathbb{C}\simeq \overline{\mathbb{Q}}_\ell$, the local system corresponding to $\rho_\mathbb{C}|_{C^{\on{an}}\setminus\{x_1,
	\cdots, x_n\}}$ on $C^{\on{an}}\setminus\{x_1,
	\cdots, x_n\}$ underlies an
			integral PVHS.
	\end{enumerate}
	Moreover, $(1), (3)$, and $(4)$ are equivalent whenever $\dim \rho <
	2\sqrt{g+1}$.
\end{corollary}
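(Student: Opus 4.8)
The statement to prove is Corollary \ref{corollary:fontaine-mazur}, establishing the equivalence of (1)--(4) (and of (1), (3), (4) in the wider range $\dim\rho<2\sqrt{g+1}$). Here is the plan.

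\textbf{Outline of the argument.} The plan is to prove the cycle of implications $(1)\Rightarrow(2)\Rightarrow(1)$, $(1)\Rightarrow(4)\Rightarrow(3)\Rightarrow(2)$, treating the easy implications first and reserving the genuinely Hodge-theoretic inputs for $(3)\Rightarrow(2)$. The implication $(1)\Rightarrow(2)$ is immediate: a finite-image representation of $\pi_1^{\text{\'et}}(C_{\overline K}\setminus\{x_i\})$ has only finitely many conjugates of any kind, so in particular its conjugacy class has finite $\on{Gal}(\overline K/K)$-orbit. The implication $(2)\Rightarrow(1)$ is the substantive content of \autoref{theorem:arithmetic-consequence}: an arithmetic representation of rank $r<\sqrt{g+1}$ of the geometric fundamental group of the generic curve has finite image. (This is exactly where the proof is carried out in \autoref{subsubsection:arithmetic-proof}; for the corollary we simply cite it.) So far this gives $(1)\Leftrightarrow(2)$ in the range $\dim\rho<\sqrt{g+1}$.

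\textbf{The remaining implications.} For $(1)\Rightarrow(4)$: if $\rho$ has finite image, then for any embedding $K\hookrightarrow\mathbb C$ and any isomorphism $\mathbb C\simeq\overline{\mathbb Q}_\ell$, the associated complex local system $\mathbb V$ on $C^{\on{an}}\setminus\{x_i\}$ has finite monodromy. A finite-monodromy local system is defined over $\mathscr O_K$ for a number field (its image is a finite subgroup of $\on{GL}_r(\overline{\mathbb Q})$, hence conjugate into $\on{GL}_r(\mathscr O_K)$), and it carries a variation of Hodge structure pure of weight $0$ with trivial Hodge filtration, polarized by an invariant Hermitian form; thus $\mathbb V$ underlies an integral PVHS. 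For $(4)\Rightarrow(3)$: a local system underlying an integral PVHS on a quasi-projective variety is of geometric origin --- this is a theorem going back to the Hodge-theoretic circle of results used throughout the paper (one can invoke the fact that an integral local system underlying a PVHS is of geometric origin, cf.\ the discussion of geometric origin recalled just before the corollary, or cite \cite{LL:geometric-local-systems}). Finally, $(3)\Rightarrow(2)$: a local system of geometric origin on $C^{\on{an}}\setminus\{x_i\}$ descends, after shrinking, to a subquotient of the cohomology of a family defined over a finitely generated field, and such ``motivic'' local systems are arithmetic --- the Galois conjugates are again realized inside the \'etale cohomology of Galois conjugates of the same family, of which there are finitely many up to isomorphism. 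This closes the cycle $(1)\Leftrightarrow(2)\Leftrightarrow(3)\Leftrightarrow(4)$ when $\dim\rho<\sqrt{g+1}$.

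\textbf{The wider range and the main obstacle.} For the last sentence, we must show $(1)\Leftrightarrow(3)\Leftrightarrow(4)$ whenever $\dim\rho<2\sqrt{g+1}$, i.e.\ without the full strength of \autoref{theorem:finite-image}. The implications $(1)\Rightarrow(4)\Rightarrow(3)$ and $(3)\Rightarrow(4)$ (geometric origin forces an integral PVHS structure by Deligne's semisimplicity and the theory of weights, together with integrality of geometric local systems) go through verbatim in this range. The one point needing the hypothesis $\dim\rho<2\sqrt{g+1}$ is $(4)\Rightarrow(1)$: an integral PVHS on the generic curve of rank $<2\sqrt{g+1}$ has finite monodromy. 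This is proved by the same mechanism as \autoref{proposition:unitary-implies-finite} --- by \cite[Theorem 1.2.12]{LL:geometric-local-systems} a PVHS of rank $<2\sqrt{g+1}$ on (a punctured versal family, restricted to a fiber equal to) our curve is unitary, hence by \autoref{proposition:MCG-finite-family-of-curves} it is MCG-finite and unitary of rank $<2\sqrt{g+1}$, and then \autoref{proposition:unitary-implies-finite} (whose hypothesis is only $r<\sqrt{g+1}$ --- wait, here one must be careful) applies; in fact the unitary case of finiteness needs $r<\sqrt{g+1}$, so in the range $\sqrt{g+1}\le \dim\rho<2\sqrt{g+1}$ one instead argues directly: an \emph{integral} unitary representation has finite image by \cite[Lemma 7.2.1]{LL:geometric-local-systems}, since all its archimedean conjugates are again PVHS of the same low rank on the same curve hence unitary, and integrality plus unitarity at every place forces finiteness. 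The main obstacle is thus bookkeeping the precise rank thresholds: ensuring that the step ``PVHS of low rank $\Rightarrow$ unitary'' uses the bound $2\sqrt{g+1}$ while the step ``integral $+$ unitary at all places $\Rightarrow$ finite'' uses only integrality, so that the combination yields finiteness in the full range $\dim\rho<2\sqrt{g+1}$ without circularly invoking \autoref{theorem:finite-image}.
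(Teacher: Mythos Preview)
Your cycle has the implication $(4)\Rightarrow(3)$ going the wrong way. The claim ``a local system underlying an integral PVHS on a quasi-projective variety is of geometric origin'' is \emph{not} a theorem; it is essentially a form of the Hodge/motivicity conjecture and is wide open (the paper even discusses Simpson's motivicity conjecture for rigid local systems as open in \S10). What \emph{is} known, and what the paper uses, is the reverse direction $(3)\Rightarrow(4)$: a local system of geometric origin underlies an integral PVHS (Deligne's theory of weights plus integrality of \'etale cohomology). With that correction, the natural cycle is $(1)\Rightarrow(3)\Rightarrow(4)\Rightarrow(1)$: finite image trivially gives geometric origin (a finite-monodromy local system is a summand of $f_*\mathbb C$ for the associated finite \'etale cover $f$), geometric origin gives integral PVHS, and then one must close with $(4)\Rightarrow(1)$.

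Your argument for $(4)\Rightarrow(1)$ in the wider range also has a gap. You invoke \cite[Theorem 1.2.12]{LL:geometric-local-systems}, but that result applies to a PVHS on the total space of a \emph{versal family} restricted to a fiber---here you only have a PVHS on a single curve, with no mechanism for spreading it. Moreover, the step ``all its archimedean conjugates are again PVHS of the same low rank on the same curve'' is unjustified: the PVHS property involves positivity of the polarization and is \emph{not} preserved by automorphisms of $\mathbb C$ (in the paper's proof of \autoref{proposition:unitary-implies-finite}, this step requires cohomological rigidity, which you do not have). The paper sidesteps all of this by citing \cite[Corollary 1.2.7]{LL:geometric-local-systems} directly: one chooses an embedding $K\hookrightarrow\mathbb C$ landing on an \emph{analytically very general} point of $\mathscr M_{g,n}$, and that result asserts that on such a curve an integral PVHS of rank $<2\sqrt{g+1}$ already has finite monodromy. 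Since the topological fundamental group is dense in the \'etale one, finiteness of $\rho$ follows. This single citation replaces your entire bookkeeping of rank thresholds and renders the $(3)\Rightarrow(2)$ step (which is also sketchy as written---descent of ``geometric origin'' across $\mathbb C\simeq\overline{\mathbb Q}_\ell$ needs care) unnecessary.
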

\begin{proof}
The equivalence of $(1)$ and $(2)$ when $\dim \rho <\sqrt{g+1}$ holds by
	\autoref{theorem:arithmetic-consequence}, and $(1)$
	certainly implies $(3)$ and $(4)$ for any value of $\dim \rho$.
	Next, $(3)$ implies $(4)$ for any value of $\dim \rho$ since any local system of geometric origin
	underlies an integral PVHS. This is well known, but see, for example, the proof of
	\cite[Corollary 1.2.7]{LL:geometric-local-systems}.

	To conclude, it remains to show that $(4)$ implies $(1)$ when $\dim \rho
	< 2\sqrt{g+1}$.
	This follows from \cite[Corollary 1.2.7]{LL:geometric-local-systems},
	by choosing an embedding $K\hookrightarrow \mathbb{C}$ corresponding to an analytically very general complex point of $\mathscr{M}_{g,n}$.
	By \cite[Corollary 1.2.7]{LL:geometric-local-systems},
	 a local system on an analytically very general curve underlying an integral PVHS has finite monodromy whenever its dimension is
	$< 2\sqrt{g+1}$.
	Therefore, $\rho$ also has finite monodromy.
\end{proof}
\begin{remark}\label{remark:fontaine-mazur}
	The relative Fontaine-Mazur conjecture predicts that all semisimple arithmetic  local systems
	are of geometric origin. Hence on a generic curve, all semisimple arithmetic local systems $\rho$ with $\dim \rho< 2 \sqrt{g+1}$ should  have finite monodromy by
	\autoref{corollary:fontaine-mazur}.
	In particular, this suggests one should be able to improve the bound of $\sqrt{g+1}$ in
	\autoref{theorem:arithmetic-consequence} to $2 \sqrt{g+1}$ if one
	restricts to semisimple representations.

Note that arithmeticity of $\rho$ on $C$ over $K$ is an a stronger condition than having finite orbit under
the action of the mapping class group after base change to $\mathbb C$, and so
this does not mean the bound of \autoref{theorem:finite-image-semisimple} can be improved
to $2 \sqrt{g+1}$. Indeed, it cannot be improved, even in the semisimple case, when $g = 1$, see
\autoref{remark:sharp-g-1}.
\end{remark}

\begin{proof}[Proof of \autoref{theorem:arithmetic-consequence}]
	Let $C^\circ_{\overline K} := 
	C_{\overline{K}}\setminus\{x_1, \cdots, x_n\}$. For $R$ a ring, let $\mathscr
	M_{g,n,R}$ denote the moduli stack of genus $g$, $n$-pointed curves over
	$\on{Spec} R$
	and let $\pi^\circ_R: \mathscr C_R^\circ \to \mathscr M_{g,n,R}$ denote the universal $n$-punctured genus
	$g$ curve over $R$.
	We use $C^\circ_R$ to denote a fiber of $\pi^\circ_R$.

	We claim (suppressing basepoints) that there is a map of exact sequences of \'etale fundamental groups
	\begin{equation}
	\label{equation:map-of-birman-exact-sequences}
	\begin{tikzcd}
	0 \ar {r} & \pi^\text{\'et}_1(C^\circ_{\overline K}) \ar {r} \ar {d}{\simeq} &
	\pi^\text{\'et}_1(C^\circ_K) \ar {r} \ar {d} & \pi^\text{\'et}_1(\on{Spec} K) \ar {r}
	\ar {d} & 0 \\
	0 \ar {r} & \pi^\text{\'et}_1(C^\circ_{\overline {K(M_{g,n,\mathbb Q})}}) \ar {r} &
	\pi^\text{\'et}_1(\mathscr C^\circ_{\mathbb Q}) \ar {r} & \pi^\text{\'et}_1(\mathscr
	M_{g,n,\mathbb Q}) \ar {r} & 0.
	\end{tikzcd}\end{equation}
	Indeed, the first sequence is the standard homotopy exact sequence, see
	\cite[\href{https://stacks.math.columbia.edu/tag/0BTX}{Tag
	0BTX}]{stacks-project}.
	The second one is less standard but, using 
	\cite[\href{https://stacks.math.columbia.edu/tag/0BTX}{Tag
	0BTX}]{stacks-project} again, its exactness
	can be reduced to verifying exactness of
	\begin{equation}
	\label{equation:q-curve-fibration}
	\begin{tikzcd}
	0 \ar {r} &  \pi^\text{\'et}_1(C^\circ_{\overline {K(M_{g,n,\mathbb Q})}}) \ar {r} &
	\pi^\text{\'et}_1(\mathscr C^\circ_{\overline {\mathbb Q}}) \ar {r} & \pi^\text{\'et}_1(\mathscr
	M_{g,n,\overline{ \mathbb Q}}) \ar {r} & 0.
	\end{tikzcd}\end{equation}
	By comparison of fundamental groups of algebraically closed fields of
	characteristic $0$, it is enough to verify exactness of the sequence
	with all fields in subscripts replaced by the complex numbers.
	The version for topological fundamental groups is given in
	\autoref{lemma:mgn-fundamental-group}. Since the \'etale fundamental
	group is the profinite completion of the topological fundamental group
	\cite[Expos\'e XII, Corollaire 5.2]{sga1},
	and profinite completions of exact sequences where the left term has
	trivial center remain exact \cite[Proposition
	3]{anderson:exactness-properties}, we obtain exactness of
	\eqref{equation:q-curve-fibration}.

We next use \eqref{equation:map-of-birman-exact-sequences} to deduce that 
$\rho$ has finite orbit under the natural action of $\pi_1(\mathscr
M_{g,n,\mathbb \overline{\mathbb Q}})$, to be constructed below.
	The two exact sequences in
	\eqref{equation:map-of-birman-exact-sequences} induce compatible outer actions of
	$\pi_1(\on{Spec} K)$ and $\pi_1(\mathscr
	M_{g,n,\mathbb Q})$ on 
	$\pi_1(C^\circ_{\overline {K(M_{g,n,\mathbb Q})}})$.
	Hence, the outer action of
	$\pi_1(\on{Spec} K)$ factors through that of $\pi_1(\mathscr
	M_{g,n,\mathbb Q})$.
	Because $\on{Spec} K \to \mathscr
	M_{g,n,\mathbb Q}$ is dominant, the induced map
$\pi_1(\on{Spec} K) \to \pi_1(\mathscr M_{g,n,\mathbb Q})$ has image of finite
index.
In particular, the orbit of the conjugacy class of $\rho$ under
$\pi_1^{\text{\'et}}(\mathscr{M}_{g,n, \mathbb{Q}}, \overline{\eta})$,
and hence under $\pi_1^{\text{\'et}}(\mathscr{M}_{g,n,
\overline{\mathbb{Q}}}, \overline{\eta})$, is finite. 

	Choose an embedding $\overline{K}\hookrightarrow\mathbb{C}$, and let
	$(\mathscr{C}^\circ)^{\text{an}}$ be the corresponding punctured Riemann
	surface, so that,
	by residual finiteness of surface groups,
	there is a natural inclusion
	$\pi_1((\mathscr{C}^\circ)^{\text{an}})\hookrightarrow
	\pi_1^{\text{\'et}}(\mathscr{C}^\circ_{\overline{K}})$. As the conjugacy
	class of $\rho$ has finite orbit under
	$\pi_1^{\text{\'et}}(\mathscr{M}_{g,n, \overline{\mathbb{Q}}},
	\overline{\eta})
	\simeq \pi_1^{\text{\'et}}(\mathscr{M}_{g,n, \mathbb{C}},
\overline{\eta}_{\mathbb C})$, the standard comparison theorems between \'etale and
	topological $\pi_1$
	\cite[Expos\'e XII, Corollaire 5.2]{sga1}
	imply that
	$\rho|_{\pi_1((\mathscr{C}^\circ)^{\text{an}})}$ has finite orbit under
	the topological fundamental group
	$\pi_1(\mathscr{M}_{g,n, \mathbb{C}}^{\text{an}},
	\overline{\eta}_{\mathbb{C}})$. Hence
	$\rho|_{\pi_1((\mathscr{C}^\circ)^{\text{an}})}$ is MCG-finite, and hence has
	finite image by \autoref{theorem:finite-image},
	which may be applied after choosing an isomorphism between $\overline{\mathbb{Q}}_\ell$ and $\mathbb{C}$. 
	But
	$\pi_1( (\mathscr{C}^\circ)^{\text{an}})$ is dense in
	$\pi_1^{\text{\'et}}(\mathscr{C}^\circ_{\overline{K}})$, so $\rho$ also
	has finite image.
\end{proof}

\subsection{Application to lifting residual representations}
Let $K, (C, x_1, \cdots, x_n)$ be as in
\autoref{theorem:arithmetic-consequence}. We next explain why there are residual
representations of $\pi_1^{\text{\'et}}(C_{\overline{K}}\setminus\{x_1, \cdots,
x_n\})$ which do not lift to arithmetic representations in characteristic zero.
In particular, in \autoref{example:non-liftable-representations}
we construct residual representations of geometric
fundamental groups which are not ``of geometric origin," as we now define. To our knowledge these are the first such examples.
\begin{definition}
Let $K$ be an algebraically closed field, and let $X/K$ be a variety. Let $\mathbb{F}$ be a finite field of characteristic different from that of $K$. Let $L$ be the fraction field of the Witt vectors $W(\mathbb{F})$. We say that a continuous representation $$\rho: \pi_1^{\text{\'et}}(X_{\overline{K}})\to \on{GL}_r(\mathbb{F})$$ is \emph{of geometric origin} if there exists a continuous representation $$\xi: \pi_1^{\text{\'et}}(X_{\overline{K}})\to \on{GL}_r(\overline{L})$$ such that:
\begin{enumerate}
\item There exists a $\xi$-stable
	$\mathscr{O}_{\overline{L}}$-lattice $W\subset \overline{L}^r$ with
	$W\otimes \overline{\mathbb{F}}\simeq \rho \otimes
	\overline{\mathbb{F}}$.
\item There exists a dense open subscheme $U\subset X$ and a smooth proper morphism $\pi: Y\to U$, such that $\xi|_{\pi_1^{\text{\'et}}(U_{\overline{K}})}$ arises as a subquotient of the monodromy representation of $R^i\pi_*\overline{L}$ for some $i\geq 0.$
\end{enumerate}
\end{definition}
In other words, we say a residual representation is of geometric origin if it
arises as the reduction of a characteristic zero representation of geometric
origin.
\begin{corollary}\label{corollary:lifting-and-geometric-origin}
	Let $K, (C, x_1, \cdots, x_n)$ be as in \autoref{theorem:arithmetic-consequence}. Then for $1<r<\sqrt{g+1}$ and  $p\gg_r 0$, no surjective representation 
	$$\rho: \pi_1^{\text{\'et}}(C_{\overline{K}}\setminus\{x_1, \cdots, x_n\})\to \on{GL}_r(\mathbb{F}_p)$$ admits an arithmetic lift to characteristic zero. In particular no such representation is of geometric origin.
\end{corollary}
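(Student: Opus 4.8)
The plan is to combine \autoref{theorem:arithmetic-consequence} with Jordan's theorem and the simplicity of $\on{PSL}_r(\mathbb F_p)$. Suppose for contradiction that for some $1<r<\sqrt{g+1}$ and some prime $p$ there is a surjection $\rho:\pi_1^{\text{\'et}}(C_{\overline K}\setminus\{x_1,\dots,x_n\})\twoheadrightarrow \on{GL}_r(\mathbb F_p)$ admitting an arithmetic lift to characteristic zero --- that is, a continuous arithmetic representation $\widetilde\rho:\pi_1^{\text{\'et}}(C_{\overline K}\setminus\{x_1,\dots,x_n\})\to \on{GL}_r(\overline{\mathbb Q}_p)$ together with a $\widetilde\rho$-stable lattice whose reduction modulo the maximal ideal becomes isomorphic to $\rho$ after extension of scalars to $\overline{\mathbb F}_p$. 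Fixing an abstract field isomorphism $\overline{\mathbb Q}_p\simeq\mathbb C$ (equivalently, reading $\overline{\mathbb Q}_p$ as $\overline{\mathbb Q}_\ell$ with $\ell=p$), \autoref{theorem:arithmetic-consequence} applies to $\widetilde\rho$ since $r<\sqrt{g+1}$, and shows that $G:=\on{im}(\widetilde\rho)$ is finite.

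By Jordan's theorem there is a constant $J(r)$, depending only on $r$, such that every finite subgroup of $\on{GL}_r$ over a field of characteristic zero admits a normal abelian subgroup of index at most $J(r)$; this property is inherited by subgroups and quotients. Reduction modulo the maximal ideal of the stable lattice is a group homomorphism which carries $G$ onto a finite linear group over a finite field, and the hypothesis that this reduction becomes isomorphic to $\rho$ over $\overline{\mathbb F}_p$ identifies that group, as an abstract group, with $\on{im}(\rho)=\on{GL}_r(\mathbb F_p)$. Hence $\on{GL}_r(\mathbb F_p)$ is a quotient of $G$, and $\on{PSL}_r(\mathbb F_p)$ --- a quotient of the subgroup $\on{SL}_r(\mathbb F_p)\subset\on{GL}_r(\mathbb F_p)$ --- is a subquotient of $G$, so it too carries a normal abelian subgroup of index at most $J(r)$. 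However $\#\on{PSL}_r(\mathbb F_p)\to\infty$ as $p\to\infty$, so there is a bound $P(r)$ such that for every prime $p>P(r)$ the group $\on{PSL}_r(\mathbb F_p)$ is simple and nonabelian (here we use $r\geq 2$) and has order exceeding $J(r)$. For such $p$ the only normal abelian subgroup of $\on{PSL}_r(\mathbb F_p)$ is trivial, of index $\#\on{PSL}_r(\mathbb F_p)>J(r)$, a contradiction. Thus for $p\gg_r 0$ no surjective $\rho$ as above admits an arithmetic lift to characteristic zero.

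For the ``in particular'' clause it suffices to observe that a residual representation of geometric origin automatically admits an arithmetic lift to characteristic zero, namely the representation $\xi$ appearing in the definition: it is continuous, it lifts $\rho$ by construction, and it is arithmetic. To see the last point, spread out the smooth proper family $\pi:Y\to U$ over a finitely generated subfield $K'\subset\overline K$ containing $K$; then $R^i\pi_*\overline L$, as a representation of $\pi_1^{\text{\'et}}(U_{\overline K})$, has finite orbit under $\on{Gal}(\overline K/K')$ on isomorphism classes, hence so does its subquotient $\xi|_{\pi_1^{\text{\'et}}(U_{\overline K})}$. Since $\pi_1^{\text{\'et}}(U_{\overline K})\twoheadrightarrow\pi_1^{\text{\'et}}(C_{\overline K}\setminus\{x_1,\dots,x_n\})$ is surjective, two representations of the target are isomorphic if and only if their pullbacks are, so $\xi$ itself has finite $\on{Gal}(\overline K/K)$-orbit. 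This contradicts the previous paragraph and finishes the argument.

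The only step requiring genuine care is the group-theoretic heart of the first two paragraphs: deducing from \autoref{theorem:arithmetic-consequence} that $\on{GL}_r(\mathbb F_p)$ would have to have bounded Jordan index, and then checking that this fails once $p\gg_r 0$ via the simplicity of $\on{PSL}_r(\mathbb F_p)$. The remaining ingredients --- identifying $\overline{\mathbb Q}_p$ with $\mathbb C$, tracking images under reduction (including the passage to $\overline{\mathbb F}_p$), and the spreading-out argument that ``geometric origin'' implies ``arithmetic'' --- are routine.
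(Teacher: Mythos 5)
Your proof is correct, and on the group-theoretic heart it takes a genuinely different route from the paper. Both proofs reduce, via \autoref{theorem:arithmetic-consequence}, to showing that for $p\gg_r 0$ no finite subgroup of $\on{GL}_r$ over a characteristic-zero field can have $\on{GL}_r(\mathbb{F}_p)$ as a quotient, and both invoke Jordan's theorem. The paper then packages Jordan's theorem as a group identity --- $n(r)$-th powers commute in any such finite subgroup, and hence in any quotient --- and contradicts this with the explicit pair of elementary unipotent matrices $\begin{pmatrix}1&1\\0&1\end{pmatrix}\oplus I_{r-2}$, $\begin{pmatrix}1&0\\1&1\end{pmatrix}\oplus I_{r-2}$ once $p\nmid n(r)$. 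You instead observe that ``has a normal abelian subgroup of index at most $J(r)$'' is inherited by subquotients, and then note that $\on{PSL}_r(\mathbb F_p)$ is a simple nonabelian subquotient of $\on{GL}_r(\mathbb F_p)$ of unbounded order. The paper's computation is more explicit and hands-on and yields a cleaner description of the exceptional primes (those dividing $n(r)$); your argument is more conceptual and avoids any matrix computation at the cost of invoking simplicity of $\on{PSL}_r(\mathbb F_p)$. Your expansion of the ``in particular'' clause (spreading out the smooth proper family and using surjectivity of $\pi_1(U_{\overline K})\to\pi_1(X_{\overline K})$ to push finiteness of the Galois orbit from $U$ to $X$) fills in details the paper leaves implicit and is a correct way to justify that ``of geometric origin'' implies ``arithmetic.''
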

\begin{proof}
	By definition, a mod $p$ representation of geometric origin lifts to a
representation of geometric origin over $\overline{\mathbb{Q}}_p$. As representations of geometric origin are arithmetic, the second statement follows from the first.
	
	Thus it suffices to show that for $p\gg 0$ and $\rho$ as in the theorem, $\rho$ does not admit an arithmetic lift. Suppose to the contrary that it did admit an arithmetic lift $\xi$; by \autoref{theorem:arithmetic-consequence}, $\xi$ would have finite image. Thus it suffices to show that for $Q$ a finite, totally ramified extension of $\mathbb{Q}_p$, there do not exist any finite subgroups of $\on{GL}_r(\mathscr{O}_Q)$ surjecting onto $\on{GL}_r(\mathbb{F}_p)$ if $p\gg_r 0$. 
	
	This follows from Jordan's theorem on finite subgroups of $\on{GL}_r(Q)$,
	where $Q$ is any field of characteristic zero
	(\cite[p.~91]{jordan1878memoire} or \cite[Theorem
	36.13]{curtis1966representation}). Recall that Jordan's theorem says
	that there exists some constant $n(r)$ such that if $G\subset \on{GL}_r(Q)$ is a finite subgroup, $G$ contains an abelian normal subgroup of index dividing $n(r)$. In particular, for any $g_1, g_2\in G$, we have $$[g_1^{n(r)}, g_2^{n(r)}]=I_r,$$ where $I_r$ is the $r\times r$  identity matrix.
	
	Now define $\overline{g_1}, \overline{g_2}\in \on{GL}_r(\mathbb{F}_p)$ by
	$$\overline{g_1}=\begin{pmatrix} 1 & 1 \\ 0 & 1 \end{pmatrix} \oplus
	I_{r-2}$$ $$\overline{g_2}=\begin{pmatrix} 1 & 0 \\ 1 & 1 \end{pmatrix}
	\oplus I_{r-2}.$$ Direct computation shows that $[\overline{g_1}^{n(r)},
	\overline{g_2}^{n(r)}]\neq I_r$ as long as $p$ does not divide $n(r)$.
	But if $\on{GL}_r(\mathbb{F}_p)$ was the image of a finite subgroup of
	$\on{GL}_r(\mathscr{O}_Q)$, this equality would hold by Jordan's theorem, giving the claim. Indeed, we have shown the result for all $p$ not dividing the constant $n(r)$ from Jordan's theorem.
\end{proof}
Using \autoref{corollary:lifting-and-geometric-origin}, we now construct
examples of residual representations with no arithmetic lifts.
\begin{example}
	\label{example:non-liftable-representations}
	Fix non-negative integers $g,n$ so that $n \geq 1$ if $g =1$ and $n\geq
	3$ if $g = 0$.
	Fix $r$ with $1<r<\sqrt{g+1}$. We claim
	that for any $n$-pointed genus $g$ curve $(C, x_1, \cdots, x_n)$, and any $p$,
	there exist surjective representations $$\rho:
	\pi_1^{\text{\'et}}(C_{\overline{K}}\setminus\{x_1, \cdots, x_n\})\to
	\on{GL}_r(\mathbb{F}_p)$$ as in \autoref{corollary:lifting-and-geometric-origin}. 
	In particular these representations do not admit arithmetic lifts to
	characteristic $0$.

	If $n > 0$, maintaining the assumption $n \geq 3$ when $g = 0$, we have
	that
$\pi_1^{\text{\'et}}(C_{\overline{K}}\setminus\{x_1, \cdots, x_n\})$ is
profinite free on at least $2$ generators.
The claim in this case follows from the fact that $\on{GL}_r(\mathbb{F}_p)$ is generated by two elements
\cite{waterhouse:two-generators}. 

We now deal with the case $n =0$ and $g \geq 2$.
Indeed, this follows from the fact that
$\pi_1^{\text{\'et}}(C_{\overline{K}})$ surjects onto a free profinite group on
$g$ generators by writing it as the profinite completion of $\langle a_1, \ldots,
a_g, b_1, \ldots, b_g \rangle/ \prod_{i=1}^g [a_i, b_i]$ and considering the map
to the free profinite group generated by $c_1, \ldots, c_g$ sending $a_i \mapsto
c_i, b_i \mapsto \on{id}$.
\end{example}

We conclude the section with several remarks describing consequences of the
above non-liftable residual representations.
\begin{remark}\label{remark:dejong}
\autoref{corollary:lifting-and-geometric-origin} provides a counterexample to a particularly optimistic extension of de Jong's conjecture \cite[Conjecture 2.3, Theorem 3.5]{de2001conjecture}.	De Jong's conjecture, proven by Gaitsgory \cite{gaitsgory2007jong}, implies that an absolutely irreducible residual $\mathbb{F}_q$-representations of geometric fundamental groups of smooth curves over finite fields of characteristic not dividing $q$ always lift to arithmetic representations over a field of characteristic zero. One might naturally ask if the same is true for curves over arbitrary finitely-generated fields; \autoref{corollary:lifting-and-geometric-origin} shows that this is not the case.
\end{remark}
\begin{remark}
	\label{remark:complete-intersection}
	The solution to de Jong's conjecture, as described in
	\autoref{remark:dejong}, implies that deformation rings of
absolutely irreducible $\mathbb{F}_p$-representations of arithmetic fundamental
groups of curves over finite fields (of characteristic different from $p$) are
always complete intersections over $\mathbb{Z}_p$. Our results show that the
analogous statement is not true for the arithmetic fundamental group
$\pi_1^{\text{\'et}}(C_{K}\setminus\{x_1, \cdots, x_n\})$ as in
\autoref{theorem:arithmetic-consequence} because they are not flat over $\mathbb
Z_p$ by \autoref{example:non-liftable-representations}. 
\end{remark}
\begin{remark}
	\label{remark:flach}
	Flach asks  \cite[p.~7]{cornelissen2005problems} if deformation rings of
	absolutely irreducible residual representations of profinite groups are
	always complete intersections. By now it is well-known that the answer
	to this question is in general ``no" (see e.g.~\cite{eardley2016inverse}
	for a more or less complete answer to this question, and the references
therein). Our result \autoref{corollary:lifting-and-geometric-origin} shows that
this question has a negative answer even for arithmetic fundamental groups of
generic smooth  curves, as explained in \autoref{remark:complete-intersection}.
\end{remark}

\section{Questions and examples}\label{section:questions}
\subsection{Rigidity questions for mapping class groups}
One of the key ingredients of our arguments is the statement that certain representations of (finite index subgroups of) $\on{Mod}_{g,n}$ are rigid.
\begin{question}\label{question:rigidity}
Let $g\gg 0$. Is every irreducible complex representation of every finite index subgroup of $\on{Mod}_{g,n}$ rigid?
\end{question}
The answer is, perhaps, plausibly ``yes" if one accepts the well-known analogy
between $\on{Mod}_{g,n}$ and lattices in simple Lie groups of rank $>1$, as
all complex representations of such lattices are rigid.  
For $g$ small, there
are examples of irreducible non-rigid representations of $\on{Mod}_{g,n}$ (for
example, $\pi_1(\mathscr{M}_{0,4})$ is free on two generators). 

\begin{remark}
	\label{remark:}
	A positive answer to \autoref{question:rigidity} would imply Ivanov's
conjecture that $\on{Mod}_{g,n}$ does not virtually surject onto $\mathbb{Z}$
as stated in \cite[\S 7]{Ivanov:problems} and \cite[Problem
2.11.A]{Kirby:problems}. Indeed, suppose some finite index subgroup
$\Gamma\subset \on{Mod}_{g,n}$ admitted a surjection $f: \Gamma
\twoheadrightarrow\mathbb{Z}$. For $t\in \mathbb{C}^\times$, the representation
\begin{align*}
\rho_t: \mathbb{Z} &\to \mathbb{C}^\times \\
n & \mapsto t^n
\end{align*}
is a non-trivial
family of one-dimensional representations of $\mathbb{Z}$. Then the
representation $\rho_t\circ f$ is a non-constant family of irreducible
representations of $\Gamma$.
\end{remark}

A positive answer to \autoref{question:rigidity} would also imply a positive answer to the following question:
\begin{question}\label{question:finite-fixed-points}
Let $g\gg0$. Let $\Gamma\subset \on{Mod}_{g,n}$ be a finite index subgroup, and
let $X_r(\pi_1(\Sigma_{g,n}))$ be the character variety parametrizing
$r$-dimensional semisimple representations of $\pi_1(\Sigma_{g,n})$. Is the fixed locus  $X_r(\pi_1(\Sigma_{g,n}))^\Gamma$ finite?
\end{question}

\begin{remark}
One reason to believe \autoref{question:finite-fixed-points} has a positive answer is that the profinite analogue does. Namely, let $X$ be any normal connected algebraic variety over $\mathbb{C}$, and let $\pi_1^{\text{\'et}}(X)$ be its profinite fundamental group. Let $\Gamma$ be a finite index subgroup of $\on{Out}(\pi_1^{\text{\'et}}(X))$. Then $\Gamma$ acts on the set of conjugacy classes of continuous representations $$\pi_1^{\text{\'et}}(X)\to \on{GL}_r(\overline{\mathbb{Q}}_\ell).$$ It follows from \cite[Remark 1.1.4]{litt2021arithmetic} that the $\Gamma$-fixed points are \emph{discrete} in the $\ell$-adic topology. (The result there is stated for curves, but one may reduce to this case by the Lefschetz hyperplane theorem.)
\end{remark}

If one accepts in addition Simpson's motivicity conjecture for rigid local systems \cite[Conjecture on p.~9]{simpson1992higgs}, a positive answer to \autoref{question:rigidity} would imply that every semisimple local system on $\mathscr{M}_{g,n}$ is of geometric origin for $g\gg 0$. This would be of particular interest for those local systems not obviously of geometric origin, e.g.~the local systems arising from TQFT constructions. 
Thus one might ask:

\begin{question}
For $g\gg 0$, is every semisimple local system on $\mathscr{M}_{g,n}$ of geometric origin? Let $C$ be a Riemann surface of genus $g\gg 0$. Is every semisimple MCG-finite representation of $\pi_1(C)$ of geometric origin?
\end{question}

\begin{example}\label{non-rigid-examples}
The analogy between the representation theory of mapping class groups and the representation theory of lattices in simple Lie groups of rank greater than one only goes so far. Indeed, mapping class groups typically admit non-rigid reducible representations, as we now explain. 
\begin{enumerate}
\item Let $H=H_1(\Sigma_g, \mathbb{Q})$. Morita \cite{morita1993extension} produces a nonzero class $$\sigma\in H^1(\on{Mod}_g, (\wedge^3 H)/H),$$ closely related to the Johnson homomorphism, yielding a non-split extension of $\on{Mod}_g$-representations $$1\to \wedge^3H/H\to W\to \mathbb{Q}\to 1.$$ The representation $W$ is evidently a non-trivial deformation of $\mathbb{Q}\oplus \wedge^3H/H$, and hence this latter representation is non-rigid.

\item An essentially identical but perhaps slightly simpler argument gives examples for punctured surfaces. For $n>0$ the group $\on{PMod}_{g,n}$ acts on $\pi_1(\Sigma_{g,n-1}, x)$, and hence on the group algebra $\mathbb{Q}[\pi_1(\Sigma_{g,n-1}, x)]$. Let $\mathscr{I}\subset \mathbb{Q}[\pi_1(\Sigma_{g,n-1}, x)]$ be the augmentation ideal. Then direct computation shows that the short exact sequence of $\on{PMod}_{g,n}$-representations $$0\to \mathscr{I}^2/\mathscr{I}^3\to \mathscr{I}/\mathscr{I}^3\to \mathscr{I}/\mathscr{I}^2\to 0$$ does not split, and hence the representation $\mathscr{I}/\mathscr{I}^2\oplus \mathscr{I}^2/\mathscr{I}^3$ is not rigid.

\item \label{h1-example} For another example, one may consider the non-torsion class in $$H^1(\on{Mod}_{g,1}, H^1(\Sigma_g, \mathbb{Z}))$$ constructed in \cite[Proposition 6.4]{morita1989families}, which yields a non-split extension of $\on{Mod}_{g,1}$-representations $$1\to H^1(\Sigma_g, \mathbb{Q})\to W\to \mathbb{Q}\to 1.$$ One may make the restriction to the point-pushing subgroup $W|_{\pi_1(\Sigma_g)}$ explicit as follows. Let $$a: \pi_1(\Sigma_g)\to \pi_1(\Sigma_g)^{\on{ab}}\otimes\mathbb{Q}\simeq H_1(\Sigma_g, \mathbb{Q})$$  be the map induced by the Hurewicz isomorphism $H_1(\Sigma_g, \mathbb{Z})=\pi_1(\Sigma_g)^{\on{ab}}$. Choose a splitting $W=H_1(\Sigma_g, \mathbb{Q})\oplus \mathbb{Q}e$, where we view $H_1(\Sigma_g, \mathbb{Q})$ as isomorphic to $H^1(\Sigma_g, \mathbb{Q})$ via the universal coefficient theorem and Poincar\'e duality. Then $\pi_1(\Sigma_g)$ acts trivially on $H_1(\Sigma_g, \mathbb{Q})$, and an element $\gamma\in\pi_1(\Sigma_g)$ acts on $e$ via the map $$e\mapsto e+a(\gamma).$$  
\end{enumerate}
\end{example}

\begin{remark}\label{remark:FLM-rank-1-result}
	For a related discussion of ways mapping class groups \emph{do not} behave like lattices in simple Lie groups of rank greater than one, see \cite{farbLM:rank-1-phenomena}. Interestingly, \cite[Theorem 1.6]{farbLM:rank-1-phenomena} shows there are no faithful linear
representations of finite index subgroups of $\on{Mod}_{g,0}$ of dimension $< 2 \sqrt{g-1}$. A related result follows from our work here, as we now explain.

Let $\Gamma\subset \on{PMod}_{g,n+1}$ be a finite-index subgroup containing the point-pushing subgroup. 
It follows from \autoref{theorem:finite-image},
\autoref{lemma:finite-rho-implies-finite-lift}(1), and
\autoref{proposition:MCG-finite-index-rep}
that if $\rho$ is a representation of $\Gamma$ of rank $<\sqrt{g+1}$ 
\begin{enumerate}
\item 	whose restriction to the point-pushing subgroup is irreducible, and
\item with finite determinant
\end{enumerate}
then $\rho$ has finite image.
Although the two results are not comparable,
it is interesting that the bound in
\cite{farbLM:rank-1-phenomena} is also asymptotic to $\sqrt{g}$. 
\end{remark}

\subsection{Bounds and examples}\label{question:bounds}
It is natural to ask how sharp the bound of $\sqrt{g+1}$ in
\autoref{theorem:finite-image} is. We have no reason to believe it is sharp.
That said, one cannot expect a bound that is much stronger, as the extension
$W$ constructed in \autoref{non-rigid-examples}(\ref{h1-example}) yields a
non-trivial unipotent representation of $\pi_1(\Sigma_g)$ of rank $2g+1$, fixed
by the action of $\on{Mod}_g$, namely $W|_{\pi_1(\Sigma_g)}$.  

\begin{remark}
	\label{remark:sharp-g-1}
	The bounds of many of our main results are sharp when $g=1$. 
In $g = 1$, there are semisimple $2$-dimensional ``special dihedral" MCG-finite representations
of $\pi_1(\Sigma_{1,n})$ for $n > 0$
\cite[Theorem B]{biswas2017surface},
with image contained in the infinite dihedral group.
This shows sharpness of \autoref{theorem:finite-image} and also the semisimple
case, \autoref{theorem:finite-image-semisimple}.
Moreover, there exist $2$-dimensional non-semisimple MCG-finite representations of $\pi_1(\Sigma_{1,n})$ for $n>0$, see \cite[Theorem B]{CH:isomonodromic}.
This implies sharpness of the bound in \autoref{lemma:semisimplicity}.

Note that the ``special dihedral" MCG-finite representations of \cite[Theorem B]{biswas2017surface} are not in general arithmetic in the sense of \autoref{definition:arithmetic}, as in general their local monodromy at infinity is not quasi-unipotent.
\end{remark}
\begin{remark}
	\label{remark:not-sharp-g-2-3}
	On the other hand, the bound $r < \sqrt{g+1}$ in \autoref{theorem:finite-image} is not sharp when $g =
	2$ and $g =3$.
	Indeed, to verify this, we only need show there are no $2$-dimensional
	MCG-finite representations with infinite image. There are no non-semisimple such
	representations by \cite[Theorem B]{CH:isomonodromic}, and so it only
	remains to show there are no irreducible $2$-dimensional
	representations.
	Any such representation $\rho: \pi_1(\Sigma_{g,n}) \to \on{GL}_2(\mathbb
	C)$ has finite determinant by \autoref{lemma:finite-central-part}.
	Replacing $\rho$ by $\rho \otimes \det \rho^{-1/2}$, we would obtain another
	MCG-finite $2$-dimensional representation with infinite image factoring
	through $\on{SL}_2(\mathbb C)$.
	No such representations	exist by \cite[Theorem A]{biswas2017surface}.
\end{remark}

\begin{question}
Let $g, n\geq 0$. What is the minimal rank of an MCG-finite representation of $\pi_1(\Sigma_{g,n})$ with infinite image?
\end{question}

	It is plausible that much stronger bounds 
than $\sqrt{g+1}$ hold in \autoref{theorem:finite-image}
if one assumes the representation is semisimple (see \autoref{figure:MCG-geography}).

\begin{example}
	\label{example:semisimple-bound}
We know of examples of MCG-finite
semisimple representations of $\pi_1(\Sigma_g)$ with infinite image, of rank exponential in $g$. For example, one may via TQFT techniques construct representations of $\on{Mod}_{g,n}$
of rank exponential in $g$
\cite[Corollary 4.3]{koberda2016quotients}, \cite[Theorem 5.1]{biswas2018representations}; 
these representations are non-trivial when restricted to the point-pushing subgroup, and hence by \autoref{proposition:MCG-finite-index-rep} yield MCG-finite representations.
Similarly, representations of $\on{Mod}_{g,n}$ constructed via variants of
the Kodaira-Parshin trick 
\cite[Example 3.3.1]{lawrence2019representations}
(see also \cite[Proposition 5.1.1 and Remark 5.1.2]{LL:geometric-local-systems})
are semisimple of rank exponential in $g$ and restrict to 
MCG-finite, semisimple representations of the point-pushing subgroup, with infinite image. 
\end{example}

\begin{example}
	In genus zero, there is a huge collection of MCG-finite representations, namely the \emph{rigid local systems} studied by Katz \cite{katz2016rigid}. The use of rigidity in our proof of \autoref{theorem:finite-image}, and \autoref{question:rigidity}, suggest that the study of MCG-finite representations is a natural generalization  of the study of rigid local systems to the higher genus setting. 
\end{example}

\begin{question}
Let $g, n\geq 0$. What is the minimal rank of a semisimple MCG-finite representation of $\pi_1(\Sigma_{g,n})$ with infinite image?
\end{question}

In our view it would be extremely interesting to give an improved bound for semisimple MCG-finite representations, and to produce fundamentally new examples of semisimple MCG-finite representations.

\begin{remark}[Bounds for free groups]
	\label{example:free-group-bounds}
	A variant of the construction in \autoref{non-rigid-examples}(\ref{h1-example})
gives an example of a representation of the free group $F_N$ on $N$ generators
of rank $N+1$, whose conjugacy class is fixed by the action of $\on{Out}(F_N)$,
see \cite[p.~1444]{potapchik2000low}. For example, when $N=2$, this representation is given by the matrices $$\begin{pmatrix} 1 & 0 & 1 \\ 0 & 1 & 0 \\ 0 & 0 & 1 \end{pmatrix}, \begin{pmatrix} 1 & 0 & 0 \\ 0 & 1 & 1 \\ 0 & 0 & 1 \end{pmatrix}.$$
Thus again the bound of $\sqrt{g+1}$ in
\autoref{corollary:free-groups} cannot be improved too much further if one
allows non-semisimple representations. However it is in principle possible that
there is \emph{no} semisimple representation of $F_N$ with infinite image whose
conjugacy class has finite orbit under $\on{Out}(F_N)$ when $N\geq 3$. Indeed, this would
follow from a conjecture of Grunewald and Lubotzky, see \cite[\S9.2]{grunewald2009linear}, using the main result of \cite{farb2017moving}.

There are interesting semisimple representations of $F_2$ whose conjugacy class
has finite orbit under $\on{Out}(F_2)$. This follows, for example, from the linearity of $\on{Aut}(F_2)$ \cite{krammer2000braid}, e.g.~by choosing a faithful representation of $\on{Aut}(F_2)$ and restricting to the inner automorphisms, and then semisimplifying. Alternately, one may construct examples by applying the Kodaira-Parshin trick to construct representations of $\on{Mod}_{1,2}$, which is commensurable with $\on{Aut}(F_2)$.
\end{remark}

As remarked in the introduction, we conjecture the opposite of Grunewald and Lubotzky:
\begin{conjecture}
	\label{conjecture:infinite-image-semisimple}
For all $N>0$, there exist semisimple representations of $F_N$ with infinite image, whose conjugacy class has finite orbit under $\on{Out}(F_N)$. 
\end{conjecture}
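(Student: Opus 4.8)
The plan is to treat $N=2$ separately -- where the conjecture follows from existing results -- and to set up $N\ge 3$ as a reduction, since the latter is genuinely open and, as noted in the introduction and \autoref{example:free-group-bounds}, runs against an expectation of Grunewald--Lubotzky.

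For $N=2$, the key point is that $\on{Out}(F_2)\cong\on{GL}_2(\mathbb Z)$ acts on the conjugacy class of the commutator $[a,b]\in F_2$ only through inversion, so $\on{Out}(F_2)$-finiteness is, up to this involution, the same as $\on{Mod}_{1,1}$-finiteness under the identification $F_2\cong\pi_1(\Sigma_{1,1})$, $\on{Mod}_{1,1}\cong\on{SL}_2(\mathbb Z)$. Concretely: by linearity of $\on{Aut}(F_2)$ \cite{krammer2000braid} fix a faithful linear representation $\tau$ of $\on{Aut}(F_2)$ and set $\rho:=\tau|_{\on{Inn}(F_2)}$, identifying $\on{Inn}(F_2)\cong F_2$. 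The argument of \autoref{proposition:MCG-finite-index-rep} (which reads $\rho^\gamma(h)=\rho(\gamma)\rho(h)\rho(\gamma)^{-1}$) shows the conjugacy class of $\rho$ is fixed by all of $\on{Out}(F_2)$, and a standard argument -- if the semisimplification $\rho^{ss}$ had finite image, a finite-index subgroup of $F_2$ would map unipotently, hence nilpotently, under the faithful representation $\rho$, which is impossible -- shows $\rho^{ss}$ has infinite image. This $\rho^{ss}$ settles $N=2$; alternatively one may work out explicit examples among the ``special dihedral'' representations of $\pi_1(\Sigma_{1,1})$ of \cite[Theorem B]{biswas2017surface}.

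For $N\ge 3$ I would reduce the conjecture to the following: produce a finite-index subgroup $\Gamma\subset\on{Aut}(F_N)$ containing $\on{Inn}(F_N)$ together with a semisimple representation $\sigma:\Gamma\to\on{GL}_r(\mathbb C)$ whose restriction to $\on{Inn}(F_N)\cong F_N$ has infinite image. Granting this, \autoref{proposition:MCG-finite-index-rep}'s argument shows $\sigma|_{\on{Inn}(F_N)}$ has conjugacy class fixed by the finite-index subgroup $\Gamma/\on{Inn}(F_N)\subset\on{Out}(F_N)$, hence is $\on{Out}(F_N)$-finite, and semisimplicity passes to restrictions. It is \emph{not} enough to take a semisimple MCG-finite representation of $\pi_1(\Sigma_{g,n})$ with infinite image (these exist, of rank exponential in $g$, by the constructions in \autoref{question:bounds}) together with a presentation $F_N\cong\pi_1(\Sigma_{g,n})$, $2g+n-1=N$, $n\ge1$: such a representation has finite orbit only under the mapping class group, a proper subgroup of $\on{Out}(F_N)$. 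The natural candidates for $\sigma$ are the arithmetic ``higher Prym'' representations of $\on{Aut}(F_N)$ of Grunewald--Lubotzky, attached to a characteristic finite-index $K\trianglelefteq F_N$, the quotient $Q=F_N/K$, and an irreducible $\mathbb C Q$-representation $\rho$, via the $\on{Aut}(F_N)$-action on the isotypic piece $H_1(K,\mathbb C)^\rho$; one would search for $(K,\rho)$ for which the restriction to $\on{Inn}(F_N)$ already has infinite image. Outside this framework one could try representations factoring through the metabelian quotient $F_N/F_N''$ and fixed up to conjugacy by $\on{Out}(F_N)$, or Kodaira--Parshin-type constructions made canonical with respect to $\on{Out}(F_N)$ rather than a mapping class group.

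The main obstacle is precisely that this program contradicts the expectation of Grunewald--Lubotzky \cite[\S9.2]{grunewald2009linear}: for the Prym representations the image of $\on{Inn}(F_N)$ is expected to be \emph{finite} when $N\ge 3$, even though -- by the graph analogue of the Putman--Wieland conjecture proved by Farb--Hensel \cite{farb2017moving} -- no nonzero vector has finite orbit. So a genuine proof must either refute this expectation by a direct analysis of some $H_1(K,\mathbb C)^\rho$, or exhibit a fundamentally new family of $\on{Out}(F_N)$-finite semisimple representations that neither factors through a virtually solvable quotient nor is of Prym type. We know of no such construction, which is why the statement is recorded as a conjecture.
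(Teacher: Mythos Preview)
The statement is a \emph{conjecture}, and the paper does not prove it; there is no ``paper's own proof'' to compare against. The paper offers it as a deliberately provocative counterpoint to the Grunewald--Lubotzky expectation, with only the evidence recorded in \autoref{example:free-group-bounds} (the case $N=2$ via linearity of $\on{Aut}(F_2)$) and the existence of MCG-finite examples for surface groups. Your write-up correctly recognises this: you treat $N=2$ by exactly the argument the paper sketches (restrict a faithful linear representation of $\on{Aut}(F_2)$ to the inner automorphisms and semisimplify), and you explicitly flag $N\ge 3$ as open and in tension with Grunewald--Lubotzky. So your ``proof proposal'' is really a status report, and it is an accurate one.

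Two minor points. First, you omit $N=1$, which is trivial but is part of ``all $N>0$'': send a generator of $\mathbb{Z}$ to $\on{diag}(\lambda,\lambda^{-1})$ with $\lambda$ of infinite order; this is semisimple, infinite-image, and its conjugacy class is fixed by $\on{Out}(\mathbb{Z})=\{\pm1\}$. Second, in your $N\ge3$ reduction you assert ``semisimplicity passes to restrictions'': this is false in general, but correct here because $\on{Inn}(F_N)\trianglelefteq\Gamma$ and Clifford's theorem applies; it would be worth saying so. Beyond that, your discussion of possible attacks (Prym-type isotypic pieces, metabelian quotients, canonical Kodaira--Parshin constructions) and of why they are obstructed is reasonable and goes slightly beyond what the paper records, but none of it constitutes a proof for $N\ge3$, and you do not claim otherwise.
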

Our main impetus for this conjecture is the existence of many interesting
MCG-finite representations of $\pi_1(\Sigma_{g,n})$, see \autoref{example:semisimple-bound}. In our view this conjecture is of great importance, both because of the intrinsic interest of ``canonical" representations of $F_N$ (i.e.~those with finite orbit under $\on{Out}(F_N)$), and because of its relationship to the representation theory of $\on{Aut}(F_N)$. 

\begin{remark}
We briefly spell out the the relationship between representations of $F_N$ whose conjugacy class has finite orbit under $\on{Out}(F_N)$, and the representation theory of
$\on{Aut}(F_N)$.
If $\Gamma\subset \on{Aut}(F_N)$ is a finite index subgroup containing the inner automorphisms, and
$$\rho:\Gamma\to \on{GL}_r(\mathbb{C})$$ is a representation, then the conjugacy
class of $\rho|_{F_N}$ has finite orbit under $\on{Out}(F_N)$, via a proof
analogous to that of
\autoref{proposition:MCG-finite-index-rep}. Conversely, as in to the proof of \autoref{lemma:mcg-rep-construction}, the projectivization of any irreducible $F_N$-representation whose conjugacy class has finite orbit under $\on{Out}(F_N)$ arises from a projective representation of a finite index subgroup of $\on{Aut}(F_N)$. See \cite[Theorem 7.12]{baumeister-kielak-pierro} for a related result about low-rank projective representations of certain finite index subgroups of $\on{Aut}(F_n)$.
\end{remark}
\subsection{Classification}
The following is evidently quite difficult, but in our view is of great interest:
\begin{question}
Can one classify MCG-finite representations of $\pi_1(\Sigma_{g,n})$	 of rank $r$? Equivalently, can one classify algebraic solutions to the rank $r$ isomonodromy differential equations over $\mathscr{M}_{g,n}$? Can one classify representations of the free group $F_N$ on $N$ generators whose conjugacy class has finite orbit under $\on{Out}(F_N)$?
\end{question}
\begin{remark}
	\label{remark:}
It would also be extremely interesting to learn of any new sources of
\emph{examples} of MCG-finite representations. For example, are there MCG-finite representations $\pi_1(\Sigma_{g,n})\to \on{SL}_r(\mathbb{C})$ with (Zariski-)dense image for $r$ arbitrarily large?
\end{remark}

\appendix
\section{Proof of \autoref{theorem:period-map-computation}}\label{appendix}
We now explain the proof of \autoref{theorem:period-map-computation}, which loosely follows \cite[\S9 and \S10]{Voisin:hodgeTheory}. We retain notation as in \autoref{section:unitary-period-map}.
\subsection{Preparatory Lemmas}
Recall that the derivative of the period map $dP$ was a map 	$$dP : T_{\mathscr B} \to P^* T_{\on{Gr}(\rk F^1 \mathscr H, \rk \mathscr H)} \simeq (F^1
	\mathscr H)^\vee \otimes (\mathscr H/F^1 \mathscr H).$$
By adjointness, we obtain from $dP$ a map
\begin{align*}
	dP' : F^1 \mathscr H \to 
	(\mathscr H/F^1 \mathscr H) \otimes T_{\mathscr B}^\vee.
\end{align*}
Recall that we defined in \autoref{subsection:period-map-derivative} a map $\overline{\nabla}$ with the same source and target.

\begin{lemma}
	\label{lemma:connection-and-period-map}
	We have an equality $\overline \nabla = dP'$.
\end{lemma}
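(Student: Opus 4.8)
The plan is to unwind the classical description of the derivative of a morphism to a Grassmannian, and to observe that under the flat trivialization of $\mathscr H$ provided by the contractibility of $\mathscr B$, this description becomes literally the map $\overline\nabla$ of \eqref{equation:shift-derivative}. There is no deep content here beyond matching conventions; the argument is the higher-rank analogue of the classical computation for curves carried out in \cite[\S10]{Voisin:hodgeTheory}.

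First I would recall the relevant part of \cite[Theorem 3.5]{eisenbudH:3264-&-all-that}. Let $\underline V = V \otimes_{\mathbb C} \mathscr O_{\mathscr B}$ be a trivial bundle on a smooth base, equipped with its canonical flat connection $d$, and let $\mathscr F \subset \underline V$ be a subbundle of rank $k$ with classifying map $P \colon \mathscr B \to \on{Gr}(k, \dim V)$. Then, under the canonical identification $P^* T_{\on{Gr}} \simeq \mathscr F^\vee \otimes (\underline V/\mathscr F)$, the derivative $dP \colon T_{\mathscr B} \to \mathscr F^\vee \otimes (\underline V/\mathscr F)$ sends a tangent vector $v$ to the homomorphism $\mathscr F \to \underline V/\mathscr F$, $s \mapsto \overline{d_v \tilde s}$, where $\tilde s$ is any local lift of $s$ to $\underline V$ and $\overline{(\cdot)}$ denotes the image in $\underline V/\mathscr F$. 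This is independent of the choice of lift: two lifts differ by a local section of $\mathscr F$, and in the given trivialization the $d$-derivative at $b$ of a section of $\mathscr F$ vanishing at $b$ again lies in $\mathscr F_b$, hence vanishes in $(\underline V/\mathscr F)_b$. I would then apply this with $\underline V = \mathscr H$: although $\mathscr H = \mathbb W \otimes \mathscr O_{\mathscr B}$ is not literally trivial, flatly trivializing $\mathbb W$ (possible since $\mathscr B$ is contractible, which is exactly how $P$ was defined in \eqref{equation:period-map}) identifies it with a trivial bundle, and under this identification the canonical flat connection corresponds to $\nabla_{GM}$, since by construction the flat sections of $\nabla_{GM}$ are the sections of $\mathbb W$. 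Taking $\mathscr F = F^1 \mathscr H$, and using that $P^*$ of the universal sub- and quotient bundles are $F^1\mathscr H$ and $\mathscr H/F^1\mathscr H$ (as in the setup of $dP$), the formula above reads $dP(v)(s) = \overline{\nabla_{GM,v}(s)}$ with $\overline{(\cdot)}$ the projection to $\mathscr H/F^1\mathscr H$. Passing to the adjoint $dP' \colon F^1\mathscr H \to (\mathscr H/F^1\mathscr H) \otimes \Omega^1_{\mathscr B}$, this is precisely the composite $F^1\mathscr H \hookrightarrow \mathscr H \xrightarrow{\nabla_{GM}} \mathscr H \otimes \Omega^1_{\mathscr B} \to (\mathscr H/F^1\mathscr H) \otimes \Omega^1_{\mathscr B}$ that defines $\overline\nabla$ in \eqref{equation:shift-derivative}, giving $\overline\nabla = dP'$.

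The only step requiring any care — and the closest thing to an obstacle — is the bookkeeping of the two adjunctions (the canonical identification $P^* T_{\on{Gr}} \simeq (F^1\mathscr H)^\vee \otimes (\mathscr H/F^1\mathscr H)$ and the passage from $dP$ to $dP'$), to make sure the source and target of $dP'$ and $\overline\nabla$ agree on the nose and that no transpose or sign slips in. This is routine, but it is the place where one must be explicit about conventions.
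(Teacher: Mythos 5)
Your argument is correct and matches the paper's proof in structure and content: both unwind the classical description of the derivative of a map to a Grassmannian (you cite Eisenbud--Harris where the paper cites Voisin's Lemma 10.7, but it is the same fact), identify the flat derivative in the trivialization of $\mathscr{H}$ with $\nabla_{GM}$, and then read off $\overline\nabla = dP'$ after passing to the adjoint. One small note on phrasing: where you write ``$\tilde s$ is any local lift of $s$ to $\underline V$'' you of course mean a local section of the subbundle $\mathscr F$ extending $s$ (as your independence argument makes clear), since otherwise $\overline{d_v\tilde s}$ would not be well-defined.
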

\begin{proof}
	Let $W$ be a vector space and $K\subset W$ a subspace of rank $r$. It is shown in the course of the proof of \cite[Lemma 10.7]{Voisin:hodgeTheory}
	that for $[K] \in \on{Gr}(r,W)$, the identification $T_{\on{Gr}(r,W),[K]} \simeq
	\Hom(K,W/K)$ is given as follows.
	Let $\mathscr S \subset \mathscr{O}_{\on{Gr}(r,W)} \otimes W$ denote the universal
	subbundle.
	Given $\sigma \in K$, let $\widetilde{\sigma}$
	denote a choice of holomorphic section of $\mathscr S$
	defined on a neighborhood of $[K] \in \on{Gr}(r,W)$ with $\widetilde{\sigma}([K]) =
	\sigma$.
	Then, the map 
	\begin{align*}
		T_{\on{Gr}(r, W), [K]} &\to \Hom(K, W/K) \\
		u &\mapsto \left( \sigma \mapsto \frac{\partial }{\partial
		u} (\widetilde{\sigma}) \bmod K \right)
	\end{align*}
	is well-defined and independent of the choice of lift $\widetilde{\sigma}$.

	Choosing $b \in \mathscr B$, the derivative of the period map $dP_b'$ is then given as follows:
	send 
	$\sigma \in F^1 \mathscr H_b$ to the function which sends
	$v \in T_{\mathscr B,b}$ to $\frac{\partial }{\partial v} \widetilde{\sigma} \bmod F^1
	\mathscr H_b$,
	for $\widetilde{\sigma}$ a local holomorphic lift of $\sigma$ to $F^1
	\mathscr H$. Here $\frac{\partial }{\partial v}$ makes sense as we have
	chosen a flat trivialization of $\mathscr{H}$, but we may equivalently
	write this map as $$\sigma \mapsto (v \mapsto \nabla_{GM}(\widetilde{\sigma})(v)
	\bmod F^1\mathscr{H}_b),$$
	or equivalently 
	\begin{equation*}
	\sigma \mapsto (v \mapsto
	\overline{\nabla}(\widetilde{\sigma})(v)).\qedhere
\end{equation*}
\end{proof}

After introducing some notation, we next give an explicit computation of the Gauss-Manin connection $\nabla_{GM}$ on $\mathscr{H}$. 

\begin{notation}
	\label{notation:}
	Recall from \autoref{notation:period-map}, that $(\mathscr E, \nabla)$ denotes the Deligne canonical extension of
$(\mathbb V \otimes \mathscr O_{\mathscr C^\circ}, \on{id} \otimes d)$ to
$\mathscr C$.
	Let $\mathcal{A}^{i,j}_{\log \mathscr{D}}(\mathscr{E})$ be the sheaf of $C^\infty$ logarithmic $(i,j)$-forms valued in $\mathscr{E}$, and let $\mathcal{A}^n_{\log \mathscr{D}}(\mathscr{E})=\oplus_{i+j=n} \mathcal{A}^{i,j}_{\log \mathscr{D}}(\mathscr{E})$. 
	Then, $(\mathcal{A}^{i,\bullet}_{\log\mathscr{D}}(\mathscr{E}),
	\overline{\partial})$ is the Dolbeault complex of $\mathscr{E}\otimes
	\Omega^i_{\mathscr{C}}(\log \mathscr D)$. The complex
	$(\mathcal{A}^\bullet_{\log \mathscr{D}}(\mathscr{E}),
	\nabla+\overline{\partial})$ is the de Rham resolution of the unitary
	local system $\mathbb{V}$. We similarly define the relative variants  $\mathcal{A}^{i,j}_{\log \mathscr{D}, \mathscr{C}/\mathscr{B}}(\mathscr{E})$, $\mathcal{A}^{n}_{\log \mathscr{D}, \mathscr{C}/\mathscr{B}}(\mathscr{E})$.
\end{notation}
\begin{notation}
	\label{notation:}
	For $v$ a $C^\infty$ vector field on an open $U \subset \mathscr C$, tangent to $\mathscr{D}$, and
$w$ a $C^\infty$ section of $\mathcal{A}^i_{\log \mathscr{D}}(\mathscr{E})$ on $U$, the interior product
$\on{int}(v)(w) \in H^0(U, \mathcal{A}^{i-1}_{\log \mathscr D}(\mathscr E))$ is
defined by
$$\on{int}(v)(w)(X_1, \ldots, X_{i+j-1}) = w(v, X_1, \ldots, X_{i+j-1})$$ for any
$C^\infty$ vector fields $X_1, \ldots, X_{i+j-1}$ on $U$ tangent to $\mathscr{D}$.
\end{notation}

\begin{lemma}
	\label{lemma:connection-to-interior}
	Let $\sigma\in \mathcal A^1_{\log \mathscr{D}}(\mathscr{E})$ be a
	differential form such that for each $b'\in \mathscr{B}$,
	$\sigma|_{\mathscr{C}_{b'}}$ is closed, so that $\sigma$ represents a
	section $[\sigma]$ of $\mathscr{H}$.
Fix $b\in \mathscr{B}$ and a tangent vector $u\in T_{\mathscr{B}, b}$, and let
$v$ be a $C^\infty$ section of $T_\mathscr{C}|_{\mathscr{C}_b}$, tangent to
$\mathscr{D}$ with $u = \pi_*(v).$
	We have
	\begin{align*}
		\nabla_{GM}([\sigma])|_b(u)= [(\on{int}(v)((\nabla+\overline{\partial})
	(\sigma)|_{\mathscr C_b})]\in \mathscr{H}_b.
	\end{align*}
\end{lemma}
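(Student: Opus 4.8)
The plan is to compute the Gauss--Manin connection $\nabla_{GM}$ on the hypercohomology bundle $\mathscr{H}$ using the $C^\infty$ (Dolbeault-type) resolution of the local system $\mathbb{V}$ described in \autoref{notation:} above, mimicking the classical computation in the proper case, e.g.~\cite[\S9--\S10]{Voisin:hodgeTheory}. Recall that $\mathscr{H} = \mathbb{W}\otimes_{\mathbb{C}}\mathscr{O}_{\mathscr{B}}$ carries its flat Gauss--Manin connection, and that, by \autoref{theorem:unitary-MHS} and the degeneration of the Hodge--de Rham spectral sequence for unitary $\mathbb{V}$, a class in $\mathscr{H}_{b'}$ may be represented by hypercohomology classes of the total complex $(\mathcal{A}^\bullet_{\log\mathscr{D}}(\mathscr{E}), \nabla+\overline{\partial})$ restricted to the fiber $\mathscr{C}_{b'}$. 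A $C^\infty$ relative $1$-form $\sigma\in\mathcal{A}^1_{\log\mathscr{D}}(\mathscr{E})$ which is fiberwise closed represents a section $[\sigma]$ of $\mathscr{H}$ (a priori only a $C^\infty$ one, but the point of the degeneration is that it can be corrected to a holomorphic one, or computed using such $C^\infty$ representatives).

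First I would set up, over a small contractible open $U\subset\mathscr{B}$ containing $b$, a $C^\infty$ splitting $T_{\mathscr{C}}|_{\pi^{-1}(U)} \simeq \pi^*T_{\mathscr{B}} \oplus T_{\mathscr{C}/\mathscr{B}}$ compatible with $\mathscr{D}$, so that any tangent vector $u\in T_{\mathscr{B},b}$ lifts to a $C^\infty$ vector field $v$ along $\mathscr{C}_b$ tangent to $\mathscr{D}$ with $\pi_*(v)=u$. The standard formula for $\nabla_{GM}$ of a class represented by a total-complex cocycle is: differentiate the cocycle in the $\mathscr{B}$-direction, which (after using that a flat local frame of $\mathbb{W}$ trivializes $\mathscr{H}$, i.e.~the derivative of a flat class in the horizontal direction measures exactly the failure to be horizontal) amounts to taking the Lie-derivative $\mathcal{L}_{\widetilde v}$ along a global lift $\widetilde v$ of $v$, and then using the Cartan homotopy formula $\mathcal{L}_{\widetilde v} = \on{int}(\widetilde v)\circ d + d\circ\on{int}(\widetilde v)$ with $d = \nabla+\overline{\partial}$ the de Rham differential of the resolution. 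The $d\circ\on{int}(\widetilde v)$ term is exact and drops out in hypercohomology, leaving $\on{int}(v)\big((\nabla+\overline{\partial})\sigma|_{\mathscr{C}_b}\big)$, which depends only on the restriction of $\widetilde v$ to $\mathscr{C}_b$ because $\sigma$ is fiberwise closed; this is exactly the asserted formula.

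The main obstacle — and where this differs from the proper case — is bookkeeping the logarithmic poles along $\mathscr{D}$: one must check that $\on{int}(v)$ of a logarithmic form is again logarithmic (hence the requirement that $v$ be tangent to $\mathscr{D}$), that Cartan's formula is valid on the logarithmic de Rham complex, and that the comparison $\mathscr{H}\simeq \mathbb{R}^1\pi_*(\mathscr{E}\xrightarrow{\nabla}\mathscr{E}\otimes\Omega^1_{\mathscr{C}/\mathscr{B}}(\log\mathscr{D}))$ together with the fact that the $C^\infty$ logarithmic de Rham resolution computes the same hypercohomology (a consequence of the degeneration statement \cite[Theorem 7.1(a)]{timmerscheidt:mixed-hodge-structure-for-unitary} and \cite[Corollaire 6.10, Proposition 6.14]{deligne:regular-singular}) lets us perform the computation with these representatives. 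I would handle this by working locally near $\mathscr{D}$ in suitable coordinates where $\mathscr{D}=\{z=0\}$ and $\mathscr{E}$ admits a logarithmic flat frame, checking the homotopy formula by direct expansion on generators $\frac{dz}{z}$ and $d\bar z$, and then globalizing via a partition of unity. Everything else is the verbatim argument of \cite[Lemma 10.19 and Lemma 10.31]{Voisin:hodgeTheory} adapted to this relative logarithmic setting, and I would simply cite those statements for the parts that carry over unchanged.
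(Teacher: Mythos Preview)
Your proposal is correct and follows essentially the same approach as the paper. The paper's proof (explicitly citing \cite[Proposition 9.14]{Voisin:hodgeTheory}) uses Ehresmann's theorem to write $\mathscr{C}\simeq\mathscr{C}_b\times\mathscr{B}$, decomposes $\sigma=\Phi+\sum_i dt_i\otimes\psi_i$ in local base coordinates, and then computes directly that $\on{int}(\partial/\partial t_i)((\nabla+\overline\partial)\sigma)|_{\mathscr{C}_b}$ differs from the Gauss--Manin derivative $\on{int}(\partial/\partial t_i)((\nabla+\overline\partial)\Phi)|_{\mathscr{C}_b}$ by the exact term $(\nabla+\overline\partial)\psi_i|_{\mathscr{C}_b}$; your Cartan-formula packaging $\mathcal{L}_{\widetilde v}=\on{int}(\widetilde v)\,d+d\,\on{int}(\widetilde v)$ is the coordinate-free version of precisely this computation, and the exact term you discard is the same one.
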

\begin{proof}
	This proof essentially follows \cite[Proposition
	9.14]{Voisin:hodgeTheory}.	
	After possibly shrinking $\mathscr B$, which will not alter the statement of this
	lemma, we may write $\mathscr C \simeq
	\mathscr C_b \times \mathscr B$ (as $C^\infty$ manifolds), by Ehresmann's theorem. 
	Let $t_1, \ldots, t_m$ be $C^\infty$  functions on $C_b \times
	\mathscr B$ pulled back from a system of local coordinates for $\mathscr B$.
	After possibly shrinking $\mathscr B$, we can write
	$\sigma$ as
	\begin{align*}
		\sigma = \Phi + \sum_{i=1}^m dt_i \otimes \psi_i, 
	\end{align*}
	where $\Phi$ a section of $\mathscr{A}^1_{\log \mathscr{D}}(\mathscr{E})$ independent of
	$dt_i$, 
	each
	$\psi_i$ is a $C^\infty$ section of $\mathscr{E}$,
	and $\Phi|_{\mathscr C_b} = \sigma|_{\mathscr C_b}$. 
	Since $\sigma$ is fiber-wise closed, applying $\nabla+\overline{\partial}$ gives
		\begin{align*}
		(\nabla+\overline{\partial}) \sigma = \sum_i dt_i
		\on{int}(\partial/\partial t_i)((\nabla+\overline{\partial})\Phi) - \sum_i dt_i \wedge (\nabla+\overline{\partial})
		\psi_i 
	\end{align*}
	and hence
	\begin{align*}
		\on{int}(\partial/\partial t_i) ((\nabla+\overline{\partial})\sigma)|_{\mathscr{C}_b} =
	\on{int}(\partial/\partial t_i)((\nabla+\overline{\partial})\Phi|_{\mathscr{C}_b}) - (\nabla+\overline{\partial})
		\psi_i|_{\mathscr{C}_b}. 
	\end{align*}
	
	Since $\Phi|_{\mathscr C_b} = \sigma|_{\mathscr C_b}$, 
	$$\on{int}(\partial/\partial
	t_i)((\nabla+\overline{\partial})\Phi|_{\mathscr{C}_b}) =
	\nabla_{GM}([\sigma])|_b(\pi_*(\partial/\partial t_i)),$$ by definition of the Gauss-Manin connection.	
	But as $(\nabla+\overline{\partial})
		\psi_i|_{\mathscr{C}_b}$ is exact, 
		$\on{int}(\partial/\partial t_i) ((\nabla+\overline{\partial})\sigma)|_{\mathscr{C}_b}$ and $\on{int}(\partial/\partial t_i)((\nabla+\overline{\partial})\Phi|_{\mathscr{C}_b})$ represent the same cohomology class. Thus 
\begin{align*}
	\mathscr{H}_b \ni [(\on{int}(\partial/\partial t_i)((\nabla+\overline{\partial})(\sigma)|_{\mathscr C_b})] =
		[(\on{int}(\partial/\partial t_i)((\nabla+\overline{\partial})(\Phi)|_{\mathscr C_b})] =
		\nabla_{GM}([\sigma])|_b(\pi_*(\partial/\partial t_i)).
\end{align*}

 Since
	$\sigma|_{\mathscr{C}_b}$ is closed, 
	the same result holds after
	replacing $\partial/\partial t_i$ with any vector field $v$ satisfying
	$\pi_*v=\pi_*(\partial/\partial t_i)$; 	
	as the $\pi_*(\partial/\partial t_i)$ form a
	basis for $T_{\mathscr{B},b}$, the proof is complete.
		\end{proof}

For the next result, we need to recall the Kodaira-Spencer map.
\begin{definition}
	\label{definition:kodaira-spencer}
	With notation as in \autoref{notation:period-map},
we have a short exact sequence
\begin{equation}
	\label{equation:tangent-exact-sequence}
	\begin{tikzcd}
		0 \ar {r} & T_{\mathscr C_b}(-\log \mathscr D) \ar {r} & T_{\mathscr C}(-
		\log \mathscr D)|_{\mathscr C_b} \ar {r}
	& \pi^* T_{\mathscr B}|_{\mathscr C_b} \ar {r} & 0 
\end{tikzcd}\end{equation}
where $T_{\mathscr{C}}(-\log\mathscr{D})$ is the dual of $\Omega^1_{\mathscr{C}}(\log \mathscr{D})$.
The {\em Kodaira-Spencer} map $$\kappa: T_{\mathscr{B}, b}\to H^1(T_{\mathscr{C}_b}(-\log\mathscr{D}))$$ sends 
a vector $v \in T_{\mathscr B,b} \simeq H^0(\mathscr C_b, \pi^* T_{\mathscr B}|_{\mathscr C_b})$ to its image in 
$H^1(T_{\mathscr{C}_b}(-\log\mathscr{D}))$ under the connecting homomorphism in the long exact sequence in cohomology arising from \eqref{equation:tangent-exact-sequence}. 
\end{definition}

\begin{remark}
	\label{remark:kodaira-spencer}
	Geometrically, the Kodaira-Spencer map sends a deformation of $(\mathscr C_b, \mathscr D_b)$ to the corresponding
cohomology class in $H^1(T_{\mathscr C_b}(-\mathscr D))$ (identifying $T_{\mathscr C_b}(-\mathscr D)$ with $T_{\mathscr{C}_b}(-\log\mathscr{D})$).
For an explanation of why the boundary map coincides with this description in
the case $\mathscr D = \emptyset$, see, \cite[\S9.1.2]{Voisin:hodgeTheory}.
In the case $\mathscr D \neq \emptyset$, an analogous argument goes through.
\end{remark}

\begin{proposition}
	\label{proposition:period-map}
	Keep notation as in \autoref{notation:period-map}.
	For $b \in \mathscr B$, set $C=\mathscr{C}_b$, $D=\mathscr{D}_b$, and $E=\mathscr{E}|_C$. We have a commuting diagram
	\begin{equation}
		\label{equation:period-to-cup}
		\begin{tikzcd} 
			H^0( C, E \otimes \Omega^1_{ C}(D)) \otimes
			T_{\mathscr B,b} \ar {r}{dP_b''} \ar {d}{1 \otimes \kappa} & H^1( C,  E) \\
			H^0( C,  E \otimes \Omega^1_{ C}( D)) \otimes
			H^1( C, T_{ C}(- D)) \ar {r}{\cup} & H^1( C,  E
				\otimes \Omega^1_{ C}( D) \otimes T_{ C}(- D)) \ar{u}{\alpha}
	\end{tikzcd}\end{equation}
	where $dP_b''$ is adjoint to the derivative of the period map $dP_b$, $\kappa$ is the Kodaira-Spencer map, $\alpha$ is the map induced by the pairing 
	$\Omega^1_{ C}( D) \otimes T_{ C}(- D) \to \mathscr O_{ C}$
	and $\cup$ is the cup product.
\end{proposition}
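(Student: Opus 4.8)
\textbf{Proof proposal for Proposition~\ref{proposition:period-map}.}

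The plan is to unwind all three maps in the diagram \eqref{equation:period-to-cup} into \v{C}ech or Dolbeault representatives and check commutativity by an explicit computation, following the template of \cite[Theorem 10.4]{Voisin:hodgeTheory} but keeping track of the logarithmic poles along $\mathscr D$. First I would fix $b \in \mathscr B$ and, as in the proof of \autoref{lemma:connection-to-interior}, use Ehresmann's theorem to trivialize $\mathscr C \simeq C \times \mathscr B$ as $C^\infty$ manifolds over a contractible neighborhood of $b$, with $\mathscr D$ corresponding to $D \times \mathscr B$; this lets me pull back local holomorphic coordinates $t_1, \dots, t_m$ from $\mathscr B$. The key point is that a class in $H^0(C, E \otimes \Omega^1_C(D)) = F^1\mathscr H_b / (\text{nothing})$ — more precisely an element of $F^1\mathscr H_b$, which by \eqref{equation:h-filtration} is exactly $H^0(C, E \otimes \Omega^1_C(D))$ — can be spread out to a fiberwise-closed section $\sigma$ of $\mathscr A^1_{\log \mathscr D}(\mathscr E)$ whose restriction to $C$ is holomorphic of type $(1,0)$. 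Then \autoref{lemma:connection-and-period-map} identifies $dP_b''$ with $\overline\nabla_b$, and \autoref{lemma:connection-to-interior} computes $\overline\nabla_b([\sigma])(u)$ as the class of $\on{int}(v)\big((\nabla + \overline\partial)\sigma|_C\big)$ in $\mathscr H_b$, projected to $\mathscr H_b / F^1 \mathscr H_b \simeq H^1(C, E)$, where $v$ is any $C^\infty$ lift of $u$ to $T_{\mathscr C}(-\log\mathscr D)|_C$.

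Next I would compute the bottom route of the diagram. Writing $u \in T_{\mathscr B, b}$, the Kodaira-Spencer class $\kappa(u) \in H^1(C, T_C(-\log D))$ is represented (via \autoref{definition:kodaira-spencer} and the connecting homomorphism for \eqref{equation:tangent-exact-sequence}) by the \v{C}ech cocycle $\{v_{\alpha\beta} := v_\alpha - v_\beta\}$ on an open cover $\{U_\alpha\}$ of $C$, where $v_\alpha$ is a local holomorphic lift of $u$ through $T_{\mathscr C}(-\log\mathscr D)|_C \to \pi^*T_{\mathscr B}|_C$; equivalently, in the Dolbeault picture, by $\overline\partial(v_{\text{glob}})$ where $v_{\text{glob}}$ is a global $C^\infty$ lift — and this $v_{\text{glob}}$ is precisely the $v$ appearing in the top route. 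For a section $\eta \in H^0(C, E \otimes \Omega^1_C(D))$, the cup product $\kappa(u) \cup \eta$ followed by the contraction $\alpha$ induced by $\Omega^1_C(D) \otimes T_C(-\log D) \to \mathscr O_C$ then yields, in Dolbeault cohomology, the class of $\on{int}(\overline\partial v_{\text{glob}})(\eta)$, and an integration-by-parts / Leibniz-rule manipulation — using that $\eta$ is holomorphic so $\overline\partial \eta = 0$ and $\nabla$ respects the flat structure — identifies this with $\on{int}(v_{\text{glob}})\big((\nabla+\overline\partial)\eta\big)$ modulo an exact term, matching the top route. The logarithmic bookkeeping enters only in checking that all vector fields stay tangent to $\mathscr D$ and all forms stay in the $\log \mathscr D$ complexes, which is exactly why \eqref{equation:tangent-exact-sequence} is the correct exact sequence to use.

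The main obstacle I anticipate is the careful matching of signs and of the two descriptions of $v$ (the Dolbeault global lift versus the \v{C}ech cocycle of local lifts) across the cup-product formula — this is where \cite{Voisin:hodgeTheory} spends most of its effort in the compact case, and the presence of the divisor $\mathscr D$ means I cannot simply cite it but must rerun the argument verifying that every contraction, wedge, and $\overline\partial$ preserves the logarithmic conditions. A secondary subtlety is justifying that shrinking $\mathscr B$ (used for the Ehresmann trivialization and for writing $\sigma = \Phi + \sum dt_i \otimes \psi_i$) does not affect the pointwise statement at $b$, but this is immediate since $dP_b$, $\kappa$, $\cup$, and $\alpha$ are all defined fiberwise at $b$. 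Once these compatibilities are in hand, the commutativity of \eqref{equation:period-to-cup} follows formally, and this is the computation carried out in \autoref{subsection:proof-period-map}.
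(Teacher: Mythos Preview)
Your proposal is correct and follows essentially the same route as the paper: both compute $dP_b''$ via \autoref{lemma:connection-and-period-map} and \autoref{lemma:connection-to-interior}, represent $\kappa(u)$ by the Dolbeault class $\overline\partial v$, and match the two expressions using the Cartan/Leibniz identity $\overline\partial(\on{int}(v)\widetilde\sigma) = -\on{int}(v)\overline\partial\widetilde\sigma + \on{int}(\overline\partial v)\widetilde\sigma$. The one point the paper makes explicit that you gloss over is that producing the fiberwise-holomorphic (hence fiberwise-closed) lift $\widetilde\sigma$ of $\sigma$ uses unitarity of $\mathbb V$, via the $\overline\partial$-Laplacian and harmonic theory as in \cite[Proposition 9.22]{Voisin:hodgeTheory}.
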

\begin{proof}
Let $\sigma\otimes u$ be an element of $H^0( C, E \otimes \Omega^1_{ C}(D)) \otimes
			T_{\mathscr B,b}$. By e.g.~\cite[Proposition
			9.22]{Voisin:hodgeTheory} (applied to the
			$\overline\partial$-Laplacian---here unitarity is used
		to define the adjoint of $\overline\partial$ and consequently
	the Laplacian), we may choose a lift $\widetilde\sigma$ of $\sigma$ to a $C^\infty$ section of $\mathscr{E}\otimes \Omega^1_{\mathscr{C}}(\log \mathscr{D})$, which is holomorphic on fibers.
	Combining \autoref{lemma:connection-and-period-map} and
	\autoref{lemma:connection-to-interior}, we find that $dP''(\sigma
	\otimes u)$
	can be expressed as $[((\on{int}(v)(\nabla+\overline{\partial})\widetilde{\sigma})|_{\mathscr C_b})^{0,1}]$ for
$v$ a $C^\infty$ vector field of type $(1,0)$ tangent to $\mathscr{D}$ and lifting $u$, as in \autoref{lemma:connection-to-interior}. 
As $\mathscr{C}$ is a relative curve, for degree reasons we have
$$[((\on{int}(v)(\nabla+\overline{\partial})\widetilde{\sigma}|_{\mathscr C_b}))^{0,1}]=[\on{int}(v)\overline{\partial}\widetilde{\sigma}|_{\mathscr C_b}].$$
Direct computation gives
\begin{align*}
	\overline{\partial} (\on{int}(v) (\widetilde{\sigma}|_{\mathscr{C}_b})) = 
	- \on{int}(v)\overline{\partial}\widetilde{\sigma}|_{\mathscr C_b}
	+ \on{int}(\overline{\partial} v)(\widetilde{\sigma}|_{\mathscr{C}_b}).
\end{align*}
Using the vanishing of 
the Dolbeault cohomology class $[\overline{\partial} (\on{int}(v)
(\widetilde{\sigma}))]$
we obtain 
\begin{align*}
	[\on{int}(v)(\overline{\partial}\widetilde{\sigma}|_{\mathscr C_b})]
	=
	[\on{int}(\overline{\partial}v)(\widetilde{\sigma}|_{\mathscr C_b})].
\end{align*}
Combining the above yields that $dP''_b(\sigma\otimes u) = 
[\on{int}(\overline{\partial}v)(\widetilde{\sigma}|_{\mathscr C_b})]$.
By definition of the Kodaira Spencer map 
$\overline{\partial}v$ is identified with $\kappa(u)$.
So
\begin{align*}
dP''_b(\sigma \otimes u) =
[\on{int}(\overline{\partial}v)(\widetilde{\sigma}|_{\mathscr C_b})]
=
[\alpha\left( \overline{\partial}v \cup \widetilde{\sigma}|_{\mathscr C_b}\right)]
=
[\alpha\left( \overline{\partial}v\cup \sigma \right)]
=
[\alpha\left( \kappa(u) \cup \sigma \right)]
\end{align*}
verifying the commutativity of
\eqref{equation:period-to-cup} as desired.
\end{proof}

\begin{lemma}
	\label{lemma:multiplication-map}
	Suppose $E$ is a vector bundle on a smooth curve $C$, and $D \subset C$
	is a divisor.
	Let $H^1(C, T_C(-D)) \otimes H^0(C, E \otimes \omega_C(D)) \to H^1(C, E)$
	denote the map corresponding to the Yoneda cup product composition $\alpha \circ \cup$ 
	from \autoref{proposition:period-map},
	induced by the natural pairing $E \otimes \omega_C(D) \otimes T_C(-D) \to E$.
This is adjoint to a map $\upsilon: H^0(C, E \otimes \omega_C(D)) \otimes
	H^1(C, E)^\vee \to H^1(C, T_C(-D))^\vee$.
	We have a commutative diagram
	\begin{equation}
		\label{equation:serre-dual-diagram}
		\begin{tikzcd} 
			H^0(C, E \otimes \omega_C(D)) \otimes H^0(C, E^\vee \otimes
			\omega_C)  \ar {r}{\mu}\ar{d}{\eta}  & H^0(C, \omega_C^{\otimes 2}(D))\ar{d}{\zeta}
\\
H^0(C, E \otimes \omega_C(D)) \otimes H^1(C, E)^\vee \ar
			{r}{\upsilon} & H^1(C, T_C(-D))^\vee ,
				\end{tikzcd}\end{equation}
	where the two vertical maps are induced by Serre duality, and the 
	map $\mu$ is the map on global sections induced by $$(E
	\otimes \omega_C(D)) \times (E^\vee \otimes \omega_C) \to (E \otimes E^\vee)
	\otimes (\omega_C(D) \otimes \omega_C) \to \omega_C^{\otimes 2}(D)$$
	where the first map is the tensor product and the second is obtained from the trace pairing $E\otimes E^\vee\to \mathscr{O}_C$.
\end{lemma}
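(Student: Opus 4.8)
The statement to prove is \autoref{lemma:multiplication-map}: the commutativity of the square \eqref{equation:serre-dual-diagram} relating the cup-product pairing $\upsilon$ (built from the Yoneda pairing $E\otimes\omega_C(D)\otimes T_C(-D)\to E$) to the multiplication map $\mu: H^0(E\otimes\omega_C(D))\otimes H^0(E^\vee\otimes\omega_C)\to H^0(\omega_C^{\otimes 2}(D))$, via the two vertical Serre-duality isomorphisms.

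\medskip

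The plan is to reduce everything to the compatibility of cup product with Serre duality. First I would make all the identifications explicit. Serre duality on $C$ gives $H^1(C,E)^\vee\simeq H^0(C,E^\vee\otimes\omega_C)$ and $H^1(C,T_C(-D))^\vee\simeq H^0(C,T_C(-D)^\vee\otimes\omega_C)=H^0(C,\omega_C(D)\otimes\omega_C)=H^0(C,\omega_C^{\otimes 2}(D))$; these are the maps $\eta$ (tensored with the identity on the first factor) and $\zeta$. The map $\alpha\circ\cup$ of \autoref{proposition:period-map} is, after adjunction, a pairing $H^0(C,E\otimes\omega_C(D))\otimes H^1(C,E)^\vee\to H^1(C,T_C(-D))^\vee$; unwinding the adjunction, $\upsilon(s\otimes\phi)$ is the functional on $H^1(C,T_C(-D))$ sending $\xi\mapsto \phi\big(\alpha(\xi\cup s)\big)$, where $\xi\cup s\in H^1(C,E\otimes\omega_C(D)\otimes T_C(-D))$ and $\alpha$ contracts to land in $H^1(C,E)$.

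\medskip

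The key step is then the naturality of Serre duality with respect to cup product: for a bundle $F$ on $C$ and classes $\xi\in H^1(C,F\otimes T_C(-D))$, $t\in H^0(C,F^\vee\otimes\omega_C(D)\otimes\omega_C)$... more precisely, I want the statement that the trace map $H^1(C,\omega_C)\to\mathbb C$ together with cup products makes the diagram commute. Concretely: given $s\in H^0(E\otimes\omega_C(D))$ and $u\in H^0(E^\vee\otimes\omega_C)$, the element $\phi:=\eta(u)\in H^1(C,E)^\vee$ is $x\mapsto \mathrm{tr}(x\cup u)$ where $x\cup u\in H^1(C,\omega_C)$ and $\mathrm{tr}$ is the Serre trace. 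So for any $\xi\in H^1(C,T_C(-D))$,
\begin{align*}
\langle\upsilon(s\otimes\phi),\xi\rangle &= \phi\big(\alpha(\xi\cup s)\big) = \mathrm{tr}\big(\alpha(\xi\cup s)\cup u\big).
\end{align*}
On the other hand $\zeta(\mu(s\otimes u))$ is the functional $\xi\mapsto \mathrm{tr}\big(\xi\cup \mu(s\otimes u)\big)$ where $\mu(s\otimes u)\in H^0(\omega_C^{\otimes 2}(D))$ is paired with $\xi\in H^1(T_C(-D))=H^1((\omega_C^{\otimes 2}(D))^\vee\otimes\omega_C)$ into $H^1(\omega_C)$. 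So the whole statement comes down to the identity
\begin{align*}
\mathrm{tr}\big(\alpha(\xi\cup s)\cup u\big) = \mathrm{tr}\big(\xi\cup\mu(s\otimes u)\big) \quad\text{in }\mathbb C,
\end{align*}
which is a purely formal consequence of the associativity and graded-commutativity of cup product together with the fact that both sides are obtained by contracting the total tensor product $T_C(-D)\otimes (E\otimes\omega_C(D))\otimes(E^\vee\otimes\omega_C)$ using the evaluation $E\otimes E^\vee\to\mathscr O_C$ and multiplying the line-bundle factors $T_C(-D)\otimes\omega_C(D)\otimes\omega_C\to\omega_C$ — just in two different orders/groupings. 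One checks the two bracketings agree; this is where $\alpha$ being induced by $\Omega^1_C(D)\otimes T_C(-D)\to\mathscr O_C$ and $\mu$ being induced by $\mathrm{tr}\otimes(\text{multiplication of }\omega\text{'s})$ line up.

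\medskip

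I expect the only real subtlety — the main obstacle — to be bookkeeping with signs and with the precise placement of the $\omega_C(D)$ versus $\omega_C$ twists in the two routes, i.e.\ verifying that the two associativity rearrangements of the quadruple cup product genuinely produce the same contraction map $E\otimes\omega_C(D)\otimes T_C(-D)\otimes E^\vee\otimes\omega_C\to\omega_C$ and not one off by a sign or a transposition. This is handled by working on Čech or Dolbeault representatives (Dolbeault is natural here since $E$ is the underlying bundle of a unitary system, but the identity is sheaf-theoretic and needs no metric) and citing the standard compatibility of Serre duality with cup product, e.g.\ as in \cite[\S9, \S10]{Voisin:hodgeTheory} from which the rest of the appendix follows. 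Once that identity is in hand, commutativity of \eqref{equation:serre-dual-diagram} is immediate from the computation above.
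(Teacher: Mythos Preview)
Your proposal is correct and follows essentially the same approach as the paper: both reduce commutativity of the square to the identity $\on{tr}\big(\alpha(\xi\cup s)\cup u\big)=\on{tr}\big(\xi\cup\mu(s\otimes u)\big)$ (in the paper's notation, $\on{tr}((\alpha\overline\cup\beta)\cup t)=\on{tr}(\beta\overline\cup(\alpha\cup t))$), and both justify it by associativity of cup product together with the fact that the two routes contract the same sheaf tensor $E\otimes\omega_C(D)\otimes T_C(-D)\otimes E^\vee\otimes\omega_C\to\omega_C$ in different orders. The paper's write-up is slightly more compact---it introduces the shorthand $\overline\cup$ for cup-then-trace and observes $\alpha\overline\cup\beta=\beta\overline\cup\alpha$---but the substance is identical to what you outline.
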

\begin{proof}
	Recall that for $F$ a vector bundle on $C$, the Serre duality pairing
	between $H^0(C, F)$ and $H^1(C, F^\vee\otimes \omega_C)$ is obtained
	as the composition $H^0(C, F) \otimes H^1(C, F^\vee\otimes \omega_C) \to
	H^1(C, F^\vee\otimes F \otimes \omega_C) \to
	H^1(C, \omega_C) \to \mathbb C$, where
	the first map is induced by cup product, the second is induced by the
	pairing
	$F \otimes F^\vee \to \mathscr O$,
	and the third map is the trace
	map $\on{tr}: H^1(C, \omega_C) \simeq \mathbb C$
	\cite[30.3.15]{vakil:foundations-of-algebraic-geometry-2017}.

	Using the above description, we now check commutativity of
	\eqref{equation:serre-dual-diagram}.
	We use $\cup$ for the usual cup product, and $\overline{\cup}$ to denote
	the composition of the cup product and the pairing $E \otimes E^\vee \to
	\mathscr O$. 
		Choose $\alpha \in H^0(C, E \otimes \omega_C(D)), \beta \in
	H^0(C, E^\vee \otimes \omega_C)$; we will denote by $s$ a general element of $H^1(C, E)$ and by $t$ a general element of $H^1(C,
	T_C(-D))$.
	On the one hand $\mu(\alpha \otimes \beta) = \alpha \overline{\cup}\beta$, and so $\zeta(\mu(\alpha\otimes \beta))$ is given by $$t\mapsto \on{tr}((\alpha\overline{\cup}\beta)\cup t).$$
	On the other hand, using the above description of Serre duality,
	\begin{align*}
		\upsilon(\eta(\alpha \otimes \beta))
		&= \upsilon(\alpha \otimes (s \mapsto \on{tr}(\beta
		\overline{\cup}s)))
		\\
		&= \left(t \mapsto \on{tr}(\beta \overline{\cup} (\alpha \cup
			t))\right) \\
			&= \left( t \mapsto \on{tr}( (\beta \overline{\cup} \alpha) \cup
			t )\right)  
	\end{align*}
	and so we obtain the desired commutativity (using that $\alpha \overline{\cup} \beta = \beta \overline{\cup} \alpha$).
	\end{proof}

\subsection{Proof of \autoref{theorem:period-map-computation}}
\begin{proof}
\label{subsection:proof-period-map}
	The final statement claiming adjointness follows from
	\autoref{lemma:connection-and-period-map}.
	The map $\mu$ in \autoref{lemma:multiplication-map}, 
	is another name for the composition	
	$\on{tr} \circ \otimes$ in \eqref{equation:multiplication-to-period},
	and by \autoref{lemma:multiplication-map},
	$\mu$ is identified with the map labeled $\upsilon$ there, upon applying
	Serre duality.
	Moreover, $\upsilon$ is by definition adjoint to the composition $\alpha \circ \cup$
	in \autoref{proposition:period-map}.
	The result now follows by identifying the Kodaira-Spencer map $\kappa$
	of \autoref{proposition:period-map} as Serre dual to the pullback map
	$c_b^*$ in the statement of \autoref{theorem:period-map-computation};
	this identification follows from \autoref{remark:kodaira-spencer}.
\end{proof}

\bibliographystyle{alpha}
\bibliography{bibliography-mcg-hodge-theory}

\end{document}